\DeclareMathAlphabet{\mathcal}{OMS}{cmsy}{m}{n} 
\newcommand{\ii}{\mathsf{i}}
\DeclareMathOperator{\im}{im}
\DeclareMathOperator{\Lin}{Lin}
\newcommand{\mom}{\mathrm{mom}}
\newcommand{\sos}{\mathrm{sos}}
\DeclareMathOperator{\Pos}{Pos}
\DeclareMathOperator{\rIm}{Im}
\DeclareMathOperator{\rRe}{Re}
\DeclareMathOperator{\rank}{rank}
\newcommand{\sA}{\mathsf{A}}
\newcommand{\sB}{\mathsf{B}}
\newcommand{\sN}{\mathsf{N}}
\DeclareMathOperator{\supp}{supp}
\DeclareMathOperator{\Sym}{Sym}
\DeclareMathOperator{\TR}{Tr}
\DeclareMathOperator{\Var}{Var}
\begin{document}
\newtheorem{thm}{Theorem}
\newtheorem{dfn}[thm]{Definition}
\newtheorem{lem}{Lemma}
\newtheorem{exa}[thm]{Example}
\newtheorem{prop}[thm]{Proposition}
\newtheorem{cor}[thm]{Corollary}
\newtheorem{rem}[thm]{Remark}
\newcommand{\ux}{\underline{x}}
\newcommand{\uy}{\underline{y}}
\newcommand{\un}{\underline}
\newcommand{\ov}{\overline}
\newcommand{\uz}{\underline{z}}
\newcommand{\ouz}{\overline{\underline{z}}}

\newcommand{\gp}{\mathfrak{p}}
\newcommand{\gP}{\mathfrak{P}}
\newcommand{\gN}{\mathfrak{N}}
\newcommand{\gL}{\mathfrak{L}}
\newcommand{\gq}{\mathfrak{q}}
\newcommand{\gj}{\mathfrak{j}}
\newcommand{\gm}{\mathfrak{m}}
\newcommand{\gk}{\mathfrak{k}}
\newcommand{\gn}{\mathfrak{n}}
\newcommand{\gl}{\mathfrak{l}}
\newcommand{\gx}{\mathfrak{x}}
\newcommand{\gw}{\mathfrak{w}}
\newcommand{\gy}{\mathfrak{y}}
\newcommand{\dP}{\mathds{P}}
\newcommand{\dD}{\mathds{D}}
\newcommand{\dH}{\mathds{H}}
\newcommand{\T}{\mathds{T}}
\newcommand{\C}{\mathds{C}}
\newcommand{\R}{\mathds{R}}
\newcommand{\N}{\mathds{N}}
\newcommand{\K}{\mathds{K}}
\newcommand{\Q}{\mathds{Q}}
\newcommand{\Z}{\mathds{Z}}
\newcommand{\cJ}{\mathcal{J}}
\newcommand{\cA}{\mathcal{A}}
\newcommand{\cB}{\mathcal{B}}
\newcommand{\cC}{\mathcal{C}}
\newcommand{\cE}{\mathcal{E}}
\newcommand{\cI}{\mathcal{I}}
\newcommand{\cK}{\mathcal{K}}
\newcommand{\cL}{\mathcal{L}}
\newcommand{\cF}{\mathcal{F}}
\newcommand{\cP}{\mathcal{P}}
\newcommand{\cG}{\mathcal{G}}
\newcommand{\cH}{\mathcal{H}}
\newcommand{\cN}{\mathcal{N}}
\newcommand{\cM}{\mathcal{M}}
\newcommand{\cQ}{\mathcal{Q}}
\newcommand{\cD}{\mathcal{D}}
\newcommand{\cR}{\mathcal{R}}
\newcommand{\cS}{\mathcal{S}}
\newcommand{\cV}{\mathcal{V}}
\newcommand{\cW}{\mathcal{W}}
\newcommand{\cT}{\mathcal{T}}
\newcommand{\cX}{\mathcal{X}}
\newcommand{\cY}{\mathcal{Y}}
\newcommand{\cU}{\mathcal{U}}
\newcommand{\cZ}{\mathcal{Z}}
\newcommand{\sL}{\sf{L}}
\newcommand{\sP}{\sf{P}}
\newcommand{\sM}{\sf{M}}
\newcommand{\sC}{\sf{C}}
\newcommand{\sQ}{\sf{Q}}
\newcommand{\sS}{\sf{S}}
\newcommand{\sat}{\rm{sat}}
\newcommand{\ext}{\rm{ext}}
\newcommand{\ord}{\rm{ord}}
\newcommand{\rk}{\mathtt{rk}}
\newcommand{\ind}{\mathtt{ind}}
\newcommand{\Int}{\rm{Int}}
\newcommand{\tr}{\rm{tr}}

\newcommand{\secret}[1]{}
\newcommand{\Tr}{\mathtt{Tr}}
\newcommand{\gB}{\mathfrak{B}}
\newcommand{\gs}{\mathfrak{s}}
\newcommand{\hide}[1]{}
\newcommand{\smallwedge}{\mathbin{\mathchoice{\scriptstyle\wedge}{\scriptstyle\wedge}{\scriptscriptstyle\wedge}{\scriptscriptstyle\wedge}}}
\newcommand{\dint}{\displaystyle\int}

\author{Konrad Schm\"udgen}
\title{Ten Lectures on the  Moment Problem}
\maketitle

%




\setcounter{tocdepth}{3}

%
%
%
%

\chapter*{Preface}

If $\mu$ is a positive Borel measure on the line and $k$ is a nonnegative integer,  the number
\begin{align}\label{kmo}
 s_k\equiv s_k(\mu)
 =\int_\R x^k \, d\mu(x)
\end{align}
is called the \textit{$k$-th moment} of $\mu$.
For instance, if the measure $\mu$ 
comes from the distribution function $F$ of a random variable $X$, then the expectation value of $X^k$ is just the $k$-the moment,
\begin{align*}
E [X^k]=s_k=\int_{-\infty}^{+\infty} x^kdF(x),
\end{align*}
 and the variance of $X$ is\,
 $\Var(X)=E[(X-E[X])^2]=E[X^2]-E[X]^2= s_2-s_1^2$ (of course, provided  the corresponding  numbers are finite).
 
 The moment problem is the inverse problem of ``finding"   the measure  when the  moments are given. 
 
 In order to apply functional-analytic methods it is convenient to rewrite the moment problem in terms of linear functionals.
For a real sequence $s=(s_n)_{n\in \N_0}$ let $L_s$ denote the linear functional on the polynomial algebra $\R[x]$  defined by $L_s(x^n)=s_n$, $n\in \N_0$. Then, by the linearity of the integral,  (\ref{kmo}) holds for all $k$ if and only if
\begin{align}\label{integraqlLs1}
L_s(p)=\int_{-\infty}^{+\infty} p(x) \,d\mu(x)~ \quad \text{ for }~~ p\in \R[x].
\end{align}
That is, the moment problem asks whether a linear functional  on  $\R[x]$ admits an integral representation (\ref{integraqlLs1}) with  some positive measure $\mu$ on $\R$. This is the simplest version of the moment problem, the so-called \textit{Hamburger moment problem}.

Moments and the moment problem have  natural $d$-dimensional versions.
For a positive measure $\mu$ on $\R^d$ and a multi-index $\alpha=(\alpha_1,\dots,\alpha_d)\in \N_0^d$, 
the $\alpha$-the moments is defined by
\begin{align*}
 s_\alpha(\mu)
 =\int_{\R^d} x_1^{\alpha_1}\cdots x_d^{\alpha_d} \, d\mu(x).
\end{align*}
 

Let $\cK$ be a  closed subset of $\R^d.$
An important variant of the moment problem, called the \textit{$\cK$-moment problem}, requires that the representing $\mu$ is supported on $\cK$.  
If only a \textit{finite} number of moments are given,  we have the \textit{truncated moment problem}.

Moments were applied  to the study of functions by the Russian mathematicians  P.L. Chebychev (1874) and  
A.A. Markov (1884). However, 
the moment problem   first  appeared in a famous memoir (1894)  of the Dutch mathematician T.J. Stieltjes. He formulated and solved this problem for the half-line and gave the first explicit example of an indeterminate problem. 
The cases of the real line and of bounded intervals were studied only later by H. Hamburger (1920) and F. Hausdorff (1920). 

Because of the simplicity of its formulation it is surprising that the moment problem has  deep interactions   with many branches of mathematics (functional analysis, function theory, real algebraic geometry, spectral theory, optimization, numerical analysis,  convex analysis, harmonic analysis and others)  and a broad range of applications. The AMS volume ``Moments in Mathematics", edited by H.J. Landau (1987),  and the book ``Moments, Positive Polynomials and Their Applications" by J.P. Lasserre (2015) illustrate this in a convincing manner.

\smallskip

The following notes grew out from a series of lectures given at the
\[
\textit{``School on Sums of Squares, Moment Problems, and Polynomial Optimization"},
\]
which was organized from 1.1.2019 till 31.3.2019  at the \[ \textit{Vietnam Institute for Advanced Studies in Mathematics}~~ (VIASM)\]
in Hanoi. I would like to thank my Vietnamese colleague, Prof. Trinh Le Cong, for the kind invitation and his warm hospitality during this visit. The audience consisted of researchers, young scientists, and graduate students. 

The first  three lectures deal with one-dimensional full and truncated moment problems and contain well-known classical material. The topics of the other lectures were chosen according to the themes of the VIASM school. In Lectures 4--7, we investigate the multi-dimensional \textit{full} moment problem with particular emphasis on the interactions with real algebraic geometry. Lecture 7 gives a digression into applications of moment methods and real algebraic geometry to polynomial optimization. The final Lectures 8--10 are devoted to the multidimensional \textit{truncated} moment problem. In Lectures 4--10, I have  covered  selected advanced results as
 well as recent developments. 

 The following  are smoothed and slightly extended versions of the handouts I had given to the audience.
 I have tried to maintain the casual style of the lectures and to illustrate the theory by  many examples.
 The notes are essentially based on  my book\\

\textit{``The Moment Problem''}, Graduate Text in Mathematics, Springer,  2017,\\
 
 \noindent
 which is quoted as [MP] in what follows.
For some results complete proofs are given, while for most of them I referred to [MP].  These notes have no bibliography. The book [MP] contains additional material  and also detailed references for the main results.
Finally, it should be emphasized that these notes are \textit{not}  a  book on the moment problem. It is more appropriate  to think of them as an entrance guide into some  developments of the beautiful  subject.



\tableofcontents

\makeatletter
\renewcommand{\@chapapp}{Lecture}
\makeatother

\chapter{Integral representations of  linear functionals}

Abstract:\\
\textit{An integral representation theorem of positive functionals on Choquet's adapted spaces is obtained.
As  applications, Haviland's theorem is derived and existence results for moment problems on intervals are developed.}
\bigskip

All variants of the moment problem deal  with integral representations of certain linear functionals.
We begin  with a rather general setup.

Suppose  $\cX$ is a \textbf{locally compact} topological Hausdorff space.
The  Borel algebra $\gB(\cX)$ is the $\sigma$-algebra  generated by the open subsets of $\cX$.
 \begin{definition}
A  \emph{Radon measure} on $\cX$
 is a  measure $\mu:\gB(\cX)\to [0,+\infty]$ such that\, $\mu(K)<\infty$\,  for each compact subset $K$ of $\cX$ and 
\begin{align}\label{innerreg}
\mu(M)=\sup\{\mu(K): K\subseteq M,~ K~\text{ compact}\}\quad \text{for~all}~~M\in \gB(\cX).
\end{align}
\end{definition} 
Thus, 
in our terminology  
Radon measures are always nonnegative! 

Closed subsets of  $\R^d$ are  locally compact in the induced topology of $\R^d$.

Further, suppose $E$ is a \textbf{vector space of continuous real-valued functions} on $\cX$.
In a very general and modern form,
the moment problem is the following problem:
\smallskip

 \textit{Given  a linear  functional $L:E\to \R$   and a closed subset $\cK$ of $\cX$, when does there exist a   Radon measure $\mu$ \textbf{supported on $\cK$} such that}
\begin{align}
\label{intreplf}
L(f)=\int_\cX f(x) \, d\mu(x)\quad \text{\textit{for}}~~ f\in E?
\end{align}
In this case,  $L$ is called a \textbf{$\cK$-moment functional} or   a \textbf{moment functional} if $\cK=\cX$.

When we write equations such as (\ref{intreplf}) we always 
mean that  the function $f(x)$ is $\mu$-integrable and its $\mu$-integral is $L(f)$. 

 Let $\{e_j:j\in J\}$ be a basis of the vector space $E$.
 Then, for each real sequence $s=(s_j)_{j\in J}$, there is unique linear functional  $L_s:E\to \R$ given by $L_s(e_j)=s_j$ for $j\in J$; this functional is called the \textit{Riesz functional} associated with $s$.
 Clearly, (\ref{intreplf}) holds for the functional $L=L_s$ if and only if $s_j=\int e_j(x) d\mu(x)$ for all $j\in J$.
 In this case, the numbers $s_j$ are called the \textit{generalized moment} of $\mu$.
\medskip

The most important cases are: $\cX=\R^d$\smallskip

Case  1.  $E=\R_d[\ux]:=\R[x_1,\dots,x_d]$: classical full moment problem.\smallskip

Case  2. $E=\R_d[\ux]_m:=\{p\in \R_d[\ux]:\deg (p)\leq m\}$: 
 truncated moment problem.\medskip

These two special cases 
lead already to an interesting  theory with  nice results and difficult problems!
In both cases we can take a  basis  $x^\alpha=x_1^{\alpha_1}\cdots x_d^{\alpha_d}$, $\alpha=(\alpha_1,\dots,\alpha_d)\in\N_0^d,$ of monomials; then the numbers $\int x^\alpha d\mu(x)$ are the \textit{moments} of  the measure $\mu$.

\smallskip

Why \textit{positive} measures and not signed measures?
It can be shown that in both Cases 1 and 2  \textit{each} linear functional can be represented by some signed measure.
\smallskip

There is an  obvious obstruction for a functional to be a moment functional. Set 
\begin{align*}E_+:=\{f\in E :f(x)\geq 0~~\text{ for }~~ x\in \cX\}.
\end{align*} Since integration of nonnegative functions by  positive measures gives  nonnegative numbers, each moment functional $L$ is $E_+$-positive, that is, $L(f)\geq 0$ for  all $f\in E_+$. In Case 1, the $E_+$-positivity is also sufficient (by Haviland's theorem below), in Case 2 it is not in general. Even if the $E_+$-positivity is sufficient, one would need a description of  $E_+$, which can be very difficult.  We will return to this problem later.

\section{Positive linear functionals on adapted spaces} 

The  representation theorem proved below is based on the following notion, invented by G.~Choquet, of an adapted space.

\begin{definition}\label{defadapted} A  linear subspace $E$ of  $C(\cX;\R)$ is called \emph{adapted}\index{Adapted space} if the following conditions are satisfied:
\begin{itemize}
\item[(i)] $ E=E_+-E_+$.
\item[(ii)]~ For each $x\in \cX$ there exists an $f_x\in E_+$ such that $f_x(x)>0$.
\item[(iii)]~ For each $f\in E_+$ there exists a $g\in E_+$ such that for any $\varepsilon >0$ there exists a compact subset $K_\varepsilon$ of $\cX$ such that $|f(x)|\leq \varepsilon |g(x)|$ for  $x\in \cX\backslash K_\varepsilon$. 
\end{itemize}
\end{definition}

If condition (iii) is satisfied, we shall say that \textit{$g$ dominates $f$}. Roughly speaking, this  means that $|f(x)/g(x)|\to 0$ as $x\to \infty$.
\begin{example}\label{exaadaptedsubspace}
Let $\cX$ be closed subset of $\R^d$.\\ 
Then \textit{$E=\R_d[\ux]\lceil \cX$ is an adapted subspace of $C(\cX;\R)$}. 
Indeed, condition (i) in Definition \ref{defadapted}   follows from the relation $4p=(p+1)^2-(p-1)^2$.  (ii) is trivial. If $p\in E_+$, then $g=(x_1^2+\cdots+x_d^2)f$ dominates $f$, so condition (iii) is also fulfilled. 

In contrast, $E=\R_d[\ux]_m\lceil \cX$, where $\R_d[\ux]_m:=\{p\in \R_d[\ux]:\deg (p)\leq m\}$ is not an adapted subspace of $C(\cX,\R)$, because (iii) fails.

From the technical side this is an important difference between the full moment problem and the truncated moment problem. $\hfill \circ$
\end{example}

\begin{lemma}\label{compactinad}
If\, $E$ is an adapted subspace of  $C(\cX;\R)$,  then for any  $f\in C_c(\cX;\R)_+$ there exists a $g\in E_+$ such that $g(x)\geq f(x)$ for all $x\in \cX$.
\end{lemma}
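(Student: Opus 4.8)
The plan is to exploit the compact support of $f$ and to construct a dominating element of $E_+$ by a finite-covering argument; only condition (ii) of Definition~\ref{defadapted}, together with the fact that $E_+$ is a convex cone (immediate from $E$ being a linear subspace), will be needed.

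First I would set $K:=\supp f$, which is compact by hypothesis, and dispose of the trivial case $f\equiv 0$ by taking $g=0\in E_+$. Assuming $K\neq\emptyset$, for each $x\in K$ condition (ii) supplies an $f_x\in E_+$ with $f_x(x)>0$; by continuity $f_x$ remains positive on some open neighborhood $U_x$ of $x$. The family $\{U_x:x\in K\}$ is an open cover of the compact set $K$, so I would extract a finite subcover $U_{x_1},\dots,U_{x_n}$ and set $h:=f_{x_1}+\cdots+f_{x_n}$. Since $E$ is a linear subspace and each summand is nonnegative, $h\in E_+$; moreover every $y\in K$ lies in some $U_{x_i}$, whence $h(y)>0$ for all $y\in K$.

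Next I would convert this strict positivity into a uniform bound. As $h$ is continuous and strictly positive on the compact set $K$, it attains a minimum $\delta:=\min_{x\in K}h(x)>0$, and $f$ attains a maximum $M:=\max_{x\in K}f(x)\geq 0$ on $K$. Setting $g:=(M/\delta)\,h$ and using that $E_+$ is closed under multiplication by nonnegative scalars gives $g\in E_+$. On $K$ one has $g\geq (M/\delta)\,\delta=M\geq f$, while off $K$ one has $f=0\leq g$ because $g\geq 0$ everywhere; hence $g\geq f$ on all of $\cX$, as claimed.

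I expect no serious obstacle here. In particular, neither condition (i) nor the domination condition (iii) of Definition~\ref{defadapted} enters the argument: outside the compact support $f$ vanishes, and there any nonnegative function dominates it. The one point that needs care is the passage from the pointwise positivity of $h$ on $K$ to a uniform positive lower bound $\delta$, which is exactly where the compactness of $\supp f$ (as opposed to that of $\cX$) is invoked; I would make sure the cover and the minimum are taken over $K$ rather than over $\cX$.
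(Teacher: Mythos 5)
Your proof is correct and follows essentially the same route as the paper's: a finite cover of $\supp f$ by neighborhoods coming from condition (ii) of Definition~\ref{defadapted}, followed by summing the corresponding elements of $E_+$. The only cosmetic difference is that the paper rescales each $g_x$ beforehand so that $g_x>f$ holds near $x$ (making the sum dominate $f$ immediately), whereas you sum first and then rescale globally by $\max_K f/\min_K h$; both variants invoke the compactness of $\supp f$ in the same way.
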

\begin{proof} 
Let $x\in \cX$.
 By Definition \ref{defadapted}(ii) there exists a function $g_x\in E_+$ such that $g_x(x)>0$.
 Multiplying $g_x$ by some positive constant we get $g_x(x)>f(x)$.
 This inequality remains valid in some neighborhood of $x$.
 By the compactness of\, $\supp f$\, there are finitely many  $x_1,\dots,x_n\in \cX$ such that $g(x):=g_{x_1}(x)+\cdots+g_{x_n}(x)>f(x)$ for  $x\in\supp f$ and $g(x)\geq f(x)$ for all $x\in\cX$.
\end{proof} 


We will use the following Hahn-Banach type extension result.
\begin{lemma}\label{exthbversion}
Let $E$ be a linear subspace of a real vector space $F$ and let $C$ be a convex cone of $F$ such that $F=E+C$. Then each  $(C\cap E)$-positive linear functional $L$ on $E$  can be extended to a $C$-positive linear functional $\tilde{L}$ on $F$.
\end{lemma}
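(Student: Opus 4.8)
The statement is the classical M.~Riesz extension theorem, and the plan is to prove it by a one-dimensional extension step combined with a transfinite (Zorn) argument. First I would set up the Zorn framework: consider the collection $\cP$ of all pairs $(G,\Lambda)$, where $G$ is a linear subspace of $F$ with $E\subseteq G$ and $\Lambda$ is a $(C\cap G)$-positive linear functional on $G$ extending $L$, partially ordered by extension. Unions of chains supply upper bounds, so Zorn's lemma yields a maximal element $(E_0,L_0)$, and the goal becomes to show $E_0=F$. A simple but crucial preliminary observation is that the hypothesis $F=E+C$ is inherited by every such $G$: since $E\subseteq G$ we have $F=E+C\subseteq G+C$, hence $F=G+C$; in particular $F=E_0+C$.

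The heart of the argument is the one-step extension. Assuming for contradiction that $E_0\neq F$, I would pick $x_0\in F\setminus E_0$ and attempt to assign a value $L_0(x_0)=\alpha$ so that the resulting functional on $E_0+\R x_0$ stays $C$-positive. Examining the membership $e+tx_0\in C$ for $e\in E_0$, $t\in\R$, splitting into the cases $t>0$, $t<0$, $t=0$, and rescaling by $|t|$ (legitimate since $C$ is a cone), the positivity requirement collapses to finding a single real $\alpha$ with
\[
\sup\{-L_0(u) : u\in E_0,\ u+x_0\in C\}\ \leq\ \alpha\ \leq\ \inf\{L_0(v) : v\in E_0,\ v-x_0\in C\}.
\]
The case $t=0$ is automatically handled by the $(C\cap E_0)$-positivity of $L_0$.

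It then remains to verify that this interval is nonempty, which I expect to be the main obstacle and the place where both hypotheses are genuinely used. For nonemptiness of the two constraint sets I would apply $F=E_0+C$ once to $x_0$ and once to $-x_0$: writing $x_0=e_1+c_1$ gives $-e_1+x_0=c_1\in C$, and writing $-x_0=e_2+c_2$ gives $-e_2-x_0=c_2\in C$, so each set contains a point. For the ordering of the two sides, take any admissible $u$ and $v$; then $(u+x_0)+(v-x_0)=u+v$ lies in $C$ because a convex cone is closed under addition, and it also lies in $E_0$, so $L_0(u+v)\geq 0$, i.e. $-L_0(u)\leq L_0(v)$. Consequently the supremum is bounded above (by any value $L_0(v)$) and the infimum bounded below (by any value $-L_0(u)$), both are finite, and the supremum does not exceed the infimum; any $\alpha$ in the interval defines a $(C\cap(E_0+\R x_0))$-positive extension. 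This contradicts the maximality of $(E_0,L_0)$, forcing $E_0=F$ and giving the desired extension $\tilde{L}=L_0$. Note that discarding the surjectivity $F=E+C$ would allow one of the constraint sets to be empty or the relevant supremum to be $+\infty$, which is precisely why that assumption cannot be dropped.
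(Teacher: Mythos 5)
Your proof is correct, but it takes a different route from the paper. The paper proves the lemma in one stroke from the Hahn--Banach theorem: it defines the sublinear functional $q(f)=\inf\{L(g): g\in E,\ g-f\in C\}$ (using $F=E+C$ to see the defining set is nonempty and $q$ is finite), notes $q=L$ on $E$, extends $L$ to $\tilde{L}\leq q$ by Hahn--Banach, and reads off $C$-positivity from $q(-h)\leq 0$ for $h\in C$. You instead redo the extension by hand in the classical M.~Riesz style: a one-dimensional extension step whose admissible values form the interval
\[
\Bigl[\,\sup\{-L_0(u): u+x_0\in C\},\ \inf\{L_0(v): v-x_0\in C\}\,\Bigr],
\]
plus Zorn's lemma. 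Your key verifications are all sound: $F=G+C$ is inherited by intermediate subspaces, both constraint sets are nonempty precisely because of $F=E_0+C$, and the ordering of the endpoints follows from $(u+x_0)+(v-x_0)=u+v\in C\cap E_0$ together with the positivity of $L_0$ (note that a convex cone is indeed closed under addition, which you use). The two arguments are close relatives — the paper's $q(f)$ is exactly your upper endpoint, and Hahn--Banach itself is proved by the same one-step-plus-Zorn scheme — so what differs is where the transfinite work is hidden. The paper's proof is shorter because it delegates that work to Hahn--Banach as a black box; yours is self-contained, and it makes explicit (as you point out at the end) exactly where the hypothesis $F=E+C$ enters, namely in guaranteeing that neither constraint set is empty, so the interval of admissible values is a genuine finite interval.
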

\begin{proof}
Let $f\in F$. We define
\begin{equation}\label{defqx}
q(f)=\inf\, \{ L(g): g\in E,g-f\in C\}.
\end{equation}
Since $F=E+C$, there exist $g\in E$ and $c\in C$ such that  $-f=-g+c$, so $c=g-f\in C$ and the corresponding set in (\ref{defqx}) is not empty. It is easily seen that $q(f)$ is finite and that $q$ is a
 sublinear functional  such that $L(g)=q(g)$ for $g\in E$. Therefore, by the Hahn--Banach theorem, there is an extension $\tilde{L}$ of $L$ to $F$ such that $\tilde{L}(f)\leq q(f)$ for all $f\in F$. 

Let $h\in C$. Setting $g=0, f=-h$\, we have $g-f\in C$, so that $q(-h)\leq L(0)=0$\, by (\ref{defqx}). Hence  $\tilde{L}(-h)\leq q(-h)\leq 0$, so  that  $\tilde{L}(h)\geq 0$. Thus, $\tilde{L}$ is $C$-positive.
$\hfill \qed$ \end{proof}

Our first main result is the following theorem. 
\begin{theorem}\label{choquet}
Suppose  $E$ is an adapted subspace of $C(\cX;\R)$. For any linear functional $L:E\to \R$ the following are equivalent:
\begin{itemize}
\item[\em (i)] ~ The functional $L$ is  $E_+$-positive, that is, $L(f)\geq 0$ for all $f\in E_+$.
\item[\em (ii)]~ $L$ is a moment functional, that is, there exists a Radon measure $\mu$ on $\cX$  such that $L(f)=\int f d\mu$ for $f\in E$.
\end{itemize}
\end{theorem}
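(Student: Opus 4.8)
The direction (ii)$\Rightarrow$(i) is immediate: if $L(f)=\int f\,d\mu$ with $\mu\ge 0$, then $f\in E_+$ forces $L(f)\ge 0$. The substance is the converse, and my plan follows the classical three-step scheme for adapted spaces: \emph{extend} $L$ to a positive functional on a space containing the compactly supported continuous functions, \emph{represent} that extension by a Radon measure via the Riesz representation theorem, and then \emph{verify}, using the defining properties of an adapted space, that this measure already represents $L$ on all of $E$.

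For the extension step I would work in $F:=E+C_c(\cX;\R)$, a linear subspace of $C(\cX;\R)$, and take the convex cone $C:=F_+=\{h\in F:h(x)\ge 0 \text{ for all } x\in\cX\}$. To apply Lemma~\ref{exthbversion} I must check $F=E+C$: given $h=p+\varphi$ with $p\in E$ and $\varphi\in C_c(\cX;\R)$, Lemma~\ref{compactinad} (which is where Definition~\ref{defadapted}(ii) enters) produces $g\in E_+$ with $g\ge|\varphi|$, whence $g+\varphi\in C$ and $h=(p-g)+(g+\varphi)\in E+C$. Since plainly $C\cap E=E_+$, the assumed $E_+$-positivity is exactly $(C\cap E)$-positivity, so Lemma~\ref{exthbversion} yields a $C$-positive extension $\tilde L$ of $L$ to $F$. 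Its restriction to $C_c(\cX;\R)$ is a positive linear functional, and the Riesz representation theorem furnishes a Radon measure $\mu$ on $\cX$ with $\tilde L(\varphi)=\int\varphi\,d\mu$ for all $\varphi\in C_c(\cX;\R)$.

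It remains to prove $L(f)=\int f\,d\mu$ for $f\in E$; by Definition~\ref{defadapted}(i) it suffices to treat $f\in E_+$. For the upper bound, any $\varphi\in C_c(\cX;\R)$ with $0\le\varphi\le f$ satisfies $f-\varphi\in C$, hence $\int\varphi\,d\mu=\tilde L(\varphi)\le\tilde L(f)=L(f)$; taking the supremum over such $\varphi$ and using inner regularity of $\mu$ (which gives $\int f\,d\mu=\sup_\varphi\int\varphi\,d\mu$ for nonnegative continuous $f$) shows that $f$ is $\mu$-integrable with $\int f\,d\mu\le L(f)$. The lower bound is the crux, and is exactly where the domination condition Definition~\ref{defadapted}(iii) becomes indispensable. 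Fix $\varepsilon>0$, choose $g\in E_+$ dominating $f$ together with a compact $K_\varepsilon$ with $f\le\varepsilon g$ off $K_\varepsilon$ (the absolute values disappear since $f,g\ge 0$), and pick by Urysohn a cutoff $\chi\in C_c(\cX;\R)$ with $0\le\chi\le 1$ and $\chi\equiv 1$ on $K_\varepsilon$. Then $f(1-\chi)\le\varepsilon g$ everywhere (it vanishes on $K_\varepsilon$ and is $\le f\le\varepsilon g$ off it), so $\varepsilon g+f\chi-f\in C$, and positivity of $\tilde L$ gives $L(f)\le\tilde L(f\chi)+\varepsilon\,\tilde L(g)=\int f\chi\,d\mu+\varepsilon L(g)\le\int f\,d\mu+\varepsilon L(g)$. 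Letting $\varepsilon\to 0$ yields $L(f)\le\int f\,d\mu$, and combined with the upper bound this gives equality.

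The only genuine obstacle is this last lower estimate: it is the step that controls the ``mass near infinity'' of $f$, and it is precisely the feature that breaks down for the truncated space $\R_d[\ux]_m\lceil\cX$ of Example~\ref{exaadaptedsubspace}, where condition (iii) fails and the conclusion is false. Everything else is bookkeeping: the extension is a routine application of Lemma~\ref{exthbversion} once $F=E+C$ is arranged, and the Riesz step is off the shelf.
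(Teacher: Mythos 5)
Your proof is correct, and its skeleton is the paper's: extend $L$ positively by Lemma \ref{exthbversion}, represent the extension on $C_c(\cX;\R)$ via the Riesz representation theorem, then squeeze $L(f)$ against $\int f\,d\mu$ from both sides, with condition (iii) of Definition \ref{defadapted} and a cutoff function doing the work in the lower bound. The one structural difference is the extension space. You extend to the small space $F=E+C_c(\cX;\R)$ with cone $C=F_+$, whereas the paper extends to the domination hull $\tilde{E}=\{f\in C(\cX;\R): |f|\le g \text{ on } \cX \text{ for some } g\in E_+\}$ with cone $(\tilde{E})_+$. Both routes use Lemma \ref{compactinad}, but in different places: you need it to verify the hypothesis $F=E+C$ of Lemma \ref{exthbversion} (the paper gets its decomposition $\tilde{E}=E+(\tilde{E})_+$ almost for free from the definition of $\tilde{E}$), while the paper needs it to know $C_c(\cX;\R)\subseteq\tilde{E}$ so that Riesz applies (you have $C_c(\cX;\R)\subseteq F$ by construction). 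The trade-off is essentially a wash; your smaller space has the mild advantage that every cone-membership check ($f-\varphi\in C$, $\varepsilon g+f\chi-f\in C$) is literally pointwise nonnegativity, with no domination estimate to verify. One sentence to tighten: you write ``Fix $\varepsilon>0$, choose $g\in E_+$ dominating $f$,'' but the quantifiers must go the other way—fix one $g$ dominating $f$ (domination is $\varepsilon$-uniform by Definition \ref{defadapted}(iii)) and then choose $K_\varepsilon$ and $\chi$ for each $\varepsilon$—since the final estimate $L(f)\le\int f\,d\mu+\varepsilon L(g)$ only yields the conclusion as $\varepsilon\to 0$ if $L(g)$ does not depend on $\varepsilon$; this is exactly the point the paper flags with ``Note that $g$ does not depend on $\varepsilon$.''
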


\begin{proof} The implication (ii)$\to$(i) is obvious. We prove
 (i)$\to$(ii) and  begin by setting
\begin{align*}
 \tilde{E}
 := \{f\in C(\cX,\R): |f(x)|\leq g(x),\, x\in \cX, \quad\text{ for some }\quad g\in E\}
\end{align*}
 and claim that  $\tilde{E}=E+(\tilde{E})_+ $. Obviously, $E+(\tilde{E})_+ \subseteq \tilde{E}$. Conversely, let $f\in \tilde{E}$. We choose a  $g\in E_+$ such that $|f|\leq g$. Then we have $f+g \in ( \tilde{E})_+$, $-g\in E$ and  $-f=-g+ (g+f)\in E+(\tilde{E})_+$. That is, $\tilde{E}=E+(\tilde{E})_+$.

By Lemma \ref{exthbversion}, $L$ can be extended to an $(\tilde{E})_+$-positive linear functional $\tilde{L}$ on $\tilde{E}$. We have $C_c(\cX;\R)\subseteq \tilde{E}$ by Lemma \ref{compactinad}. From the Riesz representation theorem  it follows that there is a Radon measure $\mu$ on $\cX$  such that $\tilde{L}(f)=\int f\, d\mu$ for  $f\in C_c(\cX;\R)$. By Definition \ref{defadapted}(i),  $E=E_+-E_+$. To complete the proof it therefore suffices to show that each $f\in E_+$  is $\mu$-integrable and satisfies $L(f)\equiv \tilde{L}(f)=\int f\, d\mu$. 

 Fix $f\in E_+$.
 Let $\cU:=\{\eta\in C_c(\cX;\R): 0 \leq \eta(x)\leq 1\text{ for }x\in \cX\}$.
 Clearly, for $\eta\in \cU$,  $f\eta \in C_c(\cX;\R)$ and hence $\tilde{L}(f\eta)=\int f\eta\, d\mu$.
 Using this fact and the $(\tilde{E})_+$-positivity of $\tilde{L}$, we derive
\begin{equation}\label{esuonedirec}
\int f d\mu =\sup_{\eta \in \cU}\,  \int f\eta\, d \mu= \sup_{\eta\in \cU}\, \tilde{L}(f\eta)\leq \tilde{L}(f)=L(f) <\infty,
\end{equation}
so  $f$ is $\mu$-integrable. 

By (\ref{esuonedirec}) it suffices to  prove that $L(f)\leq \int f d\mu$.  From Definition \ref{defadapted}(iii), there exists a $g\in E_+$ that dominates $f$. For any $\varepsilon >0$  we choose  a function $\eta_\varepsilon\in \cU$ such that $\eta_\varepsilon =1$ on the set $K_\varepsilon$ from condition (iii). Then  $f\leq \varepsilon g+f\eta_\varepsilon$. Since $ f\eta_\varepsilon \leq f$, 
\begin{align*}L(f)=\tilde{L}(f)\leq\varepsilon\tilde{ L}(g)+ \tilde{L}(f\eta_\varepsilon)=\varepsilon L(g)+\int f\eta_\varepsilon d\mu \leq\varepsilon L(g)+\int f d\mu.\end{align*}
Note  that $g$ does not depend on $\varepsilon$!  Passing to the limit $\varepsilon \to +0$,
we get $L(f)\leq\int f d\mu$. Thus, $L(f)=\int f d\mu$ which  completes  the proof of (ii).
$\hfill \qed$ \end{proof}

 If  $\cX$ is compact, then $C(\cX;\R)=C_c(\cX;R)$, so condition (iii) in Definition \ref{defadapted}  is trivially fulfilled. But in this case we can obtain the desired integral representation of $L$ more directly, as the following proposition shows.
\begin{proposition}\label{integralrepcompactcase}
 Suppose   $\cX$ is a compact Hausdorff space and  $E$ is a linear subspace of $C(\cX;\R)$ which contains a function $e$ such that $e(x)>0$ for $x\in \cX$. 
 
 Then each $E_+$-positive linear functional $L$ on $E$ is a moment functional.
\end{proposition}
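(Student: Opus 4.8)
The plan is to reduce the statement to the Riesz representation theorem by extending $L$ to a positive linear functional on all of $C(\cX;\R)$, and the vehicle for this extension is Lemma \ref{exthbversion}. I would take $F=C(\cX;\R)$ and let $C:=C(\cX;\R)_+$ be the cone of nonnegative continuous functions. With this choice $C\cap E=E_+$, and the hypothesis that $L$ is $E_+$-positive says precisely that $L$ is $(C\cap E)$-positive. Thus, once the decomposition $F=E+C$ is verified, Lemma \ref{exthbversion} produces a $C$-positive — that is, a positive — linear functional $\tilde{L}$ on $C(\cX;\R)$ extending $L$.

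The crucial step, and the only place where the special function $e$ and the compactness of $\cX$ enter, is the verification that $F=E+C$. Here I would use compactness twice: since $e$ is continuous and strictly positive, it attains a minimum $m:=\min_{x\in\cX} e(x)>0$; and given an arbitrary $f\in C(\cX;\R)$, the function $f$ is bounded below, say $f(x)\geq -M$ for all $x\in\cX$. Choosing $\lambda\geq M/m$, the function $f+\lambda e$ is nonnegative on $\cX$, hence lies in $C$, while $-\lambda e\in E$ because $E$ is a linear subspace containing $e$. The identity $f=(-\lambda e)+(f+\lambda e)$ then exhibits $f\in E+C$, proving $F=E+C$.

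Finally, I would invoke the Riesz representation theorem for the compact Hausdorff space $\cX$: the positive linear functional $\tilde{L}$ on $C(\cX;\R)$ is represented by a Radon measure $\mu$ on $\cX$, so $\tilde{L}(f)=\int f\, d\mu$ for all $f\in C(\cX;\R)$. Restricting to $f\in E$ and using $\tilde{L}\lceil E=L$ gives $L(f)=\int f\, d\mu$, which is exactly the assertion that $L$ is a moment functional.

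I expect the main obstacle to be the decomposition $F=E+C$; everything else is a mechanical application of Lemma \ref{exthbversion} and of Riesz's theorem. It is worth emphasizing why adaptedness is not needed in this compact setting: the single strictly positive function $e$, through its uniform positive lower bound $m$, already dominates every continuous function on $\cX$, and this replaces condition (iii) of Definition \ref{defadapted}, which is what makes the short argument possible.
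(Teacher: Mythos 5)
Your proof is correct and follows essentially the same route as the paper: the decomposition $F=E+C$ with $F=C(\cX;\R)$, $C=C(\cX;\R)_+$ (using compactness to bound $f$ against a positive multiple of $e$), followed by Lemma \ref{exthbversion} and the Riesz representation theorem. The only cosmetic difference is that you bound $f$ from below ($f+\lambda e\geq 0$) while the paper bounds it from above, and your version is in fact written more cleanly, since the paper's displayed decomposition contains a harmless sign slip.
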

\begin{proof}
Set $F=C(\cX;\R)$ and $C=C(\cX;\R)_+$. Let $f\in F$. Since $\cX$ is compact, $f$ is bounded and $e$ has a positive infimum. Hence there exists a $\lambda >0$ such that $f(x)\leq \lambda e(x)$  on $ \cX$. Since $\lambda e-f\in C$ and $-\lambda e\in E$,  $f=-\lambda e +(\lambda e-f)\in E+C$. Thus, $F=E+C$. By Lemma \ref{exthbversion}, $L$ extends to a $C$-positive linear functional $\tilde{L}$ on $F$.
 By the Riesz representation theorem, $\tilde{L}$, hence $L$, can be given by a Radon measure $\mu$ on $\cX$.
$\hfill\qed$ \end{proof}

In the proof of Theorem \ref{choquet} condition (iii) of Definition \ref{defadapted} was  crucial. We  give a simple example where this condition fails and $L$ has no representing measure. 
\begin{example}
Set $E:=C_c(\R;\R)+\R\cdot 1$ and define a linear functional on $E$ by 
\begin{align*}
 L(f+\lambda \cdot 1)
 :=\lambda \quad\text{ for }\quad  f\in C_c(\R;\R),~\lambda \in \R,\end{align*} 
where $1$ is the constant function equal to $1$. Then $L$ is  $E_+$-positive, but it is not a moment functional. (Indeed, since $L(f)=0$ for  $f\in C_c(\R;\R)$, the measure $\mu$ would be zero. But this is impossible, because $L(1)=1$.)

We can consider $E$ as a subspace of $C(\ov{\R};\R)$, where $\ov{\R}=\R\cup \{\infty\}$ is the point compactification of $\R$, by setting $(f+\lambda \cdot 1)(\infty)=\lambda$. Then,  $L$ is given by the integral of $\delta_\infty$, so $L$ is a moment functional.  $\hfill \circ$
\end{example}

 Now we give an important application.
 For a closed subset $\cK$ of $\R^d$ we set 
\begin{align*}
 {\Pos} (\cK)
 =\{p\in \R_d[\ux]: p(x)\geq 0 \quad\text{ for all }~~ x\in \cK\, \}.
\end{align*}
Since  
$E=\R_d[\ux]\lceil \cK $ 
is an adapted subspace of $C(\cK,\R)$, as noted in Example \ref{exaadaptedsubspace}, Theorem \ref{choquet} gives the following result, which is called  \textbf{Haviland's theorem}.
\begin{theorem}\label{haviland}
Let $\cK$ be a closed subset of $\R^d$ and $L$ a linear functional on $\R_d[\ux]$. The following statements are equivalent:
\begin{itemize}
\item[\em (i)] $L(f)\geq 0$\, for  all $f\in {\Pos} (\cK)$.
\item[\em (ii)]~ $L$ is a $\cK$-moment functional, that is, there exists a  Radon measure $\mu$ on $\R^d$ supported on $\cK$ such that\,  $L(f)=\int_\cK f\, d\mu$\, for all $f\in \R_d[\ux]$.
\end{itemize}
\end{theorem}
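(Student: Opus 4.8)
The plan is to deduce the theorem directly from Theorem \ref{choquet}, specializing the locally compact space to $\cX = \cK$ (a closed, hence locally compact, subset of $\R^d$) and the adapted subspace to $E = \R_d[\ux]\lceil\cK$, the algebra of polynomials restricted to $\cK$. By Example \ref{exaadaptedsubspace} this $E$ is adapted, so all of the analytic content is already available; what remains is to match the data of the two statements and to translate between a functional on the abstract algebra $\R_d[\ux]$ and a functional on its restriction $E$.

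The implication (ii)$\Rightarrow$(i) is immediate: if $\mu$ is supported on $\cK$ and $f\in\Pos(\cK)$, then $f\geq 0$ on $\cK$, so $L(f)=\int_\cK f\,d\mu\geq 0$. For the converse, first I would descend $L$ to $E$ by setting $\bar{L}(p\lceil\cK):=L(p)$. The point requiring genuine care is that this is well defined, i.e.\ that $L$ annihilates every polynomial vanishing on $\cK$: if $p\lceil\cK=0$, then both $p$ and $-p$ lie in $\Pos(\cK)$, so (i) yields $L(p)\geq 0$ and $L(-p)\geq 0$, hence $L(p)=0$. Next I would identify the positive cone $E_+=\{p\lceil\cK : p\in\Pos(\cK)\}$, so that condition (i) says precisely that $\bar{L}$ is $E_+$-positive. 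Theorem \ref{choquet} then furnishes a Radon measure $\mu$ on $\cK$ with $\bar{L}(f)=\int_\cK f\,d\mu$ for $f\in E$, that is, $L(p)=\int_\cK p\,d\mu$ for all $p\in\R_d[\ux]$. Finally I would regard $\mu$ as a measure on $\R^d$ via $\tilde{\mu}(A)=\mu(A\cap\cK)$; since $\cK$ is closed, compact subsets of $\cK$ are compact in $\R^d$ and inner regularity transfers, so $\tilde{\mu}$ is a Radon measure on $\R^d$ supported on $\cK$, as required.

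The hard part will be nothing deep, but the subtle step is the descent of $L$ to the restriction space, namely the well-definedness of $\bar{L}$, which is exactly where the hypothesis must be invoked on both $p$ and $-p$. Once that is secured the statement is a routine reformulation of Theorem \ref{choquet}, and the passage from a measure on $\cK$ to a measure on $\R^d$ supported on $\cK$ is pure bookkeeping. I note that in the classical case $\cK=\R^d$ the restriction map is injective and even this well-definedness issue disappears.
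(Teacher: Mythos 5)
Your proof is correct and takes essentially the same route as the paper: Haviland's theorem is obtained there by applying Theorem \ref{choquet} to the adapted space $E=\R_d[\ux]\lceil \cK$ of Example \ref{exaadaptedsubspace}. The paper compresses this derivation into one sentence, leaving implicit the two bookkeeping points you spell out (well-definedness of the descended functional $\bar{L}$ on the restriction space, and viewing the resulting measure on $\cK$ as a Radon measure on $\R^d$ supported on $\cK$), so your elaboration is a faithful, slightly more careful rendering of the same argument.
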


\section{Positive polynomials on intervals}

To settle the existence problem for moment problems on intervals, 
by Haviland's theorem it is natural to look for descriptions of positive polynomials on intervals.

 Let $p(x)\in \R[x]$ be a   nonconstant polynomial.
 If $\lambda$ is a non-real zero of $p$ with multiplicity $l$, so is $\overline{\lambda}$.
 Clearly,  $(x-\lambda)^l(x-\overline{\lambda})^l=((x-u)^2+v^2)^l$, where $u=\rRe\lambda$ and $v=\rIm\lambda$.
 Therefore, by the fundamental theorem of algebra,   $p$ factors as
\begin{align}
p(x)&= a(x-\alpha_1)^{n_1}\cdots(x-\alpha_r)^{n_r}((x-u_1)^2+v_1^2)^{j_1}\cdots  ((x-u_k)^2+v_k^2)^{j_k},\label{pfactors2}
\end{align}
where  $x-\alpha_j$ are pairwise different linear factors and $(x-u_l)^2+v_l^2$ are pairwise different quadratic factors. Note that linear factors or quadratic factors may be absent. 

Let  $\sum \R[x]^2$ denote the set of finite sums of squares $p^2$, where $p\in \R[x]$.

\begin{proposition}\label{positivepolyr}
\begin{itemize}
\item[\em (i)]~ ${\Pos}(\R)=\sum \R[x]^2$.
\item[\em (ii)]~ ${\Pos}([0,+\infty))=\big\{\, f+x g:\,  f,g\in \sum\, \R[x]^2\, \}$.
\item[\em (iii)]~ 
${\Pos}([a,b]) =\big\{\, f+(b-x)(x-a) g:\, f, g\in \sum~ \R[x]^2 \, \big\}$, where $a,b\in \R, a<b$.
\end{itemize}
\end{proposition}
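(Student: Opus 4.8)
The plan is to prove all three parts by a single strategy. In each case the trivial inclusion $T\subseteq\Pos(\cK)$, where $T$ denotes the set on the right-hand side, is immediate: a square is nonnegative on $\R$, one has $x\ge 0$ on $[0,+\infty)$, and $(b-x)(x-a)\ge 0$ on $[a,b]$, so every element of $T$ is manifestly nonnegative on the respective $\cK$. The substance is the reverse inclusion $\Pos(\cK)\subseteq T$, and I would obtain it from two ingredients: (a) that $T$ is a cone closed under multiplication, and (b) that every elementary factor of a polynomial nonnegative on $\cK$ already lies in $T$. Granting (a) and (b), factoring $p$ via \eqref{pfactors2} and multiplying the factors together finishes the argument.

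For the multiplicative closure I would first record the identity $\big(\sum a_i^2\big)\big(\sum b_j^2\big)=\sum_{i,j}(a_ib_j)^2$, which shows that $\sum\R[x]^2$ is closed under products. Writing $w=x$ for part (ii) and $w=(b-x)(x-a)$ for part (iii), the product of $f_1+wg_1$ and $f_2+wg_2$ equals $(f_1f_2+w^2g_1g_2)+w(f_1g_2+f_2g_1)$; since $w^2$ is a square and products of sums of squares are sums of squares, this has again the required shape. Thus in every case $T$ is closed under both addition and multiplication, and it suffices to realize the irreducible factors of $p$ as elements of $T$.

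Next I would invoke the factorization \eqref{pfactors2} and use positivity to control the factors. The behaviour at infinity forces the leading coefficient to be positive for $\cK=\R$ and $\cK=[0,+\infty)$; the complex-conjugate factors $((x-u_l)^2+v_l^2)$ are sums of two squares; and a real root lying in the \emph{interior} of $\cK$ must have even multiplicity, for otherwise $p$ would change sign there, so such factors are squares. This already settles part (i), where every real root is interior. For $[0,+\infty)$, a factor $(x-\alpha)$ with $\alpha\le 0$ equals $|\alpha|+x\cdot 1\in T$ and $x\in T$, which disposes of the remaining factors. For $[a,b]$ the key computation is
\begin{align*}
(x-a)(b-a)=(x-a)^2+(x-a)(b-x),\qquad (b-x)(b-a)=(b-x)^2+(b-x)(x-a),
\end{align*}
which exhibits $(x-a)$ and $(b-x)$ as elements of $T$ after dividing by the positive constant $b-a$; adding nonnegative constants then gives $(x-\alpha)\in T$ for $\alpha\le a$ and $(\alpha-x)\in T$ for $\alpha\ge b$.

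The main obstacle is the sign bookkeeping in part (iii), where $\cK=[a,b]$ is compact and the leading coefficient need not be positive. The trouble is that a factor $(x-\alpha)$ with $\alpha\ge b$ is negative on $[a,b]$ and so cannot itself lie in $T$. I would therefore replace each such factor by its nonnegative version $(\alpha-x)\in T$, collecting the sign $(-1)^{M}$, where $M$ is the total multiplicity of the real roots $\ge b$. This produces a product $P\in T$ with $p=c\,(-1)^{M}P$ for the leading coefficient $c$. Evaluating at an interior point of $[a,b]$ where $P$ does not vanish, the hypothesis $p\ge 0$ forces the constant $c\,(-1)^{M}$ to be positive, hence a square; then $p$ is the product of two elements of $T$ and therefore lies in $T$. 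Once this parity argument is in place, all three descriptions follow uniformly.
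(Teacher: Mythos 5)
Your proof is correct, and for parts (i) and (ii) it is essentially the paper's own argument: the same factorization \eqref{pfactors2}, the same multiplicative-closure computation $(f_1+wg_1)(f_2+wg_2)=(f_1f_2+w^2g_1g_2)+w(f_1g_2+f_2g_1)$, positivity of the leading coefficient from the behaviour at infinity, and the observation that an odd-multiplicity real root cannot lie in the interior of $\cK$ because of the sign change. The genuine added content is part (iii): there the paper gives no proof at all, remarking only that it ``follows by a similar, but slightly longer reasoning'' and citing [MP, Proposition 3.3]. You supply exactly that longer reasoning, and it is sound. The identities
\begin{align*}
(x-a)(b-a)=(x-a)^2+(x-a)(b-x),\qquad (b-x)(b-a)=(b-x)^2+(b-x)(x-a)
\end{align*}
exhibit $(x-a)$ and $(b-x)$ as elements of $T$, and your parity bookkeeping for roots $\alpha\geq b$ --- replacing each factor $(x-\alpha)$ by $(\alpha-x)\in T$, writing $p=c(-1)^MP$ with $P\in T$, and evaluating at an interior point to force $c(-1)^M>0$ --- correctly deals with the one feature that distinguishes the compact case, namely that the leading coefficient need not be positive. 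Two small points you should state explicitly to make the step airtight: an interior evaluation point $x_0$ with $P(x_0)\neq 0$ exists because $P$ is a nonzero polynomial and so has finitely many zeros, and at such a point in fact $P(x_0)>0$ since every factor of $P$ (shifted linear factors, even powers, and the quadratic factors) is strictly positive there; this is what upgrades $c(-1)^M\geq 0$ to $c(-1)^M>0$, so that this constant is a square and $p$ is a product of two elements of $T$.
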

\begin{proof} 
It suffices to show that the sets on the left are subsets of the sets on the right.

(i): Let $p\in {\Pos}(\R)$, $p\neq0$. Since $p(x)\geq 0$ on $\R$,   $a>0$ and the numbers $n_1,\dots,n_r$ in (\ref{pfactors2}) are even. Hence $p$ is a product of squares and of sums of two squares. Therefore,  $p\in \sum \R[x]^2$.

(ii): Let $p\in {\Pos}([0,+\infty)), p\neq 0,$ and consider  (\ref{pfactors2}).
Set $Q:=\sum \R[x]^2+x\sum \R[x]^2$.  For\, $f_1,f_2,g_1,g_2\in \sum \R[x]^2$, we have 
\begin{align*}
 (f_1+ xg_1)(f_2+x g_2)=(f_1f_2 +x^2g_1g_2)+ x(f_1g_2+g_1f_2) \in Q.
\end{align*}
 Hence $Q$ is closed under multiplication. Therefore, it suffices to show that all factors in (\ref{pfactors2}) are in $Q$.
For products of  quadratic factors and   even powers of linear factors this is  obvious. It remains  to treat the constant  $a$ and  linear factors $x-\alpha_i$ with  real zeros $\alpha_i$  of odd multiplicities. Since $p(x)\geq 0$ on $[0,+\infty)$, $a>0$ by letting $x\to +\infty$  and $\alpha_i\leq 0$,  because $p(x)$ changes its sign in a neighborhood of a zero with odd multiplicity.  Hence $a\in Q$ and\, $x-\alpha_i=(-\alpha_i+ x)\in\sum \R[x]^2 +x\sum \R[x]^2=Q.$

(iii) follows by a similar, but slightly longer reasoning 
 [MP, Proposition 3.3].
$\hfill \qed$ \end{proof}

The next result, called \textit{Markov--Lukacs theorem}, sharpens  Proposition \ref{positivepolyr}(iii) by replacing sums of squares by single squares and adding degree requirements. Its proof is much more involved than that of Proposition \ref{positivepolyr}(iii). However for the solution of the truncated moment problem on $[a,b]$ in Lecture 3 it suffices to have the weaker statement with sum of squares instead of single squares which is easy to prove [MP, Proposition 3.2].
\begin{proposition}\label{markov} For $a,b\in \R$, $a<b$, and $n\in \N_0$, 
\begin{align*}
&{\Pos}([a,b])_{2n}=\{ p_n(x)^2+ (b{-}x)(a{-}x)q_{n-1}(x)^2: p_n\in \R[x]_n, q_{n-1}{\in} \R[x]_{n-1} \},\\
&{\Pos}([a,b])_{2n+1}=\{(b-x) p_n(x)^2+ (a-x)q_n(x)^2: p_n, q_n\in \R[x]_n\, \}.
\end{align*} 
\end{proposition}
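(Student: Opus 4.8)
The inclusions $\supseteq$ are immediate and I would dispose of them first: the multipliers $(x-a)(b-x)$, $(x-a)$ and $(b-x)$ are nonnegative on $[a,b]$, and the stated degree bounds can be read off each summand. The work is in $\subseteq$. Writing $g:=(x-a)(b-x)$, I will call a polynomial $A^2+gB^2$ with $\deg A\le n,\ \deg B\le n-1$ an \emph{even-form of order $n$}, and a polynomial $(x-a)C^2+(b-x)D^2$ with $\deg C,\deg D\le n$ an \emph{odd-form of order $n$}. The whole plan rests on two Brahmagupta-type identities,
\[(A_1^2 + gB_1^2)(A_2^2 + gB_2^2) = (A_1A_2 + gB_1B_2)^2 + g\,(A_1B_2 - A_2B_1)^2,\]
\[\bigl((x-a)C_1^2 + (b-x)D_1^2\bigr)\bigl((x-a)C_2^2 + (b-x)D_2^2\bigr) = \bigl((x-a)C_1C_2 + (b-x)D_1D_2\bigr)^2 + g\,(C_1D_2 - C_2D_1)^2,\]
the second using $(x-a)^2(b-x)=g\,(x-a)$ and $(b-x)^2(x-a)=g\,(b-x)$. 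Reading off degrees, the first shows that even-form orders \emph{add} under multiplication, and the second that a product of two odd-forms of orders $n_1,n_2$ is even-form of order $n_1+n_2+1$; the single squares on the right are exactly what survive taking products.

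I would prove the even statement $\Pos([a,b])_{2n}=\{\text{even-forms of order }n\}$ by induction on $n$. For $n=0$ a nonnegative constant is a square. For $p\ge 0$ on $[a,b]$ with $\deg p=2n\ge 2$, the idea is to split off a degree-$2$ factor $h$ that is itself even-form of order $1$ and leaves $p/h\ge 0$ on $[a,b]$ of degree $2n-2$; the inductive hypothesis together with the first identity then present $p=h\cdot(p/h)$ as an even-form of order $n$. Three cases cover the choice of $h$. If $p$ has a non-real root $u\pm iv$, take $h=(x-u)^2+v^2$ and set $H_t:=h-t\,g$; its discriminant $\delta(t)$ in $x$ satisfies $\delta(0)=-4v^2<0$ while $\delta(t)\to+\infty$ as $t\to+\infty$ (the coefficient of $t^2$ is $(a-b)^2>0$), so by the intermediate value theorem $\delta(t^\ast)=0$ for some $t^\ast>0$; then $H_{t^\ast}$, with positive leading coefficient $1+t^\ast$ and a double root, is a perfect square $A^2$, whence $h=A^2+t^\ast g$. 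If $p$ has a real root of multiplicity $\ge 2$, take $h=(x-r)^2$. Otherwise all roots are real and simple; an interior simple root is impossible (it forces a sign change), so at least two roots lie at the endpoints or outside $[a,b]$, and I take $h$ to be the product of the two corresponding linear factors, each normalized by sign to be $\ge 0$ on $[a,b]$; being a product of two odd-forms of order $0$, such an $h$ is even-form of order $1$ by the second identity. In every case a one-line sign check confirms $p/h\ge 0$ on $[a,b]$.

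The odd statement would then follow from the even one by a divisibility trick, with no further induction. If $p\ge 0$ on $[a,b]$ with $\deg p=2n+1$, then $(x-a)p\ge 0$ on $[a,b]$ has even degree $2(n+1)$, so by the even case $(x-a)p=A^2+gB^2$ with $\deg A\le n+1$, $\deg B\le n$. Since $(x-a)$ divides the left-hand side and divides $gB^2=(x-a)(b-x)B^2$, it divides $A^2$ and hence $A$; writing $A=(x-a)A_1$ with $\deg A_1\le n$ and cancelling one factor $(x-a)$ gives $p=(x-a)A_1^2+(b-x)B^2$, which is odd-form of order $n$.

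The crux, and the step I expect to be most delicate, is not any single computation but the simultaneous control of three things: single squares, sharp degrees, and nonnegativity of the peeled pieces. The two identities are precisely what keep single squares alive through multiplication and make the orders tally, while the discriminant/intermediate-value argument is what moves the leftover constant square $v^2$ of a non-real atom into the $gB^2$ channel. Once these are secured, the degree count is routine and the odd case is a one-line consequence of the even one. I would budget the most care for verifying, in each peeling case, that both $h$ and $p/h$ are nonnegative on $[a,b]$ and that the order parameters add up correctly.
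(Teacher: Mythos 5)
First, a remark on the statement itself: as printed, the multipliers $(b-x)(a-x)$ and $(a-x)$ are nonpositive on $[a,b]$, so the displayed right-hand sides would contain polynomials that are negative on $(a,b)$; the intended (classical Markov--Lukacs) statement has $(x-a)(b-x)$ and $(x-a)$, and that is what you prove. Since the paper offers no proof of its own (it defers to [MP, Corollary 3.24]), your argument must stand alone, and its core does: both Brahmagupta-type identities are correct, the degree bookkeeping under multiplication checks out, the discriminant/intermediate-value argument is valid (the coefficient of $t^2$ in $\delta(t)$ is $(a-b)^2>0$, and a real quadratic with vanishing discriminant and positive leading coefficient is a perfect square), the three peeling cases are exhaustive with the right sign checks, and the divisibility reduction of the odd case to the even case is sound.

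There is, however, a genuine gap of coverage. In this paper $\R[x]_m$ means $\{p:\deg p\le m\}$, so $\Pos([a,b])_{2n}$ contains polynomials of odd degree $\le 2n-1$, and $\Pos([a,b])_{2n+1}$ contains polynomials of even degree $\le 2n$. Your induction treats only $\deg p=2n$ exactly (base case: constants; even degrees below $2n$ are fine by padding the inductive hypothesis), and your odd-case reduction treats only $\deg p=2n+1$ exactly; opposite-parity elements are never reached. Concretely, $x-a\in\Pos([a,b])_2$ and $1\in\Pos([a,b])_1$ lie outside the written argument --- for the latter, $(x-a)\cdot 1$ has odd degree $1$, so the even case as you proved it does not apply, and this defect propagates into the odd statement, whose proof needs the even statement for odd-degree inputs. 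The repair is cheap and uses only your own tools: every linear $\ell\ge 0$ on $[a,b]$, and every nonnegative constant, is an odd-form of order $0$, since $\ell=(b-a)^{-1}\bigl[\ell(b)(x-a)+\ell(a)(b-x)\bigr]$ with $\ell(a),\ell(b)\ge 0$; hence $\ell=\ell\cdot 1$ is an even-form of order $1$ by your second identity. Add this as a second base case and run the peeling (which works verbatim in odd degrees $\ge 3$) as a strong induction on the degree; then every element of $\Pos([a,b])_{2n}$, of either parity, is covered, and the odd statement follows exactly as you describe.
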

\begin{proof}
[MP, Corollary 3.24]. $\hfill \qed$\end{proof}
\section{Moment problems on intervals}

In this section we combine Haviland's theorem with the description of positive polynomials on intervals and derive the solutions for the classical one-dimensional moment problems.

Let $s=(s_n)_{n\in \N_0}$ be a real sequence. The \textit{Riesz  functional} $L_s$ is the linear functional  on $\R[x]$ defined  by  $L_s(x^n)=s_n$, $n\in \N_0$.
The sequence $s$  is called \textit{positive semidefinite} if  for all  $\xi_0,\xi_1,\dots,\xi_n\in \R$ and $n\in \N$ we have
\begin{align}\label{quadraticform}
\sum_{k,l=0}^n s_{k+l}\xi_k\,\xi_l \geq 0.
\end{align}
The set of positive semidefinite  sequences  is denoted by $\cP(\N_0)$. 

A linear functional $L$ on $\R[x]$ is called \textit{positive} if $L(p^2)\geq 0$ for all $p\in R[x]$.


Further,  we define the \textit{Hankel matrix}\index{Hankel matrix} $H_n(s)$ and the \textit{Hankel determinant}\index{Hankel determinant} $D_n(s)$ by
\begin{gather}\label{hankelmatrixt1}
 \quad H_n(s)=
\left(
\begin{array}{lllllll}
s_{0} & s_{1} &  s_{2}  & \dots &s_{n} \\
s_{1} & s_{2} & s_3  & \dots  & s_{n+1} \\
s_2 & s_3 & s_4 &   \dots & s_{n+2} \\
\dots & \dots & \dots   & \dots &\dots \\
s_n & s_{n+1}& s_{n+2}& \dots & s_{2n}
\end{array}\right),\quad D_n(s)=\det H_n(s).
\end{gather}

 The following result is \textbf{Hamburger's theorem}\index{Theorem! Hamburger}.
\begin{theorem}\label{solutionhmp} (\textit{Solution of the Hamburger moment problem})\index{Hamburger moment problem}\index{Hamburger moment problem! existence of a solution}\index{Moment problem! for the real line|see{Hamburger moment problem}}\\
For any real sequence  $s=(s_n)_{n\in \N_0}$ the following statements are equivalent:
\begin{itemize}
\item[\em (i)]~ $s$ is a Hamburger moment sequence, that is, there is a Radon measure $\mu$ on $\R$ such that
\begin{align}\label{intmoments}
s_n=\int_\R x^n d\mu(x)\quad \text{for}\quad 
n\in \N_0.
\end{align}
\item[\em (ii)]~ $s\in \cP(\N_0),$ that is, the sequence $s$ is positive semidefinite.
\item[\em (iii)]~ All Hankel matrices $H_n(s)$, $n\in \N_0$,  are positive semidefinite.
\item[\em (iv)]~ $L_s$ is a positive linear functional on $\R[x]$, that is, $L_s(p^2)\geq 0$ for $p\in \R[x].$
\end{itemize}
\end{theorem}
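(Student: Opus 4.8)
The plan is to route all four conditions through the positivity of $L_s$, treating (ii), (iii) and (iv) as essentially one statement in three costumes and reserving the real work for the passage to an integral representation. First I would record the elementary identity that makes (ii), (iii) and (iv) interchangeable: for $p(x)=\sum_{k=0}^n \xi_k x^k$ one has $L_s(p^2)=\sum_{k,l=0}^n s_{k+l}\xi_k\xi_l$, which is simultaneously the quadratic form appearing in the definition (\ref{quadraticform}) of positive semidefiniteness and the quadratic form associated to the Hankel matrix $H_n(s)$, whose $(k,l)$-entry is $s_{k+l}$. Reading this one identity in three ways yields (ii)$\Leftrightarrow$(iii)$\Leftrightarrow$(iv) with no further argument, since letting $\xi$ range over all coefficient vectors and $n$ over $\N_0$ simultaneously exhausts all squares $p^2$ and all Hankel matrices $H_n(s)$.

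Next, (i)$\Rightarrow$(iv) is immediate: if $s_n=\int_\R x^n\,d\mu$ with $\mu\geq 0$, then by linearity $L_s(p^2)=\int_\R p(x)^2\,d\mu(x)\geq 0$, the integrand being nonnegative. The only direction carrying content is therefore (iv)$\Rightarrow$(i), and here I would invoke the two major results already available. By Haviland's theorem (Theorem \ref{haviland}) with $\cK=\R$, the functional $L_s$ is a moment functional precisely when $L_s(f)\geq 0$ for every $f\in\Pos(\R)$; and by Proposition \ref{positivepolyr}(i), $\Pos(\R)=\sum\R[x]^2$. Combining the two: given (iv), any $f\in\Pos(\R)$ can be written as $f=\sum_i p_i^2$, whence $L_s(f)=\sum_i L_s(p_i^2)\geq 0$, so $L_s$ is $\Pos(\R)$-positive and Haviland's theorem produces the representing Radon measure $\mu$ on $\R$. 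This closes the cycle.

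The conceptual obstacle — and the step where dimension one is doing all the heavy lifting — is the gap between positivity on squares (condition (iv), which tests $L_s$ only against $\sum\R[x]^2$) and positivity on the full cone $\Pos(\R)$ demanded by Haviland's theorem. These two requirements coincide exactly because every nonnegative univariate polynomial is a sum of squares, a fact resting on the factorization (\ref{pfactors2}) via the fundamental theorem of algebra. In more than one variable the inclusion $\sum\R_d[\ux]^2\subsetneq\Pos(\R^d)$ is strict, so positivity of $L_s$ on squares would be genuinely weaker than being a moment functional; the elegance of Hamburger's theorem is that no such weakness exists on the line. I would therefore expect essentially no computational difficulty in writing the proof out: once Theorem \ref{haviland} and Proposition \ref{positivepolyr}(i) are granted, every remaining step is a one-line deduction.
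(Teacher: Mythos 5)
Your proof is correct and follows essentially the same route as the paper: the equivalences (ii)$\Leftrightarrow$(iii)$\Leftrightarrow$(iv) via the identity $L_s(p^2)=\sum_{k,l}s_{k+l}\xi_k\xi_l$, the trivial implication from (i), and the main step (iv)$\Rightarrow$(i) by combining Haviland's Theorem \ref{haviland} with Proposition \ref{positivepolyr}(i). The only cosmetic difference is that the paper closes the cycle via (i)$\Rightarrow$(ii) rather than (i)$\Rightarrow$(iv), which is the same computation in a different costume.
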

\begin{proof}
The main implication (iv)$\to$(i) follows from Haviland's Theorem \ref{haviland} combined with Proposition \ref{positivepolyr}(i).
A straightforward computation shows that  (i) implies (ii). Since the Hankel matrix $H_n(s)$ is just the  matrix associated with the quadratic form   (\ref{quadraticform}), (ii) and (iii) are equivalent.  For $p(x)=\sum_{j=0}^n a_n x^k\in \R[x]$ we compute
\[
L_s(p^2)=\sum_{k,l=0}^n a_ka_l L_s(x^{k+l})=\sum_{k,l=0}^n a_ka_l s_{k+l}.
\]
Hence  (ii) and (iv) are equivalent.
$\hfill \qed$ \end{proof}

The next proposition answers the question of when $s$ has a representing measure of \textit{finite} support. 
\begin{proposition}\label{mufinitesupp}
For  a positive semidefinite sequence $s$ the following are equivalent:
\begin{itemize}
\item[\em (i)]~ There is a number $n\in \N_0$ such that 
\begin{align}\label{hankelsemidef}
D_0(s)>0, \dots,D_{n-1}(s)>0 \quad \text{and} ~~~ D_k(s)=0\quad \text{for}~~ k\geq n.
\end{align}
\item[\em (ii)]~ $s$ is a moment sequence with a representing measure $\mu$ with  support of $n$ points.
\end{itemize}
\end{proposition}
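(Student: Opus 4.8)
The plan is to translate the determinantal conditions into statements about the positive semidefinite bilinear form attached to $L_s$, and then to read off the atoms of $\mu$ as the spectrum of the multiplication-by-$x$ operator on a finite-dimensional quotient algebra. The easy direction (ii)$\to$(i) I would do by a Vandermonde factorization: if $\mu=\sum_{j=1}^n m_j\delta_{x_j}$ with distinct reals $x_j$ and weights $m_j>0$, then $s_{k+l}=\sum_j m_j x_j^k x_j^l$, so $H_N(s)=V^\top D V$, where $V=(x_j^k)$ is the $n\times(N{+}1)$ Vandermonde-type matrix and $D=\mathrm{diag}(m_1,\dots,m_n)$. For $N\ge n-1$ the rows of $V$ are linearly independent, whence $\rank H_N(s)=n$; thus $H_{n-1}(s)$ is positive definite (so all its leading minors $D_0(s),\dots,D_{n-1}(s)$ are positive), while $D_k(s)=0$ for $k\ge n$.

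For the main direction (i)$\to$(ii) I would introduce the form $\langle p,q\rangle:=L_s(pq)$ on $\R[x]$, which is positive semidefinite because $s\in\cP(\N_0)$, together with its radical $\cN=\{p:\langle p,p\rangle=0\}$. Since multiplication by $x$ is symmetric, $\langle xp,q\rangle=\langle p,xq\rangle$, a Cauchy--Schwarz argument shows that $\cN$ is an ideal of $\R[x]$. Now $D_{n-1}(s)>0$ says the form is positive definite on $\R[x]_{n-1}$, while $D_n(s)=0$ furnishes a nonzero $a=(a_0,\dots,a_n)\in\ker H_n(s)$; the polynomial $p=\sum_l a_l x^l$ satisfies $\langle p,x^j\rangle=L_s(x^jp)=0$ for $j=0,\dots,n$ and has degree exactly $n$ (otherwise $p$ would be a nonzero null vector of the positive definite form on $\R[x]_{n-1}$). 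Expanding $\langle p,p\rangle=\sum_l a_l\langle x^l,p\rangle=0$ and invoking Cauchy--Schwarz yields $p\in\cN$, so $\cN=(p)$ and $A:=\R[x]/\cN$ is an $n$-dimensional commutative $\R$-algebra carrying the positive definite inner product induced by $\langle\cdot,\cdot\rangle$. (This also shows that the single relation $D_{n-1}(s)>0$, $D_n(s)=0$ already forces $D_k(s)=0$ for all $k\ge n$.)

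The crux is to prove that $p$ has $n$ distinct real roots and that the resulting interpolation weights are strictly positive. For the roots, I would observe that multiplication by $x$ descends to a self-adjoint operator on the $n$-dimensional real inner product space $A$; by the spectral theorem it is diagonalizable with real eigenvalues, and since its minimal polynomial is the monic generator $p$, the polynomial $p$ has $n$ distinct real zeros $x_1<\cdots<x_n$. To build the measure I would take the idempotents $e_j\in A$ determined by the Lagrange interpolants with $e_j(x_i)=\delta_{ij}$, set $m_j:=L_s(e_j)$, and note that $e_j^2\equiv e_j\pmod{p}$ gives $m_j=L_s(e_j^2)=\langle e_j,e_j\rangle>0$ by definiteness on $A$. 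Putting $\mu=\sum_{j=1}^n m_j\delta_{x_j}$, both $L_s$ and integration against $\mu$ factor through $A$ (the latter because $p(x_j)=0$ makes $\int f\,d\mu$ depend only on $f\bmod p$) and they agree on the basis $\{e_j\}$; hence $s_k=L_s(x^k)=\int x^k\,d\mu$ for all $k$, and $\mu$ is supported on exactly the $n$ points $x_j$.

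I expect the two positivity/reality statements to be the real content: that $p$ has simple real roots, which I handle through self-adjointness of the multiplication operator rather than through direct determinant manipulation, and that the weights $m_j=\langle e_j,e_j\rangle$ are strictly positive. Everything else—extending the recurrence $s_{k+n}=-\sum_{l<n}a_l s_{k+l}$ to all indices and matching the two functionals—is then automatic once $p\in\cN$ and $\cN=(p)$.
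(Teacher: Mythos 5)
The paper states Proposition \ref{mufinitesupp} without any proof (it is cited from [MP] and followed immediately by a remark), so there is no in-text argument to compare against; judged on its own, your proof is correct and complete. Both directions check out: the Vandermonde factorization $H_N(s)=V^{\top}DV$ gives (ii)$\to$(i) immediately (positive definiteness of $H_{n-1}(s)$, hence positivity of its leading minors $D_0,\dots,D_{n-1}$, and rank collapse for $k\ge n$), and your (i)$\to$(ii) is the finite-dimensional GNS/spectral-theorem argument: $\cN$ is an ideal by Cauchy--Schwarz, the kernel vector of $H_n(s)$ yields a degree-$n$ polynomial $p$ generating $\cN$, multiplication by $x$ is a self-adjoint operator on the $n$-dimensional inner-product space $\R[x]/(p)$ whose minimal polynomial is $p$, so $p$ has $n$ simple real roots, and the Lagrange idempotents give strictly positive weights $m_j=\langle e_j,e_j\rangle$. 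This is exactly the one-variable instance of the strategy the paper only sketches for the finite-rank full moment problem (Theorem \ref{fullmpfiniterank}: ``GNS representation on a finite-dimensional Hilbert space, spectral measure of the self-adjoint multiplication operators''), rather than a classical determinant/recurrence or orthogonal-polynomial manipulation; what it buys you is that reality and simplicity of the roots and positivity of the weights come for free from self-adjointness and positive definiteness, with no Hankel-determinant identities needed. Two small points you should make explicit in a written version: the step $\cN=(p)$ requires the division argument (for $q\in\cN$ write $q=fp+r$ with $\deg r<n$; then $r\in\cN\cap\R[x]_{n-1}=\{0\}$ by definiteness), which you use but do not state; and the degenerate case $n=0$ (where $s_0=0$ forces $s=0$ by Cauchy--Schwarz and $\mu$ is the zero measure) deserves a sentence. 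Your parenthetical observation that $D_{n-1}(s)>0$, $D_n(s)=0$ together with positive semidefiniteness already force $D_k(s)=0$ for all $k\ge n$ is correct and is a nice by-product of identifying $\cN=(p)$.
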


\begin{remark} It was  recently proved by Berg and Szwarc (2015)
that the assumption ``$s$ is positive semidefinite" in Proposition \ref{mufinitesupp} can be omitted.  $\hfill \circ$ 
\end{remark}

To solve moment problems on intervals it is convenient to have  the shifted sequence $Es$ defined by \begin{align*}(Es)_n=s_{n+1},\quad n\in \N_0.\end{align*} Clearly, $L_{Es}(p(x))=L_s(x p(x))$ for $p\in \R[x]$.

 The next main  result is \textbf{Stieltjes' theorem}\index{Theorem! Stieltjes}.\index{Moment problem! for the half-line|see{Stieltjes moment problem}} 
\begin{theorem}\label{solstietjesmp} (\textit{Solution of the Stieltjes moment problem})\index{Stieltjes moment problem}\index{Stieltjes moment problem! existence of a solution}\\
For any real sequence  $s$ the following statements are equivalent:
\begin{itemize}
\item[\em (i)]~ $s$ is a Stieltjes moment sequence, that is, there is a Radon measure $\mu$ on $[0,+\infty)$ such that 
\begin{align}\label{sintmoments}
s_n=\int_0^\infty x^n d\mu(x)\quad \text{for}\quad 
n\in \N_0.
\end{align}
\item[\em (ii)]~ $s\in \cP(\N_0)$ and $Es\in \cP(\N_0)$.
\item[\em (iii)]~ All Hankel matrices $H_n(s)$ and $H_n(Es)$, $n\in \N_0$,  are positive semidefinite.
\item[\em (iv)]~  $L_s(p^2)\geq 0$ and $L_s(xq^2)\geq 0$ for all $p,q\in \R[x].$
\end{itemize}
\end{theorem}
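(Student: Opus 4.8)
The plan is to follow exactly the template of Hamburger's Theorem~\ref{solutionhmp}, treating the two sequences $s$ and $Es$ in parallel and using Proposition~\ref{positivepolyr}(ii) in place of Proposition~\ref{positivepolyr}(i). The equivalences (ii)$\leftrightarrow$(iii)$\leftrightarrow$(iv) are purely a matter of bookkeeping with quadratic forms. Writing $p(x)=\sum_k a_k x^k$, one has $L_s(p^2)=\sum_{k,l} a_k a_l s_{k+l}$, which is the quadratic form whose matrix is $H_n(s)$; thus $s\in\cP(\N_0)$, the positive semidefiniteness of all $H_n(s)$, and $L_s(p^2)\ge 0$ for all $p$ all say the same thing. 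For the shifted sequence I would use the identity $L_{Es}(r)=L_s(xr)$ recorded just before the statement: taking $r=q^2$ gives $L_s(xq^2)=L_{Es}(q^2)=\sum_{k,l} b_k b_l\, s_{k+l+1}$, which is the quadratic form of $H_n(Es)$. Hence $Es\in\cP(\N_0)$, positive semidefiniteness of all $H_n(Es)$, and $L_s(xq^2)\ge 0$ for all $q$ are again one and the same condition, and the three-way equivalence follows.

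The implication (i)$\to$(ii) is immediate: if $\mu$ is supported on $[0,+\infty)$, then both $p(x)^2$ and $x\,q(x)^2$ are nonnegative there, so $L_s(p^2)=\int p^2\,d\mu\ge 0$ and $L_s(xq^2)=\int x q^2\,d\mu\ge 0$, which is (iv), equivalently (ii).

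The substantive step is the main implication (iv)$\to$(i), and here I would invoke Haviland's Theorem~\ref{haviland} with $\cK=[0,+\infty)$. It suffices to check that $L_s$ is nonnegative on $\Pos([0,+\infty))$. By Proposition~\ref{positivepolyr}(ii) every $f\in\Pos([0,+\infty))$ has the form $f=\sigma_0+x\sigma_1$ with $\sigma_0=\sum_i p_i^2$ and $\sigma_1=\sum_j q_j^2$ sums of squares. Then
\begin{align*}
L_s(f)=\sum_i L_s(p_i^2)+\sum_j L_s(x q_j^2)\ge 0
\end{align*}
by the two inequalities in (iv). Haviland's theorem then produces a Radon measure $\mu$ on $\R$ supported on $[0,+\infty)$ with $L_s(p)=\int p\,d\mu$ for all $p\in\R[x]$, and evaluating at $p=x^n$ gives (i).

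The only genuinely nontrivial ingredient is the sum-of-squares description of $\Pos([0,+\infty))$ in Proposition~\ref{positivepolyr}(ii); once that and Haviland's theorem are in hand there is no obstacle left, and the argument is a direct transcription of the Hamburger proof, with the single extra generator $x$ in the representation $f=\sigma_0+x\sigma_1$ accounting for the half-line constraint.
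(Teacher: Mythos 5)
Your proof is correct and is exactly the argument the paper intends: it states that the proof is "almost the same" as that of Hamburger's Theorem~\ref{solutionhmp}, with Proposition~\ref{positivepolyr}(ii) replacing Proposition~\ref{positivepolyr}(i), which is precisely your route via Haviland's theorem for $\cK=[0,+\infty)$ and the decomposition $f=\sigma_0+x\sigma_1$. The bookkeeping identifications of (ii), (iii), (iv) via the quadratic forms of $H_n(s)$ and $H_n(Es)$ also match the paper's treatment.
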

\begin{proof}
The proof is almost the same as the proof of Theorem \ref{solutionhmp}; instead of Proposition \ref{positivepolyr}(i) we use Proposition \ref{positivepolyr}(ii).
$\hfill \qed$ \end{proof} 

Combining  Haviland's theorem with  Proposition \ref{positivepolyr}(iii)  yields the following.
\begin{theorem}\label{hausdirffsolmp} (\textit{Solution of the  moment problem for a compact interval})\index{Moment problem! for compact intervals}
\\
Let $a,b \in \R$, $a<b$. For  a real sequence $s$ the following  are equivalent: 
\begin{itemize}
\item[\em (i)]~ $s$ is an $[a,b]$-moment sequence.
\item[\em (ii)]~ $s\in \cP(\N_0)$\, and\, $((a+b)Es -E(Es)-ab\, s)\in \cP(\N_0)$.
\item[\em (iii)]~ $L_s(p^2)\geq 0$ and $L_s((b-x)(x-a)q^2)\geq 0$\, for all $p,q\in \R[x]$.
\end{itemize}
\end{theorem}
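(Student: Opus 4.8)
The plan is to follow the pattern of the proofs of Hamburger's Theorem \ref{solutionhmp} and Stieltjes' Theorem \ref{solstietjesmp}, replacing Proposition \ref{positivepolyr}(i) (resp.\ (ii)) by Proposition \ref{positivepolyr}(iii), which describes $\Pos([a,b])$. The three statements split naturally: (ii) and (iii) are two encodings of the same pair of positivity conditions, while the substantive content is the passage from these conditions to an integral representation, which is handled by Haviland's Theorem \ref{haviland}.

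First I would prove the equivalence (ii)$\leftrightarrow$(iii). As in Hamburger's theorem, a sequence lies in $\cP(\N_0)$ precisely when the associated Riesz functional is nonnegative on squares, so $s\in\cP(\N_0)$ is the same as $L_s(p^2)\geq 0$ for all $p\in\R[x]$. The only new point is to identify the second sequence. Using $L_{Es}(p)=L_s(xp)$ twice, the sequence $t:=(a+b)Es-E(Es)-ab\,s$ satisfies
\begin{align*}
L_t(p)=L_s\big((a+b)x\,p-x^2p-ab\,p\big)=L_s\big((b-x)(x-a)\,p\big),
\end{align*}
since $(a+b)x-x^2-ab=-(x-a)(x-b)=(b-x)(x-a)$. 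Hence $t\in\cP(\N_0)$, i.e.\ $L_t(q^2)\geq 0$ for all $q$, is exactly $L_s((b-x)(x-a)q^2)\geq 0$ for all $q\in\R[x]$. This gives (ii)$\leftrightarrow$(iii).

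Next comes the main implication (iii)$\to$(i). Here I would invoke Proposition \ref{positivepolyr}(iii): every $F\in\Pos([a,b])$ can be written as $F=f+(b-x)(x-a)g$ with $f,g\in\sum\R[x]^2$. Expanding $f$ and $g$ as finite sums of squares and applying the two inequalities of (iii) termwise yields $L_s(F)\geq 0$. Thus $L_s$ is nonnegative on all of $\Pos([a,b])$, and Haviland's Theorem \ref{haviland} (applied with $\cK=[a,b]$) produces a Radon measure $\mu$ supported on $[a,b]$ representing $L_s$; that is, $s$ is an $[a,b]$-moment sequence.

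Finally, (i)$\to$(iii) is immediate: if $\mu$ is supported on $[a,b]$ then $L_s(p^2)=\int p^2\,d\mu\geq 0$, and since $(b-x)(x-a)\geq 0$ throughout $[a,b]$, also $L_s((b-x)(x-a)q^2)=\int (b-x)(x-a)q^2\,d\mu\geq 0$. I expect no genuine obstacle: all the difficulty is concentrated in Proposition \ref{positivepolyr}(iii), which is already available, and in Haviland's theorem; the remaining steps are the same quadratic-form bookkeeping and sign check used for the real line and the half-line.
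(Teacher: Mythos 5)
Your proof is correct and follows exactly the route the paper intends: the paper derives this theorem by "combining Haviland's theorem with Proposition \ref{positivepolyr}(iii)," mirroring the proofs of Theorems \ref{solutionhmp} and \ref{solstietjesmp}, which is precisely your argument. The identification $L_t(p)=L_s((b-x)(x-a)p)$ for $t=(a+b)Es-E(Es)-ab\,s$ and the termwise positivity bookkeeping are the same quadratic-form translation the paper uses in the Hamburger case.
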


 Finally, we   state  two  solvability criteria of  moment problems that are not based on  squares.
 The first result is easily derived from  \textit{Bernstein's theorem}.
 It says that each polynomial $p\in \R[x]$ such that $p(x)>0$ on $[-1,1]$ can be written as
\begin{align*}
 p(x)
 =\sum_{k,l=0}^n \alpha_{kl}(1-x)^k(1+x)^l, \quad \text{ where }~~ \alpha_{kl}\geq 0.
\end{align*}

\begin{theorem}\label{applbernstein}
 Let $s=(s_n)_{n\in \N_0}$ be a real sequence and let $L_s$ be its Riesz functional on $\R[x]$. Then $s$ is a\, $[-1,1]$-moment sequence if and only if 
\begin{align}\label{bernsteincondition}
L_s((1-x)^k(1+x)^l)\geq 0\quad \text{for~ all}\quad k,l\in \N_0.
\end{align}
\end{theorem}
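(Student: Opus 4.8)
The plan is to deduce the substantive implication from Haviland's Theorem \ref{haviland} and to argue the converse by direct integration. Since $[-1,1]$ is a closed subset of $\R$, Theorem \ref{haviland} applies with $\cK=[-1,1]$ and tells us that $s$ is a $[-1,1]$-moment sequence if and only if $L_s(p)\geq 0$ for every $p\in\Pos([-1,1])$. Thus the whole statement reduces to showing that condition (\ref{bernsteincondition}) is equivalent to $\Pos([-1,1])$-positivity of $L_s$. The easy direction is immediate: if $s$ is a $[-1,1]$-moment sequence with representing measure $\mu$ supported on $[-1,1]$, then for $x\in[-1,1]$ we have $1-x\geq 0$ and $1+x\geq 0$, so $(1-x)^k(1+x)^l\geq 0$ on $[-1,1]$, and integrating against $\mu$ yields $L_s((1-x)^k(1+x)^l)\geq 0$ for all $k,l\in\N_0$, which is (\ref{bernsteincondition}).

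For the converse I would assume (\ref{bernsteincondition}) and, by the reduction above, prove $L_s(p)\geq 0$ for every $p\in\Pos([-1,1])$. Here Bernstein's theorem is the tool, but it represents only polynomials that are \emph{strictly} positive on $[-1,1]$, so a perturbation is needed. Given $p\in\Pos([-1,1])$ and $\varepsilon>0$, the polynomial $p+\varepsilon$ is strictly positive on $[-1,1]$, so Bernstein's theorem furnishes coefficients $\alpha_{kl}\geq 0$ with $p+\varepsilon=\sum_{k,l}\alpha_{kl}(1-x)^k(1+x)^l$. Applying $L_s$ and using (\ref{bernsteincondition}) termwise gives $L_s(p+\varepsilon)=\sum_{k,l}\alpha_{kl}\,L_s((1-x)^k(1+x)^l)\geq 0$. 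Since $L_s(p+\varepsilon)=L_s(p)+\varepsilon\,L_s(1)$ and $L_s(1)=L_s((1-x)^0(1+x)^0)\geq 0$ by the $k=l=0$ instance of (\ref{bernsteincondition}), we obtain $L_s(p)\geq -\varepsilon\,L_s(1)$. Letting $\varepsilon\to +0$ yields $L_s(p)\geq 0$, as required, and Theorem \ref{haviland} then delivers the representing measure on $[-1,1]$.

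The one point demanding care, and the main obstacle, is precisely this passage from strict positivity to mere nonnegativity, since Bernstein's representation is not directly available for a polynomial that vanishes somewhere on $[-1,1]$ (e.g.\ at an endpoint). The $\varepsilon$-shift bridges the gap, and the argument closes because $L_s(1)$ is a fixed finite number independent of $\varepsilon$. Everything else is a routine use of linearity of $L_s$ together with Haviland's theorem.
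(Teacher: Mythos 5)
Your proof is correct and is exactly the derivation the paper has in mind: it combines Haviland's Theorem \ref{haviland} for the closed set $[-1,1]$ with Bernstein's representation of strictly positive polynomials, and your $\varepsilon$-shift correctly bridges the gap from strict positivity to the nonnegative polynomials in $\Pos([-1,1])$ that Haviland's criterion requires. The paper leaves these details to the reader ("easily derived from Bernstein's theorem"), and your write-up supplies them without deviation from that route.
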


 The next theorem is \textbf{Hausdorff's theorem}.
 It can be obtained by a simple computation from Theorem \ref{applbernstein} using the bijection $x\mapsto \frac{1}{2}(1+x)$ of $[-1,1]$ onto $[0,1]$. 
\begin{theorem}\label{completleymonetone}
A real sequence $s$  
is a $[0,1]$-moment sequence if and only if
\begin{align}\label{complmonotone}
((I-E)^n s)_k \equiv \sum_{j=0}^n (-1)^j\binom{n}{j} s_{k+j}\geq 0\quad \text{for}\quad k,n\in \N_0.
\end{align}
\end{theorem}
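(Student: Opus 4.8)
The plan is to transport the Bernstein criterion of Theorem~\ref{applbernstein} from $[-1,1]$ to $[0,1]$ through the affine bijection $\psi(x)=\tfrac12(1+x)$, whose inverse is $\psi^{-1}(y)=2y-1$. To keep the two intervals apart I would write $y$ for the variable ranging over $[0,1]$ and $x$ for the variable ranging over $[-1,1]$, regarding $L_s$ as the Riesz functional on $\R[y]$ with $L_s(y^n)=s_n$. First I would introduce the companion sequence $t=(t_n)_{n\in\N_0}$ given by $t_n:=L_s\big((2y-1)^n\big)$, whose Riesz functional $L_t$ on $\R[x]$ then satisfies $L_t(q)=L_s\big(q(2y-1)\big)$ for all $q\in\R[x]$. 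Equivalently, substituting $x=2y-1$ into $\tfrac{1+x}{2}$ returns $y$, so $L_s(p)=L_t\big(p(\tfrac{1+x}{2})\big)$ for every $p\in\R[y]$; this intertwining identity is what links the two functionals.

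The first substantive step is to show that $s$ is a $[0,1]$-moment sequence if and only if $t$ is a $[-1,1]$-moment sequence. For the forward direction, if $\mu$ is a representing measure for $s$ on $[0,1]$, its pushforward $\nu$ under $y\mapsto 2y-1$ is supported on $[-1,1]$ and satisfies $t_n=\int_{-1}^1 x^n\,d\nu(x)$. Conversely, if $\nu$ represents $t$ on $[-1,1]$, pushing $\nu$ forward under $x\mapsto\tfrac{1+x}{2}$ produces a measure $\mu$ on $[0,1]$, and the identity $L_s(p)=L_t\big(p(\tfrac{1+x}{2})\big)$ forces $s_n=\int_0^1 y^n\,d\mu(y)$. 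The content here is just that $\psi$ maps the two compact intervals onto one another, so representing measures and their supports correspond in both directions.

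Now I would apply Theorem~\ref{applbernstein} to $t$: the sequence $t$ is a $[-1,1]$-moment sequence exactly when $L_t\big((1-x)^k(1+x)^l\big)\ge 0$ for all $k,l\in\N_0$. Substituting $x=2y-1$ gives $1-x=2(1-y)$ and $1+x=2y$, whence
\begin{align*}
L_t\big((1-x)^k(1+x)^l\big)=2^{k+l}\,L_s\big((1-y)^k y^l\big),
\end{align*}
so the criterion reduces to $L_s\big((1-y)^k y^l\big)\ge 0$ for all $k,l\in\N_0$. Finally, expanding $(1-y)^k=\sum_{j=0}^k(-1)^j\binom{k}{j}y^j$ and using $L_s(y^m)=s_m$ yields
\begin{align*}
L_s\big((1-y)^k y^l\big)=\sum_{j=0}^k(-1)^j\binom{k}{j}s_{l+j}=\big((I-E)^k s\big)_l,
\end{align*}
which is precisely the left-hand side of (\ref{complmonotone}) after renaming $k\to n$ and $l\to k$. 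This identifies the two conditions and closes the argument.

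I expect the only point requiring genuine care to be the back-and-forth with the pushforward measures in the second step: one must verify that the moment identities transform correctly in \emph{both} directions and that each representing measure is genuinely supported on the correct compact interval. Everything else---the intertwining identity for the functionals and the binomial expansion producing the finite-difference operator $(I-E)^k$---is a routine computation, in keeping with the remark in the statement that Hausdorff's theorem follows from Bernstein's by a short calculation.
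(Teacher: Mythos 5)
Your proposal is correct and follows exactly the route the paper indicates: transporting Bernstein's criterion (Theorem~\ref{applbernstein}) through the affine bijection $x\mapsto\tfrac12(1+x)$ of $[-1,1]$ onto $[0,1]$, with the pushforward of representing measures handling both directions and the binomial expansion of $(1-y)^k$ producing the finite-difference condition (\ref{complmonotone}). The computations (the intertwining identity, the factor $2^{k+l}$, and the expansion) are all accurate, so nothing further is needed.
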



\setcounter{tocdepth}{3}

\makeatletter
\renewcommand{\@chapapp}{Lecture}
\makeatother

\chapter{One-dimensional moment problem:  determinacy}

Abstract:\\
\textit{This lecture is concerned with  the determinacy question  for one-dimensional Hamburger and Stieltjes moment problems. The  Carleman condition (a sufficient condition for determinacy) and the Krein condition (a sufficient condition for indeterminacy) are developed.}
\bigskip

Let $\mu$ be a Radon measure on $\R$ with finite moments $s_n=\int_\R x^n d\mu(x), n\in \N_0$. If $\mu$ has compact support, it is easily seen (using the Weierstrass approximation theorem on uniform approximation of continuous functions by polynomials) that $\mu$ is uniquely determined by its moment sequence $s=(s_n)_{n\in \N_0}$. Further, if $c>0$, it can be shown that $\mu$ is supported on the interval $[-c,c]$ if and only if \[\liminf_{n\to \infty} \sqrt[2n]{s_{2n}}\leq c.\]
In particular, $\mu$ is supported on $[-1,1]$ if and only if the sequence $s$ is bounded.

Measures with unbounded supports are not necessarily determined  by their moment sequences. In this very short Lecture we will study when this happens.
\begin{definition}\label{hamburgerdet} A moment sequence $s$ is called \emph{determinate} if it has only one representing measure; otherwise $s$ is called \emph{indeterminate}.
\end{definition}
Likewise, a Radon measure $\mu$ with finite moments is called determinate (resp. indeterminate) if and only if its moment sequence has this property. 

\section{An indeterminate measure: lognormal distribution}
 The first  example of an indeterminate moment sequence was discovered by T. Stieltjes  (1894).
 He showed that the \textit{log-normal distribution}\, $d\mu= f(x)dx$\, with density
\begin{align*}
 f(x)
 =\frac{1}{\sqrt{2\pi}}~ \chi_{(0,+\infty)}(x) x^{-1}\exp( -( \ln x)^2/2)
\end{align*}
has finite moments and the corresponding moment sequence is indeterminate. We reproduce  this famous classical example here.

 Let $n\in \Z$.
 Substituting $y=\ln x$ and $t=y-n$, we compute 
\begin{align*}
 s_n
 &=\int _\R x^n~ d\mu(x)=\frac{1}{\sqrt{2\pi}}\int_0^\infty x^{n-1} e^{-(\ln x)^2/2}~dx
 = \frac{1}{\sqrt{2\pi}} \int_\R e^{ny} e^{-y^2/2}  ~dy\\
 &=\frac{1}{\sqrt{2\pi}}\int_\R e^{-(y-n)^2/2} e^{n^2/2} ~dy
 =e^{n^2/2}\frac{1}{\sqrt{2\pi}} \int_\R~e^{t^2/2}\, dt
 = e^{n^2/2} .
\end{align*}
 This proves   $\mu$ that finite moments and its moment sequence is $s=(e^{n^2/2})_{n\in \N_0}$.

 For  $c\in [-1,1]$ we define a positive (!) measure $\mu_{c}$ by 
\begin{align*}
 d\mu_{c}(x)
 =[1+c\sin(2\pi\ln x) ]~d\mu(x).
\end{align*}
 Since $\mu$ has finite moments, so has $\mu_c$.
 For $n\in \Z$, we compute
\begin{align*}
 &~\int_\R x^n\sin(2\pi~ {\ln}~ x)~d \mu(x)
 = \frac{1}{\sqrt{2\pi}}~\int_\R  e^{ny} \,(\sin2\pi  y)~ e^{-y^2/2} dy~ \\
 & =\frac{1}{\sqrt{2\pi}}~\int_\R e^{-(y-n)^2/2} e^{n^2/2} \sin2\pi  y ~dy
 = \frac{1}{\sqrt{2\pi}} e^{n^2/2}\int_\R e^{-t^2/2}\sin2\pi (t+n)~ dt=0,
\end{align*}
 where we used the fact that the function $\sin2\pi(t+n)$  is odd.
 By the definition of $\mu_c$, it follows that any $c\in [-1,1]$ the measure $\mu_c$ has the same moments as $\mu$.
 (This is even true for all $s_n$ with $n\in \Z$.)
 Thus  $\mu$ is \textit{not} determinate!

\section{Carleman's  condition  for the Hamburger  moment problem}

 The following important result is the \textbf{Carleman theorem}. 
\begin{theorem}\label{Carleman}
 Let  $s=(s_n)_{n\in \N_0}$ be a Hamburger moment sequence.
 If $s$ satisfies the \emph{Carleman condition}\index{Carleman condition}
\begin{align}\label{carlemanhamburger}
\sum_{n=1}^\infty ~   s_{2n}^{-\frac{1}{2n}} =+\infty,
\end{align}
 then $s$ is a \emph{determinate}.
\end{theorem}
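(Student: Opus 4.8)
The plan is to compare the Fourier transforms of two representing measures and to show, via the Denjoy--Carleman theorem on quasi-analytic classes, that they must coincide; the point of the argument is that the Carleman condition (\ref{carlemanhamburger}) is exactly what certifies quasi-analyticity. Since $s$ is assumed to be a Hamburger moment sequence, at least one representing measure exists, and \emph{determinacy} means it is unique. So I would take two representing measures $\mu_1,\mu_2$ and aim to prove $\mu_1=\mu_2$ by comparing the characteristic functions $\widehat{\mu_j}(t)=\int_\R e^{\ii tx}\,d\mu_j(x)$. Because all moments are finite, each $\widehat{\mu_j}$ is $C^\infty$ on $\R$ with $\widehat{\mu_j}^{(n)}(t)=\ii^n\int_\R x^n e^{\ii tx}\,d\mu_j(x)$; in particular $\widehat{\mu_j}^{(n)}(0)=\ii^n s_n$, which is the \emph{same} for $j=1,2$ since both measures carry the moments of $s$.

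Next I would set $\phi:=\widehat{\mu_1}-\widehat{\mu_2}$, so that $\phi\in C^\infty(\R)$ with $\phi^{(n)}(0)=0$ for every $n$, while $|\phi^{(n)}(t)|\le \int_\R|x|^n\,d\mu_1+\int_\R|x|^n\,d\mu_2=:R_n$. For even $n=2m$ one has $R_{2m}=2s_{2m}$, and for odd $n=2m+1$ the Cauchy--Schwarz inequality gives $\int_\R|x|^{2m+1}\,d\mu_j\le (s_{2m}s_{2m+2})^{1/2}$, hence $R_{2m+1}\le 2(s_{2m}s_{2m+2})^{1/2}$. Thus $\phi$ lies in the Denjoy--Carleman class $C\{R_n\}$ and has all derivatives vanishing at the origin.

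The heart of the argument is that $C\{R_n\}$ is quasi-analytic. By Carleman's sufficient criterion this holds once $\sum_{n\ge1}R_n^{-1/n}=+\infty$, and keeping only even indices gives $\sum_m R_{2m}^{-1/(2m)}=\sum_m(2s_{2m})^{-1/(2m)}$, which diverges by (\ref{carlemanhamburger}) since $2^{-1/(2m)}\to1$. Then $\phi$ is a quasi-analytic function all of whose derivatives vanish at a point, forcing $\phi\equiv0$; therefore $\widehat{\mu_1}=\widehat{\mu_2}$ and, by uniqueness of the Fourier transform of finite measures, $\mu_1=\mu_2$. The main obstacle is the analytic input: invoking the Denjoy--Carleman theorem correctly and verifying that the moment-growth condition (\ref{carlemanhamburger}) really translates into quasi-analyticity of the bounding sequence $R_n$ (in particular that passing to a log-convex minorant, needed to apply the theorem, only decreases $R_n$ and so preserves the divergence of $\sum R_n^{-1/n}$). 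An alternative, more operator-theoretic route realizes $s$ through the multiplication operator $X$ on the GNS space of $L_s$, where $\|X^n\Omega\|^2=s_{2n}$ shows that (\ref{carlemanhamburger}) makes the cyclic vector $\Omega$ quasi-analytic and determinacy follows from essential self-adjointness of $X$; but this ultimately rests on the same Denjoy--Carleman mechanism.
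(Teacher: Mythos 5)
Your proof is correct and is precisely the ``standard proof'' that the paper itself points to: it cites [MP, Theorem~4.3] and notes that the assertion is derived from the Denjoy--Carleman theorem on quasi-analytic functions, which is exactly your route (form $\phi=\widehat{\mu_1}-\widehat{\mu_2}$, bound $|\phi^{(n)}|$ by the absolute moments $R_n$, note $R_{2m}=2s_{2m}$ so that the even-index part of $\sum R_n^{-1/n}$ already diverges under (\ref{carlemanhamburger}), and conclude $\phi\equiv 0$ by quasi-analyticity and then $\mu_1=\mu_2$ by uniqueness of Fourier transforms). Your handling of the technical points --- restricting to even indices is legitimate since all terms are nonnegative, and the log-convex regularization only increases the terms $R_n^{-1/n}$ --- is also sound, and your closing remark about the operator-theoretic alternative matches the paper's reference to the Jacobi-operator proof in [MP, Section~6.4].
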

\begin{proof}
 {[MP,Theorem~ 4.3]}.
 $\hfill \qed$
\end{proof}
The 
standard proof of Theorem \ref{Carleman}  derives the assertion from the Denjoy--Carleman theorem  
on quasi-analytic functions. 
An operator-theoretic proof based  on Jacobi operators is developed in Section 6.4 of [MP].

 Since $s_{2n}=\int_\R x^{2n} d\mu\geq 0$ for all $n\in \N_0$, we have either $s_{2n}^{-\frac{1}{2n}}=0$ or  $s_{2n}^{-\frac{1}{2n}} >0.$ 

The Carleman condition (\ref{carlemanhamburger})  requires that the moment sequence is not growing to fast.

\begin{corollary}\label{analyticcase} 
Let $s=(s_{n})_{n\in \N_0}$ be a Hamburger moment sequence. 
If there is a constant $M>0$ such that 
\begin{align}\label{analyticcase1}
s_{2n}\leq M^n (2n)!\quad \text{for}~~  n\in \N,
\end{align}
then Carleman's condition (\ref{carlemanhamburger}) holds and $s$ is  determinate.
\end{corollary}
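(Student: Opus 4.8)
The plan is to deduce determinacy directly from Carleman's Theorem \ref{Carleman}: since $s$ is already assumed to be a Hamburger moment sequence, it suffices to verify that the growth bound (\ref{analyticcase1}) forces the Carleman condition (\ref{carlemanhamburger}), i.e.\ that $\sum_{n\geq 1} s_{2n}^{-\frac{1}{2n}}=+\infty$. Thus the whole argument reduces to a single elementary estimate on the individual terms $s_{2n}^{-\frac{1}{2n}}$.

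First I would dispose of the degenerate case. If $s_{2n}=0$ for some $n\geq 1$, then $\int_\R x^{2n}\,d\mu=0$ forces $\mu$ to be concentrated at the origin, so $\mu=s_0\delta_0$ is a point mass and is trivially determinate; hence we may assume $s_{2n}>0$ for every $n$. For such $n$ the map $t\mapsto t^{-\frac{1}{2n}}$ is decreasing on $(0,\infty)$, so applying it to (\ref{analyticcase1}) reverses the inequality and yields
\[
s_{2n}^{-\frac{1}{2n}} \;\geq\; \big(M^n (2n)!\big)^{-\frac{1}{2n}} \;=\; M^{-1/2}\,\big((2n)!\big)^{-\frac{1}{2n}}.
\]
To bound the factorial from above I would use only the crude estimate $(2n)! \leq (2n)^{2n}$ (each of the $2n$ factors is at most $2n$), which gives $\big((2n)!\big)^{-\frac{1}{2n}} \geq (2n)^{-1}$ and therefore
\[
s_{2n}^{-\frac{1}{2n}} \;\geq\; \frac{M^{-1/2}}{2n}.
\]

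Summing over $n$ then finishes the proof, since
\[
\sum_{n=1}^\infty s_{2n}^{-\frac{1}{2n}} \;\geq\; \frac{M^{-1/2}}{2}\sum_{n=1}^\infty \frac{1}{n} \;=\;+\infty,
\]
so the Carleman condition (\ref{carlemanhamburger}) holds and Theorem \ref{Carleman} gives determinacy. I do not expect any genuine obstacle here: the only choice to be made is that of the factorial bound, and the naive inequality $(2n)!\leq(2n)^{2n}$ is amply sufficient; Stirling's formula would sharpen the constant but is unnecessary, since one only needs a constant multiple of the harmonic series as a lower bound, and that already diverges.
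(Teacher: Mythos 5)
Your proof is correct and follows essentially the same route as the paper's: the same crude bound $(2n)!\leq (2n)^{2n}$, the same chain of inequalities yielding $s_{2n}^{-1/2n}\geq M^{-1/2}(2n)^{-1}$, and the same comparison with the harmonic series before invoking Theorem \ref{Carleman}. Your separate treatment of the degenerate case $s_{2n}=0$ is a small extra tidiness (the paper glosses over it with a remark after Theorem \ref{Carleman}) but does not change the argument.
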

\begin{proof} 
It is obvious that\, $(2n)!\leq (2n)^{2n}$ for $n\in \N$. Therefore,\, $ [(2n)!]^{1/2n}\leq 2n$, so that\, $\frac{1}{2n}\leq[(2n)!]^{-1/2n} $ and hence
\begin{align*} M^{-1/2}\frac{1}{2n}\leq M^{-1/2}[ (2n)!]^{-1/2n}\leq s_{2n}^{-1/2n}, \quad n\in \N.\end{align*}
Thus, Carleman's condition (\ref{carlemanhamburger}) is satisfied, so   Theorem \ref{Carleman}(i) applies.
\qed \end{proof}

\begin{corollary}\label{evarepsilonx^2} 
Let\, $\mu$ be a Radon measure on $\R$. If 
there exists an $\varepsilon>0$ such that 
\begin{align}\label{eepsilo|x|ebd}
\int_\R e^{\varepsilon |x|}\, d\mu(x)<\infty,
\end{align} then $\mu$ has all moments, condition (\ref{analyticcase1}) 
holds, and   $\mu$ is determinate.
\end{corollary}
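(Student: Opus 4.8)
The plan is to convert the exponential integrability hypothesis into the factorial growth bound (\ref{analyticcase1}) on the even moments, and then simply invoke Corollary \ref{analyticcase}. First I would record the elementary pointwise estimate coming from a single term of the power series of the exponential: for every $k\in\N_0$ and every $x\in\R$,
\begin{align*}
|x|^k=\frac{k!}{\varepsilon^k}\cdot\frac{(\varepsilon|x|)^k}{k!}\leq \frac{k!}{\varepsilon^k}\, e^{\varepsilon|x|}.
\end{align*}
Writing $C:=\int_\R e^{\varepsilon|x|}\,d\mu(x)<\infty$ and integrating this inequality against $\mu$ immediately gives $\int_\R |x|^k\,d\mu(x)\leq \varepsilon^{-k}k!\,C<\infty$ for all $k$. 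Hence every monomial $x^k$ is $\mu$-integrable and $\mu$ has all moments $s_k=\int_\R x^k\,d\mu(x)$, which settles the first assertion.

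Next I would specialize to even indices and check the exact form of (\ref{analyticcase1}). Since $x^{2n}=|x|^{2n}$, the previous bound yields
\begin{align*}
s_{2n}=\int_\R x^{2n}\,d\mu(x)\leq \frac{(2n)!}{\varepsilon^{2n}}\,C=C\,\big(\varepsilon^{-2}\big)^n(2n)!.
\end{align*}
The only bookkeeping required is to absorb the fixed constant $C$ into the geometric factor. Setting $M:=\max(C,1)\,\varepsilon^{-2}$, one has $C\leq \max(C,1)^n$ for every $n\geq 1$, so that $s_{2n}\leq M^n(2n)!$ for all $n\in\N$. Thus condition (\ref{analyticcase1}) holds.

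Finally, since $\mu$ is by construction a representing measure for $s=(s_n)_{n\in\N_0}$, this sequence is a Hamburger moment sequence, so Corollary \ref{analyticcase} applies and yields that $s$, equivalently $\mu$, is determinate. I do not expect a genuine obstacle here: the substantive content is entirely carried by Corollary \ref{analyticcase} (and behind it Carleman's Theorem \ref{Carleman}), and the present argument is merely the passage from an exponential moment bound to a factorial growth bound, the only mild subtlety being the absorption of the constant $C$ described above.
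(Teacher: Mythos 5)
Your proof is correct and follows essentially the same route as the paper: a single-term bound from the exponential series gives $x^{2n}\leq \varepsilon^{-2n}(2n)!\,e^{\varepsilon|x|}$, integration yields finiteness of all moments and the factorial growth bound, and Corollary \ref{analyticcase} finishes. The only difference is that you spell out the absorption of the constant $C=\int e^{\varepsilon|x|}d\mu$ into $M$, which the paper leaves implicit; this is a welcome (and correct) bit of bookkeeping, not a deviation in method.
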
 
\begin{proof}
Clearly,\, $e^{\varepsilon |x|}\geq (\varepsilon x)^{2n}\frac{1}{(2n)!}$\, and  hence\, $x^{2n}e^{-\varepsilon |x|}\leq \varepsilon^{-2n}(2n)!$\, for $n\in \N_0$ and $x\in \R.$ 
Therefore,
\begin{align}\label{eepsilonx}
\int_\R x^{2n}d\mu(x)= \int_\R x^{2n}e^{-\varepsilon |x|}e^{\varepsilon |x|}  d\mu(x)\leq 
\varepsilon^{-2n} (2n)!~\int_\R e^{\varepsilon |x|}  d\mu(x) <\infty.
\end{align}
This implies\, $\int |x^k| d\mu(x)<\infty$ for  $k\in \N_0$. From 
(\ref{eepsilonx}) it follows that  (\ref{analyticcase1}) holds, so  Corollary  \ref{analyticcase} gives the assertion.\qed \end{proof}


As shown by C. Berg and J.P.R. Christensen,    Carleman's condition (\ref{carlemanhamburger}) implies also that the polynomials $\C[x]$ are dense in $L^p(\R,\mu)$ for  $p\in [1,+\infty),$ where $\mu$ is the unique representing measure of $s$. 

\section{Krein's condition for the Hamburger moment problem}

 The following \textbf{Krein theorem} \index{Theorem! Krein} shows that, for measures given by a density, the so-called \textit{Krein condition}\index{Krein condition} (\ref{kreincond}) is a sufficient condition for \textit{indeterminacy}.
\begin{theorem}\label{Kreincondition}
 Let  $f$ be a nonnegative Borel function on $\R$ such  that the measure $\mu$ defined by\,  $d\mu=f(x)dx$\,  has finite moments   $s_n:=\int x^n d\mu(x)$\, for all $n\in \N_0$.
 If 
\begin{align}\label{kreincond}
 \int_\R  \frac{\ln f(x)}{1+x^2}~ dx >-\infty,
\end{align}
then the  moment sequence $s=(s_n)_{n\in \N_0}$ is indeterminate and the polynomials $\C[x]$ are  not dense in $L^2(\R,\mu)$. 
\end{theorem}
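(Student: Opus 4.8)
The plan is to establish the sharper Hilbert-space statement first, namely that $\C[x]$ fails to be dense in $L^2(\R,\mu)$, and to read off both assertions of the theorem from a single auxiliary function. Concretely, I would produce a nonzero $g\in L^2(\R,\mu)$ orthogonal to every monomial, i.e. $\int_\R x^n\,\overline{g}\,d\mu=0$ for all $n\in\N_0$. Two preliminary observations guide the construction. First, (\ref{kreincond}) forces $\ln f(x)>-\infty$, hence $f(x)>0$, for a.e. $x$, so dividing by $f$ will be harmless. Second, since $\ln^+ t\le t$ and $\int_\R f\,dx=s_0<\infty$, the positive part of $\frac{\ln f}{1+x^2}$ is integrable; combined with (\ref{kreincond}) this upgrades the hypothesis to $\int_\R\frac{|\ln f(x)|}{1+x^2}\,dx<\infty$, exactly the admissibility condition needed below.

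The construction is complex-analytic, on the upper half-plane $\C_+=\{z:\rIm z>0\}$. Using the finiteness of the logarithmic integral I would form the outer function
\[
G(z)=\exp\Big(\frac{1}{2\pi i}\int_\R\ln f(t)\Big(\frac{1}{t-z}-\frac{t}{1+t^2}\Big)\,dt\Big),\qquad \rIm z>0,
\]
a Schwarz-type integral that is holomorphic and zero-free on $\C_+$ and whose nontangential boundary values satisfy $|G(x)|^2=f(x)$ a.e. Since $\int_\R|G(x)|^2\,dx=\int_\R f\,dx=s_0<\infty$, one has $G\in H^2(\C_+)$, and with the convention $\widehat h(\xi)=\int_\R h(x)e^{-i\xi x}\,dx$ this means $\widehat G$ is supported in $[0,\infty)$. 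Now fix any $c>0$ and set $F(x):=G(x)^2e^{icx}$. Then $F\in L^1(\R)$ with $\int_\R|F|\,dx=\int_\R f\,dx$, and $F\not\equiv0$ because $G$ is zero-free. Because $G^2\in L^1(\R)$ and its transform is, by the convolution theorem, supported in $[0,\infty)+[0,\infty)=[0,\infty)$, the shifted transform $\widehat F(\xi)=\widehat{G^2}(\xi-c)$ is supported in $[c,\infty)$ and in particular vanishes on the open set $(-\infty,c)\ni0$. As all moments $\int_\R|x|^nf\,dx$ are finite, $x^nF\in L^1(\R)$ for every $n$, so $\widehat F\in C^\infty(\R)$ and $\int_\R x^nF(x)\,dx=i^{\,n}\widehat F^{(n)}(0)=0$ for all $n\in\N_0$. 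Finally $|F(x)|=|G(x)|^2=f(x)$ yields $\int_\R\frac{|F|^2}{f}\,dx=\int_\R f\,dx<\infty$, so $g:=\overline{F}/f$ belongs to $L^2(\R,\mu)$ with $|g|=1$ a.e., and $\overline{g}f=F$ gives $\int_\R x^n\,\overline{g}\,d\mu=\int_\R x^nF\,dx=0$ for all $n$.

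From here both conclusions are immediate. Non-density is clear: $g\in L^2(\R,\mu)$ is nonzero, since $\|g\|_{L^2(\mu)}^2=\int_\R f\,dx=s_0>0$, yet it is orthogonal to $\C[x]$. For indeterminacy I would split $g=g_1+ig_2$; taking real and imaginary parts of $\int_\R x^n\overline{g}\,d\mu=0$ shows $\int_\R x^ng_1f\,dx=\int_\R x^ng_2f\,dx=0$ for all $n$, and at least one of the bounded real functions $g_1,g_2$ (say $g_1$, with $|g_1|\le|g|=1$) is nonzero in $L^2(\mu)$. Then $d\nu_\pm:=(1\pm g_1)\,d\mu$ are positive measures because $1\pm g_1\ge0$, they are distinct because $\nu_+-\nu_-=2g_1\,d\mu\neq0$, and $\int_\R x^n\,d\nu_\pm=s_n\pm\int_\R x^ng_1f\,dx=s_n$ for every $n$. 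Thus $s$ admits two distinct representing measures and is indeterminate.

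The one genuinely hard step is the existence and control of the outer function $G$: that (\ref{kreincond}) makes the Schwarz-type integral converge and define a zero-free element of $H^2(\C_+)$ with boundary modulus exactly $\sqrt{f}$, and that membership in $H^2(\C_+)$ confines $\widehat G$ to a half-line. These are the Fatou boundary-value theorem and the Paley--Wiener description of $H^2(\C_+)$ from Hardy-space theory; equivalently one may transfer the whole argument to the unit disk through the Cayley transform and invoke the classical inner--outer factorization there. Everything after the production of $G$ is formal: the support of $\widehat{G^2}$ via the convolution theorem, the vanishing of the moments via smoothness of $\widehat F$, and the final two-measure perturbation.
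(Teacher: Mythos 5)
Your proof is correct: the outer-function construction with $|G(x)|^2=f(x)$ a.e., the Paley--Wiener support argument forcing $\int_\R x^nF\,dx=0$ for all $n$, and the final perturbation $d\nu_\pm=(1\pm g_1)\,d\mu$ all go through, granted the classical Hardy-space facts (Fatou, outer functions, Paley--Wiener for $H^2(\C_+)$) that you explicitly invoke. This is essentially the same route as the paper's proof, which is deferred to [MP, Theorem~4.14] and described there precisely as resting on boundary values of analytic functions in the upper half-plane.
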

\begin{proof}
 {[MP, Theorem~ 4.14]}.
$\hfill\qed$ \end{proof}

The  proof in [MP]  uses  results on boundary values of analytic functions in the upper half plane.
\smallskip

Note  that  (\ref{kreincond}) implies that $f(x)>0$ a.e. 

 To give a slight reformulation  of condition (\ref{kreincond}), set\, $\ln^+x:=\max\, (\ln x,0)$\, and\, $\ln^-x:=-\min\, (\ln x,0)$ for $x\geq 0$.
 Since $f(x)\geq 0$ and hence $\ln^+ f(x)\leq f(x)$, 
\begin{align*}
 0
 \leq \int_\R  \frac{\ln^+f(x)}{1+x^2}~ dx
 \leq \int_\R  \frac{f(x)}{1+x^2}~ dx
 \leq \int_\R  f(x)~ dx
 =s_0
 <+\infty.
\end{align*}
 Therefore, since\, 
$\ln x=\ln^+x- \ln^-x$, Krein's condition (\ref{kreincond}) is equivalent to
\begin{align*} 
 \int_\R  \frac{\ln^-f(x)}{1+x^2}~ dx
 <+\infty.
\end{align*}


\begin{example}\label{carlemannotnec}~\textit{The Hamburger moment problem for\, $d\mu=e^{-|x|^\alpha}dx$,  $\alpha>0.$}\\
 Clearly,  $\mu\in \cM_+(\R)$ for any $\alpha>0$.  
If\, $0<\alpha<1$,  then 
\begin{align*}
 \int_\R \frac{\ln e^{-|x|^\alpha}}{1+x^2}\, dx
 =\int_\R \frac{-|x|^\alpha}{1+x^2}\,dx
 >-\infty.
\end{align*}
 so Krein's condition (\ref{kreincond}) is satisfied and hence the Hamburger moment sequence of $\mu$ is indeterminate by Theorem \ref{Kreincondition}.

Now we suppose that $\alpha\geq 1$, then
\begin{align}\label{anesti}
s_n=\bigg(\int_{-1}^1 +\int_{|x|\geq 1}\bigg)x^n e^{-|x|^\alpha}dx\leq 2+ \int_\R x^n e^{-|x|}dx=2+2n!\leq 2^n n!.
\end{align}
Therefore, by Corollary \ref{analyticcase},  the  moment sequence  of $\mu$ is determinate.
$\hfill \circ$ 
\end{example}

\section{Carleman   and Krein conditions for the Stieltjes  moment problem}
 We shall say that a Stieltjes moment sequence is \textit{determinate} if it has only one representing measure supported on $[0,+\infty)$. 

The following is Carleman's theorem   for the Stieltjes moment problem.
\begin{theorem}\label{Carlemans}
If $s=(s_n)_{n\in \N_0}$ is a Stieltjes moment sequence such that
\begin{align}\label{carlemanstieltjes}
\sum_{n=1}^\infty ~   s_{n}^{-\frac{1}{2n}} =+\infty,
\end{align}
 then $s$ is a \emph{determinate} Stieltjes moment sequence.
\end{theorem}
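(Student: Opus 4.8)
The plan is to reduce the Stieltjes statement to the already-proved Hamburger version, Theorem~\ref{Carleman}, via the symmetrization induced by the substitution $x \mapsto t^2$. First I would attach to each Stieltjes representing measure a symmetric measure on the whole line. Given any Radon measure $\mu$ on $[0,+\infty)$ with $\int_0^\infty x^n\, d\mu(x) = s_n$, define $\tilde\mu$ on $\R$ by
\[
\int_\R \varphi(t)\, d\tilde\mu(t) = \int_0^\infty \tfrac12\big[\varphi(\sqrt{x}) + \varphi(-\sqrt{x})\big]\, d\mu(x),
\]
that is, $\tilde\mu$ is the symmetric pullback of $\mu$ under $t\mapsto t^2$. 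Since $s_0 = \mu([0,+\infty)) < \infty$, $\tilde\mu$ is a finite positive Radon measure. A one-line change-of-variables computation gives $\int_\R t^{2n}\, d\tilde\mu(t) = s_n$ and $\int_\R t^{2n+1}\, d\tilde\mu(t) = 0$, so $\tilde\mu$ has finite moments and its moment sequence $\tilde s$ satisfies $\tilde s_{2n} = s_n$ and $\tilde s_{2n+1} = 0$. In particular $\tilde s$ is a Hamburger moment sequence.

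Next I would check that the hypothesis carries over verbatim. Because $\tilde s_{2n} = s_n$, the Hamburger Carleman series (\ref{carlemanhamburger}) for $\tilde s$ is
\[
\sum_{n=1}^\infty \tilde s_{2n}^{-\frac{1}{2n}} = \sum_{n=1}^\infty s_n^{-\frac{1}{2n}},
\]
which diverges by assumption (\ref{carlemanstieltjes}). Hence Theorem~\ref{Carleman} applies to $\tilde s$ and yields that $\tilde s$ is a determinate Hamburger moment sequence; i.e. $\tilde\mu$ is the unique Radon measure on $\R$ with moment sequence $\tilde s$.

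Finally I would transport this uniqueness back to the half-line. The assignment $\mu \mapsto \tilde\mu$ is injective because $\mu$ is recovered as the image of $\tilde\mu$ under $t \mapsto t^2$: for any such $\mu$ one verifies $\int_0^\infty \psi(x)\, d\mu(x) = \int_\R \psi(t^2)\, d\tilde\mu(t)$. Now suppose $\mu_1, \mu_2$ are two representing measures of $s$ supported on $[0,+\infty)$. Their symmetrizations $\tilde\mu_1, \tilde\mu_2$ both have moment sequence $\tilde s$ (the symmetrization sees only the even moments $s_n$ and the vanishing odd moments, which are common to both), so $\tilde\mu_1 = \tilde\mu_2$ by the Hamburger determinacy just established, whence $\mu_1 = \mu_2$. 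Therefore $s$ is a determinate Stieltjes moment sequence.

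The genuinely delicate point is not any estimate but the bookkeeping of the symmetrization: one must confirm that $\tilde\mu$ is a bona fide positive Radon measure with exactly the claimed finite moments, and that $\mu \leftrightarrow \tilde\mu$ is a bijection between Stieltjes representing measures of $s$ and symmetric Hamburger representing measures of $\tilde s$, so that determinacy transfers in both directions. Once this correspondence is pinned down, the remainder is the two change-of-variable identities above together with a direct invocation of Theorem~\ref{Carleman}.
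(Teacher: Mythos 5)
Your proof is correct. The paper gives no in-text proof of Theorem~\ref{Carlemans} (it only cites [MP, Theorem~4.3(ii)]), and your argument---symmetrizing each Stieltjes representing measure $\mu$ to the even measure $\tilde\mu$ with moments $\tilde s_{2n}=s_n$, $\tilde s_{2n+1}=0$, applying the Hamburger Carleman Theorem~\ref{Carleman} to $\tilde s$, and recovering $\mu$ from $\tilde\mu$ as the image under $t\mapsto t^2$---is precisely the classical reduction used in that reference, with all the needed bookkeeping (finiteness of $\tilde\mu$, the change-of-variables identities, and injectivity of $\mu\mapsto\tilde\mu$) carried out correctly.
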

\begin{proof}
 {[MP, Theorem~4.3(ii)]}.
$\hfill \qed$
\end{proof}

 Note that $s_n\geq0$ for all $n\in\N_0$, because $s$ is a Stieltjes moment sequence.

The following two corollaries can be easily derived from Theorem \ref{Carlemans}.

\begin{corollary}\label{analyticcases} 
Let $s=(s_{n})_{n\in \N_0}$ be a Stieltjes moment sequence. If  there is a constant $M>0$ such that 
\begin{align}\label{analyticcase2}
s_{n}\leq M^n (2n)!\quad \text{for}~~  n\in \N,
\end{align}
then $s$ is a determinate Stieltjes moment sequence.
\end{corollary}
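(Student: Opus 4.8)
The plan is to mirror the proof of Corollary \ref{analyticcase}, replacing the Hamburger Carleman theorem by its Stieltjes counterpart, Theorem \ref{Carlemans}. Everything reduces to checking that the growth bound (\ref{analyticcase2}) forces the Stieltjes Carleman condition (\ref{carlemanstieltjes}); once that is in hand, Theorem \ref{Carlemans} delivers determinacy immediately.

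First I would take the $-\tfrac{1}{2n}$ power of the hypothesis $s_n\leq M^n(2n)!$. Since $s$ is a Stieltjes moment sequence, $s_n\geq 0$ for all $n$ (as noted after Theorem \ref{Carlemans}), so these powers make sense; should some $s_n$ vanish the corresponding term equals $+\infty$ and (\ref{carlemanstieltjes}) holds trivially, so I may assume $s_n>0$. Taking powers gives
\begin{align*}
s_n^{-1/(2n)}\geq M^{-1/2}\,\big[(2n)!\big]^{-1/(2n)}.
\end{align*}
Next I would invoke the elementary estimate $(2n)!\leq (2n)^{2n}$ already used in Corollary \ref{analyticcase}, which yields $\big[(2n)!\big]^{1/(2n)}\leq 2n$ and hence $\big[(2n)!\big]^{-1/(2n)}\geq \tfrac{1}{2n}$. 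Combining this with the previous display,
\begin{align*}
s_n^{-1/(2n)}\geq \frac{M^{-1/2}}{2n},\qquad n\in\N.
\end{align*}
Summing over $n$ and comparing with the (divergent) harmonic series then gives $\sum_{n=1}^\infty s_n^{-1/(2n)}=+\infty$, which is exactly condition (\ref{carlemanstieltjes}). Theorem \ref{Carlemans} now applies and shows that $s$ is a determinate Stieltjes moment sequence.

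There is no genuine obstacle here: the argument is completely parallel to the Hamburger case and rests only on a crude factorial estimate together with divergence of the harmonic series. The one point worth flagging is the difference in indexing between the two Carleman conditions. The Stieltjes condition (\ref{carlemanstieltjes}) sums $s_n^{-1/(2n)}$ over \emph{all} $n$, whereas the Hamburger condition (\ref{carlemanhamburger}) sums $s_{2n}^{-1/(2n)}$ over even subscripts only. Consequently the hypothesis (\ref{analyticcase2}), which bounds \emph{every} $s_n$, is precisely what is needed to feed the Stieltjes condition, and no passage to even subscripts is required.
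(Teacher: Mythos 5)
Your proof is correct and is exactly the derivation the paper intends: it mirrors the proof of Corollary \ref{analyticcase}, using $(2n)!\leq (2n)^{2n}$ to bound $s_n^{-1/(2n)}\geq M^{-1/2}/(2n)$ and concluding divergence of the series in (\ref{carlemanstieltjes}), so that Theorem \ref{Carlemans} applies; the paper itself only remarks that the corollary is "easily derived" from Theorem \ref{Carlemans} in this way. Your handling of the vanishing-moment case and your remark on the indexing difference between (\ref{carlemanhamburger}) and (\ref{carlemanstieltjes}) are both accurate.
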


\begin{corollary}
Suppose that\, $\mu$ is a Radon measure supported on $[0,+\infty)$. If  
there exists an $\varepsilon>0$ such that 
\begin{align}\label{eepsilo|x|ebd2}
\int_0^\infty e^{\varepsilon \sqrt{x}}\, d\mu(x)<\infty,
\end{align} then  $\mu$ has finite moments,  condition (\ref{analyticcase2}) is satisfied, and the corresponding Stieltjes moment sequence is  determinate.
\end{corollary}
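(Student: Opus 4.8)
The plan is to follow the proof of Corollary~\ref{evarepsilonx^2} almost verbatim, with $\sqrt{x}$ playing the role that $|x|$ played there; this is precisely why the exponent in the hypothesis (\ref{eepsilo|x|ebd2}) is $\varepsilon\sqrt{x}$ rather than $\varepsilon|x|$. The decisive pointwise estimate comes from retaining a single term of the power series of the exponential: for every $n\in\N_0$ and every $x\geq 0$,
\[
e^{\varepsilon\sqrt{x}}\geq \frac{(\varepsilon\sqrt{x})^{2n}}{(2n)!}=\frac{\varepsilon^{2n}x^n}{(2n)!},
\]
which rearranges to $x^n e^{-\varepsilon\sqrt{x}}\leq \varepsilon^{-2n}(2n)!$ on $[0,+\infty)$. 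Note that the even power $2n$ in the numerator of the exponential series turns $(\sqrt{x})^{2n}$ into exactly $x^n$, so the surrogate $\sqrt{x}$ is what makes the trick land on the Stieltjes moment $s_n$ rather than on $s_{2n}$.

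First I would establish finiteness of all moments together with the growth bound in one stroke. Writing $x^n=(x^n e^{-\varepsilon\sqrt{x}})\,e^{\varepsilon\sqrt{x}}$ and inserting the inequality above,
\[
s_n=\int_0^\infty x^n\,d\mu(x)\leq \varepsilon^{-2n}(2n)!\int_0^\infty e^{\varepsilon\sqrt{x}}\,d\mu(x)=C\,\varepsilon^{-2n}(2n)!<\infty,
\]
where $C:=\int_0^\infty e^{\varepsilon\sqrt{x}}\,d\mu(x)$ is finite by hypothesis (\ref{eepsilo|x|ebd2}). Hence $\mu$ has all finite moments, and the sequence $s=(s_n)_{n\in\N_0}$ is genuinely a Stieltjes moment sequence, represented by the measure $\mu$ on $[0,+\infty)$.

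It then remains to put this bound into the form (\ref{analyticcase2}). Choosing $M:=\varepsilon^{-2}\max(C,1)$, one has $M^n=\varepsilon^{-2n}\max(C,1)^n\geq \varepsilon^{-2n}C=C\,\varepsilon^{-2n}$ for all $n\in\N$, and therefore $s_n\leq C\,\varepsilon^{-2n}(2n)!\leq M^n(2n)!$, which is exactly condition (\ref{analyticcase2}). Corollary~\ref{analyticcases} now applies and yields that the Stieltjes moment sequence $s$ is determinate, completing the proof.

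I expect no genuine obstacle here: the statement is a routine corollary of Corollary~\ref{analyticcases}. The only two points requiring a moment's care are the recognition that $\sqrt{x}$ is the correct surrogate for $|x|$, so that the single-term bound on the exponential produces $x^n$ in the numerator, and the harmless absorption of the finite constant $C$ into the geometric factor $M^n$ by enlarging $M$.
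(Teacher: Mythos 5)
Your proof is correct and is exactly the intended argument: the paper leaves this corollary unproved (it is stated as "easily derived" from Carleman's Theorem~\ref{Carlemans} via Corollary~\ref{analyticcases}), and your argument mirrors the paper's proof of the Hamburger analogue, Corollary~\ref{evarepsilonx^2}, with $\sqrt{x}$ in place of $|x|$ so that the single-term exponential bound yields $s_n\leq C\varepsilon^{-2n}(2n)!$. Your explicit absorption of the constant $C$ into $M^n$ is a detail the paper glosses over even in the Hamburger case, so nothing is missing.
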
 

 In probability theory the two sufficient determinacy conditions (\ref{eepsilo|x|ebd}) and (\ref{eepsilo|x|ebd2}) are called \emph{Cramer's condition}\index{Cramer condition} and \emph{Hardy's condition}\index{Hardy condition}, respectively.

 The next theorem is about  Krein's condition for the Stieltjes moment problem.
 
\begin{theorem}\label{kreinstieltjesmp}
Let $f$, $\mu$, and $s$ be as in Theorem \ref{Kreincondition}. If the measure $\mu\in \cM_+(\R)$ is supported on $[0,+\infty)$ and
\begin{align}\label{kreincondstielt}
 \int_\R \frac{\ln f(x^2)}{1+x^2}~dx
 \equiv\int_0^\infty  \frac{\ln f(x)}{(1+x)}~ \frac{dx}{\sqrt{x}}
 >-\infty,
\end{align}
 then $s$ is an \emph{indeterminate} Stieltjes moment sequence.
\end{theorem}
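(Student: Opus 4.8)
The plan is to reduce the Stieltjes problem to a \emph{symmetric} Hamburger problem through the substitution $x\mapsto x^2$ and then invoke the Hamburger version of Krein's theorem (Theorem \ref{Kreincondition}), which is already at our disposal. Since $\mu$ is supported on $[0,+\infty)$ and $d\mu=f(x)\,dx$, I would introduce the symmetric measure $\nu$ on $\R$ defined by
\[
d\nu(x)=|x|\, f(x^2)\,dx .
\]
A direct change of variables $t=x^2$ shows that $\nu$ has finite moments, that its odd moments vanish, and that its even moments reproduce $s$, namely $\int_\R x^{2n}\,d\nu(x)=\int_0^\infty t^n\,d\mu(t)=s_n$. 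Moreover $x\mapsto x^2$ sets up a bijection between symmetric Radon measures on $\R$ and Radon measures on $[0,+\infty)$, under which the even moments of a symmetric measure correspond to the Stieltjes moments of its image. This is the correspondence that will allow me to pass back from $\nu$ to $\mu$ at the end.

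First I would check that the hypothesis \eqref{kreincondstielt} is precisely the Hamburger--Krein condition \eqref{kreincond} for the density $h(x):=|x|\,f(x^2)$ of $\nu$. Writing $\ln h(x)=\ln|x|+\ln f(x^2)$ and using that $\int_\R \frac{\ln|x|}{1+x^2}\,dx$ is finite (indeed it equals $0$), the requirement $\int_\R \frac{\ln h(x)}{1+x^2}\,dx>-\infty$ is equivalent to $\int_\R \frac{\ln f(x^2)}{1+x^2}\,dx>-\infty$, which is exactly \eqref{kreincondstielt} (the identity between the two integrals in \eqref{kreincondstielt} being the same substitution $t=x^2$). Hence Theorem \ref{Kreincondition} applies to $\nu$ and yields that the Hamburger problem for $\nu$ is indeterminate and that $\C[x]$ is not dense in $L^2(\R,\nu)$.

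It remains to transfer the indeterminacy of $\nu$ to Stieltjes-indeterminacy of $\mu$, and this is the step I expect to be the main obstacle. The difficulty is that an indeterminate Hamburger problem could, \emph{a priori}, acquire all of its additional solutions from non-symmetric measures, leaving $\nu$ as the unique symmetric solution; in that case the bijection of the first paragraph would only return $\mu$ itself, and nothing would follow. To rule this out I would use the structure of the solution set: since every odd moment of $\nu$ vanishes, the reflection $\rho\mapsto\rho(-\,\cdot\,)$ preserves the convex, weak-$*$ compact set of representing measures, and for a genuinely indeterminate symmetric problem its fixed-point set is a nontrivial (one-parameter) family rather than a single point — equivalently, one exhibits a one-parameter family of symmetric solutions through the Nevanlinna parametrization, the symmetric solutions corresponding to the odd Pick functions. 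Any two distinct symmetric solutions push forward under $x\mapsto x^2$ to two distinct measures on $[0,+\infty)$ with moment sequence $s$, so $s$ is Stieltjes-indeterminate.

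Alternatively, and more in the spirit of the strengthened conclusion of Theorem \ref{Kreincondition}, I would exploit the non-density of $\C[x]$ in $L^2(\R,\nu)$. Splitting $L^2(\R,\nu)$ into its even and odd parts identifies polynomial density there with the simultaneous density of polynomials in $L^2(\mu)$ and in $L^2(t\,d\mu)$, hence with density in $L^2((1+t)\,d\mu)$ on $[0,+\infty)$; the failure of the latter is exactly the M.~Riesz-type criterion for Stieltjes-indeterminacy. The precise form of this Stieltjes--Hamburger correspondence is the one place where I would appeal to [MP].
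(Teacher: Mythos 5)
Your proposal is correct and follows essentially the route behind the paper's statement: the paper gives no argument of its own (its proof is the citation [MP, Theorem~4.17]), and the proof there is precisely your symmetrization --- pass to $d\nu=|x|\,f(x^2)\,dx$, check that \eqref{kreincondstielt} is the Hamburger--Krein condition \eqref{kreincond} for $\nu$ (using that $\int_\R \ln|x|\,(1+x^2)^{-1}\,dx$ is finite), apply Theorem \ref{Kreincondition}, and transfer indeterminacy back through the bijection between symmetric representing measures on $\R$ and Stieltjes representing measures on $[0,+\infty)$. You correctly isolate the transfer step as the real content, and your main justification (reflection invariance plus the Nevanlinna parametrization, with symmetric solutions corresponding to odd Pick functions, so an indeterminate symmetric problem has a whole family of symmetric solutions) is the standard, sound way to close it. One caution on your alternative closing paragraph: density of $\C[x]$ in $L^2(\mu)$ and in $L^2(t\,d\mu)$ separately is not obviously equivalent to density in $L^2((1+t)\,d\mu)$, and the Riesz-type splitting done correctly (applied to $L^2(\R,(1+x^2)\,d\nu)$, per Theorem \ref{mriesz}) yields the pair of spaces $L^2((1+t)\,d\mu)$ and $L^2(t(1+t)\,d\mu)$ rather than the single space you name; since that paragraph is only an alternative, this does not affect the validity of your proof.
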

\begin{proof} 
 {[MP, Theorem~4.17]}.
$\hfill\qed$
\end{proof}

\begin{example}\label{tsieltakrca} ~\textit{The Stieltjes moment problem for\, $d\mu=\chi_{[0,\infty)}(x)e^{-|x|^\alpha}dx$,  $\alpha>0.$}\\
 If  $0<\alpha<1/2$,  then  (\ref{kreincondstielt}) holds, so the Stieltjes moment sequence  is indeterminate.
 If\, $\alpha \geq 1/2$, then $2\alpha\geq 1$ and hence by (\ref{anesti}),
\begin{align*}
 s_n
 = \int_0^\infty x^n d\mu(x)
 =\int_0^\infty (x^2)^n d\mu(x^2)
 =\int_0^\infty x^{2n}e^{-|x|^{2\alpha}} dx\leq 4^n (2n)!\, .
\end{align*}
 By Corollary \ref{analyticcases},  the Stieltjes moment sequence is determinate for $\alpha \geq 1/2$.
$\hfill \circ$
\end{example}

In both Examples \ref{carlemannotnec} and \ref{tsieltakrca}  the determinacy was decided for all parameter values $\alpha$ by using  only the Carleman and Krein theorems. This indicates that both criteria are efficient and strong to  cover even borderline cases.

\begin{remark}
 Suppose $s$ is a determinate Stieltjes moment sequence.
 This means that $s$ has a unique    representing measure $\mu$ supported on $[0,+\infty)$.
 Then it may happen (see, e.g., [MP, Example~ 8.11]) that $s$ is not determinate as a Hamburger moment sequence, that is, $s$ may have another  representing measure which is not supported on $[0,+\infty)$.
 However, if  $\mu(\{0\})=0$, then  $s$ is also a determinate  Hamburger moment sequence according to Definition \ref{hamburgerdet} by [MP, Corollary~ 8.9].
\end{remark}

\secret{
\section{Measures supported on bounded intervals}
The following proposition  characterizes  measures with bounded support in terms of their moments.
\begin{proposition}\label{comapctsupp}
Suppose $s=(s_n)_{n\in \N_0}$ is a Hamburger moment sequence. Let $\mu$ be  representing measure of $s$ and let $c\in \R, c>0$. The following are equivalent:
\begin{itemize}
\item[\em (i)]~ $\mu$ is supported on $[-c,c]$. 
\item[\em (ii)]~ There exists a number\, $d>0 $ such that\, $|s_n|\leq dc^n$ for $n\in \N_0$.
\item[\em (iii)]~  $S:=\liminf_{n\to \infty}\, s_{2n}^{\frac{1}{2n}}\leq c$.
\end{itemize}
\end{proposition}
\begin{proof} The implications (i)$\to$(ii)$\to$(iii) are easily verified  with $d>s_0$, so it suffices to prove  (iii$\to$(i). 
For $\alpha>0$, let $\chi_\alpha$ denote the characteristic function of  $\R\backslash (-\alpha,\alpha)$ and $M_\alpha:=\int \chi_\alpha d\mu$. Then 
\begin{align*}
M_\alpha \alpha^{2n}= \int_\R \alpha^{2n} \chi_\alpha d\mu\leq  \int_\R x^{2n}\chi_\alpha d\mu\leq\int_\R x^{2n}d\mu=s_{2n}
\end{align*}
and hence\, $(M_\alpha)^{\frac{1}{2n}} \alpha \leq  s_{2n}^{\frac{1}{2n}}$\, for $n\in \N$. Therefore, if $M_\alpha >0$, by passing to the limits we obtain $\alpha \leq S$. Thus, $M_\alpha=\mu(\R\backslash (-\alpha,\alpha)) =0$ when $\alpha >S$. Since $S\leq c$ by (iii),\, ${\rm supp}\, \mu \subseteq [-S,S]\subseteq [-c,c]$, which proves (i).
\qed\end{proof}
\begin{corollary}\label{detcompct}
If a Hamburger moment sequence $s$  has a representing measure with compact support, then $s$ is determinate.
\end{corollary}
\begin{proof}
If $\mu_1,\mu_2$ are representing measures of $s$, both measures  are supported on $[-S,S]$  by Proposition \ref{comapctsupp}. Then, for all $f\in \R[x]$, 
\begin{align}\label{intf(x)}\int_{-S}^S f(x)\, d\mu_1=\int_{-S}^S f(x)\, d\mu_2.
\end{align}
  Since the polynomials $\R[x]$ are dense in $C([-S,S];\R)$  by the Weierstrass theorem,  (\ref{intf(x)}) holds for all continuous functions $f$. Hence $\mu_1=\mu_2.$
\qed \end{proof}

Another immediate consequence of Proposition \ref{comapctsupp} is the following.
\begin{corollary}
A  moment sequence $s$ has a representing measure supported on $[-1,1]$ if and only if the  sequence $s$ is bounded.
\end{corollary}
}

\setcounter{tocdepth}{3}

\makeatletter
\renewcommand{\@chapapp}{Lecture}
\makeatother

\chapter{The one-dimensional truncated moment problem on a bounded interval}

Abstract:\\
\textit{The truncated moment problem on a compact interval $[a,b]$ is treated. Basic results on the existence and the determinacy are obtained. For interior points of the moments cone principal measures and canonical measures are developed.}
\bigskip

In contrast to the preceding lecture  we study in this Lecture moment problems where only finitely many moments are given and the measures are supported on a bounded interval. More precisely, suppose
$a$ and $b$ are  real numbers such that $a<b$ and $m\in \N$. 
We consider the \emph{truncated moment problem} on the interval $[a,b]$:

\textit{Given a real sequence $s=(s_j)_{j=0}^m$, when is there a  Radon measure $\mu$ on $[a,b]$ such that 
 $s_j=\int_a^b x^j\, d\mu(x)$ for $j=0,\dots,m$?} 
 
 In this case we say that $s$ is a \emph{truncated $[a,b]$-moment sequence} and $\mu$ is a representing measure for $s$. We shall see that  odd and  even cases are different.

\section{Existence of a solution}\label{existence}

First we fix some notation. 
Suppose  $s=(s_j)_{j=0}^m$ is a real sequence. Let    $L_s$  be the Riesz functional on $\R[x]_m:=\{p \in \R[x]:\deg p \leq m\}$ defined by $L_s(x^j)=s_j$,  $j=0,\dots,m,$ and  $H_k(s)$, $2k\leq m,$ the Hankel matrix $H_k(s):=(s_{i+j})_{i,j=0}^k$. The shifted sequence $Es$ is  $Es:=(s_1,\dots,s_m)=(s_{j+1})_{j=0}^{m-1}.$Further, we define
\begin{align*}
 {\Pos}([a,b])_m:=\{ p\in \R[x]_m: p(x)\geq 0~~\text{ on }~~ [a,b]\}.
\end{align*}


 The following notation differs between the two even and the odd cases  $m=2n$:
\begin{align*}
&\un{H}_{2n}(s)\index[sym]{HAnA@$\un{H}_{2n}(s),\ov{H}_{2n}(s),\un{H}_{2n+1}(s),\ov{H}_{2n+1}(s)$}:=H_n(s)\equiv (s_{i+j})_{i,j=0}^n, \\ &\ov{H}_{2n}(s):=H_{n-1}((b-E)(E-a))s)\equiv((a+b)s_{i+j+1}-s_{i+j+2}-abs_{i+j})_{i,j=0}^{n-1},\\
&\un{H}_{2n+1}(s):=H_n(Es-as)\equiv(s_{i+j+1}-as_{i+j})_{i,j=0}^n,\\
&\ov{H}_{2n+1}(s):=H_n(bs-Es)\equiv (bs_{i+j}-s_{i+j+1})_{i,j=0}^n,\\
&\un{D}_m(s):=\det \un{H}_m(s),\quad  \ov{D}_m(s):=\det \ov{H}_m(s).
\end{align*}
The upper and lower bar notation  allow us  to treat the even and odd
 cases at once. 

For  $f=\sum_{j=0}^k a_jx^j\in \R[x]_k$\, let $\vec{f}\index[sym]{fAvec@$\vec{f}$}:= (a_0,\dots,a_k)^T\in \R^{k+1}$  denote the coefficient vector of $f$. Then for $p,q\in \R[x]_n$ and $f,g\in \R[x]_{n-1}$  simple computations yield
\begin{align}\label{Lspghn2n}
L_s(pq)&=\vec{p}^T\, \un{H}_{2n}(s)\vec{q}, ~~~L_s((b-x)(a-x)fg)=\vec{f}^{\, T}\, \ov{H}_{2n}(s)\vec{g},\\
L_s((x-a&)pq)=\vec{p}^{\, T}\, \un{H}_{2n+1}(s)\vec{q}, ~~~~ L_s((b-x)pq)=\vec{p}^{\, T}\, \ov{H}_{2n+1}(s)\vec{q}.\label{Lspghn2n+1}
\end{align}

Then   following two  theorems  settle the existence problem. The notation $A\succeq 0$ for a hermitian  matrix $A$ means that $A$ is positive semidefinite. 
\begin{theorem}\label{exittruncevenhaus} (Truncated\, $[a,b]$-moment problem; even case $m=2n$)\\  For a real sequence $s=(s_j)_{j=0}^{2n}$ the following statements are equivalent:
\begin{itemize}
\item[\em (i)]~ $s$ is a truncated $[a,b]$-moment sequence.
\item[\em (ii)]~ $L_s(p^2)\geq 0$\, and\, $L_s((b-x)(x-a)q^2)\geq 0$\, for\, $p\in \R[x]_n$ and\, $q\in \R[x]_{n-1}$.
\item[\em (iii)]~  $\un{H}_{2n}(s)\succeq 0$\, and\, $\ov{H}_{2n}(s)\succeq 0.$
\end{itemize}
 \end{theorem}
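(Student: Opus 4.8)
The plan is to prove the equivalence as a short cycle that ends by reducing the existence question to the compact-space representation theorem of Proposition~\ref{integralrepcompactcase}. Two of the three links are routine: the equivalence (ii)$\Leftrightarrow$(iii) is just the quadratic-form identities already recorded in~(\ref{Lspghn2n}), and (i)$\Rightarrow$(ii) is immediate integration. The genuine content lies in (ii)$\Rightarrow$(i), the production of a representing measure, and that is where I would put the work.

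For (ii)$\Leftrightarrow$(iii), the first identity in~(\ref{Lspghn2n}) gives $L_s(p^2)=\vec{p}^{\,T}\un{H}_{2n}(s)\vec{p}$ for $p\in\R[x]_n$; as $p$ ranges over $\R[x]_n$ its coefficient vector $\vec{p}$ ranges over all of $\R^{n+1}$, so $L_s(p^2)\geq 0$ for all such $p$ is exactly $\un{H}_{2n}(s)\succeq 0$. The second identity does the same for $q\mapsto L_s((b-x)(x-a)q^2)$ and $\ov{H}_{2n}(s)$, so (ii) and (iii) say the same thing. For (i)$\Rightarrow$(ii), if $\mu$ represents $s$ on $[a,b]$, then for $p\in\R[x]_n$ and $q\in\R[x]_{n-1}$ both $p^2$ and $(b-x)(x-a)q^2$ lie in $\R[x]_{2n}$, whence $L_s$ evaluates them as $\int_a^b p^2\,d\mu\geq 0$ and $\int_a^b (b-x)(x-a)q^2\,d\mu\geq 0$, the latter since $(b-x)(x-a)\geq 0$ on $[a,b]$.

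The heart is (ii)$\Rightarrow$(i). I would take $\cX=[a,b]$, which is compact Hausdorff, and regard $E:=\R[x]_{2n}\lceil[a,b]$ as a finite-dimensional subspace of $C([a,b];\R)$; since $a<b$ the restriction map is injective, so $L_s$ is well defined on $E$, and $E$ contains the strictly positive constant $1$, so the hypotheses of Proposition~\ref{integralrepcompactcase} on $E$ are met. Here $E_+$ is exactly $\Pos([a,b])_{2n}$. The crucial step is to upgrade hypothesis~(ii) to $E_+$-positivity of $L_s$, and for this I would invoke the degree-bounded description of positive polynomials on $[a,b]$: by the Markov--Lukacs Proposition~\ref{markov} (or the easier sum-of-squares variant noted there), every $f\in\Pos([a,b])_{2n}$ is $f=p^2+(b-x)(x-a)q^2$ with $p\in\R[x]_n$ and $q\in\R[x]_{n-1}$. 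Applying $L_s$ and using~(ii) termwise gives $L_s(f)\geq 0$, so $L_s$ is $E_+$-positive. Proposition~\ref{integralrepcompactcase} then delivers a Radon measure $\mu$ on $[a,b]$ with $L_s(f)=\int f\,d\mu$ for all $f\in E$; taking $f=x^j$, $j=0,\dots,2n$, yields $s_j=\int_a^b x^j\,d\mu$, which is precisely~(i).

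The main obstacle, and the reason the slicker-looking route through Haviland's Theorem~\ref{haviland} is unavailable, is the degree bound. The functional $L_s$ is defined only on $\R[x]_{2n}$, and $\R[x]_{2n}\lceil[a,b]$ is \emph{not} an adapted subspace (condition~(iii) of Definition~\ref{defadapted} fails, as in Example~\ref{exaadaptedsubspace}), so neither Theorem~\ref{choquet} nor Haviland's theorem applies. This forces the use of a decomposition of $\Pos([a,b])_{2n}$ that stays within degree $2n$ (Markov--Lukacs provides exactly this, since $\deg p^2\leq 2n$ and $\deg\big((b-x)(x-a)q^2\big)\leq 2n$) together with the compact-case Proposition~\ref{integralrepcompactcase}, whose Hahn--Banach step (Lemma~\ref{exthbversion}) silently carries $L_s$ from the finite-dimensional $E$ up to all of $C([a,b];\R)$. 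Checking that this extension and the resulting Riesz measure really reproduce the prescribed moments $s_0,\dots,s_{2n}$ is the one point that deserves care.
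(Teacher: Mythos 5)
Your proof is correct and follows essentially the same route as the paper: the equivalence (ii)$\Leftrightarrow$(iii) via the identities (\ref{Lspghn2n}), and the existence direction by combining the Markov--Lukacs decomposition of $\Pos([a,b])_{2n}$ (Proposition \ref{markov}) with the compact-case representation result (Proposition \ref{integralrepcompactcase}) applied to $E=\R[x]_{2n}\lceil[a,b]$. Your closing remark correctly identifies why Haviland's theorem is unavailable here, which is exactly the point the paper makes about adapted spaces versus truncated problems.
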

 
 \begin{theorem}\label{existoddhaus}\index{Truncated moment problem on a bounded interval! existence of a solution} (Truncated\, $[a,b]$-moment problem; odd case $m=2n+1$)\\  For a real sequence $s=(s_j)_{j=0}^{2n+1}$ the following are equivalent:
\begin{itemize}
\item[\em (i)]~  $s$ is a truncated $[a,b]$-moment sequence.
\item[\em (ii)]~  $L_s((x-a)p^2)\geq 0$\, and\, $L_s((b-x)p^2)\geq 0$\, for all\, $p\in \R[x]_n$.
\item[\em (iii)]~ $\un{H}_{2n+1}(s)\succeq 0$\, and\, $\ov{H}_{2n+1}(s)\succeq 0.$
\end{itemize}
\end{theorem}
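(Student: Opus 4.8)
The plan is to run the cycle by isolating the two routine equivalences and concentrating on the one implication that carries the content. The equivalence (ii)$\Leftrightarrow$(iii) is a pure restatement of the quadratic-form identities (\ref{Lspghn2n+1}): specializing them to $q=p$ gives $L_s((x-a)p^2)=\vec{p}^{\,T}\un{H}_{2n+1}(s)\vec{p}$ and $L_s((b-x)p^2)=\vec{p}^{\,T}\ov{H}_{2n+1}(s)\vec{p}$ for every $p\in\R[x]_n$. Since $p\mapsto\vec{p}$ is a linear bijection of $\R[x]_n$ onto $\R^{n+1}$ and both matrices are symmetric, nonnegativity of these two forms for all $p$ is exactly positive semidefiniteness of $\un{H}_{2n+1}(s)$ and $\ov{H}_{2n+1}(s)$. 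The implication (i)$\Rightarrow$(ii) is equally direct: if $\mu$ is a representing measure on $[a,b]$, then $x-a\geq 0$ and $b-x\geq 0$ throughout $[a,b]$, so $(x-a)p^2$ and $(b-x)p^2$ are nonnegative there; both have degree $\leq 2n+1=m$, hence lie in the domain of $L_s$, and integrating against $\mu$ yields $L_s((x-a)p^2)\geq 0$ and $L_s((b-x)p^2)\geq 0$.

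The substance is (ii)$\Rightarrow$(i), and I would carry it out in two moves. First I would upgrade the two inequalities of (ii) to full $\Pos([a,b])_{2n+1}$-positivity of $L_s$. Here the odd-degree Markov--Lukács description (Proposition \ref{markov}) is the crucial input: every $f\in\Pos([a,b])_{2n+1}$ can be written as $f=(b-x)p^2+(x-a)q^2$ with $p,q\in\R[x]_n$. Applying $L_s$ and using linearity,
\[
L_s(f)=L_s((b-x)p^2)+L_s((x-a)q^2)\geq 0
\]
by hypothesis (ii). Thus $L_s$ is nonnegative on $E_+$, where $E:=\R[x]_{2n+1}\lceil[a,b]$ and $E_+=\Pos([a,b])_{2n+1}$ under the (bijective) restriction of polynomials of degree $\leq 2n+1$ to the infinite set $[a,b]$.

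The second move invokes the compact integral representation. Since $[a,b]$ is a compact Hausdorff space and $E$ contains the constant function $1>0$, Proposition \ref{integralrepcompactcase} applies to the $E_+$-positive functional $L_s$ and produces a Radon measure $\mu$ on $[a,b]$ with $L_s(f)=\int_a^b f\,d\mu$ for all $f\in E$; in particular $s_j=\int_a^b x^j\,d\mu$ for $j=0,\dots,2n+1$, which is precisely (i). This also explains the parallel with the even case (Theorem \ref{exittruncevenhaus}), where one uses instead the first part of Proposition \ref{markov}.

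The main obstacle is the step converting the two weighted positivity conditions of (ii) into positivity of $L_s$ on the whole cone $\Pos([a,b])_{2n+1}$: it rests entirely on the nontrivial fact that every polynomial nonnegative on $[a,b]$ of odd degree $\leq 2n+1$ admits the representation $(b-x)p^2+(x-a)q^2$ with the sharp degree bound $\deg p,\deg q\leq n$, so that each summand lies in the domain of $L_s$ and is covered by (ii). Everything else, the matrix translation and the compact-case Riesz representation, is routine once this description is available. I would also remark that only the weaker sum-of-squares form of the description is genuinely needed for this argument, as the text flags after Proposition \ref{markov}.
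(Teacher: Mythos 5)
Your proposal is correct and follows essentially the same route as the paper: (ii)$\Leftrightarrow$(iii) via the quadratic-form identities (\ref{Lspghn2n+1}), and (i)$\Leftrightarrow$(ii) by combining the Markov--Luk\'acs description of $\Pos([a,b])_{2n+1}$ (Proposition \ref{markov}, where the paper's ``$(a-x)q_n^2$'' is evidently a typo for $(x-a)q_n^2$, as you write it) with the compact-case representation result, Proposition \ref{integralrepcompactcase}, applied to $E=\R[x]_m$ on $[a,b]$. Your closing remark that the weaker sum-of-squares form of the decomposition already suffices is exactly the point the paper makes after Proposition \ref{markov}.
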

\textit{Proofs of Theorems \ref{exittruncevenhaus} and \ref{existoddhaus}:}

(i)$\leftrightarrow$(ii): We  apply Proposition \ref{integralrepcompactcase} to the  subspace  $E=\R[x]_m$  of $C([a,b];\R)$. 
Then $L_s$ is a truncated $[a,b]$-moment functional if and only if $L_s(p)\geq 0$ for all  $p\in E_+= {\Pos}([a,b])_m.$ By  Proposition \ref{markov}  this is equivalent to condition (ii). 

(ii)$\leftrightarrow$(iii)\, follows at once from  the identities (\ref{Lspghn2n}) and (\ref{Lspghn2n+1}).$\hfill\qed$

\medskip

These  theorems give characterize truncated moment sequences  in terms of the positivity of two Hankel matrices. These are very useful criteria 
that can be verified by means of well-known criteria from linear algebra.

Note that  since\, $p^2 =(b-a)^{-1}[(b-x)p^2 +(x-a)p^2]$,\, condition (ii) in Theorem \ref{existoddhaus}  implies in particular that $L_s(p^2)\geq 0$ for  $p\in \R[x]_n.$

\section{The moment cone  and its boundary points}\label{momentcone}

Let $\cM_+$ denote the Radon measures on $[a,b]$.

\begin{definition}\label{momentconedim1}
 The \emph{moment cone}\index{Truncated moment problem on a bounded interval!  moment cone}\index{Moment cone} ${\cS}_{m+1}$\index[sym]{SCm@${\cS}_{m+1}$}\index[sym]{cBm@$\mathsf{c}_{m+1}$}   and the \emph{moment curve}\index{Truncated moment problem on a bounded interval! moment curve}\index{Moment curve} $\mathsf{c}_{m+1}$ are 
\begin{align*}
{\cS}_{m+1}&:=\{s=(s_0,s_1,\dots,s_m): s_j=\int_a^b t^j\, d\mu(t), j=0,\dots,m,\, \mu\in \cM_+\, \},\\ \mathsf{c}_{m+1}&=\{{\gs}(t):=(1,t,t^2,\dots,t^m):~ t\in [a,b]\, \}
\end{align*}
\end{definition}
If we identify the row vectors $s$ and ${\gs}(t)$ with the corresponding column vector $s^T$ and ${\gs}(t)^T$, then  become ${\cS}_{m+1}$ and  $\mathsf{c}_{m+1}$ as subsets of $\R^{m+1}$. The curve $\mathsf{c}_{m+1}$ is contained in ${\cS}_{m+1}$, since ${\gs}(t)$ is the moment sequence of the delta measure $\delta_t$. 
 It is not difficult to show that \emph{the moment cone ${\cS}_{m+1}$ is a closed convex cone in $\R^{m+1}$ with nonempty interior and that it is  the conic  hull\, $\cC_{m+1}$ of the moment curve $\mathsf{c}_{m+1}$.}


Each moment sequence $s\in{\cS}_{m+1}, s\neq 0$, has a  $k$-atomic representing measure 
\begin{align}\label{muatomicrepr}
\mu=\sum_{j=1}^k m_j\delta_{t_j},
\end{align}
where $k\leq m+1$ and $t_j\in [a,b]$ for all $j$. (This follows at once from the Richter-Tchakaloff theorem \ref{richter} proved in Lecture 8.)

 That  $\mu$ is $k$-atomic means that  the points $t_j$ are pairwise distinct and   $m_j>0$.
 The numbers $t_j$ are called \emph{roots}  or \emph{atoms} of $\mu$.
 Without loss of generality we assume
\begin{align}\label{orderroots}
a\leq t_1<t_2<\dots<t_k\leq b.
\end{align}

 To formulate our next theorem the following notion is convenient.

\begin{definition}\label{definitionindex}
 Let $s\in {\cS}_{m+1}, s\neq 0$. 
 The \emph{index} $\ind (\mu)$ of the $k$-atomic representing measure (\ref{muatomicrepr}) for $s$ is the sum  
\begin{align*}
 \ind (\mu)\index[sym]{mGu@$\ind (\mu)$}\index[sym]{eGpsilont@$\epsilon (t)$}:=\sum_{j=1}^k \epsilon(t_j),\quad \text{ where }~~ \epsilon (t):=2\quad \text{ for }~~ t\in (a,b)~~\text{ and }~~ \epsilon (a)=\epsilon (b):=1.
\end{align*}
The \emph{index}\index{Truncated moment problem on a bounded interval! index of representing measure}  $\ind (s)$ of $s$ is the minimal index of all representing measures (\ref{muatomicrepr}) for $s$.
\end{definition}
The reason why  boundary points and interior points are counted differently is  the following fact which is used in the  proofs:  If  $t_0\in [a,b]$ is a zero of $p\in {\Pos}([a,b])_m$ with multiplicity $k$, then  $k\geq 2$\, if\, $t_0\in (a,b)$, 
while $k=1$ is possible if $t_0=a,b$.


The following theorem characterize the boundary points of the moment cone.
\begin{theorem}\label{boundarypointsmn+1}
For $s\in {\cS}_{m+1}$, $s\neq 0$, the following statements are equivalent:
\begin{itemize}
\item[\em (i)]~  $s$ is a boundary point\index{Truncated moment problem on a bounded interval! moment cone! boundary point} of the convex cone  ${\cS}_{m+1}$.
\item[\em (ii)]~  $\ind (s)\leq m$.
\item[\em (iii)]~ There exists a $p\in {\Pos}([a,b])_m, p\neq  0$, such that $L_s(p)=0$.
\item[\em (iv)]~ $\un{D}_m(s)=0$\, or\, ${\ov{D}}_m(s)=0.$
\item[\em (v)]~  $s$ has a unique representing measure $\mu\in \cM_+$.
\end{itemize}
If $p$ is as in (iii) and $\mu$ is as in (v), then\, $\supp\, \mu \subseteq \{t\in [a,b]:p(t)=0\}$.
\end{theorem}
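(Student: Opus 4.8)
The plan is to prove the chain of equivalences cyclically, exploiting the geometry of the moment cone $\cS_{m+1}$ as the conic hull of the moment curve $\mathsf{c}_{m+1}$, together with the Hankel-matrix criteria from Theorems~\ref{exittruncevenhaus} and \ref{existoddhaus} and the atomic representation \eqref{muatomicrepr}. A convenient order is (i)$\to$(iii)$\to$(ii)$\to$(v)$\to$(i) for the main loop, then handling (iv) separately by relating the vanishing of a Hankel determinant to the existence of a nonzero $p$ with $L_s(p)=0$, and finally proving the closing support statement.

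First I would prove (i)$\to$(iii) by a separation argument. If $s$ lies on the boundary of the closed convex cone $\cS_{m+1}\subseteq\R^{m+1}$, there is a supporting hyperplane: a nonzero vector $\vec{p}\in\R^{m+1}$ with $\langle \vec{p},s\rangle=0$ and $\langle \vec{p},s'\rangle\geq 0$ for all $s'\in\cS_{m+1}$. Since $\cS_{m+1}$ is the conic hull of $\{\gs(t):t\in[a,b]\}$, testing against $\gs(t)$ shows that the polynomial $p$ with coefficient vector $\vec{p}$ satisfies $p(t)=\langle\vec{p},\gs(t)\rangle\geq 0$ on $[a,b]$, so $p\in{\Pos}([a,b])_m$, $p\neq 0$, and $L_s(p)=\langle\vec{p},s\rangle=0$. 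For (iii)$\to$(ii): given such a $p$ and any minimal-index atomic representation \eqref{muatomicrepr} with atoms $t_j$, the identity $0=L_s(p)=\int p\,d\mu=\sum_j m_j\,p(t_j)$ with $m_j>0$ and $p\geq 0$ forces $p(t_j)=0$ for every atom; invoking the multiplicity fact stated after Definition~\ref{definitionindex} (an interior zero of a polynomial in ${\Pos}([a,b])_m$ has multiplicity $\geq 2$, counted with weight $\epsilon(t_j)$), one counts $\ind(\mu)=\sum_j\epsilon(t_j)\leq\deg p\leq m$, giving $\ind(s)\leq m$.

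The step I expect to be the main obstacle is (ii)$\to$(v), the \emph{uniqueness} of the representing measure when $\ind(s)\leq m$; this is where the real content of the truncated theory lives. The plan here is to show that a representing measure $\mu$ of minimal index whose support is pinned to the zero set of a fixed $p\in{\Pos}([a,b])_m$ is rigidly determined: its atoms are forced to lie among the (finitely many) roots of $p$ in $[a,b]$, and the index/degree count leaves no freedom to redistribute mass. Concretely, if $\mu_1,\mu_2$ are two representing measures and $p$ is as in (iii), both are supported in $\{p=0\}$, a finite set, so their atoms come from a common finite pool; matching the prescribed moments $s_0,\dots,s_m$ then becomes a linear system whose Vandermonde-type structure (from distinct nodes $t_j$) forces the weights, hence $\mu_1=\mu_2$. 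The delicate point is verifying that the number of admissible nodes, weighted by $\epsilon$, cannot exceed the number of moment constraints in a way that would permit two distinct solutions; this is exactly controlled by the inequality $\ind(s)\leq m$ and the boundary/interior multiplicity asymmetry.

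For (v)$\to$(i) I would argue contrapositively: if $s$ is an interior point of $\cS_{m+1}$, then $s$ admits representing measures of maximal index (one can perturb an atomic measure by splitting or shifting atoms into the open interval without leaving $\cS_{m+1}$), producing two distinct representing measures and contradicting uniqueness. The equivalence of (iv) with the others follows by translating (iii) into matrix language through the quadratic-form identities \eqref{Lspghn2n}--\eqref{Lspghn2n+1}: a nonzero $p\in{\Pos}([a,b])_m$ with $L_s(p)=0$ corresponds, via the Markov--Lukacs representation of ${\Pos}([a,b])_m$ in Proposition~\ref{markov}, to a nonzero vector in the kernel of $\un{H}_m(s)$ or of $\ov{H}_m(s)$, equivalently to $\un{D}_m(s)=0$ or $\ov{D}_m(s)=0$, since $s$ already satisfies the positive-semidefiniteness of both Hankel matrices by Theorems~\ref{exittruncevenhaus}--\ref{existoddhaus}. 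Finally, the closing assertion $\supp\mu\subseteq\{t\in[a,b]:p(t)=0\}$ is precisely the observation already extracted in the (iii)$\to$(ii) step: for the representing measure $\mu$ and the distinguished $p$, the computation $0=L_s(p)=\int p\,d\mu$ with $p\geq 0$ on $[a,b]$ forces $p$ to vanish $\mu$-almost everywhere, hence on $\supp\mu$.
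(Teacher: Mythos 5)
Most of your architecture is sound, and several steps are essentially complete: the supporting-hyperplane argument for (i)$\to$(iii), the zero-counting with the weights $\epsilon(t_j)$ for (iii)$\to$(ii), the translation of (iii) into (iv) via Proposition~\ref{markov} and the identities \eqref{Lspghn2n}--\eqref{Lspghn2n+1} together with positive semidefiniteness of both Hankel matrices, and the closing support statement. Your Vandermonde uniqueness argument is also correct, and in fact simpler than you fear: once every representing measure is supported on the zero set of $p$, which contains at most $\deg p\leq m$ points, the $m+1$ moment equations with distinct nodes determine the weights uniquely, so no ``delicate'' counting weighted by $\epsilon$ is needed there. (The paper itself only cites [MP, Theorem~10.7], so I judge your proposal on correctness alone.) There is, however, a structural flaw: inside your proof of (ii)$\to$(v) you invoke ``$p$ as in (iii)'', but in your cycle (iii) is deduced from (i), not from (ii), so this is circular as written. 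You must supply (ii)$\to$(iii), which is easy but uses the index in an essential way: given a representing measure $\mu$ with $\ind(\mu)\leq m$, take $p$ to be the product of $(x-t_j)^2$ over the interior atoms, times $(x-a)$ if $a$ is an atom and $(b-x)$ if $b$ is an atom; then $p\in{\Pos}([a,b])_m$, $p\neq 0$, $\deg p=\ind(\mu)\leq m$, and $L_s(p)=0$.

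The genuine gap is (v)$\to$(i). Your contrapositive sketch --- ``perturb an atomic measure by splitting or shifting atoms into the open interval without leaving $\cS_{m+1}$'' --- does not work as stated: splitting or moving atoms changes the moment vector, so the perturbed measure no longer represents $s$; the constraint is not membership in the cone but equality of all $m+1$ moments, and your sketch never addresses how to restore them. A correct use of interiority is different: if $s$ is an interior point of $\cS_{m+1}$, fix $\varepsilon>0$ with the ball $B(s,\varepsilon)\subseteq \cS_{m+1}$ and set $R=\max_{t\in[a,b]}\|\gs(t)\|$. For any $t\in[a,b]$ and any $0<\eta<\varepsilon/R$, the vector $s-\eta\,\gs(t)$ lies in $\cS_{m+1}$, hence has a representing measure $\nu_t$, and $\mu_t:=\nu_t+\eta\,\delta_t$ represents $s$ and has $t$ as an atom of mass at least $\eta$. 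If all the measures $\mu_t$, $t\in[a,b]$, coincided with a single measure $\mu^*$, then $\mu^*$ would carry mass at least $\eta$ at every point of $[a,b]$, contradicting $\mu^*([a,b])=s_0<\infty$; hence two of them differ, and $s$ has at least two representing measures, i.e.\ (v) fails. With this replacement and the (ii)$\to$(iii) patch above, your proof closes.
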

\begin{proof}
 {[MP, Theorem~ 10.7]}.
$\hfill \qed$
\end{proof}

This theorem shows that several important properties of $s$ are equivalent.
In particular, the last assertion is crucial: The atoms of $\mu$ are contained in the zero set of the polynomial $p\in {\Pos}([a,b])_m$ satisfying $L_s(p)=0$. 

By (iii),   the unique representing measure  $\mu$ has    $\ind\, (\mu)=\ind (s)\leq m$. Hence, if all $t_j$ are in the open interval $(a,b)$, then $k\leq \frac{m}{2}$. If precisely one $t_j$ is an end point, then $k\leq \frac{m+1}{2}$ and if both end points are among the $t_j$, then $k\leq \frac{m}{2}+1$. The case $k=\frac{m}{2}+1$ can only happen if $m$ is even and both $a$ and $b$ are among the $t_j$. Thus, all boundary points of ${\cS}_{m+1}$ can be represented by $k$-atomic measures with $k\leq\frac{m}{2}+1.$

\section{Interior points and principal measures}\label{interiorpoints}

 Now we consider the interior points of the moment cone.
 Since $s\in {\cS}_{m+1}$ is an interior point if and only if $s$ is \emph{not} a boundary point, Theorem \ref{boundarypointsmn+1} yields the following:
 $s\in {\cS}_{m+1}$ is an interior point of  ${\cS}_{m+1}$ if and only if the Hankel matrices $\un{H}_m(s)$\, and\, $\ov{H}_m(s)$ are positive definite, or equivalently,
$\un{D}_m(s)>0$\, and\, $\ov{D}_m(s)>0$

 Throughout this section, we suppose that \emph{$s$ is an interior point of}  ${\cS}_{m+1}$.

Then, by Theorem \ref{boundarypointsmn+1},  we have $\ind\, ( s)\geq m+1$.
Thus, it is natural to ask whether there are representing measure which  have  the minimal possible index $m+1$ and  to describe these measures provided they exists.
\begin{definition} 
A  representing measure $\mu$ of the form (\ref{muatomicrepr}) for $s$  is called\\ $\bullet$~  \emph{principal}\index{Truncated moment problem on a  bounded interval! principal measure}  if\, $\ind\, (\mu)=m+1$,\\
$\bullet$~ \emph{upper principal} if it is principal and $b$ is an atom of $\mu$,\\$\bullet$~ \emph{lower principal} if it is principal and $b$ is not an atom of $\mu$.
\end{definition}
Thus, for principal measures the index is equal to the number of prescribed moments.
Then we have  following existence theorem for such measures.
\begin{theorem}\label{principallowerupper}
Each  interior point $s$ of ${\cS}_{m+1}$ has  a unique upper principal representing measure $\mu^+$ and a  unique lower  principal representing measure $\mu^-$.
\end{theorem}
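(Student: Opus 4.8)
The plan is to prove existence and uniqueness of the upper and lower principal measures for an interior point $s$ of $\cS_{m+1}$ by exploiting the last assertion of Theorem~\ref{boundarypointsmn+1}: the atoms of a representing measure satisfying $L_s(p)=0$ for some $p\in \Pos([a,b])_m$ are confined to the zeros of $p$. A principal measure has index exactly $m+1$, the \emph{minimal} value for an interior point, so its atoms must be as few and as ``efficient'' as possible. The strategy is to construct, for each prescribed end-point behavior, a distinguished polynomial in $\Pos([a,b])_m$ whose prescribed zero set determines the atoms, and then to solve a nondegenerate linear system for the masses. I expect the main obstacle to be showing that the resulting node polynomial really does lie in $\Pos([a,b])_m$ and that the induced linear moment equations have a \emph{positive} solution.

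First I would treat the upper principal measure $\mu^+$ (the case in which $b$ is an atom). Because $\ind(\mu^+)=m+1$ and $b$ contributes $\epsilon(b)=1$, the interior atoms contribute the remaining $m$ to the index in units of $2$, and a possible atom at $a$ contributes $1$. One checks a parity/counting argument (distinguishing $m=2n$ and $m=2n+1$, as the existence theorems \ref{exittruncevenhaus} and \ref{existoddhaus} already do) to pin down the exact number of interior atoms and whether $a$ is forced to appear. The idea is to look for a representing measure supported on prescribed nodes $a\le t_1<\dots<t_k\le b$ with $b$ among them; the node polynomial
\begin{align*}
p(x)=(b-x)\,\prod_{i}(x-\tau_i)^2
\end{align*}
(with $\tau_i$ the interior nodes, and an extra factor $(x-a)$ inserted when the parity forces $a$ to be an atom) is nonnegative on $[a,b]$ and has degree $\le m$, hence lies in $\Pos([a,b])_m$. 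The requirement $L_s(p)=0$ together with the boundary-point description then forces $\supp\mu\subseteq\{p=0\}$, so the nodes of any such principal measure are exactly the zeros of this $p$.

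For the construction itself I would turn the problem into a pair of coupled conditions. Writing the sought measure as $\mu=\sum_j m_j\delta_{t_j}$, the $m+1$ moment equations $s_\ell=\sum_j m_j t_j^\ell$ for $\ell=0,\dots,m$ become a linear system in the masses $m_j$ once the nodes $t_j$ are fixed; the Vandermonde structure of $\gs(t_1),\dots,\gs(t_k)$ guarantees that the nodes and masses are uniquely recoverable from $s$ provided they are consistent, which gives uniqueness. For existence one may argue by a continuity/degree argument, deforming from a known boundary point, or invoke the Gauss--Jacobi quadrature construction: the interior nodes are the roots of the orthogonal polynomial of appropriate degree associated with the (interior, hence positive definite by Theorem~\ref{boundarypointsmn+1}) Hankel form $\un H_m$ or $\ov H_m$, and positivity of the Hankel matrix guarantees these roots are real, simple, and lie in the open interval. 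The positive definiteness of the relevant Hankel matrix is precisely what makes the Gauss quadrature weights $m_j$ strictly positive, yielding a genuine $k$-atomic representing measure.

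Finally, the lower principal measure $\mu^-$ (with $b$ not an atom) is handled by the symmetric construction, using the node polynomial $(x-a)\prod_i(x-\tau_i)^2$ (or $\prod_i(x-\tau_i)^2$ according to parity) and the complementary Hankel form; alternatively one reflects the interval $[a,b]$ via $x\mapsto a+b-x$, which interchanges the roles of $a$ and $b$ and hence the two principal measures, so $\mu^-$ follows from the result for $\mu^+$. The hard part throughout is verifying that the node polynomial genuinely sits in $\Pos([a,b])_m$ and that the associated quadrature weights are strictly positive; once positive definiteness of $\un H_m(s)$ and $\ov H_m(s)$ (the interior-point characterization recalled just before the theorem) is invoked, both facts follow from the classical Gauss--Jacobi quadrature theory, and uniqueness is forced by the boundary-point description of the zero set together with the Vandermonde invertibility of the node system.
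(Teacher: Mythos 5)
Your central mechanism fails on a degree count, and the failure cannot be repaired. By the parity argument you yourself invoke, an upper principal measure has atoms $\tau_1<\dots<\tau_n$ in $(a,b)$ plus $b$ when $m=2n$, and $a$, $\tau_1,\dots,\tau_n$, $b$ when $m=2n+1$; since zeros of a polynomial nonnegative on $[a,b]$ that lie in $(a,b)$ have multiplicity at least $2$, the minimal degree of a node polynomial vanishing on these atoms is $2n+1$, resp.\ $2n+2$, i.e.\ exactly $m+1$ in both cases --- this is precisely what the index $\ind(\mu)=m+1$ encodes. So $(b-x)\prod_i(x-\tau_i)^2$ (with or without the factor $(x-a)$) does \emph{not} lie in $\Pos([a,b])_m$, contrary to your claim. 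Worse, no substitute exists: by Theorem~\ref{boundarypointsmn+1}, (iii)$\Rightarrow$(i), a nonzero $p\in\Pos([a,b])_m$ with $L_s(p)=0$ exists \emph{only if} $s$ is a boundary point of $\cS_{m+1}$, whereas here $s$ is interior. Hence the step ``$L_s(p)=0$ together with the boundary-point description forces $\supp\mu\subseteq\{x:p(x)=0\}$'' is vacuous in the setting of Theorem~\ref{principallowerupper}, and the uniqueness argument you build on it collapses. (Note also that the paper offers no proof to compare against --- it cites [MP, Theorem~10.17] --- so your argument must stand on its own.)

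The quadrature half of your proposal is the right idea and can carry the whole proof, but then it must be done precisely, including uniqueness, and several details differ from what you wrote. The interior nodes are roots of monic polynomials orthogonal with respect to \emph{weighted} forms of the complementary parity: e.g.\ for $m=2n$ the interior nodes of $\mu^+$ are the roots of the monic degree-$n$ polynomial $P$ with $L_s((b-x)Pq)=0$ for all $q\in\R[x]_{n-1}$, whose Gram matrix is $H_{n-1}(bs-Es)$, and for $m=2n+1$ the nodes of $\mu^-$ come from the unweighted form $H_n(s)$ --- in no case from $\un{H}_m(s)$ or $\ov{H}_m(s)$ as you state. Positive definiteness of these Gram matrices gives real simple roots, but their location in $(a,b)$ requires the strict positivity of $L_s$ on $\Pos([a,b])_m\setminus\{0\}$ (equivalently, positive definiteness of \emph{both} $\un{H}_m(s)$ and $\ov{H}_m(s)$), via the standard sign-change argument; positivity of the weights follows from the same strict positivity applied to polynomials such as $(b-x)\prod_{j\neq i}(x-\tau_j)^2$, which \emph{do} have degree $\leq m$. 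Uniqueness then follows either from uniqueness of the monic orthogonal polynomial, or most simply from the observation that two principal measures of the same type have combined support in at most $m+1$ points, so their signed difference annihilates $\R[x]_m$ and Lagrange interpolation forces it to vanish. Finally, your reflection shortcut $x\mapsto a+b-x$ exchanges $\mu^+$ and $\mu^-$ only when $m$ is even; for $m=2n+1$ both endpoints are atoms of $\mu^+$ and neither is an atom of $\mu^-$, so reflection preserves each type rather than swapping them.
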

\begin{proof}
 {[MP, Theorem~ 10.17]}.
 $\hfill\qed$
\end{proof}

Let $t^\pm_j$ denote the roots of  the principal measures $\mu^\pm$. The location of these roots 
$t^\pm_j$   in the even  and  odd cases are illustrated 
by the following scheme:
\begin{align*}
 m=2n,\quad\quad\quad~~\mu^+: &~~ a<t_1^+<t_2^+<\dots<t_{n+1}^+=b,\\ m=2n,\quad\quad\quad ~~\mu^-:&~~ a=t_1^-<t_2^-<\dots<t_{n+1}^-<b,\\
 m=2n+1,\quad\quad \mu^+: &~~ a=t_1^+<t_2^+<\dots<t_{n+1}^+<t^+_{n+2}=b,\\ m=2n+1,\quad\quad
  \mu^-: &~~a<t_1^-<t_2^-<\dots<t_{n+1}^-<b.
\end{align*}
Further, we have
\begin{align*}
 m=2n:\quad\quad~~ & a=t_1^-<t_1^+<t_2^-<t_2^+< \dots <t_n^+<t_{n+1}^-<t_{n+1}^+=b,\\
 m=2n+1:\quad  & a=t_1^+<t_1^-<t_2^+<t_2^-<\dots<t_{n+1}^+<t_{n+1}^-<t^+_{n+2}=b.
\end{align*}
These formulas show that the roots of $\mu^+$ and $\mu^-$ are strictly interlacing.
\smallskip 
\begin{definition}
A representing measure $\mu$ of  the form  (\ref{muatomicrepr}) for $s$ is called \emph{canonical} if\, $\ind (\mu) \leq m+2$.
\end{definition}
While there exist precisely two principal measures by Theorem \ref{principallowerupper}, there is a one-parameter family of canonical measures, as the following theorem shows.
\begin{theorem}
For each point $\xi\in (a,b)$ there exists a unique canonical representing measure $\mu_\xi$ of $s$ which has $\xi$ as an atom.
\end{theorem}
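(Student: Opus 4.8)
The plan is to fix $\xi \in (a,b)$ and realize the canonical measure $\mu_\xi$ as a boundary point of a suitably restricted moment problem. The key idea is that forcing $\xi$ to be an atom is equivalent to subtracting off a small multiple of $\delta_\xi$ and requiring the remainder to be a moment sequence. Concretely, I would study the half-line $\{s - \rho\, \gs(\xi) : \rho > 0\}$ emanating from the interior point $s$ in the direction $-\gs(\xi)$. Since $s$ is interior and $\gs(\xi) \in \cC_{m+1} = {\cS}_{m+1}$, for small $\rho>0$ the point $s_\rho := s - \rho\,\gs(\xi)$ still lies in the open cone, while for large $\rho$ it leaves it. By closedness and convexity of ${\cS}_{m+1}$ (stated in Section \ref{momentcone}), there is a largest value $\rho_0 > 0$ for which $s_{\rho_0} \in {\cS}_{m+1}$, and this $s_{\rho_0}$ is necessarily a \emph{boundary} point of the cone.

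The core of the argument then invokes Theorem \ref{boundarypointsmn+1}: the boundary point $s_{\rho_0}$ has a \emph{unique} representing measure $\nu$, whose atoms lie in the zero set of the annihilating polynomial $p \in {\Pos}([a,b])_m$ satisfying $L_{s_{\rho_0}}(p)=0$. First I would verify that $\xi$ is \emph{not} an atom of $\nu$: if it were, then $s_{\rho_0} - \rho' \gs(\xi)$ would remain in the cone for some $\rho'>0$, contradicting maximality of $\rho_0$. Setting $\mu_\xi := \nu + \rho_0 \delta_\xi$ then produces a representing measure of $s$ having $\xi$ as an atom. Its index satisfies $\ind(\mu_\xi) = \ind(\nu) + \epsilon(\xi) = \ind(\nu) + 2$, and since $\ind(s_{\rho_0}) \leq m$ by Theorem \ref{boundarypointsmn+1}(ii), we obtain $\ind(\mu_\xi) \leq m+2$, so $\mu_\xi$ is indeed canonical.

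For uniqueness, suppose $\mu'$ is any canonical representing measure of $s$ with $\xi$ as an atom, say with mass $\rho' > 0$ at $\xi$. Then $\mu' - \rho'\delta_\xi$ represents $s - \rho'\gs(\xi) = s_{\rho'}$, forcing $s_{\rho'} \in {\cS}_{m+1}$ and hence $\rho' \leq \rho_0$. The canonical constraint $\ind(\mu') \leq m+2$ together with $\epsilon(\xi)=2$ means the remaining atoms contribute index at most $m$, so $s_{\rho'}$ is a boundary point, which by maximality of $\rho_0$ pins down $\rho' = \rho_0$. Then $\mu' - \rho_0\delta_\xi$ is a representing measure of the boundary point $s_{\rho_0}$, and the uniqueness clause of Theorem \ref{boundarypointsmn+1}(v) forces it to equal $\nu$, whence $\mu' = \mu_\xi$.

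The main obstacle I anticipate is the bookkeeping around the index and the precise claim that $\xi$ is not already an atom of $\nu$ at the maximal parameter $\rho_0$; this is where the nonstrict versus strict inequalities in Definition \ref{definitionindex} and the weights $\epsilon(t)$ must be handled with care, and where one must confirm that stripping $\rho_0\delta_\xi$ from a canonical measure lands exactly on a boundary point rather than merely inside the cone. Establishing existence of the maximal $\rho_0$ rigorously (using closedness of ${\cS}_{m+1}$ plus the fact that $s_\rho$ eventually exits, which follows because $s_0 = L_s(1) - \rho$ must stay nonnegative) is routine but should be stated explicitly.
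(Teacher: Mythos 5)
Your proposal is correct and takes essentially the same route as the proof of [MP, Corollary~10.13] that the paper cites for this theorem: subtract the maximal admissible mass $\rho_0$ at $\xi$, observe that $s-\rho_0\,\gs(\xi)$ is a boundary point of $\cS_{m+1}$, and then invoke Theorem \ref{boundarypointsmn+1} for both existence (adding back $\rho_0\delta_\xi$ gives index $\leq m+2$) and uniqueness. The only step to tighten is where you conclude $\rho'=\rho_0$ ``by maximality'': that $\rho'<\rho_0$ is impossible follows not from maximality itself but from the standard convexity fact that the segment joining the interior point $s$ to $s_{\rho_0}\in\cS_{m+1}$ consists of interior points except possibly its endpoint, so a boundary point $s_{\rho'}$ on this segment must be that endpoint.
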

\begin{proof}
 {[MP, Corollary~10.13]}.
$\hfill \qed$
\end{proof}

A deeper study of the set  of representing measures can be found in the books by Karlin and Studden (1966) and by Krein and Nudelman (1977).

%



%

%






\makeatletter
\renewcommand{\@chapapp}{Lecture}
\makeatother

{\chapter{The moment problem on compact semi-algebraic sets}}
\noindent
Abstract:\\
\textit{The moment problem for compact semi-algebraic sets is investigated and solved. The interplay between 
 moment problem and Positivstellens\"atze of real algebraic geometry is discussed.}
\bigskip

In this Lecture we enter the  multidimensional moment problem by treating the moment problem for compact subsets of $\R^d$ that are defined by means of finitely many polynomial inequalities. In Lecture 1 we have seen how the solvability criteria  for intervals have been derived from descriptions of positive polynomials. This suggests that real algebraic geometry and Positivstellens\"atze might be  useful tools for the  multidimensional moment problem. For compact semi-algebraic sets this is indeed true and will be elaborated in this Lecture.

We  begin by reviewing some  concepts and results from real algebraic geometry. References are the excellent books by Marshall (2008) and Prestel and Delzell (2001).
\bigskip

\section{Basic notions from real algebraic geometry}


 In this section, $\sA$ denotes a \textbf{unital commutative real algebra}.
 The reader might always think of the polynomial algebra $\R_d[\ux]\equiv \R[x_1,\dots,x_d]$. 

Positivity in $\sA$ is described by means of the following notions.

\begin{definition}
 A \emph{ quadratic module}\index{Quadratic module} of\, $\sA$ is a subset $Q$ of $\sA$ such that 
\begin{align}\label{axiomquadmodule}
Q + Q \subseteq Q,~~ 1 \in Q,~~
a^2 Q  \in Q~~\text{ for all }~~a\in  \sA.
\end{align} 
 A quadratic module $T$ is called a {\em preordering}\index{Preordering} if\, $T\cdot T\subseteq T$. 
\end{definition}

\begin{example}
 Obviously, $Q=\sum \R_d[\ux]^2+x_1\sum \R_d[\ux]^2+\dotsb+ x_d\sum \R_d[\ux]^2$ is a quadratic module of $\R_d[\ux]$.
 If  $d\geq 2$, then product of elements of $Q$ are not in $Q$ in general, so $Q$ is not a preordering. \hfill $\circ$ 
\end{example}

Obviously, the set $\sum  \sA^2$ of all finite sums of squares of elements $a\in \sA$ is the smallest quadratic module and also the smallest  preordering of $\sA$.

 Let $\mathsf{f}=\{f_1,\dots,f_k\}$ be a finite subset of $\R_d[\ux]$. The set
\begin{align}\label{basicclosed}
K(\mathsf{f})\equiv K(f_1,\dots,f_k)\index[sym]{KCfB@$K(\mathsf{f}), K(f_1,\dots,f_k)$}=\{x\in \R^d: f_1(x)\geq 0,\dots,f_k(x)\geq 0\}
\end{align}
 is called the \emph{basic closed semi-algebraic set associated with $\mathsf{f}$}.\index{Basic closed semi-algebraic set}\index{Semi-algebraic set} It is easily seen that  
\begin{align}\label{quadraticqf}
Q(\mathsf{f})\equiv Q(f_1,\dots,f_k)\index[sym]{QAfB@$Q(\mathsf{f}), Q(f_1,\dots,f_k)$}= \big\{\, \sigma_0+ f_1 \sigma_1+\dots+f_k \sigma_k : \,\sigma_0,\dots,\sigma_k\in \sum\R_d[\ux]^2\big\}
\end{align}
 is the \emph{quadratic module generated by the set} $\mathsf{f}$ and that
\begin{align}\label{preorderingtf}
T(\mathsf{f})\equiv T(f_1,\dots,f_k)\index[sym]{TAfB@$T(\mathsf{f}), T(f_1,\dots,f_k)$}=\bigg\{  \sum_{e=(e_1,\dots,e_k)\in \{0,1\}^k} f_1^{e_1}\cdots f_k^{e_k} \sigma_e:\, \sigma_e\in \sum\R_d[\ux]^2 \, \bigg\}
\end{align}
is the {\em preordering generated by the set $\mathsf{f}$}. Obviously, $Q(\mathsf{f})\subseteq T(\mathsf{f})$
and all polynomials from $T(\mathsf{f})$ are nonnegative on $K(\mathsf{f})$.

 The following \textbf{Positivstellensatz of Krivine--Stengle} is a central results of real algebraic geometry.
 It describes nonnegative resp. positive polynomials on $K(\mathsf{f})$ in terms of \emph{quotients} of elements of  $T(\mathsf{f})$. 
\begin{theorem}\label{krivinestengle}\index{Theorem! Krivine--Stengle}   Let $K(\mathsf{f})$ and $T(\mathsf{f})$ be as above and let $g\in \R_d[\ux]$. Then we have:
\begin{itemize}
\item[\textnormal{(i)}]\emph{(Positivstellensatz)}~ $g(x)>0$  for all\, $x\in K(\mathsf{f})$\, if and only if there exist polynomials $p,q\in T(\mathsf{f})$  such that $pg=1+q$.
\item[\textnormal{(ii)}]\emph{(Nichtnegativstellensatz)} $g(x)\geq 0$ for all $x\in K(\mathsf{f})$ if and only if there exist $p,q\in T(\mathsf{f})$ and $m\in \N$ such that $pg=g^{2m}+q$.
\end{itemize}
\end{theorem}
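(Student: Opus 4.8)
The plan is to reduce both nontrivial implications to the \emph{abstract Positivstellensatz} for preorderings of a commutative ring, and then transfer from the real spectrum down to genuine points of $\R^d$. I would first dispose of the easy (reverse) directions. If $pg=1+q$ with $p,q\in T(\mathsf{f})$, then since every element of $T(\mathsf{f})$ is nonnegative on $K(\mathsf{f})$ we have $p(x)g(x)=1+q(x)\ge 1$ for $x\in K(\mathsf{f})$; together with $p(x)\ge 0$ this forces $g(x)>0$. Similarly, from $pg=g^{2m}+q$ one sees that $g(x)<0$ at some $x\in K(\mathsf{f})$ would make the left side $p(x)g(x)\le 0$ while the right side $g(x)^{2m}+q(x)>0$, a contradiction; hence $g\ge 0$ on $K(\mathsf{f})$.

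For the forward direction of (i) I would argue by contraposition. Put $A=\R_d[\ux]$ and $T=T(\mathsf{f})$, and suppose that no $p,q\in T$ satisfy $pg=1+q$. The abstract Positivstellensatz identifies solvability of $pg=1+q$ within $T$ with the strict positivity of $g$ at every point of the real spectrum of $A$ whose associated ordering contains $T$. The mechanism behind this equivalence is a Zorn's lemma argument: the failure of the identity permits one to enlarge $T$ by treating $-g$ as nonnegative while keeping the preordering proper (no $-1$), and then to extend it to a \emph{maximal} proper preordering $P$. The decisive algebraic lemma is that a maximal proper preordering is automatically a total ordering, with $P\cup(-P)=A$ and $P\cap(-P)$ a prime ideal. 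This produces an ordering $P\supseteq T$ with $-g\in P$, i.e.\ at which $g$ is nonpositive.

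Such an ordering yields, through the residue field at the prime $P\cap(-P)$ and its real closure, a homomorphism $\phi\colon A\to R$ into a real closed field $R$ with $\phi(T)\subseteq R_{\ge 0}$ and $\phi(g)\le 0$. Since every $f_i$ lies in $T$, the tuple $(\phi(x_1),\dots,\phi(x_d))\in R^d$ is a point of $K(\mathsf{f})$ computed over $R$ at which $g\le 0$; the Tarski--Seidenberg transfer principle (equivalently the Artin--Lang theorem) then yields a genuine point $x\in K(\mathsf{f})\subseteq\R^d$ with $g(x)\le 0$, contradicting $g>0$ on $K(\mathsf{f})$. Part (ii) runs through the identical machinery, the nonnegativity variant of the abstract Positivstellensatz producing the identity $pg=g^{2m}+q$, with the even power $g^{2m}$ accounting for the orderings at which $g$ vanishes. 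The genuine difficulty, and the step I expect to be hardest, is the abstract Positivstellensatz itself --- establishing that a maximal proper preordering is an ordering and converting it into a sign-preserving homomorphism into a real closed field; the Tarski--Seidenberg transfer is the second essential pillar, and the remaining steps are formal.
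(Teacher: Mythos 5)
Your proposal is correct and follows essentially the same route the paper takes: the paper checks only the easy ``if'' directions and defers the ``only if'' directions to Marshall and Prestel--Delzell, whose proofs are exactly the argument you outline (properness of $T(\mathsf{f})-gT(\mathsf{f})$, extension to a maximal proper preordering which is an ordering, passage to a real closed residue field, and descent to a real point of $K(\mathsf{f})$ via Tarski--Seidenberg/Artin--Lang). Your verifications of the reverse implications and your identification of the two pillars (the abstract Positivstellensatz and the transfer principle) match the cited treatment, so there is nothing to correct.
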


 The \emph{``if''} assertions are easily checked.
 Proofs  of the \emph{``only\, if''} directions in Theorem \ref{krivinestengle} are given in the books by Marshall or by Prestel and Delzell; they are based on the Tarski--Seidenberg transfer  principle.

Another important concept is introduced in the following definition.
\begin{definition}
 A  quadratic module $Q$ of  $\sA$ is called \emph{Archimedean} if $\sA$ coincides with the set $ \sA_\mathrm{b}$ of bounded elements with respect to $Q$, where
\begin{align*}
 \sA_\mathrm{b}
 :=\{ a\in \sA:\text{there exists a $\lambda >0$ such that $\lambda - a \in Q$ and $\lambda +a\in Q$}\}.
\end{align*}
\end{definition}
\begin{lemma}\label{boundedele1} Let $Q$ be a quadratic module of\, $\sA$\, and\, $a\in\sA$. Then we have 
 $a\in \sA_\mathrm{b}$\,  if and only if\,   $\lambda^2 -a^2\in Q$ for some $\lambda >0$.
\end{lemma}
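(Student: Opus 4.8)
The plan is to prove both implications directly from the two defining properties of a quadratic module, after first recording two elementary consequences of the axioms (\ref{axiomquadmodule}). Namely: every square $b^2$ lies in $Q$ (write $b^2=b^2\cdot 1$ and use $1\in Q$ together with $b^2Q\subseteq Q$), and $Q$ is stable under multiplication by any nonnegative real $c\ge 0$ (write $c=(\sqrt{c})^{2}$, so that $c\,q=(\sqrt{c}\cdot 1)^2 q\in Q$ for $q\in Q$). These two facts are what let me move scalar factors in and out of $Q$, and they will be used throughout.

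For the direction ``$\Leftarrow$'', I assume $\lambda^{2}-a^{2}\in Q$ for some $\lambda>0$ and want $\lambda-a,\lambda+a\in Q$. First I would add the square $(\lambda-a)^{2}\in Q$ to the hypothesis; since $(\lambda-a)^{2}+(\lambda^{2}-a^{2})=2\lambda(\lambda-a)$, closure under addition gives $2\lambda(\lambda-a)\in Q$, and scaling by the nonnegative real $1/(2\lambda)$ yields $\lambda-a\in Q$. Repeating the computation with $(\lambda+a)^{2}$ in place of $(\lambda-a)^{2}$ gives $\lambda+a\in Q$, so $a\in\sA_{\mathrm b}$ with the very same constant $\lambda$.

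For the direction ``$\Rightarrow$'', I assume $\lambda-a\in Q$ and $\lambda+a\in Q$ and want $\lambda^{2}-a^{2}\in Q$. The obstacle here is that a quadratic module need not be closed under products, so one cannot simply multiply the two generators to reach $\lambda^{2}-a^{2}=(\lambda-a)(\lambda+a)$. The trick I would use is to multiply each generator by a suitable \emph{square}, which is permitted by the axiom $a^{2}Q\subseteq Q$: since $(\lambda-a)^{2}$ and $(\lambda+a)^{2}$ are squares, both $(\lambda-a)^{2}(\lambda+a)$ and $(\lambda+a)^{2}(\lambda-a)$ lie in $Q$, and their sum factors as
\[
(\lambda-a)^{2}(\lambda+a)+(\lambda+a)^{2}(\lambda-a)=(\lambda-a)(\lambda+a)\big[(\lambda-a)+(\lambda+a)\big]=2\lambda(\lambda^{2}-a^{2}).
\]
Thus $2\lambda(\lambda^{2}-a^{2})\in Q$, and scaling by $1/(2\lambda)$ gives $\lambda^{2}-a^{2}\in Q$, again with the same $\lambda$.

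I expect the forward direction to be the only genuine difficulty, precisely because of the failure of multiplicative closure: the whole content is the observation that multiplying the generators $\lambda\mp a$ by the squares $(\lambda\mp a)^{2}$ synthesizes the desired product $\lambda^{2}-a^{2}$ up to the harmless positive factor $2\lambda$. Everything else is routine bookkeeping with the two closure properties noted at the outset, and it is worth remarking that the argument shows the equivalence holds with one and the same $\lambda$ on both sides.
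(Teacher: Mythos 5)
Your proof is correct and uses essentially the same argument as the paper: both directions rest on the identities $(\lambda-a)^2(\lambda+a)+(\lambda+a)^2(\lambda-a)=2\lambda(\lambda^2-a^2)$ and $(\lambda^2-a^2)+(\lambda\mp a)^2=2\lambda(\lambda\mp a)$, which are exactly the ones in the paper's proof. Your only addition is to spell out explicitly why squares lie in $Q$ and why scaling by $1/(2\lambda)$ is permitted, which the paper leaves implicit.
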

\begin{proof}
 If $\lambda \pm a\in Q$ for $\lambda>0$, then  
\begin{align*}
\lambda^2 -a^2= \frac{1}{2\lambda}\big[(\lambda+a)^2(\lambda -a) +(\lambda -a)^2(\lambda + a)\big] \in Q.\end{align*}
Conversely, if $\lambda^2-a^2 \in Q$ and $\lambda >0$, then
\begin{align*}\hspace{2,7cm}
\lambda\pm a=\frac{1}{2\lambda}\big[(\lambda^2-a^2) +(\lambda\pm a)^2 \big] \in Q.
\hspace{3cm} \Box
\end{align*}

\begin{lemma}\label{boundedele2}
 $\sA_\mathrm{b}$ is a unital  subalgebra of $\sA$ for any quadratic module $Q$.
\end{lemma}
\begin{proof} 
Clearly,  sums  and scalar multiples of elements of $\sA_\mathrm{b}$ are again in $\sA_\mathrm{b}$. It suffices to verify that this holds for the product of  $a,b \in \sA_\mathrm{b}$.
By Lemma \ref{boundedele1},   there are $\lambda_1 >0$ and $\lambda_2>0$
such that $\lambda_1^2-a^2$ and $\lambda_2^2-b^2$ are in $Q$. Then
\begin{align*}
(\lambda_1\lambda_2)^2 - (ab)^2= \lambda_2^2(\lambda_1^2-a^2)+ a^2(\lambda_2^2-b^2) \in Q,\end{align*}
so that $ab\in \sA_\mathrm{b}$ again by  Lemma \ref{boundedele1}.
\qed \end{proof}

\begin{corollary}\label{archirux}
For a quadratic module $Q$  of\, $\R_d[\ux]$ the following are equivalent:
\begin{itemize}
\item[\em (i)]~ $Q$ is Archimedean.
\item[\em (ii)]~\,  There exists a number $\lambda >0$ such that $\lambda -\sum_{k=1}^d x_k^2\in Q$.
\item[\em (iii)]~\,  For any $k=1,\dots,d$ there exists a  $\lambda_k>0$ such that $\lambda_k-x_k^2\in Q$.
\end{itemize}
\end{corollary}
\begin{proof}
(i)$\to$(ii) is clear by definition. If $\lambda -\sum_{j=1}^d x_j^2\in Q$, then \begin{align*}\lambda- x_k^2=\lambda -\sum\nolimits_j x_j^2~ +~ \sum\nolimits_{j\neq k}x_j^2 \in Q.\end{align*} This  proves (ii)$\to$(iii). Finally, if (iii) holds,  then $x_k\in \R_d[\ux]_\mathrm{b}$ by Lemma \ref{boundedele1} and hence $\R_d[\ux]_\mathrm{b}=\R_d[\ux]$ by Lemma \ref{boundedele2}. Thus, (iii)$\to$(i).
\qed \end{proof}  

\begin{corollary}\label{archicompact}
If the quadratic module $Q(\mathsf{f})$ of $\R_d[\ux]$ is Archimedean, then the set $K(\mathsf{f})$ is compact.
\end{corollary}
\begin{proof}
By Corollary \ref{archirux},   $\lambda -\sum_{k=1}^d x_k^2\in Q(\mathsf{f})$ for some $\lambda >0$. Therefore, since polynomials of  $Q(\mathsf{f})$ are nonnegative on $K(\mathsf{f})$, the set $K(\mathsf{f})$ is compact. 
\qed \end{proof}  

The converse of Corollary \ref{archicompact}  does not hold.
However, it does hold for the preordering $T(\mathsf{f})$, as shown by Proposition \ref{prearchcom} below.

The following  separation result is used  in the next section.
 \begin{proposition}\label{eidelheit}
Let $Q$ be an Archimedean quadratic module of ${\sA}$. If $a_0\in {\sA}$ and $a_0\notin Q$, there exists a $Q$-positive linear functional $\varphi$ on ${\sA}$ such that $\varphi(1)=1$ and $\varphi(a_0)\leq 0$.
\end{proposition}
\begin{proof}
Let $a\in {\sA}$ and choose $\lambda >0$ such that\, $\lambda \pm a\in Q$. If\,  $0<\delta \leq\lambda^{-1}$,\, then $\delta^{-1} \pm a\in Q$ and hence $1\pm \delta a \in Q$. This shows that   $1$ is an internal point of $Q$. Therefore,  Eidelheit's separation theorem  applies and there exists a $Q$-positive linear functional $\psi\neq 0$ on ${\sA}$ such that $\psi(a_0)\leq 0$. Since $\psi\neq 0$, we have  $\psi(1)> 0$. (Indeed, if $\psi(1)=0$, since $\psi$ is $Q$-positive,  $\lambda\pm a \in Q$ implies $\psi(a)=0$ for all $a\in {\sA}$ and so $\psi=0$.) Then $\varphi:=\psi(1)^{-1}\psi$ has the desired properties.
\qed \end{proof}  
%
\section{Strict Positivstellensatz and solution of the moment problem}

 Throughout this section,  $\mathsf{f}=\{f_1,\dots,f_k\}$ denotes a \emph{finite} subset of $\R_d[\ux]$ such that the \textbf{semi-algebraic set $K(\mathsf{f})$ is compact.}

 The following  is the \textbf{strict Positivstellensatz} for compact semi-algebraic sets.
 
\begin{theorem}\label{schmps}
 Let\, $h\in \R_d[\ux]$.
 If  $ h(x) > 0$ for all $x \in K(\mathsf{f})$, then $h \in T(\mathsf{f})$.
\end{theorem}

 The next theorem contains  the \textbf{solution of the moment problem} for compact semi-algebraic sets.
 
\begin{theorem}\label{mpschm}
 Let  $L$ be a linear functional on $\R_d[\ux]$. If $L$ is $T(\mathsf{f})$-positive, then $L$ is a $K(\mathsf{f})$-moment functional.
\end{theorem}

 We next show that \emph{both theorems are equivalent and can be derived from each other}.
 This emphasizes that there is a close interplay between real algebraic geometry and moment theory.
 
\secret{
 The crucial technical ingredient of the proof of Theorem \ref{schmps} is Proposition \ref{prearchcom} below. It states that the preorder $T(\mathsf{f})$ is Archimedean if the semi-algebraic sets $K(\mathsf{f})$ is compact. In  the proof of this result  we apply  assertion (i) of the Krivine-Stengle's Positivstellensatz. Using  Proposition \ref{prearchcom} we next show that {\it both theorems are equivalent and can be derived from each other}. All that emphasizes the close interplay between real algebraic geometry and the  moment problem.
 
 The proof of Theorem \ref{mpschm} is given by Proposition \ref{continuityLprop}(ii) in Section \ref{proof}.
}

\medskip  

\textit{Proof of Theorem \ref{mpschm} (assuming Theorem \ref{schmps}):}

 Let $h\in \R_d[\ux]$ and suppose $h(x)\geq 0$ on $K(\mathsf{f})$.
 Then, for any $\varepsilon>0$, $h(x)+\varepsilon >0$ on $K(\mathsf{f})$ and therefore $h\in T(\mathsf{f})$ by  Theorem \ref{schmps}.
 Hence  $L(h+\varepsilon)=L(h)+\varepsilon L(1)\geq 0$ by the assumption.
 Then $L(h)\geq 0$ by letting $\varepsilon\to 0$.
 Therefore, $L$  is a $K(\mathsf{f})$-moment functional by  Haviland's Theorem.$\hfill\qed$

\medskip

\textit{Proof of Theorem \ref{schmps} (assuming Theorem \ref{mpschm} and Proposition \ref{prearchcom}):}

Suppose   $h\in \R_d[\ux]$ and $h(x)>0$ on $K(\mathsf{f})$. Assume to the contrary that $h\notin T(\mathsf{f})$. Since the preordering  $T(\mathsf{f})$ is Archimedean by Proposition \ref{prearchcom},  Proposition \ref{eidelheit} applies, so  there  is a $T(\mathsf{f})$-positive linear functional $L$ on $\R_d[\ux]$ such that $L(1)=1$ and $L(h)\leq 0$. By Theorem \ref{mpschm}, $L$ is a $K(\mathsf{f})$-moment functional, that is, there is a Radon measure $\mu$ supported on $K(\mathsf{f})$ such that $L(p)=\int_{K(\mathsf{f})} p\, d\mu$  for $p\in \R_d[\ux]$. But $L(1)=\mu(K(\mathsf{f}))=1$ and $h>0$ on $K(\mathsf{f})$ imply that $L(h)>0$. This is a contradiction, since $L(h)\leq 0$.$\hfill\qed$

\medskip

\secret{

\begin{remark}
The preordering $T(\mathsf{f})$\, is defined as the sum of  sets $f_1^{e_1}\cdots f_k^{e_k}\cdot\sum \R_d[\ux]^2$. It is natural to ask whether or not  all such sets with mixed products $f_1^{e_1}\cdots f_k^{e_k}$ are really needed. To formulate the corresponding result we put $l_k:=2^{k-1}$ and let
 $g_1,\dots,g_{l_k}$ denote the first $l_k$ polynomials of the following row of mixed  products:
 \begin{align*}
 f_1,\dots,f_k,f_1f_2,f_1f_3,\dots,f_1f_k,\dots,f_{k-1}f_k,f_1f_2f_3,\dots, f_{k-2}f_{k-1}f_k,\dots,f_1f_2\dots,f_k.
 \end{align*}
Let $Q(\mathsf{g})$ denote the quadratic module generated by $g_1,\dots,g_{l_k}$, that is,
 \begin{align*}Q(\mathsf{g}):=\sum\R_d[\ux]^2+ g_1\sum\R_d[\ux]^2+\dots+g_{l_k}\sum\R_d[\ux]^2.\end{align*} 
Then, as shown  by T. Jacobi and A. Prestel (2001),
the assertions of  Theorems \ref{mpschm} and \ref{schmps} remain valid if the preordering $T(\mathsf{f})$ is replaced by $Q(\mathsf{g})$.

We briefly discuss the relations between $T(\mathsf{f})$ and $Q(\mathsf{g})$. If $k=1$, then  $Q(\mathsf{f})=T(\mathsf{f})$. However, for $k=2$,
\begin{align*}
Q(\mathsf{f})&=\sum \R_d[\ux]^2+f_1\sum \R_d[\ux]^2+f_2\sum \R_d[\ux]^2,
\end{align*}
so $Q(\mathsf{g})$ differs $T(\mathsf{f})$ by the summand $f_1f_2\sum \R_d[\ux]^2$. If $k=3$, then
\begin{align*}
Q(\mathsf{g})
=\sum \R_d[\ux]^2+&f_1\sum \R_d[\ux]^2+ f_2\sum \R_d[\ux]^2+f_3\sum \R_d[\ux]^2+f_1f_2\sum \R_d[\ux]^2 \,,
\end{align*} 
that is, the sets 
$h\sum \R_d[\ux]^2$ with $h=f_1f_3,f_2f_3,f_1f_2f_3$  do not enter into $Q(\mathsf{g})$. 
$\hfill \circ$ \end{remark}
}

 The proof of Theorem \ref{mpschm} is given by Proposition \ref{continuityLprop}(ii) in Section \ref{proof}.
 The crucial technical ingredient of this proof is Proposition \ref{prearchcom} which states that the preorder $T(\mathsf{f})$ is Archimedean if the semi-algebraic sets $K(\mathsf{f})$ is compact. In  the proof of this result   assertion (i) of the Krivine-Stengle's Positivstellensatz is used. 
 
 If we plug in these proofs and \emph{assume} (!) the Archimedean property, then the assertions hold for a quadratic module rather than the preordering.
 The gives the following theorem.
 Assertion (i) is usually the \textbf{Archimedean Positivstellensatz}.



\begin{theorem}\label{archmedps}
 Suppose the quadratic module $Q(\mathsf{f})$ defined by \eqref{quadraticqf} is Archimedean. 
\begin{enumerate}
 \item[\em (i)]\, If $h\in \R_d[\ux]$ satisfies $h(x)> 0$ for all $x\in K(\mathsf{f})$, then $h\in Q(\mathsf{f}).$
 \item[\em (ii)]\, Each  $Q(\mathsf{f})$-positive linear functional $L$ on\, $\R_d[\ux]$ is a $K(\mathsf{f})$-moment functional.
\end{enumerate}
\end{theorem}


 The shortest and probably the most elegant approach to Theorems \ref{archmedps} and \ref{mpschm} is based on the multi-dimensional spectral theorem combined with the GNS construction; it is developed in [MP, Section 12.5].

We close the section with two examples.

\begin{example}\label{n-dimintervalexist}\textit{($d$-dimensional compact interval~ $[a_1,b_1]\times\dots \times [a_d,b_d]$)}\\
Let  $a_j,b_j\in \R$, $a_j< b_j,$ and set $f_{2j-1}:=b_j-x_j$, $f_{2j}:=x_j-a_j,$ for $j=1,\dots,d$. The
semi-algebraic set $K(\mathsf{f})$ is the $d$-dimensional interval $[a_1,b_1]\times\dots \times [a_d,b_d]$ and
$Q(\mathsf{f})$ is Archimedean by Lemma \ref{boundedele2}(ii). Hence, by Theorem \ref{archmedps}(ii),

 \textit{$L$  is a  $K(\mathsf{f})$-moment functional if and only if it is $Q(f)$-positive, or equivalently, if\,  $ L_{f_1},L_{f_2}, \dots, L_{f_k}$  are positive  functionals, that is,}
\begin{align*} \hspace{0.4cm}
 L((b_j{-}x_j)p^2)\geq 0~~ \text{\textit{and}} ~~ L((x_j{-}a_j)p^2)\geq 0~~ \text{\textit{for}} ~~ j=1,\dots,d,\, p\in \R_d[\ux]. \hspace{1cm}\circ
\end{align*}
\end{example}

An immediate corollary of Theorem \ref{mpschm} is the following.

\begin{corollary}\label{compactalgvar}
Suppose that $\cI$ is an ideal of $\R_d[\ux]$ such that the real algebraic set\, $V:=\cZ(\cI)=\{x\in \R^d:f(x)=0~~\text{for}~ f\in \cI\}$ is compact.
Then  each positive linear functional  on $\R_d[\ux]$ which annihilates $\cI$ is a $V$-moment functional.
\end{corollary}

\begin{example} (\textit{Moment problem on unit spheres})\\
 Let $S^{d-1}=\{x\in \R^d:x_1^2+\dots+x_d^2=1\}$.
 Then  a linear functional $L$  on $\R_d[\ux]$  is a $S^{d-1}$-moment functional if and only if
\begin{align*}
 \hspace{1cm}L(p^2)\geq 0 \quad \text{ and }~~ L((x_1^2+\dots+x_d^2-1)p)=0 \quad \text{ for }~~ p\in \R_d[\ux].\hspace{1.3cm} \circ
\end{align*} 
\end{example}

\section{Localizing functionals and Hankel matrices}

The preceding result on the moment problem suggest the following question:
\smallskip

\textit{How to verify the $Q(\mathsf{f})$-positivity  or  $T(\mathsf{f})$-positivity of a functional $L$ on $\R_d[\ux]$?}
\smallskip

Let $L$ be a  linear functional on  $\R_d[\ux]$ and  $g\in\R_d[\ux]$. The  linear functional $L_g$\index[sym]{LAgA@$L_g$}   defined by $L_g(p)=L(gp),\,  p\in\R_d[\ux]$, is called the \emph{localization} of $L$ at $g$. 

For instance, suppose  $L$ is a $K(\mathsf{f})$ moment functional and $g(x)\geq 0$ on $K(\mathsf{f})$. If $\mu$ is a representing measure for $L$, then $L_g(p)=\int p(x)g(x)d\mu$, so\, $gd\mu$ is a representing measure for $L_g$. This justifies the name "localization".
\smallskip

  Let $s=(s_\alpha)_{\alpha \in \N_0^d}$ be the $d$-sequence given by  $s_\alpha=L(x^\alpha)$ and  write $g=\sum_\gamma g_\gamma x^\gamma$. We define a $d$-sequence $g(E)s=((g(E)s)_\alpha)_{\alpha\in \N_0^d}$ by 
\begin{align*}(g(E)s)_\alpha:=\sum\nolimits_\gamma ~ g_\gamma s_{\alpha+\gamma},~~\alpha\in \N_0^d,\end{align*}  
 and the \emph{localized Hankel matrix}\, $H(gs)=(H(gs)_{\alpha,\beta})_{\alpha,\beta\in \N_0^d} $\,  with entries 
\begin{align*}
H(gs)_{\alpha,\beta}:= \sum\nolimits_\gamma ~ g_\gamma  s_{\alpha+\beta+\gamma},~~\alpha,\beta\in \N_0^d.
\end{align*}
Then,
 for  $p(x)=\sum_\alpha
a_\alpha x^{\alpha}\in \R_d[\ux]$, we obtain
\begin{align*}
L_s(gp^2)=\sum_{\alpha,\beta} a_\alpha a_\beta (g(E)s)_{\alpha+\beta}=\sum_{\alpha,\beta}\, a_\alpha a_\beta H(gs)_{\alpha,\beta} .
\end{align*}
By the preceding, $g(E)s$ is the\, sequence for the functional $L_g$ and  $H(gs)$ is a Hankel matrix for the sequence $g(E)s$. From these formulas we easily derive the following.

\begin{proposition}\label{equvialentsolvmp}
 Let $Q(\mathsf{g})$ be  the quadratic module generated by $\mathsf{g}=\{g_1,\dotsc,g_m\}$.
 For a linear functional $L$ on $\R_d[\ux]$ the following are equivalent:
\begin{enumerate}
 \item[\em (i)]~ $L$ is a $Q(\mathsf{g})$-positive linear functional on $\R_d[\ux]$.
 \item[\em (ii)]~  $L, L_{g_1},\dotsc,L_{g_m}$ are positive linear  functionals on $\R_d[\ux]$.
 \item[\em (iii)]~ $s, g_1(E)s,\dotsc,g_m(E)s$ are positive semidefinite $d$-sequences.
 \item[\em (iv)]~ $H(s),H(g_1s),\dotsc, H(g_ms)$ are positive semidefinite matrices.
\end{enumerate}
\end{proposition}

Condition (iv) can be verified by using standard criteria from linear algebra. Thus, the $Q(\mathsf{g})$-positivity or $T(\mathsf{g})$-positivity of a functional  are reasonable and familiar conditions in moment theory.

\section{Moment problem criteria based on semirings}
\begin{definition}
 A \emph{semiring} of $\sA$ is a subset $S$ satisfying
\begin{align*}
 S+S\subseteq S,~~ S\cdot S\subset S,~~ \lambda\cdot 1\in S\quad \text{ for }~~\lambda\geq 0.
\end{align*}
\end{definition}
A quadratic module is not necessarily invariant under multiplication, but a semiring is. While  a quadratic module contains all squares, a semiring does not in general. Clearly, a quadratic module is a preordering if and only if it is a semiring.

 The \emph{semiring} $S(\mathsf{f})$ generated by $f_1,\dots,f_k\in \sA$ is the set of all finite sums of terms
\begin{align}\label{semringgene}
 \lambda f_1^{n_1}\cdots f_k^{n_k},\quad \text{ where }~~n_1,\dots,n_k\in \N_0,~ \lambda  \geq 0.
\end{align}
 The next theorem applies to semi-algebraic sets contained in \textbf{compact polyhedra}.

 Let $f_1,\dots,f_k\in\R_d[\ux]$ such that  $f_1,\dots,f_m$, $m\leq k$, are \emph{linear}.
 Setting
\begin{align*}
\mathsf{\hat{f}}=\{f_1,\dots,f_m\},\quad \mathsf{f}=\{f_1,\dots,f_k\}.
\end{align*}
 Then $K(\mathsf{\hat{f}})$ is  a \emph{polyhedron}  such that\,  $K(\mathsf{f})\subseteq K(\mathsf{\hat{f}}).$

\begin{theorem}\label{prestel}
 Suppose  the polyhedron\, $K(\, \mathsf{\hat{f}}\, )$\, is  compact and  nonempty.
 Then a linear functional $L$ on $\R_d[\ux]$ is a $K( \mathsf{f} )$-moment functional if and only if 
\begin{align}\label{solvconsemiring}
 L( f_1^{n_1}\cdots f_k^{n_k})\geq 0\,  \quad \text{for~ all}~~~ n_1,\dots,n_k\in \N_0.
\end{align}
\end{theorem} 
\begin{proof}
 {[MP, Theorem~12.45]}.
 $\hfill \qed$
\end{proof}
 
Condition (\ref{solvconsemiring}) means that $L$ is nonnegative on the semiring $S(\mathsf{f})$ defined by (\ref{semringgene}).
Note that  the criterion in Theorem 1.17
in Lecture 1 was  also a positivity condition for a semiring.
 Finally, we state three examples based on Theorems  \ref{archmedps} and \ref{prestel}.
\medskip


\begin{example} (\textit{Simplex  in $\R^d, d\geq 2$})\\
Let $f_1=x_1,\dots,f_d=x_d, f_{d+1}=1- \sum_{i=1}^d x_i$. The set  $K(\mathsf{f})$ is the simplex
\begin{align*}
K_d=\{ x\in \R^d: x_1\geq 0,\dots, x_d\geq 0,\, x_1+\dots+x_d\leq 1\, \}.
\end{align*}
\textit{A linear functional $L$ is a\, $K_d$--moment functional if and only if}
\[
 L(x_ip^2)\geq 0, ~ i=1,\cdots,d,~~\text{\textit{and}}~~~ L((1- ( x_1+x_2+\dots+x_d))p^2)\geq 0 ~~~\text{\textit{for}}~~p \in \R_d[\ux],  
\]
\textit{or equivalently,}
\begin{align*}
\hspace{0,9cm}L(x_1^{n_1}\dots x_d^{n_d}(1- (x_1+\dots+x_d))^{n_{d+1}}) \geq 0 \quad \text{\textit{for}}~~~n_1,\dots,n_{d+1}\in \N_0.\hspace{0,9cm}\hfill \circ
\end{align*}
\end{example}

\begin{example}\label{Delta_dsimplex} (\textit{Standard simplex $\Delta_d$ in $\R^d$})\\
 Let \, $f_1=x_1,\dots,\, f_d=x_d,\, f_{d+1}=1- \sum_{i=1}^d x_i,\,  f_{d+2}=-f_{d+1}$.
 Then  $K(\mathsf{f})$ is 
\begin{align*}
\Delta_d=\{x\in \R^d: x_1\geq 0,\dots,x_d\geq 0, x_1+\dots+x_d=1\}.
\end{align*}
\textit{A linear functional $L$ is a $\Delta_d$-moment functional if and only if}
\begin{align*}
L(x_1^{n_1}\dots x_d^{n_d})\geq 0,~  L(x_1^{n_1}\dots x_d^{n_d}(1{-}(x_1{+}\dots{+}x_d))^r)=0, ~~ n_1,\dots,n_d\in \N_0, r\in \N.\circ
\end{align*}
\end{example}

\begin{example} (\textit{Multidimensional\, Hausdorff\, moment\, problem on $[0,1]^d$})\\
 Set  $f_1=x_1, f_2=1-x_1,\dots,f_{2d-1}=x_d,f_{2d}=1-x_d, k=2d$. Then $K(\,\mathsf{f}\,)= [0,1]^d$. Let $s=(s_\gn)_{\gn\in \N_0^d}$ be a 
 multisequence. We define  the shift $E_j$ of the $j$-th index by  \begin{align*}
 (E_js)_\gm=s_{(m_1,\dots,m_{j-1},m_j+1,m_{j+1},\dots,m_d)}, ~~~\gm \in \N_0^d.
 \end{align*}
  Then \textit{$L_s$ is a $[0,1]^d$-moment functional on $\R_d[\ux]$ if and only if }
  \begin{align*}
  \hspace{1cm} L_s( x_1^{m_1}(1-x_1)^{n_1}\cdots x_d^{m_d}(1-x_d)^{n_d})\geq 0~~~{for}~~ \gn,\gm\in \N_0^d.\hspace{0.8cm}\hfill \circ
  \end{align*}
  \end{example}

\section{Two technical results and the proof of Theorem \ref{mpschm}}\label{proof}

 In this section,  we reproduce from [MP] the proofs of the following results.
 Both results of interest in themselves.
 As noted above, they complete the proof of Theorem \ref{mpschm}.
 Suppose  $K(\mathsf{f})$ is \emph{compact}.
 We  write $a \preceq b$\, if \, $b-a \in T(\mathsf{f})$.

\begin{proposition}\label{prearchcom}
The 
preordering $T(\mathsf{f})$ is Archimedean.
\end{proposition}
\begin{proof}
Let\, $g\in \R_d[\ux]$ be a fixed polynomial  and  $\lambda >0$. Suppose that\, $\lambda^2>g(x)^2$\, for all $x\in  K(\mathsf{f})$. Our first aim to to show that there exists a $p \in T(\mathsf{f})$
such that 
\begin{align}\label{p2n2}
g^{2n}  \preceq \lambda^{2n+2}p ~~~\text{for}~~~n\in \N.
\end{align}
Indeed, by the Krivine--Stengle Positivstellensatz (Theorem \ref{krivinestengle}(i)) there exist polynomials
$p,q \in T(\mathsf{f})$ such that
\begin{align}\label{stengle}
p(\lambda^2-g^2) =1 +q.
\end{align}
Since $q \in T(\mathsf{f})$ and $T(\mathsf{f})$ is a quadratic module, $g^{2n}(1+q) \in T(\mathsf{f})$ for $n \in \N_0$. Therefore, using (\ref{stengle}) we conclude  that
\begin{align*}g^{2n+2}p = g^{2n} \lambda^2 p-g^{2n}(1+q) \preceq g^{2n} \lambda^2 p.\end{align*} By induction it follows that 
\begin{align}\label{p2n1}
g^{2n} p \preceq \lambda^{2n} p.
\end{align}
Since $g^{2n}(q+pg^2) \in T(\mathsf{f})$, using first (\ref{stengle}) and then  (\ref{p2n1}) we derive
\begin{align*}
g^{2n} \preceq g^{2n} + g^{2n}(q+pg^2) = g^{2n}(1+q+pg^2)=g^{2n}\lambda^2 p \preceq \lambda^{2n+2}p\,. 
\end{align*}
\end{proof}
This completes the proof of (\ref{p2n2}).

Now we put $g(x):=(1+x_1^2)\cdots(1+x_d^2)$.
Since $g$ is bounded on the compact set $K(\mathsf{f})$, we have $\lambda^2 >g(x)^2$ on  $K(\mathsf{f})$ for some $\lambda>0$. Therefore, by the preceding  there exists a $p \in T(\mathsf{f})$ such that (\ref{p2n2}) holds.  

 Further, for any multiindex $\alpha\in \N_0^d$, $|\alpha| \leq k$, $k \in \N$, we obtain
\begin{align}\label{pk}
\pm 2x^\alpha \preceq x^{2\alpha} + 1 \preceq \sum_{|\beta| \leq k} x^{2\beta} = g^k.
\end{align}
Hence there exist numbers $c >0$ and $k \in \N$ such that $p
\preceq 2c g^k$. Combining the latter with $g^{2n}  \preceq \lambda^{2n+2}p$ by~(\ref{p2n2}), we get
$g^{2k} \preceq \lambda^{2k+2} 2c g^k$ and so
\begin{align*}(g^k{-}\lambda^{2k+2}c)^2 \preceq (\lambda^{2k+2}c)^2{\cdot}1.\end{align*} Hence, by
Lemma \ref{boundedele1}, $g^k{-}\lambda^{2k+2}c \in \R_d[\ux]_\mathrm{b}$ and so  $g^k \in\R_d[\ux]_\mathrm{b}$ with respect to the preorder  $T(\mathsf{f})$. Since $\pm x_j \preceq g^k$ by
(\ref{pk}) and $g^k \in\R_d[\ux]_\mathrm{b}$, we obtain $x_j  \in\R_d[\ux]_\mathrm{b}$ for $j=1,{\cdots},d$. Now from Lemma \ref{boundedele2}(ii) it follows that $T(\mathsf{f})$ is Archimedean.
$\hfill\qed$ \end{proof}  

\begin{proposition}\label{continuityLprop}
Suppose  $L$ is a\, $T(\mathsf{f})$-positive linear functional on $\R_d[\ux]$. 
\begin{itemize}
\item[\em (i)]\,\,\,  If\, $g\in\R_d[\ux]$ and $\|g \|_\infty$ denotes the  supremum of  $g$ on $K(\mathsf{f}),$  then
\begin{align}\label{conitnuityL}
|L(g)|\leq L(1) ~\|g\|_\infty .
\end{align}
\item[\em (ii)]~ $L$ is a $K(\mathsf{f})$-moment functional.
\end{itemize}
\end{proposition}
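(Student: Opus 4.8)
The plan is to treat (i) as the substantive step and then deduce (ii) by a soft functional-analytic argument. Throughout write $K = K(\mathsf{f})$ and recall that $\sum \R_d[\ux]^2 \subseteq T(\mathsf{f})$, so $T(\mathsf{f})$-positivity of $L$ gives $L(p^2)\ge 0$ for all $p$; in particular $(p,q)\mapsto L(pq)$ is a positive semidefinite symmetric bilinear form, so the Cauchy--Schwarz inequality $L(pq)^2 \le L(p^2)\,L(q^2)$ holds. Since $1\in T(\mathsf{f})$ we have $L(1)\ge 0$, and if $L(1)=0$ then $L(g)^2 \le L(1)L(g^2)=0$ forces $L\equiv 0$, making both assertions trivial; hence I may rescale and assume $L(1)=1$.

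For (i), fix $g$ and set $M=\|g\|_\infty$. Taking $q=1$ and $p=g^{2^k}$ in Cauchy--Schwarz gives $L(g^{2^k})^2 \le L(g^{2^{k+1}})$, so the (nonnegative) sequence $L(g^{2^k})^{1/2^k}$ is non-decreasing and $|L(g)|\le L(g^{2^n})^{1/2^n}$ for every $n\ge 1$. It therefore suffices to bound $L(g^{2^n})$ by $(M+\varepsilon)^{2^n}$ up to a constant independent of $n$. To do this I invoke the Krivine--Stengle Positivstellensatz (Theorem \ref{krivinestengle}(i)): since $(M+\varepsilon)^2-g^2 > 0$ on $K$, there are $p_\varepsilon,q_\varepsilon\in T(\mathsf{f})$ with $p_\varepsilon\bigl((M+\varepsilon)^2-g^2\bigr)=1+q_\varepsilon$. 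Because $T(\mathsf{f})$ is a preordering (closed under multiplication) and $g^2\in T(\mathsf{f})$, this identity yields both $1\preceq (M+\varepsilon)^2 p_\varepsilon$ and $g^2 p_\varepsilon \preceq (M+\varepsilon)^2 p_\varepsilon$; multiplying the latter repeatedly by $g^2\in T(\mathsf{f})$ gives $g^{2m}p_\varepsilon \preceq (M+\varepsilon)^{2m}p_\varepsilon$ for all $m$. Applying $L$ and combining with $g^{2m}\preceq (M+\varepsilon)^2 p_\varepsilon g^{2m}$ (from the first relation) produces $L(g^{2m}) \le (M+\varepsilon)^{2m+2}\,L(p_\varepsilon)$. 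Putting $2m=2^n$, taking $2^n$-th roots and letting $n\to\infty$ leaves $|L(g)|\le M+\varepsilon$, and $\varepsilon\to 0$ yields $|L(g)|\le M$; the unnormalized statement is $|L(g)|\le L(1)\,\|g\|_\infty$.

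For (ii), the bound (i) shows that $L$ is well defined and $\|\cdot\|_\infty$-continuous on the space $\R_d[\ux]\lceil K \subseteq C(K;\R)$: if a polynomial vanishes on $K$ then (i) forces $L$ to vanish on it, so $L$ descends to the restriction algebra with operator norm $\le L(1)$, and since $L(1)=L(1)\,\|1\|_\infty$ the norm equals $L(1)$. By the Hahn--Banach theorem I extend $L$ to $\tilde L\in C(K;\R)^*$ with $\|\tilde L\|=L(1)=\tilde L(1)$. A short computation shows $\tilde L$ is positive: for $0\le f\le 1$ one has $\|1-f\|_\infty\le 1$, so $\tilde L(1)-\tilde L(f)=\tilde L(1-f)\le \|\tilde L\|=\tilde L(1)$, whence $\tilde L(f)\ge 0$, and positivity for general $f\ge 0$ follows by scaling. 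The Riesz representation theorem then provides a positive Radon measure $\mu$ on the compact set $K$ with $\tilde L(f)=\int_K f\,d\mu$; restricting to polynomials gives $L(p)=\int_K p\,d\mu$, so $L$ is a $K(\mathsf{f})$-moment functional.

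The main obstacle is obtaining the sharp constant $\|g\|_\infty$ in (i): the mere Archimedean property of $T(\mathsf{f})$ (Proposition \ref{prearchcom}) only gives $|L(g)|\le L(1)\,\lambda$ for some $T(\mathsf{f})$-bound $\lambda\ge\|g\|_\infty$, which is too weak to force the positivity of the Hahn--Banach extension in (ii). Replacing $\lambda$ by the true supremum over $K$ is exactly where a Positivstellensatz must enter, and the iterated Cauchy--Schwarz estimate above is the device that converts the \emph{quotient} identity of Krivine--Stengle into the required sharp bound without having to produce $\|g\|_\infty+\varepsilon-g$ inside $T(\mathsf{f})$ directly (which would be Theorem \ref{schmps} and hence circular). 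Alternatively one could quote the Kadison--Dubois representation theorem, which uses Proposition \ref{prearchcom} directly, but the route sketched here needs nothing beyond the results already stated.
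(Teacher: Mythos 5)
Your proof is correct, and it takes a genuinely different route from the paper's in both parts, while sharing the one indispensable ingredient: the Krivine--Stengle identity $p_\varepsilon\bigl(\lambda^2-g^2\bigr)=1+q_\varepsilon$ and its consequence $g^{2m}\preceq \lambda^{2m+2}p_\varepsilon$, which is precisely equation (\ref{p2n2}) from the proof of Proposition \ref{prearchcom} (the paper quotes it; you re-derive it). For (i), after reaching $L(g^{2m})\le\lambda^{2m+2}L(p_\varepsilon)$ the paper proceeds via one-dimensional moment theory: it applies Hamburger's theorem to the sequence $s_n:=L(g^n)$ to obtain a representing measure $\nu$ on $\R$, uses the bound to show $\supp\nu\subseteq[-\lambda,\lambda]$, and finishes with a single Cauchy--Schwarz step $|L(g)|^2\le L(1)L(g^2)\le L(1)^2\lambda^2$. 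You avoid Hamburger's theorem altogether with the iterated Cauchy--Schwarz, $2^n$-th root argument $|L(g)|\le L(g^{2^n})^{1/2^n}\le \lambda^{1+2/2^n}L(p_\varepsilon)^{1/2^n}\to\lambda$; this is more elementary, needing only positive semidefiniteness of the form $(p,q)\mapsto L(pq)$, though it loses the support localization of $\nu$ that the paper's argument produces along the way. For (ii), the paper stays with polynomials: from (i) and the inequality $\|\,\|g\|_\infty\cdot 1-2g\,\|_\infty\le\|g\|_\infty$, valid for $g\ge 0$ on $K(\mathsf{f})$, it deduces $L(g)\ge 0$ for all $g\in\Pos(K(\mathsf{f}))$ and then cites Haviland's theorem; you instead descend $L$ to the restriction algebra inside $C(K(\mathsf{f});\R)$, extend by Hahn--Banach with $\|\tilde L\|=\tilde L(1)=L(1)$, use the standard fact that a functional whose norm is attained at $1$ is positive, and apply the Riesz representation theorem. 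This bypasses Haviland, but since Haviland's compact case (Proposition \ref{integralrepcompactcase}) is itself a Hahn--Banach-plus-Riesz argument, the two routes have comparable depth: yours is self-contained within this section, the paper's is shorter given the earlier lectures. Two harmless details worth noting: your norm identity $\|\tilde L\|=L(1)$ presupposes $K(\mathsf{f})\neq\emptyset$ (if the set is empty one checks directly that $L=0$), and in the limit $n\to\infty$ the degenerate case $L(p_\varepsilon)=0$ deserves a word, since there the bound already forces $L(g^{2^n})=0$; neither affects correctness.
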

\begin{proof}
(i):  Fix $\varepsilon >0$ and put $\lambda:= \parallel g \parallel_\infty +\varepsilon$. 
We define a real sequence $s=(s_n)_{n\in \N_0}$ by  $s_n:=L(g^n)$. Then $L_s(q(y))=L(q(g))$ for  $q\in \R[y]$. For any $p\in \R[y]$, we have $p(g)^2\in \sum \R_d[\ux]^2\subseteq T(\mathsf{f})$ and hence $L_s(p(y)^2)=L(p(g)^2)\geq 0$, since $L$ is $T(\mathsf{f})$-positive.
Thus, by  Hamburger's theorem
, there exists a Radon measure $\nu$ on $\R$ such that $s_n= \int_\R t^n d\nu(t)$, $n \in \N_0$. 

For $\gamma >\lambda$ let $\chi_\gamma$ denote the characteristic function of the set $(-\infty,-\gamma]\cup[\gamma,+\infty)$. Since $\lambda^2-g(x)^2>0$ on $K(\mathsf{f})$,  we have $g^{2n} \preceq \lambda^{2n+2}p$\, by equation (\ref{p2n2}) in the proof of Proposition \ref{prearchcom}. Using the $T(\mathsf{f})$-positivity of $L$  we derive
\begin{align}\label{gamma2nlp}
\gamma^{2n} \int_\R \chi_\gamma(t) ~d\nu(t) \leq \int_\R t^{2n} d\nu(t) =s_{2n}= L(g^{2n}) \leq \lambda^{2n+2} L(p)
\end{align}
for all $n\in \N$.
 Since $\gamma >\lambda$, (\ref{gamma2nlp}) implies that $\int_\R \chi_\gamma(t)~d\nu(t) =0$.
 Hence $\supp\nu \subseteq [-\lambda,\lambda]$.
 Therefore, applying the Cauchy--Schwarz inequality for $L$, we derive
\begin{align*}
|L(g)|^2 &\leq L(1)L(g^2) 
= L(1) s_2=
L(1)\int_{-\lambda}^\lambda~ t^2 ~d\nu(t)\\& \leq L(1)\nu(\R)\lambda^2 = L(1)^2\lambda^2=L(1)^2(\parallel g \parallel_\infty + \varepsilon)^2.
\end{align*}
Letting $\varepsilon \to +0$, we get\, $|L(g)| \leq L(1)\parallel g \parallel_\infty$. 
\smallskip

(ii): Suppose that $g\in \R_d[\ux]$ and  $g\geq 0$ on $K(\mathsf{f})$. Then, we conclude easily that\, $\|\,1\cdot \|g\|_\infty-2\,g\|_\infty = \|g\|_\infty.$ Using this equality and (\ref{conitnuityL}) we obtain
\begin{align*}
L(1) \|g\|_\infty - 2\,L(g)=L( 1 \cdot\|g\|_\infty - 2\,g) \leq L(1)\| 1\cdot \|g\|_\infty -2\,g\|_\infty =L(1)\|g\|_\infty ,
\end{align*}
which in turn implies that\, $L(g)\geq 0$. Therefore,  by Haviland's theorem, $L$ is a $K(\mathsf{f})$-moment functional.
$\hfill\qed$ \end{proof}





\setcounter{tocdepth}{3}

\makeatletter
\renewcommand{\@chapapp}{Lecture}
\makeatother

\chapter{The moment problem on closed  semi-algebraic sets: the fibre theorem}

Abstract:\\
\textit{This main result of this lecture is the fibre theorem about the existence for the moment problem on (unbounded) semi-algebraic sets.
This theorem reduces  moment properties of a preordering $T(\mathsf{f})$ to those for fibre preorderings built by fibres of  bounded polynomials on the set $\cK(\mathsf{f})$. }
\bigskip


The moment problem  on compact semi-algebraic sets has  rather  satisfactory results. There  are useful existence theorems  (Theorems \ref{mpschm}, \ref{archmedps} and \ref{prestel})
and the solution is always unique (by the Weierstrass theorem on uniform approximation by polynomials).
Both facts are  longer true in the noncompact case. 
The moment problem on unbounded sets leads to  new principal difficulties concerning the existence of a solution and also concerning the uniqueness of the representing measure. 

 For noncompact semi-algebraic sets, only in rare cases the positivity of a linear functional on the preordering is sufficient for being a moment functional. 
The fibre theorem is a powerful and deep general result from which almost all known affirmative results can be derived.
%

 
\section{Positive functionals which are not moment functionals}\label{possums}

Each real polynomial in one variable which is nonnegative on $\R$ is a sum of squares. This result was the crucial step main to the simple criteria for the one-dimensional Hamburger moment problem.
As already noted  by Hilbert (1888), there are nonnegative polynomials in two variables which cannot be represented as finite sums of squares of polynomials. A prominent example is Motzkin's polynomial 
\begin{align}\label{motzkinformula}
p_c(x_1,x_2):= x_1^2x_2^2(x_1^2+x_2^2- c) +1,~~0<c\leq 3.
\end{align}
It can be shown that the cone $\sum \R_d[\ux]^2$ of sums of squares of polynomials is closed in the finest locally convex topology on the  vector space $\R_d[\ux]$. Hence, since $p\notin \sum \R_2[\ux]^2$,  the separation theorem of convex sets implies that there is a linear functional $L$ on $\R_2[\ux]$ such that $L$ is nonnegative on $\sum \R_2[\ux]^2$ and $L(p_c)<0$. Thus $L$ is a positive linear functional on the algebra $\R_2[\ux]$. But, since $p_c\geq 0$ on $\R^2$, $L$ cannot be represented as an integral by a positive measure on $\R^2$.
This indicates that the existence problem for the moment problem on noncompact sets is much more difficult than in the compact case.

A similar example for  the Stieltjes moment problem in $\R^2$ is
\begin{align}\label{qcstieltjes}
q_c(x_1,x_2):=x_1x_2(x_1+x_2- c) +1, ~~c\in (0,3].
\end{align}
 Since $q_c(x_1^2,x_2^2)=p_c(x_1,x_2)$,  $q_c\geq 0 $ on the positive quarter plane $\R_+^2$ and $q_c$ does not belong to the preordering
 \begin{align*}
T=\sum \R[x_1,x_2]^2+x_1\sum \R[x_1,x_2]^2+x_2\sum \R[x_1,x_2]^2+x_1x_2\sum \R[x_1,x_2]^2.
\end{align*}
Hence the functional $L^\prime$ defined by $L^\prime(f)=L(f(x_1^2,x_2^2))$,  $f\in\R[x_1,x_2]$, is  a $T$-positive linear functional, which cannot be given by a  Radon measure on\,  $\R_+^2$.


\section{Properties (MP) and (SMP) and the fibre theorem}\label{propertiessmpandmpandfibretheorem}

In this Lecture,  ${\sA}$ is a \textbf{finitely generated commutative real unital algebra}. 

Then ${\sA}$ is (isomorphic to) a quotient algebra $\R_d[\ux]/\cJ$ for some ideal $\cJ$ of $\R_d[\ux]$ and the  character set  $\hat{\sA}$ of ${\sA}$ is the zero set $\cZ(\cJ)$ of $\cJ$. Hence
 $\hat{\sA}$, equipped with the weak topology,   is a locally compact Hausdorff space and the elements of $\sA$ become continuous functions on this space. 
 
 For a quadratic module $Q$ of ${\sA}$ we define
\[\cK(Q):=\{x\in\hat{{\sA}}: f(x)\geq 0 ~, f\in Q \}\] 
Let $\cM_+(\hat{\sA})$ denote the  Radon measures $\mu$ on $\hat{\sA}$ such that each $f\in {\sA}$ is $\mu$-integrable.

Our main concepts are introduced in the following definition. 
\begin{definition}\label{definionsmpmp}
 A quadratic module $Q$ of\, ${\sA}$ has the\smallskip

 $\bullet$~  \emph{moment property (MP)}\index{Moment property (MP)}\, if   each $Q$-positive linear functional $L$ on ${\sA}$ is a moment functional, that is, there exists a  Radon measure $\mu\in \cM_+(\hat{{\sA}})$ such that
\begin{align}\label{Lrepreborelmu}
L(f) =\int_{\hat{{\sA}}}\, f(x) \, d\mu(x) \quad \text{ for~all}~~f\in {\sA},
\end{align} 
 $\bullet$ ~ \emph{strong moment property (SMP)}\index{Strong moment property (SMP)} if  each $Q$-positive linear functional $L$ on ${\sA}$ is a $\cK(Q)$--moment functional, that is, there is a Radon measure $\mu\in \cM_+(\hat{{\sA}})$ such that\, 
$\supp\mu\subseteq \cK(Q)$\, and
(\ref{Lrepreborelmu}) holds.
\end{definition}

As noted in Section \ref{possums}, (MP) fails for the preordering $\sum \R_d[\ux]^2$, $ d\geq 2$. Obviously, (SMP) implies (MP). The converse is not true, as shown by the next example.
\begin{example}\label{stelnosmp} 
Let $\sA=\R[x]$ and consider the preordering $T_3=\sum \R[x]^2+ x^3\sum \R[x]^2$. Then $\cK(T_3)=[0,+\infty)$. By Hamburger's theorem each $T_3$-positive linear functional can be given  by a Radon measure on $\R$, so $T_3$   satisfies  (MP).
 It can be shown (see [MP, Example~ 13.7]) 
 that there exists a $T_3$-positive functional on $\sA$ which has no representing measure supported on $[0,+\infty)$. This means that (SMP) fails for the preordering $T_3$. 

Set $T_1=\sum \R[x]^2+ x\sum \R[x]^2$. Then, by Stieltjes theorem, each $T_1$-positive linear functional has a representing measure supported on $[0,+\infty)$, so $T_1$ has (SMP).

Note that $\cK(T_3)=\cK(T_1)=[0,+\infty)$. Thus, whether a preordering $T$ has (SMP) is not determined by the semi-algebraic set $\cK(T)$. It  depends essentially on the ``right" generators of $T$.
$\hfill\circ$
\end{example}

Now we begin with the preparations for the fibre theorem.

 Suppose $T$ is a finitely generated preordering  of ${\sA}$ and  $\mathsf{f}=\{f_1,\dots,f_k\}$ is a set of generators of  $T$.
 We consider an $m$-tuple\, $\mathsf{h}=(h_1,\dots,h_m)$\,  of  elements $h_k\in {\sA}$. Let
\begin{align}\label{defhKT}
\lambda=(\lambda_1,\dots,\lambda_r)\in  \mathsf{h}(\cK(T)):=\{(h_1(x),\dots,h_m(x)): x \in \cK(T)\}.
 \end{align}
 We denote by 
$\cK(T)_\lambda$ the subset of $\hat{{\sA}}$ given by
\begin{align*}
\cK(T)_\lambda=\{ x\in \cK(T): h_1(x)=\lambda_1,\dots,h_m(x)=\lambda_m \},
\end{align*} 
and by\, $T_\lambda$ the  preordering of ${\sA}$ generated by the sequence
\begin{align*}\mathsf{f}(\lambda):= \{f_1,\dots,f_k,h_1-\lambda_1,\lambda_1-h_1,\dots,h_m-\lambda_m,\lambda_m
-h_m\}.
\end{align*}
Clearly,   $\cK(T)$ is  the disjoint union of fibre set $\cK(T)_\lambda$, where $\lambda \in\mathsf{h}(\cK(T))$.

Let $\cI_\lambda$ denote  the ideal of ${\sA}$ generated by $h_1-\lambda_1,\dots,h_m-\lambda_m$. Then the preordering $(T +\cI_\lambda)/\cI_\lambda$ of the quotient algebra ${\sA}/\cI_\lambda$  is generated by 
\begin{align*}
\pi_\lambda(\mathsf{f}):=\{\pi_\lambda(f_1),\dots,\pi_\lambda(f_k)\},
\end{align*}
where $\pi_\lambda:{\sA}\to {\sA}/\cI_\lambda$ denotes the canonical map.
\smallskip

Let us illustrate this in the simple  case of a strip $[a,b]\times\R$ in $\R^2$.
\begin{example}\label{abstrip}
 Let $a,b\in \R$, $a<b$, $d=2$, and $\mathsf{f}_1=\{f_1:=(x_1-a)(b-x_1)\}$.
 Then 
\begin{align*}
T:= T(\mathsf{f}_1)=\sum \R[x_1,x_2]^2+ (x_1-a)(b-x_1)\sum \R[x_1,x_2]^2,~~\cK(T)=[a,b]\times\R.
 ~\end{align*} 
 Set $\mathsf{h}=(h_1)$, where $h_1(x_1,x_2):=x_1$.
 Clearly,  $\mathsf{h}(\cK(T))=[a,b]$.
 Fix $\lambda \in [a,b]$. 

 Then the fibre $\cK(T)_\lambda $ is the line $\{\lambda\}\times \R$, the ideal $\cI_\lambda$ is generated by $x_1-\lambda$ and $\pi_\lambda (f_1)=(\lambda-a)(b-\lambda)$ is a constant.
 Since   $x_1=\lambda$ in the quotient algebra by $\cI_\lambda$, we obtain ${\sA}/\cI_\lambda=\R[x_2]$ and  $(T+\cI_\lambda)/\cI_\lambda =\sum \R[x_2]^2$.

 Now we add the polynomial $f_2(x_1,x_2)=x_2$ to  $\mathsf{f}_1$, that is, we replace $\mathsf{f}_1$ by\,  $\mathsf{f}_2=\{f_1=(x_1-a)(b-x_1),f_2:=x_2\}$.
 Then we have $\cK(T)=[a,b]\times [0,+\infty) $.
 As above, let $\mathsf{h}=(h_1)$.
 Let $\lambda \in [a,b]$.
 As in the case of $\mathsf{f}_1$, we get ${\sA}/\cI_\lambda=\R[x_2]$.
 But now  $\cK_\lambda = \{\lambda\}\times [0,+\infty)$ and the preordering is $(T+\cI_\lambda)/\cI_\lambda =\sum \R[x_2]^2+ x_2 \sum \R[x_2]^2$.

 In both cases, the fibre preorderings have (SMP) and (MP) by the solutions of the Hamburger and Stieltjes moment problems, respectively.

 Finally, let us consider  the case  $\mathsf{f}_3=\{f_1=(x_1-a)(b-x_1),f_2:=x_2^3\}$.
 Then we obtain $(T+\cI_\lambda)/\cI_\lambda =\sum \R[x_2]^2+ x_2^3 \sum \R[x_2]^2$.
 This fibre preordering has (MP), but (SMP) fails, as noted in Example \ref{stelnosmp}. $\hfill\circ$
\end{example}

 The  following \emph{fibre theorem} is the main result of this Lecture. It was proved by the author first (2003) for $\sA=\R_d[\ux]$  and later (2015) in the general case.
\begin{theorem}\label{fibreth1} 
 Let ${\sA}$ be a finitely generated commutative real unital algebra and let $T$ be a finitely generated preordering of  ${\sA}$.
 Suppose  $h_1,\dots,h_m$ are elements of ${\sA}$ that are \textbf{bounded} on the  set  $\cK(T)$.
 Then the following are equivalent:
\begin{itemize}
\item[\em (i)]~  $T$ satisfies property (SMP) (resp. (MP)) in ${\sA}$.
\item[\em (iii)]~\, $(T+\cI_\lambda)/\cI_\lambda$ satisfies (SMP) (resp. (MP)) in\, ${\sA}/\cI_\lambda$\, for all\, $\lambda \in \mathsf{h}(\cK(T)).$
\end{itemize}
\end{theorem}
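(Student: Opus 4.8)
The plan is to prove the two implications separately; the direction (i)$\to$(iii) is essentially bookkeeping, while (iii)$\to$(i) is a disintegration argument that carries all the difficulty.

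\medskip

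For \textbf{(i)$\to$(iii)}, fix $\lambda\in\mathsf{h}(\cK(T))$ and let $\bar L$ be a $(T+\cI_\lambda)/\cI_\lambda$-positive linear functional on $\sA/\cI_\lambda$. I would pull it back to $L:=\bar L\circ\pi_\lambda$ on $\sA$. Since $\pi_\lambda(T)\subseteq(T+\cI_\lambda)/\cI_\lambda$, the functional $L$ is $T$-positive, and since $\pi_\lambda(\cI_\lambda)=0$ it annihilates $\cI_\lambda$. Property (SMP) (resp. (MP)) of $T$ then furnishes a representing measure $\mu$ with $\supp\mu\subseteq\cK(T)$ (resp. $\mu\in\cM_+(\hat\sA)$). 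Because $(h_j-\lambda_j)^2\in\cI_\lambda$ and $L$ kills $\cI_\lambda$, one gets $\int(h_j-\lambda_j)^2\,d\mu=0$, forcing $h_j=\lambda_j$ $\mu$-almost everywhere; hence $\mu$ is concentrated on the fibre $\cK(T)_\lambda$ (resp. on $\cZ(\cI_\lambda)\cap\hat\sA=\widehat{\sA/\cI_\lambda}$) and descends to a representing measure for $\bar L$ on the quotient. This is exactly (SMP) (resp. (MP)) of the fibre preordering.

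\medskip

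For \textbf{(iii)$\to$(i)}, take a $T$-positive functional $L$ on $\sA$; the idea is to disintegrate it over the bounded coordinates. \emph{Step 1:} build a \emph{marginal} measure $\nu$ supported on the compact set $\Lambda:=\overline{\mathsf{h}(\cK(T))}\subseteq\R^m$ representing the functional $q\mapsto L\bigl(q(h_1,\dots,h_m)\bigr)$ on $\R[\uy]$. \emph{Step 2:} disintegrate $L$ along $\nu$, producing for $\nu$-almost every $\lambda$ a functional $L_\lambda$ on $\sA/\cI_\lambda$ that is positive on $(T+\cI_\lambda)/\cI_\lambda$ and satisfies $L(a)=\int L_\lambda(\pi_\lambda a)\,d\nu(\lambda)$. \emph{Step 3:} invoke hypothesis (iii) to represent each $L_\lambda$ by a measure $\mu_\lambda$ supported on $\cK(T)_\lambda$ (resp. on $\widehat{\sA/\cI_\lambda}$). \emph{Step 4:} reassemble $\mu:=\int\mu_\lambda\,d\nu(\lambda)$; since $\cK(T)$ is the disjoint union of the fibres $\cK(T)_\lambda$, the measure $\mu$ represents $L$ and is supported on $\cK(T)$ (resp. on $\hat\sA$), which gives (SMP) (resp. (MP)) of $T$.

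\medskip

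The \textbf{main obstacle} is Step~1 together with the measurability needed in Step~2. The subtlety is that $h_1,\dots,h_m$ are bounded only \emph{on} $\cK(T)$, so for a bound $C_j$ one has $C_j^2-h_j^2\in\Pos(\cK(T))$ but \emph{not} necessarily $C_j^2-h_j^2\in T$; consequently, for $q\in\Pos(\Lambda)$ the element $q(h_1,\dots,h_m)$ lies in $\Pos(\cK(T))$ but need not lie in $T$, so the marginal positivity $L(q(h_1,\dots,h_m))\ge 0$ is \emph{not} immediate from $T$-positivity. Bridging this gap is the crux: I would obtain it through an approximation argument using the Krivine--Stengle Positivstellensatz (Theorem~\ref{krivinestengle}) on the compact image, together with Hamburger's theorem (Theorem~\ref{solutionhmp}) applied to the sequences $n\mapsto L(h_j^n a^2)$ to control the location of mass, and then Haviland's theorem (Theorem~\ref{haviland}) and the compact-set existence result (Theorem~\ref{archmedps}) to produce $\nu$ on $\Lambda$. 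The disintegration itself then demands a measurable selection $\lambda\mapsto L_\lambda$ and the verification that, for $\nu$-almost every $\lambda$, the fibre functional is genuinely positive on the fibre preordering (with empty fibres, where the preordering contains $-1$, being $\nu$-null); ensuring this measurability and the integrability that makes Step~4 legitimate is where the real technical work lies.
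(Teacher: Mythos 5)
Your direction (i)$\to$(iii) is correct and complete: pulling $\bar L$ back along $\pi_\lambda$, observing that the pullback is $T$-positive and annihilates $\cI_\lambda$, and using $\int (h_j-\lambda_j)^2\,d\mu=0$ to force the support of the representing measure into the fibre is exactly the right (and easy) argument. Note that the paper itself prints no proof of this theorem --- it defers to [MP, Theorem~13.10], remarking that the hard implication is very long and technically involved and uses the Krivine--Stengle Positivstellensatz --- and your Steps 1--4 do reproduce the skeleton of that proof: disintegrate $L$ over the values of the bounded elements $h_1,\dots,h_m$ and apply the fibre hypothesis $\nu$-almost everywhere.

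For (iii)$\to$(i), however, what you have written is a plan, not a proof, and the obstacle you flag yourself is precisely the mathematical content of the theorem. Three things remain open. First, Step 1 is unexecuted: you must show that the marginal functional $q\mapsto L\bigl(q(h_1,\dots,h_m)\bigr)$ is representable by a measure $\nu$ supported on the compact set $\Lambda$, even though $C_j^2-h_j^2$ need not lie in $T$. This is where Krivine--Stengle must actually be put to work; the paper's Proposition \ref{continuityLprop}(i) shows the template in the compact case, where the identity $p(\lambda^2-g^2)=1+q$ with $p,q\in T$ yields $g^{2n}\preceq\lambda^{2n+2}p$, and Hamburger plus this growth bound pins the support of the marginal measure --- you name these tools but do not carry out the (multivariate, joint) estimate. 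Second, Step 2 does not follow from Step 1 by generalities: constructing fibre functionals $L_\lambda$ with $L(a)=\int L_\lambda(\pi_\lambda a)\,d\nu(\lambda)$ requires a concrete device --- in the known proof, the GNS representation of $L$, on which the $h_j$ act as commuting \emph{bounded} self-adjoint operators whose joint spectral measure furnishes both $\nu$ and the densities $L_\lambda$ --- and one must then \emph{prove} that $\nu$-a.e.\ $L_\lambda$ is $(T+\cI_\lambda)/\cI_\lambda$-positive, annihilates $\cI_\lambda$, and that the $\nu$-mass of $\Lambda\setminus\mathsf{h}(\cK(T))$ (where hypothesis (iii) says nothing) is zero. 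Third, the reassembly in Step 4 needs a measurable selection $\lambda\mapsto\mu_\lambda$ and integrability of $\lambda\mapsto\int f\,d\mu_\lambda$, since the measures supplied by (iii) are in no way canonical. You explicitly defer all three points (``this is where the real technical work lies''), so the proposal identifies the correct strategy and the correct difficulties but does not close them; as it stands it is an outline of the argument in [MP], with the hard steps left open.
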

\begin{proof}
 {[MP, Theorem~13.10]}. 
$\hfill \qed$
\end{proof}

The main implication (ii)$\to$(i) of this theorem  allows us to derive the properties (SMP) or (MP) for $T$ from the corresponding properties of fibre preorderings $T_\lambda$. Thus, if the algebra $\sA$ contains elements $h_j$ that are bounded on the set $\cK(T)$, then the moment properties of $T$ can be reduced to the preorderings $T_\lambda$ which have (in general) ``lower dimensional" fibre sets $\cK(T_\lambda)$.

 The proof of (i)$\to$(ii) is not difficult, while the proof of   (ii)$\to$(i) is very long  and technically involved (see [MP, Section~13.10]).
 It uses  the Krivine--Stengle Positivstellensatz.
 There is also a version of the fibre theorem for quadratic modules, but this is much weaker and it is stated in [MP, Theorem~13.12].

\begin{remark} 
 Let us consider  the special case ${\sA}=\R_d[\ux]$, $T=T( \mathsf{f}),$ and assume that the semi-algebraic sets $\cK(T( \mathsf{f}))$ is \emph{compact}.
 Since the coordinate functions $x_j$ are bounded on  $\cK(T( \mathsf{f}))$, they can be taken as\, $h_j$.
 Then, all  fibre algebras ${\sA}/\cI_\lambda$ are equal to $\R$ and $(T+\cI_\lambda)/\cI_\lambda$ is $[0,+\infty)$, so it   has obviously (SMP). Therefore, by Theorem \ref{fibreth1}, $T=T(\mathsf{f})$ obeys (SMP) in $\R_d[\ux]$. This is the assertion of Theorem \ref{mpschm}  of Lecture 3.
$\hfill \circ$
\end{remark}

Theorem  \ref{fibreth1} is  useful if the algebra contains many bounded elements. This is often the case for algebras of rational functions as the following example shows.
\begin{example}
Let $\sA$  be the real algebra of rational functions on $\R^d$ generated by the polynomial algebra $\R_d[\ux]$ and $q_j(x)=(1+x_j^2)^{-1},$  $j=1,\dots,d$. Let $T=\sum \sA^2$. Then $\hat{\sA}$ is given the evaluations at points of $\R^d$. Hence $\cK(T)=\hat{\sA}=\R^d$.  Clearly, the elements $h_j:=q_j$ and   $h_{d+j}=x_jq_j$, $j=1,\dots,d$, of $\sA$   are bounded on $\cK(T)$. 

Let $\lambda \in \mathsf{h}(\cK(T)).$ Since $\lambda_j=q_j(\lambda)\neq 0$,  $ \lambda_{d+j}=\lambda_jq_j(\lambda)$, we have  $x_j=\lambda_{d+j}\lambda_j^{-1}$,  $j=1,\dots,d$, in the fibre algebra $\sA_\lambda.$ Thus,  $\sA_\lambda=\R$, so it is trivial that $\sum (\sA_\lambda)^2$  has  (MP). 
Therefore, $\sum \sA^2$   obeys (MP) by Theorem \ref{fibreth1}.
 This means that \emph{each positive linear functional on the algebra $\sA$ is given by some Radon measure on\, $\hat{\sA}=\R^d$}.

This conclusion and the preceding reasoning remain valid if $\sA$ is replaced by the algebra generated by $\R_d[\ux]$ and   the single function $q(x)=(1+x_1^2+\dots+x_d^2)^{-1}.$  $\hfill \circ$
\end{example}

The following simple lemmas are needed for the applications in the next sections.
\begin{lemma}\label{quotientsuma2}
If\, $\sum{\sA}^2$ has  property (MP) in $\sA$ and $\cI$ is an ideal of  ${\sA}$, then  $\sum ({\sA}/\cI)^2$ has (MP) in\, ${\sA}/\cI$.
\end{lemma}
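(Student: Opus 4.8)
The plan is to reduce positivity on the quotient to positivity on $\sA$ by pulling functionals back along the canonical surjection, apply the hypothesis there, and then show that the resulting representing measure is concentrated on the fibre over $\cI$, so that it descends to a representing measure on $\widehat{{\sA}/\cI}$.

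First I would let $\pi:\sA\to\sA/\cI$ be the canonical map and take an arbitrary $\sum({\sA}/\cI)^2$-positive (i.e.\ positive) linear functional $\bar L$ on ${\sA}/\cI$. Setting $L:=\bar L\circ\pi$ gives a linear functional on $\sA$, and for every $a\in\sA$ one has $L(a^2)=\bar L(\pi(a)^2)\geq 0$; hence $L$ is $\sum{\sA}^2$-positive. Since $\sum{\sA}^2$ has (MP) by assumption, there is a Radon measure $\mu\in\cM_+(\hat{\sA})$ with $L(f)=\int_{\hat{\sA}} f\,d\mu$ for all $f\in\sA$.

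The key step is to show that $\mu$ is concentrated on the zero set $\cZ(\cI):=\{x\in\hat{\sA}: g(x)=0 \text{ for all } g\in\cI\}$, which in the weak topology is canonically homeomorphic to the character set $\widehat{{\sA}/\cI}$ (the characters of ${\sA}/\cI$ being precisely the characters of $\sA$ annihilating $\cI$). For any $g\in\cI$ we have $\pi(g)=0$, so $L(g^2)=\bar L(\pi(g)^2)=0$, that is $\int_{\hat{\sA}} g^2\,d\mu=0$ and thus $g=0$ $\mu$-almost everywhere. Because $\sA$ is a finitely generated commutative $\R$-algebra it is Noetherian, so $\cI=(g_1,\dots,g_r)$ for finitely many $g_i$; then $\hat{\sA}\setminus\cZ(\cI)=\bigcup_{i=1}^r\{g_i\neq 0\}$ is a finite union of $\mu$-null sets and hence $\mu$-null. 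Therefore $\supp\mu\subseteq\cZ(\cI)$.

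Finally I would transport $\mu$ to a measure $\bar\mu$ on $\widehat{{\sA}/\cI}$ via the homeomorphism $\widehat{{\sA}/\cI}\cong\cZ(\cI)$; since this identification sends a class $\pi(a)\in{\sA}/\cI$, viewed as a function on $\widehat{{\sA}/\cI}$, to the restriction of $a$ to $\cZ(\cI)$, the resulting $\bar\mu$ is again a Radon measure and each element of ${\sA}/\cI$ is $\bar\mu$-integrable. Using that $\mu$ is concentrated on $\cZ(\cI)$ we obtain, for every $\bar a=\pi(a)$,
\[
\int_{\widehat{{\sA}/\cI}} \bar a\,d\bar\mu=\int_{\cZ(\cI)} a\,d\mu=\int_{\hat{\sA}} a\,d\mu=L(a)=\bar L(\bar a),
\]
so $\bar\mu$ represents $\bar L$ and $\sum({\sA}/\cI)^2$ has (MP). The main obstacle is the middle step: verifying that the analytic vanishing $L(g^2)=0$ genuinely forces $\mu$ onto the algebraic fibre $\cZ(\cI)$, for which the Noetherian reduction to finitely many generators and the identification of $\widehat{{\sA}/\cI}$ with a closed subset of $\hat{\sA}$ are essential.
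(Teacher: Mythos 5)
Your proof is correct and follows essentially the same route as the paper's: pull $\bar L$ back along the canonical surjection, apply (MP) in $\sA$, and observe that the representing measure lives on the zero set of $\cI$, identified with $\widehat{{\sA}/\cI}$. The paper states the support step in one line ("because $\tilde L$ annihilates $\cI$, the measure is supported on the zero set of $\cI$"); your use of $L(g^2)=0$ together with the Noetherian reduction to finitely many generators is just a careful justification of that same step, legitimate here since $\sA$ is assumed finitely generated in this lecture.
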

\begin{proof}
Let $\rho$ denote  the canonical map of $\sA$ into $\sA/\cI$. Suppose  $L$ is a positive linear functional on $\sA/\cI$. Then $\tilde{L}:=L\circ \rho$  is a positive functional on $\sA$. Since $\sum \sA^2$ has (MP) by assumption,  $\tilde{L}$, hence $L$, is given a Radon measure on $\hat{\sA}$. Because $\tilde{L}$ annihilates $\cI$, the measure is supported on the zero set of $\cI$ and so on $\widehat{\sA/\cI}$. This means that $\sum ({\sA}/\cI)^2$ has (MP).
$\hfill \qed$\end{proof}
\begin{lemma}\label{singeHamburger}
If a real  algebra ${\sA}$ has a single generator, then\,
$\sum {\sA}^2$\,   obeys (MP).
\end{lemma}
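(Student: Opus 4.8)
The plan is to exploit the single generator to realize $\sA$ as a quotient of the univariate polynomial algebra and then invoke Hamburger's theorem. Let $a$ be a generator of $\sA$. Then the evaluation homomorphism $\rho:\R[x]\to\sA$, $p\mapsto p(a)$, is surjective, so $\sA\cong\R[x]/\cJ$ with $\cJ=\ker\rho$, and $\hat{\sA}$ is identified with the real zero set $\cZ(\cJ)\subseteq\R$. Given a positive linear functional $L$ on $\sA$, first I would pull it back to $\tilde{L}:=L\circ\rho$ on $\R[x]$. For every $p\in\R[x]$ we have $\tilde{L}(p^2)=L(\rho(p)^2)\ge 0$, so $\tilde{L}$ is a positive functional on $\R[x]$; by Hamburger's Theorem \ref{solutionhmp} there is a Radon measure $\nu$ on $\R$ with $\tilde{L}(p)=\int_\R p\,d\nu$ for all $p\in\R[x]$ (so in particular every polynomial is $\nu$-integrable).

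The decisive step is to show that $\nu$ is concentrated on $\cZ(\cJ)$, so that it descends to a measure on $\hat{\sA}$. Since $\R[x]$ is a principal ideal domain, $\cJ=(g)$ for a single polynomial $g$, and then $\cZ(\cJ)=\{x\in\R:g(x)=0\}$. Because $g^2=g\cdot g\in\cJ$ and $\tilde{L}$ annihilates $\cJ$ (as $\rho(g)=0$), we get $\int_\R g^2\,d\nu=\tilde{L}(g^2)=0$. Since $g^2\ge 0$ and $\nu\ge 0$, this forces $g=0$ $\nu$-almost everywhere, hence $\supp\nu\subseteq\{g=0\}=\cZ(\cJ)=\hat{\sA}$.

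Finally I would let $\mu$ be $\nu$ regarded as a Radon measure on $\hat{\sA}$. Each $f\in\sA$ is represented by some $p\in\R[x]$ with $f=\rho(p)$, and the corresponding continuous function on $\hat{\sA}$ is the restriction of $p$ to $\cZ(\cJ)$; this is well defined because two representatives differing by an element of $\cJ$ agree on $\cZ(\cJ)$. Then $L(f)=\tilde{L}(p)=\int_\R p\,d\nu=\int_{\cZ(\cJ)} p\,d\nu=\int_{\hat{\sA}} f\,d\mu$, where the third equality uses that $\nu$ is supported on $\cZ(\cJ)$. Thus $L$ is a moment functional, and $\sum\sA^2$ has (MP).

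The main obstacle is precisely the support claim of the second paragraph: without it the Hamburger measure $\nu$ could carry mass outside $\cZ(\cJ)=\hat{\sA}$ and would not give a measure on the character space. Testing against $g^2\in\cJ$ is what pins the support down, and this relies essentially on $\R[x]$ being a PID, so that $\cJ$ has a single generator whose vanishing set coincides with $\cZ(\cJ)$, together with positivity of $\nu$. The degenerate cases are harmless: if $\cJ=\{0\}$ then $\sA=\R[x]$, $\hat{\sA}=\R$, and no support argument is needed, while if $g$ has no real root then $\int g^2\,d\nu=0$ forces $\nu=0$, so $L=0$ and (MP) holds trivially.
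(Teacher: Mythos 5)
Your proof is correct and follows essentially the same route as the paper: the paper realizes $\sA$ as a quotient $\R[y]/\cI$, pulls the positive functional back to $\R[y]$, applies Hamburger's theorem, and localizes the support of the resulting measure on the zero set of the ideal via its Lemma \ref{quotientsuma2} --- which is exactly the argument you carry out by hand. The only difference is your appeal to the PID property of $\R[x]$ to get a single generator $g$ of $\cJ$; this is harmless but unnecessary, since testing against $h^2$ for every $h\in\cJ$ already gives $\supp\nu\subseteq\cZ(\cJ)$ for an arbitrary ideal, which is how the paper's lemma works in general.
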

\begin{proof}
Because $\sA$ is single generated,  it is a quotient algebra $\R[y]/\cI$ for some ideal $\cI$ of $\R[y]$. 
Since $\sum \R[y]^2$ has (MP) by  Hamburger's theorem, it follows from  Lemma \ref{quotientsuma2}  that\,  $\sum \sA^2=\sum (\R[y]/\cI)^2$\, has (MP).
$\hfill\qed $\end{proof} 
\section{First application:  cylinder sets with compact base}\label{cylindersets}
 
 The  moment problem for cylinders with compact base fits nicely to  the fibre theorem, as shown by the following Theorem.
\begin{theorem}\label{cylindercompact}
Let $\cC$ be a compact set in $\R^{d-1}$, $d\geq 2,$ and let $\mathsf{f}$ be a finite subset of $\R_d[\ux]$. Suppose the semi-algebraic subset $\cK(\mathsf{f})$  of $\R^{d}$  is contained in the cylinder\, $\cC\times \R$. Then the preordering  $T(\mathsf{f})$ has (MP). If $\cC$ is a semi-algebraic set in  $\R^{d-1}$ and $\cK(\mathsf{f})=\cC\times \R$, then $T(\mathsf{f})$ satisfies (SMP).
\end{theorem}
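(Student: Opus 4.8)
The plan is to apply the fibre theorem (Theorem~\ref{fibreth1}) with the bounded functions taken to be the base coordinates. Write points of $\R^d$ as $(x_1,\dots,x_{d-1},x_d)$, so that $\cC\times\R$ consists of the points whose first $d-1$ coordinates lie in $\cC$. Since $\cK(\mathsf{f})\subseteq\cC\times\R$ and $\cC$ is compact, the coordinate functions $h_j:=x_j$ for $j=1,\dots,d-1$ are bounded on $\cK(\mathsf{f})=\cK(T(\mathsf{f}))$. Setting $\mathsf{h}=(h_1,\dots,h_{d-1})$, I would fix $\lambda=(\lambda_1,\dots,\lambda_{d-1})\in\mathsf{h}(\cK(\mathsf{f}))$ and pass to the fibre. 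The ideal $\cI_\lambda$ is then generated by $x_1-\lambda_1,\dots,x_{d-1}-\lambda_{d-1}$, so in the quotient each base variable is frozen to the constant $\lambda_j$ and $\sA/\cI_\lambda=\R_d[\ux]/\cI_\lambda\cong\R[x_d]$ is a \emph{singly generated} algebra. By the fibre theorem it suffices to check that every fibre preordering $(T(\mathsf{f})+\cI_\lambda)/\cI_\lambda$ has the relevant property in $\R[x_d]$.

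For the (MP) claim I would use that (MP) passes from a smaller quadratic module to any larger one in the same algebra: if $Q_1\subseteq Q_2$ and $Q_1$ has (MP), then every $Q_2$-positive functional is in particular $Q_1$-positive, hence a moment functional, so $Q_2$ has (MP). Here the fibre preordering contains $\sum(\sA/\cI_\lambda)^2=\sum\R[x_d]^2$, which has (MP) by Lemma~\ref{singeHamburger} (equivalently, by Hamburger's Theorem~\ref{solutionhmp}). Thus each fibre preordering has (MP), and the nontrivial implication of Theorem~\ref{fibreth1} yields (MP) for $T(\mathsf{f})$. Note that this step uses only the inclusion $\cK(\mathsf{f})\subseteq\cC\times\R$ and the compactness of $\cC$, not any semi-algebraicity or equality.

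For the (SMP) claim I would use the stronger hypothesis $\cK(\mathsf{f})=\cC\times\R$ (with $\cC$ semi-algebraic) to identify the fibre preordering \emph{exactly} rather than merely bound it below. Now $\mathsf{h}(\cK(\mathsf{f}))=\cC$, and for $\lambda\in\cC$ the fibre $\cK(\mathsf{f})_\lambda=\{\lambda\}\times\R$ is entirely contained in $\cK(\mathsf{f})$, so each generator satisfies $f_i(\lambda,x_d)\geq 0$ for \emph{all} $x_d\in\R$. Hence every $\pi_\lambda(f_i)$ is a one-variable polynomial nonnegative on $\R$, and by Proposition~\ref{positivepolyr}(i) it lies in $\sum\R[x_d]^2$. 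Consequently the preordering generated by the $\pi_\lambda(f_i)$ collapses to the smallest preordering, so $(T(\mathsf{f})+\cI_\lambda)/\cI_\lambda=\sum\R[x_d]^2$. Since $\cK(\sum\R[x_d]^2)=\R$ and, by Hamburger's theorem, every positive functional on $\R[x_d]$ has a representing measure on $\R$, this fibre preordering satisfies (SMP). Applying the nontrivial implication of Theorem~\ref{fibreth1} once more gives (SMP) for $T(\mathsf{f})$.

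The genuinely hard content is packed into the fibre theorem, which I am entitled to assume; granted that, the only points requiring care are (a) checking that the base coordinates are bounded, so the theorem is applicable, and (b) in the (SMP) case, observing that the equality $\cK(\mathsf{f})=\cC\times\R$ forces every generator to be globally nonnegative on each vertical line, which is precisely what makes the fibre preordering \emph{equal} to $\sum\R[x_d]^2$ rather than only contain it. The main obstacle is therefore conceptual rather than computational: recognizing that fibering over the compact base converts a $d$-dimensional problem into a family of one-dimensional Hamburger problems, for which the classical sum-of-squares description (Proposition~\ref{positivepolyr}(i)) and Hamburger's theorem settle everything.
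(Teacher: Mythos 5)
Your proof is correct and follows essentially the same route as the paper: fibre over the compact base via the coordinate functions $h_j=x_j$, reduce to one-variable fibres, and invoke Hamburger's theorem — for (MP) using that the fibre preordering contains $\sum\R[x_d]^2$, and for (SMP) that it equals $\sum\R[x_d]^2$. The only difference is that you justify the equality $(T(\mathsf{f})+\cI_\lambda)/\cI_\lambda=\sum\R[x_d]^2$ explicitly (via Proposition~\ref{positivepolyr}(i) and closure of sums of squares under products), a step the paper merely asserts.
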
 
\begin{proof}
Define $h_j(x)=x_j$ for $j=1,\dots,d-1$. Since $\cK(\mathsf{f})\subseteq\cC\times \R$ and $\cC$ is compact, the polynomials $h_j$ are bounded on $\cK$, so the assumptions of Theorem \ref{fibreth1} are fulfilled. 
Then all fibres $\cK(\mathsf{f})_\lambda$  are subsets of $ (\lambda_1,\dots,\lambda_{d-1})\times \R$,  the  preordering 
$T(\mathsf{f})_\lambda$ contains $\sum \R[x_d]^2$, and the quotient algebra $\R_d[\ux]/ \cI_\lambda$ is an algebra of polynomials in the single variable $x_d$. 
By Lemma \ref{singeHamburger}, $\sum (\R[x_d]/\cI_\lambda)^2$ obeys (MP)  and so obviously does the larger preordering $T(\mathsf{f})_\lambda/\cI_\lambda$. Therefore,    $T(\mathsf{f})$ has (MP) by  Theorem \ref{fibreth1},(ii)$\to$(i). 

If $\cK(\mathsf{f})=\cC\times \R$, the fibres for $\lambda \in \mathsf{h}(\cK(\mathsf{f}))$ are equal to  $ (\lambda_1,\dots,\lambda_{d-1})\times \R$ and $T(\mathsf{f})_\lambda=\sum \R[x_d]^2$. Hence the $T(\mathsf{f})_\lambda$ satisfy (SMP) and so does   $T(\mathsf{f})$.
\qed \end{proof}

We return to the  case of a strip in $\R^2$ and derive a classical result due to Devinatz (1955).

\begin{example} (\textit{Example \ref{abstrip} continued})\\
Let us retain the notation of Example \ref{abstrip}.
As noted therein, all fibre preorderings for $\mathsf{f}_1$ and $\mathsf{f}_2$  obey (SMP), so do $T(\mathsf{f}_1)$ and $T(\mathsf{f}_2)$ by Theorem \ref{fibreth1}. 

We state this result for $T(\mathsf{f}_1)$: Given a linear functional $L$ on $\R[x_1,x_2]$, there exists a  Radon measure $\mu$  on $\R^2$ supported on\, $[a,b]\times\R$ such that $p$ is $\mu$-integrable  and
\begin{align*}
 L(p)
 =\int_a^b \int_\R  p(x_1,x_2)\, d\mu(x_1,x_2)~~~\text{ for~ all}~~p\in \R[x_1,x_2]
\end{align*}
 if and only if 
\begin{align*}
\hspace{1,6cm}L(q_1^2+(x_1-a)(b-x_1)q_2^2)
 \geq 0 ~~~\text{ for~ all}~~~q_1,q_2\in\R[x_1,x_2].\hspace{1,5cm}\hfill \circ
\end{align*}
\end{example}

\section{(SMP) for subsets of the real line}

From Example \ref{stelnosmp} we know that (MP) does not imply (SMP) even for the polynomial algebra $\R[x]$ in one variable. The  reason is that (SMP) requires the "right set of generators" for a non-compact semi-algebraic set. 

Suppose that $\cK$ is a (nonempty)  closed semi-algebraic proper subset of $\R$, that is, $\cK$ is the union of finitely many closed  intervals. (These intervals can be  points.) In order to state the result it is convenient to introduce the following notion.

\begin{definition}\label{naturalchoicegen}
 A finite subset\, $\mathsf{g}$ of\, $\R[x]$ is called a \emph{natural choice of generators} for $\cK$ if  $\mathsf{g}$  is the smallest set  satisfying the following conditions:\\ 
$\bullet$~ If\, $\cK$ contains a least element $a$ (that is, if $(-\infty,a)\cap \cK=\emptyset$), then $(x-a)\in  \mathsf{g}$.  \\
$\bullet$~ If\, $\cK$ contains a greatest element $a$ (that is, if $(a,\infty)\cap \cK=\emptyset$), then $(a-x)\in  \mathsf{g}$.\\
$\bullet$~ If\, $a,b\in \cK$, $a<b$, and $(a,b)\cap\cK=\emptyset$, then $(x-a)(x-b)\in \mathsf{g}$. 
\end{definition}   
It is not difficult to verify  that a choice of natural generators  always exists and that  it is uniquely determined. Moreover,   w  $\cK=\cK(\mathsf{g})$.
For $\cK=\R$ we set $\mathsf{g}=\{1\}$. 
\begin{example}
$\cK=[b,+\infty)$: $\mathsf{g}=\{(x-b)\},$\\
$\cK=\{a\}\cup [b,+\infty)$, where $a<b$:~ $\mathsf{g}=\{x-a, (x-a)(x-b)\},$\\
$\cK=\{a\}\cup\{b\}$, where $a<b$:~ $\mathsf{g}=\{x-a,(x-a)(x-b), b-x\},$\\
$\cK=\{a\}$:~ $\mathsf{g}=\{x-a,a-x\}.$
\end{example}
It can be shown that ${\Pos}(\cK)=T(\mathsf{g})$ if  $\mathsf{g}$ is the natural choice of generators. If the semi-algebraic set if compact, the preordering has always (SMP) by  Theorem \ref{mpschm}. The difficulty of getting the "right support" of the representing measure appears only in the non-compact case and the answer is given by the following result of Kuhlmann and Marshall (2002).
\begin{theorem}\label{charonedimsmpnc}
Suppose  $\mathsf{f}$  is a finite subset of $\R[x]$ such that the semi-algebraic subset\,  $\cK(\mathsf{f})$ of\, $\R$ is  not compact. Then $T(\mathsf{f})$ obeys  (SMP) if and only if $\mathsf{f}$ contains  positive multiples of all polynomials of the natural choice of generators $\mathsf{g}$ of $\cK(\mathsf{f})$. 
\end{theorem}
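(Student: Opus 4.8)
The plan is to recast property (SMP) as a closure statement, then split into the two implications, handling the ``if'' direction directly through Haviland's theorem and the ``only if'' direction by constructing a pathological functional.

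First I would set up a reformulation. By Haviland's theorem (Theorem~\ref{haviland}) a linear functional $L$ on $\R[x]$ is a $\cK$-moment functional if and only if $L\geq 0$ on $\Pos(\cK)$, whereas $L$ is $T(\mathsf{f})$-positive if and only if $L\geq 0$ on $T(\mathsf{f})$. Since every linear functional is continuous in the finest locally convex topology on $\R[x]$ and $\Pos(\cK)$ is closed in that topology, the Hahn--Banach separation argument already used for Motzkin's polynomial in Section~\ref{possums} shows that $T(\mathsf{f})$ has (SMP) precisely when $\overline{T(\mathsf{f})}=\Pos(\cK)$, the closure taken in the finest locally convex topology. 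As $T(\mathsf{f})\subseteq\Pos(\cK)$ always holds, the real content is the inclusion $\Pos(\cK)\subseteq\overline{T(\mathsf{f})}$.

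For the ``if'' direction, suppose $\mathsf{f}$ contains a positive multiple $c_g\,g$ of each $g\in\mathsf{g}$. Then $g=(c_g)^{-1}(c_g g)\in T(\mathsf{f})$, so $T(\mathsf{g})\subseteq T(\mathsf{f})$; together with $T(\mathsf{f})\subseteq\Pos(\cK)$ and the identity $\Pos(\cK)=T(\mathsf{g})$ stated above, this forces $T(\mathsf{f})=\Pos(\cK)$ \emph{exactly}, and Haviland's theorem then yields (SMP) with no closure argument required. The converse I would prove by contraposition: assuming $\mathsf{f}$ contains no positive multiple of some $g_0\in\mathsf{g}$, I aim to exhibit $g_0\in\Pos(\cK)\setminus\overline{T(\mathsf{f})}$, for then any $L$ with $L\geq 0$ on $T(\mathsf{f})$ and $L(g_0)<0$ is $T(\mathsf{f})$-positive but fails to be a $\cK$-moment functional. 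Non-compactness guarantees an unbounded end of $\cK$, and after an affine change of variable the governing model is $\cK=[0,\infty)$, $g_0=x$, with $\mathsf{f}$ having no positive multiple of $x$; absorbing even powers of generators into squares, the binding positivity conditions then collapse to those of $T_3=\sum\R[x]^2+x^3\sum\R[x]^2$ from Example~\ref{stelnosmp}.

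The construction of the separating $L$ is where I expect the main obstacle to lie. Writing $s_k:=L(x^k)$, the conditions ``$L\geq 0$ on $T_3$'' and ``$L(x)<0$'' translate, via the Hankel-form identities of Lecture~1, into: all Hankel matrices $H_n(s)$ and $H_n(E^3 s)$ (for the thrice-shifted sequence $E^3 s=(s_{k+3})_k$) are positive semidefinite, while $s_1<0$. A finitely atomic representing measure cannot meet these simultaneously, since Lagrange interpolation forces all its atoms into $[0,\infty)$ and hence $s_1\geq 0$; one is therefore driven to a genuinely \emph{indeterminate} Hamburger measure charging $(-\infty,0)$ whose localization at $x^3$ shares its moments with a measure on $[0,\infty)$, which is precisely the mechanism behind [MP, Example~13.7]. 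The gap generators $(x-a)(x-b)$ and the opposite endpoint are handled symmetrically, the uniform principle being that a mismatch between the vanishing order of the generators of $\mathsf{f}$ and the order-one (endpoint) or order-two (gap) vanishing of the natural generators lets mass leak across the corresponding boundary at the level of moments. Verifying that the resulting $L$ is nonnegative on the \emph{entire} preordering $T(\mathsf{f})$, and not merely on $g_0$, is the delicate technical heart of the argument.
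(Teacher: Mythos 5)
Your reformulation of (SMP) as $\overline{T(\mathsf{f})}=\Pos(\cK)$ (closure in the finest locally convex topology) is sound, and your ``if'' direction is complete and correct: it uses only the facts stated in the paper that $\cK=\cK(\mathsf{g})$ and $\Pos(\cK)=T(\mathsf{g})$ for the natural generators, whence $T(\mathsf{g})\subseteq T(\mathsf{f})$ gives $T(\mathsf{f})=\Pos(\cK)$ exactly and Haviland's theorem applies. The ``only if'' direction, however, contains a genuine gap, and one of its reduction steps is actually false. You claim that after an affine change of variables, ``absorbing even powers of generators into squares'', the positivity conditions collapse to those of $T_3=\sum\R[x]^2+x^3\sum\R[x]^2$, so that the separating functional behind [MP, Example~13.7] can be reused. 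This fails already for $\mathsf{f}=\{x(1+x^2)\}$: here $\cK(\mathsf{f})=[0,+\infty)$ and $\mathsf{f}$ contains no positive scalar multiple of the natural generator $x$, so the theorem asserts that (SMP) fails; but $x+x^3\notin T_3$ (writing $x+x^3=\sigma_0+x^3\sigma_1$ with $\sigma_i\in\sum\R[x]^2$ is impossible, since the top-degree terms of $\sigma_0$ and $x^3\sigma_1$ have positive leading coefficients and degrees of opposite parity, so no cancellation can bring the degree down to $3$ except $\sigma_1=c$, $c=1$, which forces $x=\sigma_0$). Hence $T(\mathsf{f})\not\subseteq T_3$, and a $T_3$-positive functional need not be $T(\mathsf{f})$-positive: the model-case reduction does not even cover the simplest half-line situations, let alone the case where the missing natural generator is a gap polynomial $(x-a)(x-b)$ or where $\cK$ has several components, which you dismiss as ``handled symmetrically''.

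Beyond this, you yourself flag that verifying nonnegativity of the constructed $L$ on the whole preordering $T(\mathsf{f})$ is ``the delicate technical heart of the argument'' --- but that verification \emph{is} the only-if implication, and it is nowhere carried out; what you have is a plan, not a proof. For the record, the paper gives no proof either (it cites [MP, Theorem~13.24]), and the known argument of Kuhlmann and Marshall does not construct indeterminate measures at all: it exploits the fact that in one variable with $\cK(\mathsf{f})$ noncompact the preordering $T(\mathsf{f})$ is \emph{closed} (stability), so by your own reformulation (SMP) becomes the purely algebraic identity $T(\mathsf{f})=\Pos(\cK)=T(\mathsf{g})$, and one then shows by order-of-vanishing arguments at endpoints and gap points of $\cK$, together with degree and leading-coefficient arguments governing behavior at $\pm\infty$ (exactly the kind of computation displayed above for $x+x^3$), that a natural generator lies in $T(\mathsf{f})$ only if $\mathsf{f}$ contains a positive multiple of it. If you wish to salvage your functional-analytic route instead, the separating functional must be built adapted to the actual generators in $\mathsf{f}$, not to the fixed model $T_3$.
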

\begin{proof}
 {[MP, Theorem~13.24]}.
$\hfill\qed$
\end{proof}

\begin{example}\label{examplepoint} $f_1(x)=x_1, f_2(x)=1-x_1, f_3(x)=x_2^3-x_2^2-x_1,f_4(x)= 4-x_1x_2$.\\ Then $h_1(x)=x_1$ is bounded and $h_1(\cK(\mathsf{f}))=[0,1]$. The fibres for $\lambda\in (0,1]$ are compact, so the  preordering $T_\lambda$ has (SMP). 
The fibre set at $\lambda=0$ is\,  $\{0\}\cup [1,+\infty)$ and the  sequence $\pi_0(\mathsf{f})$ is $\{0,1,x_2^3-x_2^2,4\}$. Since $\pi_0(\mathsf{f})$ does not contain  multiples of all natural choice generators for\,  $\{0\}\cup [1,+\infty)$,  $T(\mathsf{f})$ does not have (SMP). $\hfill\circ$
\end{example}
\begin{example}\label{examplesx2^3}
$f_1(x)=x_1,f_2(x)=1-x_1, f_3(x)=1-x_1x_2, f_4(x)=x_2^3$.\\
Again let $h_1(x)=x_1$, so that\,  $h_1(\cK(\mathsf{f}))=[0,1]$. All fibres at $\lambda\in (0,1]$ are compact, so they obey (SMP). The fibre set at $\lambda=0$ is $[0,+\infty)$ and the  sequence\, $\pi_0(\mathsf{f})=\{ 0,1,0,x_2^3\}$\, does not contain a multiple of the natural choice generator $x_2$ for $[0,+\infty)$. Hence $T(\mathsf{f})$ does not satisfy (SMP). However, if we  replace $f_4$ by $\tilde{f}_4(x)=x_2$, then $T(\mathsf{f})$ has (SMP). $\hfill \circ$
\end{example}
\section{Second application: the two-sided complex moment problem}\label{twosidedcomplexmp}

 Let $\sB=\C[z,\ov{z},z^{-1},\ov{z}^{-1}]$\, be the $*$-algebra of  complex Laurent polynomials in $z$ and $\ov{z}$. The involution $f\mapsto f^+$ of $\sB$ is the complex conjugation. Note that $\sB$ is ($*$-isomorphic to) the semigroup $*$-algebra for the $*$-semigroup $\Z^2$ with involution $(m,n)^*=(n,m)$. 

Suppose $s=(s_{m,n})_{(m,n)\in \Z^2}$ is a complex sequence and  $L_s$ is the $\C$-linear  functional  on  $\sB$   defined by 
$L_s(z^m \ov{z}^n)=s_{m,n}$, $(m,n)\in \Z^2$.

 The \textbf{two-sided complex moment problem} is  the following question:
\smallskip

 \textit{When does there exist  a Radon measure $\mu$ on $\C^\times:=\C\backslash \{0\}$  such that 
the function $z^m\ov{z}^n$ on $\C^\times$ is $\mu$-integrable and
\begin{align*}
s_{mn} =\int_{\C^\times}~ z^m\ov{z}^n\, d\mu(z)\quad \text{\textit{for}}\quad (m,n)\in \Z^2 ,
\end{align*}
or equivalently,}
\begin{align*}
L_s(p)=\int_{\C^\times} p(z,\ov{z})\, d\mu(z)\quad \text{\textit{for}} \quad p\in \sB\, ?
\end{align*}
In this case we call $s$  a moment sequence  and $L_s$  a moment functional.
\smallskip

 The next  result is\, \textbf{Bisgaard's theorem} (1989).
 In terms of $*$-semigroups it means  that \emph{each positive semidefinite sequence  on $\Z^2$ is a moment sequence on $\Z^2$}.
 The main assertion of Theorem \ref{bisgaardthm} says  that each positive  functional on $\sB$ is a moment functional. Since $\sB$ has the character set  $\R^2\backslash \{0\}$,  this result is really surprising.
\begin{theorem}\label{bisgaardthm}
A linear functional $L$ on $\sB$ is a moment functional if and only if $L$ is a positive functional, that is,\, $L(f^+f)\geq 0$\, for all $f\in \sB$. 
\end{theorem}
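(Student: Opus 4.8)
The ``only if'' direction is immediate: if $L(f)=\int_{\C^\times} f\,d\mu$ with $\mu\in\cM_+(\C^\times)$, then $L(f^+f)=\int |f|^2\,d\mu\ge 0$. The whole content is the converse, and the plan is to feed it into the fibre theorem (Theorem \ref{fibreth1}) after passing to a real algebra. First I would replace the complex $*$-algebra $\sB$ by its real form. Writing $z=x_1+\ii x_2$, the hermitian part $\sB_{\mathrm h}=\{f\in\sB:f^+=f\}$ is the real algebra $\sA:=\R[x_1,x_2]_{x_1^2+x_2^2}$, the localization of $\R[x_1,x_2]$ at $x_1^2+x_2^2=z\ov z$, and $\sB=\sA\oplus\ii\,\sA$. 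For commuting hermitian $a,b$ one has $(a+\ii b)^+(a+\ii b)=a^2+b^2$, so a $\C$-linear functional $L$ on $\sB$ is positive iff its restriction $\ell:=L|_{\sA}$ is real-valued and $\sum\sA^2$-positive; moreover $\ell$ determines $L$ by $\C$-linearity, and any representing measure for $\ell$ on $\hat{\sA}$ represents $L$ on all of $\sB$. Since the character space $\hat{\sA}$ is exactly $\R^2\setminus\{0\}=\C^\times$, the theorem reduces to the assertion that $\sum\sA^2$ has property (MP) in $\sA$.

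Next I would apply the fibre theorem to $T=\sum\sA^2$ (so that $\cK(T)=\hat{\sA}=\C^\times$) with the two bounded generators $h_1:=\rRe(z/\ov z)=(x_1^2-x_2^2)/(x_1^2+x_2^2)$ and $h_2:=\rIm(z/\ov z)=2x_1x_2/(x_1^2+x_2^2)$. Since $h_1^2+h_2^2=1$ identically, both are bounded on $\cK(T)$ and $\mathsf{h}(\cK(T))=\{(\lambda_1,\lambda_2):\lambda_1^2+\lambda_2^2=1\}$. Fixing $\lambda$ on this circle pins the argument of $z$ modulo $\pi$, so the fibre $\cK(T)_\lambda$ is a punctured line through the origin; parametrizing it by its signed distance $t\in\R\setminus\{0\}$ one finds that the fibre algebra is $\sA/\cI_\lambda\cong\R[t,t^{-1}]$ (the images of $x_1,x_2$ are scalar multiples of $t$, and $(x_1^2+x_2^2)^{-1}$ maps to $t^{-2}$), with fibre preordering $(T+\cI_\lambda)/\cI_\lambda=\sum(\sA/\cI_\lambda)^2=\sum\R[t,t^{-1}]^2$.

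The whole problem thus comes down to the single fibre statement: $\sum\R[t,t^{-1}]^2$ has (MP), i.e. every positive functional on the Laurent algebra $\R[t,t^{-1}]$ is represented by a Radon measure on $\R^\times$. This is the one-dimensional \emph{two-sided} (strong) Hamburger moment problem, and it is the main obstacle, because $\R[t,t^{-1}]\cong\R[a,b]/(ab-1)$ is \emph{not} singly generated, so Lemma \ref{singeHamburger} does not apply; nor is it a quotient of a $\sum$-square-(MP) algebra, so Lemma \ref{quotientsuma2} is useless, $\sum\R[a,b]^2$ itself failing (MP). I would instead prove it by the GNS/spectral route favoured in the book: form the pre-Hilbert space with inner product $\langle p,q\rangle=L(pq)$, on which multiplication by $t$ is a symmetric operator $X$ that is invertible (its inverse being multiplication by $t^{-1}$) and commutes with the natural conjugation, hence admits an invertible self-adjoint extension $\hat X$. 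Evaluating its spectral measure at the cyclic vector $\Omega$ (the class of $1$) produces a Radon measure $\mu$ on $\R\setminus\{0\}$ with $L(t^n)=\langle \hat X^n\Omega,\Omega\rangle=\int t^n\,d\mu$ for every $n\in\Z$, the invertibility of $\hat X$ ensuring that $0$ carries no mass and that all negative moments stay finite.

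With this fibre input, the nontrivial implication of the fibre theorem (Theorem \ref{fibreth1}) yields property (MP) for $\sum\sA^2$ in $\sA$. Unwinding the reduction of the first paragraph, every positive $\C$-linear functional $L$ on $\sB$ is then given by a Radon measure on $\C^\times$, which is Bisgaard's theorem. The delicate points to watch are the self-adjoint-extension and invertibility claims in the fibre step (the genuine analytic heart of the argument) and the verification that $\mathsf{h}(\cK(T))$ is the full unit circle so that the fibre theorem covers every relevant $\lambda$.
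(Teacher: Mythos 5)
Your overall strategy coincides with the paper's: pass to the hermitian part $\sA$, feed bounded rational functions built from $z/\ov{z}$ into the fibre theorem (Theorem \ref{fibreth1}), and reduce everything to the two-sided Hamburger moment problem on the Laurent algebra. (The paper uses the three bounded elements $x_1^2/(x_1^2+x_2^2)$, $x_2^2/(x_1^2+x_2^2)$, $x_1x_2/(x_1^2+x_2^2)$, which carry exactly the same information as your $\rRe(z/\ov{z})$, $\rIm(z/\ov{z})$ and produce the same punctured-line fibres.) Where you genuinely diverge is in the fibre step, and there your write-up has two soft spots.

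First, you assert $\sA/\cI_\lambda\cong\R[t,t^{-1}]$. What is easy to see is only that $\sA/\cI_\lambda$ is generated by $x_1$ together with an inverse of $x_1$, i.e.\ that it is a \emph{quotient} of $\R[t,t^{-1}]$; the isomorphism would require showing that $\cI_\lambda$ is the full vanishing ideal of the punctured line, which you do not do. This is exactly where Lemma \ref{quotientsuma2} earns its keep: it transports (MP) from $\R[t,t^{-1}]$ to any of its quotients, so the isomorphism is never needed. Your dismissal of that lemma is aimed at the wrong target---it is indeed useless for deriving (MP) of the Laurent algebra from $\R[a,b]$ (where $\sum\R[a,b]^2$ fails (MP)), but the paper uses it in the opposite direction, from the Laurent algebra down to the fibre algebra.

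Second, the step you yourself call the analytic heart. The claim that $X$ ``admits an invertible self-adjoint extension'' does not follow from von Neumann's conjugation theorem, which only yields self-adjoint extensions; invertibility needs a separate argument. The gap is closable by one observation: since $X\cD=\cD$, the range of \emph{any} self-adjoint extension $\hat{X}$ contains the dense subspace $\cD$, hence $\ker \hat{X}=(\mathrm{ran}\,\hat{X})^\perp=\{0\}$. So every self-adjoint extension is automatically injective, its spectral measure gives no mass to $\{0\}$, $\hat{X}^{-1}$ is a self-adjoint extension of multiplication by $t^{-1}$, and $\langle \hat{X}^{\,n}\Omega,\Omega\rangle=L(t^{n})$ for all $n\in\Z$, as you want. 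With that patch your operator-theoretic route is correct, and it is the classical Hilbert-space solution of the strong Hamburger moment problem. The paper, however, disposes of this step with no operator theory at all: every $f\in\R[t,t^{-1}]$ nonnegative on $\R\setminus\{0\}$ is a sum of squares of Laurent polynomials (write $f=t^{-2N}p$ with $p\in\R[t]$; then $p\geq 0$ on $\R$, so $p$ is a sum of squares of polynomials, and divide by $t^{2N}$), and $\R[t,t^{-1}]$ is an adapted space on the locally compact space $\R\setminus\{0\}$, so Theorem \ref{choquet} (Haviland in adapted form) yields (MP) immediately; Lemma \ref{quotientsuma2} then finishes the fibre step. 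In short: same skeleton, but your fibre argument trades the paper's three-line sums-of-squares observation for a genuine (fixable) piece of extension theory.
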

\begin{proof}
It clearly suffices to prove the if part.

First we describe the  $*$-algebra $\sB$ in terms of generators.
 A vector space basis of $\C[z,\ov{z},z^{-1},\ov{z}^{-1}]$  is $\{z^k\ov{z}^l: k,l\in \Z\}$.
 Writing $z=x_1+{\ii} x_2$ with $x_1,x_2\in \R$ we  get
\begin{align*}
 z^{-1}=\frac{x_1-{\ii} x_2}{x_1^2+x_2^2}~~~\text{ and}~~~{\ov{z}}^{\,-1}=\frac{x_1+{\ii} x_2}{x_1^2+x_2^2}.
\end{align*}
Hence,\, as a complex  unital algebra, $\sB$ is generated by the four real functions 
\begin{align}\label{bisgaard}
x_1, ~~x_2,~~ y_1:= \frac{x_1}{x_1^2+x_2^2}\,,~~y_2:=\frac{x_2}{x_1^2+x_2^2}.
\end{align}

The Hermitean part  ${\sA}:=\{f\in {\sB}: f^+=f \}$  of the complex $*$-algebra $\sB$  is a real algebra and  its character set $\hat{{\sA}}$ is given by  the point evaluations $\chi_x$  at  $x\in  \R^2\backslash \{0\}.$ (Obviously,  $\chi_x$ is  a character  for $x\in  \R^2\backslash \{0\}.$  Since  $(y_1+{\ii}y_2)(x_1-{\ii }x_2)=1$, there is no character $\chi$ on ${\sA}$ for which $\chi(x_1)=0$ and $\chi(x_2)=0$.)

The three functions 
\begin{align*}h_1(x)=x_1y_1=\frac{x_1^2}{x_1^2+x_2^2}\,,~  h_2(x)=x_2y_2=\frac{x_2^2}{x_1^2+x_2^2}\,,~ h_3(x)=x_1y_2=x_2y_1=\frac{x_1x_2}{x_1^2+x_2^2}\end{align*} 
are elements of ${\sA}$ and they are bounded on $\hat{{\sA}}.$ 

Consider a nonempty fibre set given by $h_j(x)=\lambda_j$, where $\lambda_j\in \R$ for $j=1,2,3.$ Since $\lambda_1+\lambda_2=1$, we can assume without loss of generality that $\lambda_1\neq 0.$  In the quotient algebra,\, ${\sA}/\cI_\lambda$\, we  have $x_1y_1=\lambda_1\neq 0$, so $y_1=\lambda_1 x_1^{-1}$, and $x_2y_1=x_1y_2=\lambda_3$, so $y_2=\lambda_3x_1^{-1}$ and $x_2=\lambda_3\lambda_1^{-1}x_1$. Hence the algebra ${\sA}/\cI_\lambda$ is generated by $x_1$,  $x_1^{-1}$, so it is a quotient of the algebra $\R[x_1,x_1^{-1}]$ of Laurent polynomials.

 The character set of $\R[x,x^{-1}]$ is the set  of nonzero reals and the corresponding moment problem is the \emph{two-sided} Hamburger moment problem.
 Since, as  easily shown, each nonnegative $f\in \R[x,x^{-1}]$  is a sum of squares,    $\sum \R[x,x^{-1}]^2$ obeys (MP) by Haviland's Theorem and so does the preordering $\sum ({\sA}/\cI_\lambda)^2$ of its quotient algebra ${\sA}/\cI_\lambda$
by Lemma \ref{quotientsuma2}. Therefore, by  Theorem \ref{fibreth1},\, $\sum {\sA}^2$ satisfies (MP). This means that each positive functional on ${\sA}$, hence on  $\sB$, is a moment functional.
$\hfill\qed$ \end{proof}

\setcounter{tocdepth}{3}

\makeatletter
\renewcommand{\@chapapp}{Lecture}
\makeatother

\chapter{The moment problem on closed sets: determinacy and Carleman condition}

Abstract:\\
\textit{In this lecture we discuss various form of determinacy in the multidimensional case. The multivariate Carleman condition leads to a far reaching existence and uniqueness theorem for the moment problem.}
\bigskip


In  Lecture 3 we have noted that the moment problem  for noncompact semi-algebraic sets  leads to  new  difficulties concerning the existence of a solution. 
In this Lecture we will see that the same is true for the problem of determinacy.

\section{Various notions of determinacy}
Let us  begin with  the following  classical result due to M. Riesz (1923) for the determinacy of the one-dimensional Hamburger moment problem. 

\begin{theorem}\label{mriesz} Suppose  $\mu$ is a Radon measure on $\R$ such that all moments are finite and let $s$ be its moment sequence. Then the measure $\mu$, or equivalently, its moment sequence  $s$,  is determinate if and only if  the set of polynomials
 $\C[x]$ is dense in the Hilbert space $L^2(\R,(1+x^2)d\mu)$.

In particular,  if $\mu$ is determinate, then  $\C[x]$ is dense in $L^2(\R,\mu)$.
\end{theorem}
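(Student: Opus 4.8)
The plan is to reduce the weighted density statement to an unweighted one in $L^2(\R,\mu)$ and then read the result off the deficiency theory of the multiplication operator. Since $1+x^2=|x-\ii|^2$, the map $U\colon L^2(\R,(1+x^2)\,d\mu)\to L^2(\R,\mu)$ given by $(Uf)(x)=(x-\ii)f(x)$ is unitary and onto, and it sends the subspace $\C[x]$ onto $(x-\ii)\C[x]$. Hence $\C[x]$ is dense in $L^2(\R,(1+x^2)\,d\mu)$ if and only if $(x-\ii)\C[x]$ is dense in $H:=L^2(\R,\mu)$, and the theorem is equivalent to the assertion: $\mu$ is determinate if and only if $(x-\ii)\C[x]$ is dense in $H$.

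Next I would put $P:=\overline{\C[x]}^{\,H}$ and let $T$ be multiplication by $x$ with domain $\C[x]$, viewed as a densely defined symmetric operator in the Hilbert space $P$ (each monomial lies in $H$ because all moments are finite). Then $(x-\ii)\C[x]=\operatorname{ran}(T-\ii)$. Writing an arbitrary polynomial as $q=q(\ii)\,\mathbf 1+(x-\ii)r$ with $r\in\C[x]$ shows $P=\overline{\C\cdot\mathbf 1+\operatorname{ran}(T-\ii)}$, so the deficiency subspace $\operatorname{ran}(T-\ii)^{\perp_P}$ has dimension $d\le 1$; since complex conjugation is a conjugation commuting with $T$, both deficiency indices equal $d\in\{0,1\}$. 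From the orthogonal decomposition $\operatorname{ran}(T-\ii)^{\perp_H}=\operatorname{ran}(T-\ii)^{\perp_P}\oplus(H\ominus P)$ I read off that $(x-\ii)\C[x]$ is dense in $H$ precisely when $d=0$ \emph{and} $P=H$.

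I would then use the operator-theoretic description of the Hamburger problem: the representing measures of $s$ are the spectral measures $B\mapsto\langle E(B)\mathbf 1,\mathbf 1\rangle$ attached to the self-adjoint extensions $E$ of $T$, where $\mathbf 1$ is the constant function and the relations $T^{n}\mathbf 1=x^{n}$ make $\mathbf 1$ cyclic. If $d=1$ there is a genuine one-parameter family of self-adjoint extensions inside $P$; cyclicity of $\mathbf 1$ forces distinct extensions to have distinct spectral measures, so $\mu$ is indeterminate. If $d=0$ the operator is essentially self-adjoint, every self-adjoint extension agrees with $\overline T$ on the cyclic subspace of $\mathbf 1$, all the spectral measures coincide, and $\mu$ is determinate; hence $\mu$ is determinate iff $d=0$. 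It remains to promote $d=0$ to ``$d=0$ and $P=H$'': when $d=0$, $\overline T$ is self-adjoint with cyclic vector $\mathbf 1$ and spectral measure $\mu$, so the cyclic form of the spectral theorem gives a unitary $\Phi\colon P\to L^2(\R,\mu)$ with $\Phi\mathbf 1=1$ and $\Phi\overline T=M_x\Phi$; then $\Phi(x^{n})=x^{n}$, so $\Phi$ is the inclusion $P\hookrightarrow H$, and its surjectivity yields $P=H$. Combining, $\mu$ determinate $\iff d=0\iff d=0$ and $P=H\iff(x-\ii)\C[x]$ dense in $H$, which is the theorem; the final assertion is then immediate, since $(x-\ii)\C[x]\subseteq\C[x]$ is already dense in $L^2(\R,\mu)$.

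The main obstacle is the input of the third paragraph — the precise correspondence between representing measures of $s$ and self-adjoint extensions of $T$, and the fact that distinct extensions give distinct measures — which rests on von Neumann's extension theory and the cyclicity of $\mathbf 1$. By contrast, the unitary reduction, the deficiency bookkeeping, and the cyclic-model upgrade $d=0\Rightarrow P=H$ are routine once that correspondence is available.
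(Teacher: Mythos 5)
Your proposal is correct and follows essentially the same route as the paper's own source: the paper proves this theorem by citing [MP, Theorem~6.10 and Corollary~6.11], whose argument is precisely your operator-theoretic one --- the unitary $f\mapsto(x-\ii)f$ reducing weighted density to density of $\operatorname{ran}(T-\ii)$ (compare the identity (\ref{gpmf}) and Lemma \ref{essent} in Lecture~6, where the paper runs this same reduction), the bound $d\le 1$ on the deficiency indices of the multiplication operator on the closure of the polynomials, and the von Neumann correspondence between self-adjoint extensions and representing measures, which you correctly flag as the essential input. Your extra bookkeeping --- splitting ``$(x-\ii)\C[x]$ dense in $H$'' into ``$d=0$ and $P=H$'' and then showing via the cyclic spectral model that $d=0$ already forces $P=H$ --- is sound and cleanly yields the final assertion (density of $\C[x]$ in $L^2(\R,\mu)$ for determinate $\mu$).
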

\begin{proof}
[MP, Theorem 6.10 and Corollary 6.11] $\hfill \qed$
\end{proof}

That $\C[x]$ is dense in the Hilbert space $L^2(\R,(1+x^2)d\mu)$ means that, given a  $f\in L^2(\R,(1+x^2)d\mu)$ and $\varepsilon >0$, there exists a polynomial $p\in \C[x]$ such that
\[
\int_\R |f(x)-p(x)|^2 (1+x^2)d\mu(x)< \varepsilon.
\]
 The use of \emph{complex} polynomials here is only of technical nature, because Hilbert spaces such as $L^2(\R,(1+x^2)d\mu)$ are usually over the complex field.

In higher dimensions $d\geq 2$ the determinacy problem is much more subtle and the equivalence stated in Theorem \ref{mriesz} is no longer valid.

Let $\cM_+(\R^d)$ denote the set of  Radon measures on $\R^d$ for which all moments are finite.
For $\mu \in \cM_+(\R^d)$ let $\cM_\mu$ be  the set of measures $\nu \in \cM_+(\R^d)$ which have the same moments as $\mu$, or equivalently, which satisfy 
\begin{align*}
 \int_{\R^d} p(x) \, d\nu(x)
 =\int_{\R^d} p(x)\, d\mu(x) \quad \text{ for all }\quad p\in \C_d[\ux].
\end{align*}
 We write $\nu\cong \mu$ if $\nu\in \cM_\mu$. Obviously, $``\cong"$ is an equivalence relation in $\cM_+(\R^d)$.
\begin{definition}\label{defvardetnotions}
For a measure $\mu \in \cM_+(\R^d)$ we shall say that \smallskip

$\bullet$~ $\mu$ is 
\emph{determinate}\index{Multidimensional determinacy notions! determinate} if\, $\cM_\mu$ is a singleton, that is, if\,   $\nu \in \cM_\mu$ implies $\mu=\nu$,\smallskip

$\bullet$~ $\mu$ is \emph{strictly
determinate}\index{Multidimensional determinacy notions! strictly determinate} if\, $\mu$ is determinate and\,
$\C_d[\ux]$ is dense in $L^2(\R^d,\mu)$,\smallskip

$\bullet$~ 
$\mu$ is \emph{strongly determinate}\index{Multidimensional determinacy notions! strongly determinate} if\, $\C_d[\ux]$ is dense in $L^2(\R^d,(1{+}x_j^2)d\mu)$ for  $j{=}1,\dots,d$,\smallskip

$\bullet$~ $\mu$ is \emph{ultradeterminate}\index{Multidimensional determinacy notions! ultradeterminate} if\,  $\C_d[\ux]$ is dense in $L^2(\R^d,(1+\|x\|^2)d\mu)$.
\end{definition}

 Suppose  $\mu \in \cM_+(\R^d)$.
 Let $s$  be the moment sequence of $\mu$ and $L$ the moment functional of $\mu$.
 Then we  say that $s$, or $L$, is \emph{determinate, strongly determinate, strictly determinate, ultradeterminate}, if $\mu$ has this property. 

Note that all four determinacy notions are defined in terms of the measure $\mu$!

 If $\mu$ has a \emph{compact} support, it follows from the Weierstrass  theorem that it is ultradeterminate, so    all four determinacy notions are valid in this case. 

Further, in the case $d=1$ it follows at once from Theorem \ref{mriesz} that the four
concepts 
(determinacy, strict determinacy, strong determinacy, ultradeterminacy) 
are equivalent. However, in dimension $d\geq 2$
all of them are  different! Counterexamples can be found in [MP]. 

 More surprisingly, as shown by Berg and Thill (1991),   there exist \emph{determinate} measures $\mu\in \cM_+(\R^d)$, $d\geq 2$, such that $\C_d[\ux]$ is \emph{not} dense in $L^2(\R^d,\mu)$.
 Such measures are  not strictly determinate.

\smallskip
By definition strict determinacy implies determinacy. 
Theorem \ref{op-det} below shows that if $\mu$ is strongly  determinate it is  strictly
determinate and hence determinate.
Since the
norm of $L^2(\R^d,(1+||x||^2)d\mu)$ is obviously stronger than that 
of $L^2(\R^d,(1+x_j^2)d\mu)$,  ultradeterminacy
implies strong determinacy. 
Thus we have the following implications:
\begin{align*}
 ultradeterminate \Rightarrow strongly ~determinate\Rightarrow strictly ~determinate\Rightarrow  determinate.
\end{align*}

\section{Determinacy via marginal measures}\label{polynomialapproximation}

 Suppose $\mu$ is a Radon measure on $\R^d$. 
 Let $\pi_j(x_1,\dots,x_d)=x_j$ denote the $j$-th coordinate mapping of $\R^d$ into $\R$.
 The$ j$-th \emph{marginal measure} $\pi_j(\mu)$ is the Radon measure on $\R$ defined by $\pi_j(\mu)(M):=\mu(\varphi^{-1}(M))$ for any Borel set $M$ of $\R$.
 Then the
transformation formula
\begin{align}\label{immeasure}
\int_{\R} f(y)~ d\pi_j(\mu)(y) = \int_{\R^d} f(x_j)~
d\mu(x)
\end{align}
holds for any function $f \in \cL^1(\R,\pi_j(\mu))$.

 The following basic result is \textbf{Petersen's theorem} (1982).
 
\begin{theorem}\label{pet}
Let\, $\mu\in \cM_+(\R^d)$. If all marginal measures\, $\pi_1(\mu),\dots,\pi_d(\mu)$ are determinate, then $\mu$\,  itself is determinate.
\end{theorem}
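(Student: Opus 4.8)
The plan is to take an arbitrary $\nu\in\cM_\mu$ and prove $\nu=\mu$; since finite Radon measures on $\R^d$ are determined by their Fourier transforms, it suffices to show
\begin{align*}
\int_{\R^d} e^{\ii(t_1x_1+\dots+t_dx_d)}\,d\mu(x)=\int_{\R^d} e^{\ii(t_1x_1+\dots+t_dx_d)}\,d\nu(x)\qquad\text{for all }t\in\R^d.
\end{align*}
Two preliminary facts drive everything. First, because $\nu$ and $\mu$ have the same moments, their $j$-th marginals $\pi_j(\nu)$ and $\pi_j(\mu)$ have the same one-dimensional moments; as $\pi_j(\mu)$ is determinate this forces $\pi_j(\nu)=\pi_j(\mu)$ for every $j$. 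Second, Riesz's Theorem~\ref{mriesz} converts determinacy of $\pi_j(\mu)$ into density of $\C[x_j]$ in $L^2(\R,\pi_j(\mu))$; since $\pi_j(\mu)$ is finite and $|e^{\ii t y}|=1$, the function $y\mapsto e^{\ii t y}$ lies in $L^2(\R,\pi_j(\mu))$ and is therefore approximable there by polynomials.

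The heart of the argument is an induction on $k=0,1,\dots,d$ of the statement $H(k)$: \emph{for all $t_1,\dots,t_k\in\R$ and every polynomial $P$ in the variables $x_{k+1},\dots,x_d$,}
\begin{align*}
\int_{\R^d} e^{\ii(t_1x_1+\dots+t_kx_k)}\,P\,d\mu=\int_{\R^d} e^{\ii(t_1x_1+\dots+t_kx_k)}\,P\,d\nu.
\end{align*}
The base case $H(0)$ is exactly the hypothesis that $\mu$ and $\nu$ share all moments. For the step $H(k-1)\Rightarrow H(k)$, I would fix $t_1,\dots,t_k$ and $P$, choose polynomials $q_n(x_k)$ with $q_n\to e^{\ii t_k x_k}$ in $L^2(\R,\pi_k(\mu))$, and apply $H(k-1)$ to the polynomial $q_n(x_k)P$ (a polynomial in $x_k,\dots,x_d$), giving
\begin{align*}
\int e^{\ii(t_1x_1+\dots+t_{k-1}x_{k-1})}\,q_n(x_k)\,P\,d\mu=\int e^{\ii(t_1x_1+\dots+t_{k-1}x_{k-1})}\,q_n(x_k)\,P\,d\nu.
\end{align*}

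Letting $n\to\infty$ then finishes the step. On the $\mu$-side, since the leading exponential has modulus one, Cauchy--Schwarz bounds the error by $\|q_n(x_k)-e^{\ii t_k x_k}\|_{L^2(\mu)}\,\|P\|_{L^2(\mu)}$; the transformation formula~\eqref{immeasure} rewrites the first factor as $\|q_n-e^{\ii t_k\cdot}\|_{L^2(\pi_k(\mu))}\to 0$, while $\|P\|_{L^2(\mu)}$ is finite because $\mu\in\cM_+(\R^d)$ has all moments. The same estimate controls the $\nu$-side, now using the two identities $\pi_k(\nu)=\pi_k(\mu)$ and $\|P\|_{L^2(\nu)}=\|P\|_{L^2(\mu)}$ (both integrands being polynomials). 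Passing to the limit yields $H(k)$. Taking $k=d$ and $P\equiv 1$ gives equality of the Fourier transforms, whence $\nu=\mu$; as $\nu\in\cM_\mu$ was arbitrary, $\cM_\mu=\{\mu\}$ and $\mu$ is determinate.

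The step I expect to be the main obstacle is making the single-variable polynomial approximation work \emph{simultaneously} for $\mu$ and $\nu$. This is precisely where determinacy of the marginals—rather than mere equality of their moments—is indispensable: it is what guarantees $\pi_k(\nu)=\pi_k(\mu)$, so that one and the same sequence $q_n$ approximates $e^{\ii t_k x_k}$ in both $L^2(\pi_k(\mu))$ and $L^2(\pi_k(\nu))$. Carrying the auxiliary polynomial factor $P$ through the induction is the device that keeps every intermediate integrand square-integrable, so that Cauchy--Schwarz can be applied at each stage.
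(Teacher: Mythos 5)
Your proof is correct, and it is essentially the classical argument of Petersen that the paper defers to ([MP, Theorem~14.6] is cited in place of an in-text proof): deduce $\pi_j(\nu)=\pi_j(\mu)$ from equality of moments plus marginal determinacy, use Theorem~\ref{mriesz} to get $L^2$-density of $\C[x_j]$ in $L^2(\R,\pi_j(\mu))$, and induct on the number of exponential factors, carrying a polynomial factor through Cauchy--Schwarz and the transformation formula~(\ref{immeasure}), to conclude equality of the Fourier transforms of the two finite measures. No gaps: the key points (finiteness of $\|P\|_{L^2(\nu)}$ via equality of moments, and that the same approximating sequence $q_n$ works for both measures because $\pi_k(\nu)=\pi_k(\mu)$) are exactly the ones that need to be, and are, addressed.
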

\begin{proof}
 {[MP, Theorem~ 14.6]}.
\qed\end{proof}

 The converse of Theorem \ref{pet} does not hold.
 That is, there exists a determinate measure of $\mu\in \cM_+(\R^2)$ such that $\pi_1(\mu)$ and $\pi_2(\mu)$ are not determinate (see , e.g., [MP, Exercise~14.7]).

\section{The multivariate Carleman condition and Nussbaum's theorem}\label{carlemanmultidetcase}

 From Lecture 2 we recall the \emph{Carleman condition}  for  a positive semidefinite $1$-sequence $(t_n)_{n\in \N_0}$:
\begin{align}\label{carlemand1}
\sum_{n=1}^\infty \,t_{2n}^{-\frac{1}{2n}} =+\infty.
\end{align}
For a $d$-sequence $s=(s_\gn )_{\gn\in \N_0^d}$ the $1$-sequences
\begin{align}\label{det4}
s^{[1]}:=(s_{(n,0,\dots,0))})_{n \in \N_0},\, s^{[2]}:=(s_{(0,n,\dots,0)})_{n \in \N_0} , \dots, s^{[n]}:=(s_{(0,\dots,0,n)})_{n \in \N_0}
\end{align}
 are called  \emph{marginal sequences} of $s$.
 If $s$ is the moment sequence of $\mu\in \cM_+(\R^d)$, then it follows from formula (\ref{immeasure}) that 
\begin{align*}
s^{[j]}_n=\int_{\R^d} x_j^n ~d\mu(x_1,\dots,x_d)= \int_\R y^n\, d\pi_j(\mu)(y), \quad n\in \N_0, j=1,\dots,d,
\end{align*}
that is, the $j$-th marginal sequence $s^{[j]}$ is just the moment sequence of the $j$-th marginal measure $\pi_j(\mu)$ of $\mu$. 
Thus, Petersen's  Theorem \ref{pet} says that a $d$-moment sequence $s$ is determinate if all marginal sequences\, $s^{[1]}, \dots, s^{[d]}$\,  are determinate.

 A $d$-sequence $s=(s_\gn )_{\gn\in \N_0^d}$ is called \emph{positive semidefinite} if
\[
\sum_{\gn,\gm\in \N_0^d}  s_{\gn+\gm}\, \xi_\gn \xi_\gm\geq 0
\]
for all finite real multisequences $(\xi_\gn )_{\gn\in \N_0^d}$. It is easily verified that $s$ is positive semidefinite if and only if its Riesz functional $L_s$ is positive, that is, $L_s(p^2)\geq 0$ for  $p\in \R_d[\ux]$.
\begin{definition}\label{multicarle}
Let $s$ be a positive semidefinite $d$-sequence. We shall say that $s$, and equivalently the   functional $L_s$, satisfy the 
\emph{multivariate Carleman condition}\index{Carleman condition! multivariate} if all marginal sequences $s^{[1]},\dots, s^{[n]}$ satisfy  Carleman's condition (\ref{carlemand1}), that is, if
\begin{align}\label{multicarlemancon}
 \sum_{n=1}^\infty  (s_{2n}^{[j]})^{-\frac{1}{2n}}
 \equiv \sum_{n=1}^\infty\, L_s(x_j^{2n})^{-\frac{1}{2n}}
 =+\infty \quad \text{ for }\quad j=1,\dots,d.
\end{align}
\end{definition}

 The following fundamental result is \textbf{Nussbaum's theorem} (1965).
 
\begin{theorem}\label{carlemenmaindetex}
Each positive semidefinite $d$-sequence $s=(s_\gn)_{\gn \in \N_0^d}$ satisfying the multivariate Carleman condition is a strongly determinate moment sequence.
\end{theorem}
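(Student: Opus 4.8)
The plan is to realize $s$ by a projection-valued measure coming from a strongly commuting family of self-adjoint operators, with the multivariate Carleman condition \eqref{multicarlemancon} serving precisely to force self-adjointness and strong commutativity. First I would perform the GNS construction attached to $L_s$. Since $s$ is positive semidefinite, $\langle p,q\rangle:=L_s(pq)$ is a semi-inner product on $\R_d[\ux]$; quotienting by its kernel and completing gives a real Hilbert space $\cH$, in which I write $[p]$ for the class of $p\in\R_d[\ux]$ and set $\xi_0:=[1]$. Each multiplication map $X_j[p]:=[x_jp]$ is then a symmetric operator on the common dense invariant domain $\cD:=\{[p]:p\in\R_d[\ux]\}$, and the $X_j$ commute pairwise on $\cD$ because polynomial multiplication is commutative. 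The vector $\xi_0$ is cyclic, since $\{[p]\}=\{p(X_1,\dots,X_d)\xi_0\}=\cD$ is dense in $\cH$.

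The decisive step is to prove that the closures $\bar X_1,\dots,\bar X_d$ are self-adjoint and \emph{strongly} commute, and this is where I expect the real difficulty: commutativity merely on $\cD$ does not imply strong commutativity of the closures in general, so the Carleman hypothesis must enter essentially. The point is that $\|X_j^n[x^\alpha]\|^2=L_s(x_j^{2n}x^{2\alpha})=s_{2\alpha+2ne_j}$, and a standard estimate for the Carleman condition (using the Cauchy--Schwarz inequality $s_{2\alpha+2ne_j}^2\le s_{4\alpha}\,s_{4ne_j}$ and the log-convexity of the marginal moments) shows that \eqref{multicarlemancon} propagates from $\alpha=0$ to every monomial, so that each $[x^\alpha]$ is a quasi-analytic vector for every $X_j$. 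Since the monomial classes span the dense domain $\cD$, a quasi-analytic-vector theorem of Nussbaum--Nelson type then yields simultaneously that each $\bar X_j$ is self-adjoint and that $\bar X_1,\dots,\bar X_d$ strongly commute. This is the heart of the argument and the main obstacle.

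Given strong commutativity, the multidimensional spectral theorem furnishes a single projection-valued measure $E$ on the Borel sets of $\R^d$ with $\bar X_j=\int_{\R^d}x_j\,dE(x)$. I would then define the positive Radon measure $\mu(M):=\langle E(M)\xi_0,\xi_0\rangle$, of finite total mass $\|\xi_0\|^2=s_0$. Since all moments are finite, $\xi_0$ lies in the domain of $\int x^\alpha\,dE$ and that operator sends $\xi_0$ to $[x^\alpha]$, so
\[
\int_{\R^d}x^\alpha\,d\mu(x)=\Big\langle\Big(\int_{\R^d}x^\alpha\,dE(x)\Big)\xi_0,\xi_0\Big\rangle=\langle[x^\alpha],[1]\rangle=L_s(x^\alpha)=s_\alpha,
\]
which establishes that $s$ is a moment sequence with representing measure $\mu$.

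Finally, for strong determinacy I would transfer everything into $L^2(\R^d,\mu)$. Because $\xi_0$ is cyclic, the cyclic-subspace form of the spectral theorem provides a unitary $W:\cH\to L^2(\R^d,\mu)$ with $W\xi_0=1$ and $W\bar X_jW^{-1}$ equal to multiplication by $x_j$; under $W$ the dense set $\{[p]\}$ is carried onto $\C_d[\ux]$, so $\C_d[\ux]$ is dense in $L^2(\R^d,\mu)$ and multiplication by $x_j$ is essentially self-adjoint on $\C_d[\ux]\subseteq L^2(\R^d,\mu)$. It then remains to note that essential self-adjointness of this symmetric operator forces density of $\C_d[\ux]$ in $L^2(\R^d,(1+x_j^2)\,d\mu)$: the map $f\mapsto(x_j+\ii)f$ is a unitary $L^2(\R^d,(1+x_j^2)\,d\mu)\to L^2(\R^d,\mu)$ carrying $\C_d[\ux]$ onto $(x_j+\ii)\C_d[\ux]$, whose density in $L^2(\R^d,\mu)$ is exactly the triviality of one deficiency space of $x_j\cdot$, and essential self-adjointness gives both deficiency spaces trivial. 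Applying this for each $j=1,\dots,d$ yields strong determinacy, which by the implications recorded after Definition \ref{defvardetnotions} also gives strict determinacy and determinacy.
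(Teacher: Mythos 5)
Your proposal is correct and is essentially the proof the paper relies on: the paper itself only reduces the statement to the more general Theorem \ref{qavector} and cites [MP], whose argument is exactly your route --- GNS construction, propagation of quasi-analyticity to all monomial vectors via Cauchy--Schwarz and log-convexity of the marginal moments, a Nussbaum-type commutativity theorem giving strongly commuting self-adjoint closures, the joint spectral measure producing the representing measure, and essential self-adjointness of the multiplication operators yielding strong determinacy (this last equivalence is Theorem \ref{op-det}, and the Carleman/quasi-analytic-vector link is spelled out in the paper's final section). The only wrinkles are that the GNS space should be taken complex (or complexified) so that the deficiency-space and spectral-theoretic steps make sense, and that the claim $\xi_0\in\cD\bigl(\int x^\alpha\,dE\bigr)$ needs the standard $C^\infty$-vector bookkeeping (e.g.\ bounding $x^{2\alpha}$ by a combination of the $x_j^{2|\alpha|}$) --- both routine repairs.
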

Theorem \ref{carlemenmaindetex} follows at once from the following  more general result.
\begin{theorem}\label{qavector}
Suppose  $s=(s_\gn)_{\gn \in \N_0^d}$ is a positive semidefinite $d$-sequence such that the first $d{-}1$ marginal sequences $s^{[1]}, \dots, s^{[d-1]}$
fulfill  Carleman's condition (\ref{carlemand1}). Then $s$ is a moment sequence.

 If in addition the  sequence $s^{[d]}$ satisfies
Carleman's condition (\ref{carlemand1}) as well, then the moment sequence $s$ is  strongly determinate.
\end{theorem}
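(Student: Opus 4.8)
The plan is to realize $L_s$ by Hilbert-space multiplication operators and to convert Carleman's condition into essential self-adjointness of these operators via quasi-analytic vectors. First I would carry out the GNS construction: since $s$ is positive semidefinite, $\langle p,q\rangle:=L_s(pq)$ is a semi-inner product on $\R_d[\ux]$; dividing out the null space $\{p:L_s(p^2)=0\}$ and completing yields a Hilbert space $\cH$ with a dense subspace $\cD$ (the image of $\R_d[\ux]$) and a distinguished vector $\xi_0$, the class of $1$. The maps $X_j\colon\cD\to\cD$, $X_j\hat p:=\widehat{x_jp}$, are well-defined, pairwise commuting symmetric operators leaving $\cD$ invariant, and $s_\gn=\langle X_1^{\gn_1}\cdots X_d^{\gn_d}\,\xi_0,\xi_0\rangle$. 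In these terms, producing a representing measure for $s$ amounts to producing a joint spectral measure $E$ on $\R^d$ of a $d$-tuple of \emph{strongly commuting} self-adjoint operators having $\xi_0$ in their domains, for then $d\mu:=d\langle E(\cdot)\xi_0,\xi_0\rangle$ satisfies $s_\gn=\int_{\R^d}x^\gn\,d\mu$.

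The engine is the lemma that Carleman's condition on the marginal $s^{[j]}$ forces $X_j$ to be essentially self-adjoint. The identity $\|X_j^n\xi_0\|^2=L_s(x_j^{2n})=s^{[j]}_{2n}$ shows that $\sum_n\|X_j^n\xi_0\|^{-1/n}=\sum_n(s^{[j]}_{2n})^{-1/2n}=+\infty$ is precisely quasi-analyticity of $\xi_0$ for $X_j$; the work is to propagate quasi-analyticity from $\xi_0$ to a total set of vectors in $\cD$ (via a Cauchy--Schwarz estimate bounding $\|X_j^n\hat p\|$ by a power of the marginal moments) and then to invoke Nussbaum's quasi-analytic vector theorem. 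Applying this for $j=1,\dots,d-1$ makes $X_1,\dots,X_{d-1}$ essentially self-adjoint with closures $\bar X_1,\dots,\bar X_{d-1}$. I would then establish that these closures strongly commute; this does \emph{not} follow from commutativity on the common core $\cD$ alone (Nelson's example), so here one again leans on the analytic/quasi-analytic vectors, passing through essential self-adjointness of $\bar X_1^2+\dots+\bar X_{d-1}^2$ following Nelson, to obtain a joint spectral measure $E'$ on $\R^{d-1}$.

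For the existence statement this leaves the last operator $X_d$, which is only symmetric, and this is the decisive and hardest step. The point is that the $x_d$-direction needs no Carleman condition: I would disintegrate $\cH$ over the joint spectrum of $\bar X_1,\dots,\bar X_{d-1}$ as a direct integral $\int^\oplus\cH_\lambda\,d\nu(\lambda)$ with $\nu:=\langle E'(\cdot)\xi_0,\xi_0\rangle$, decompose $X_d=\int^\oplus X_d(\lambda)\,d\nu$ into fibrewise symmetric operators, and represent each fibre functional by Hamburger's Theorem \ref{solutionhmp} (which requires only positive semidefiniteness). Assembling the fibre measures into $d\mu(x',x_d):=d\rho_{x'}(x_d)\,d\nu(x')$ then gives a Radon measure on $\R^d$ with $s_\gn=\int x^\gn\,d\mu$, so $s$ is a moment sequence. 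The measurability of the fibre representation and the coherent, mutually commuting choice of self-adjoint extension of $X_d$ (if necessary in an enlarged Hilbert space) are exactly where the machinery must be handled carefully; I expect this to be the principal obstacle, and it is the place where ``the first $d-1$ Carleman conditions suffice'' is actually used.

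Finally, for strong determinacy assume in addition that $s^{[d]}$ satisfies Carleman's condition, so by the lemma \emph{all} of $X_1,\dots,X_d$ are essentially self-adjoint and their closures strongly commute, yielding a joint spectral measure $E$ on $\R^d$ that is uniquely determined by $L_s$, each $\bar X_j$ being the unique self-adjoint extension; hence $\mu$ is the unique representing measure and $s$ is determinate. (Plain determinacy also follows at once from Carleman's Theorem \ref{Carleman}, which makes each marginal $\pi_j(\mu)$ determinate, together with Petersen's Theorem \ref{pet}.) To upgrade to strong determinacy I would pass to the model $\cH\cong L^2(\R^d,\mu)$, in which $X_j$ is multiplication by $x_j$ on $\C_d[\ux]$: essential self-adjointness of this operator means exactly that $\C_d[\ux]$ is a core for multiplication by $x_j$, i.e.\ dense in the graph norm $\|f\|^2+\|x_jf\|^2=\int(1+x_j^2)|f|^2\,d\mu$, which is density of $\C_d[\ux]$ in $L^2(\R^d,(1+x_j^2)d\mu)$. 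As this holds for every $j$, $\mu$ is strongly determinate.
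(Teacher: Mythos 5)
Your proposal is correct in outline and follows the same operator-theoretic route as the proof the paper cites ([MP, Chapter 14]): GNS construction, propagation of quasi-analyticity from $\xi_0$ to a total subset of $\cD$ using the marginal Carleman conditions, and Nussbaum's quasi-analytic vector theorem to obtain essential self-adjointness and strong commutativity of $\bar{X}_1,\dots,\bar{X}_{d-1}$ (the paper's final section makes exactly this translation of Carleman's condition into quasi-analyticity of $\xi_0$). The one place you genuinely diverge is the treatment of $X_d$, which is the heart of the ``$d-1$ conditions suffice'' assertion: you disintegrate $\cH$ over the joint spectrum of the first $d-1$ closures and apply Hamburger's Theorem \ref{solutionhmp} fibre-wise, assembling $\mu$ from a measurable field of one-dimensional measures, whereas [MP] builds a single self-adjoint extension of $X_d$ acting in an enlarged Hilbert space (von Neumann's doubling $\cH\oplus\cH$) chosen to commute strongly with the joint spectral measure $E'$ of the first $d-1$ operators, and then reads off $\mu$ as $\langle E(\cdot)\xi_0,\xi_0\rangle$ for the joint spectral measure $E$ of the resulting strongly commuting $d$-tuple. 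The two mechanisms are equivalent in substance (a fibre-wise self-adjoint extension is the same thing as a fibre-wise Hamburger measure), but the global extension avoids the measurable-selection problem you yourself flag: Hamburger representing measures are not unique, so your route needs a measurable choice of fibre solutions, and that is the main technical price of your version. Two smaller remarks. First, your parenthetical repair of plain determinacy (one-dimensional Carleman, Theorem \ref{Carleman}, for each marginal together with Petersen's Theorem \ref{pet}) is actually needed, since uniqueness of the self-adjoint extensions by itself does not exclude representing measures that fail to arise from operators on the GNS space. Second, in the strong-determinacy step, before you may read essential self-adjointness of $X_j$ as density of $\C_d[\ux]$ in $L^2(\R^d,(1+x_j^2)d\mu)$ (Theorem \ref{op-det}), you must identify the GNS space with $L^2(\R^d,\mu)$, i.e.\ prove that the polynomials are dense in $L^2(\R^d,\mu)$; this holds here because $\mu$ is the joint spectral measure seen from the cyclic vector $\xi_0$, but it is a statement to be proved (it is precisely strict determinacy), not a mere change of model.
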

\begin{proof}
 {[MP, Theorem~14.6]}.
\qed\end{proof}

 Theorem \ref{carlemenmaindetex}  is  a  strong and very useful result.
 It  shows that for any positive semi-definite $d$-sequence the multivariate Carleman condition implies the existence \emph{and} the uniqueness  of a solution of the moment problem!

We illustrate the usefulness of Theorem \ref{carlemenmaindetex} 
by the following application.
\begin{corollary}\label{evarepsilonnormx}
Let $\mu \in M_+(\R^d)$. Suppose that  there exists an $\varepsilon>0$ such that
\[
\int_{\R^d} e^{\varepsilon \|x\|}~ d\mu(x)<+\infty.
\]
Then $\mu\in \cM_+(\R^d)$ and  $\mu$ is  strongly determinate.
\end{corollary}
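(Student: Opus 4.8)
The plan is to reduce the statement to Nussbaum's theorem (Theorem \ref{carlemenmaindetex}): I will show that the moment sequence $s$ of $\mu$ is well defined and positive semidefinite, and that it satisfies the multivariate Carleman condition (\ref{multicarlemancon}). Strong determinacy of $\mu$ then follows at once, since Theorem \ref{carlemenmaindetex} asserts precisely that a positive semidefinite $d$-sequence obeying the multivariate Carleman condition is a strongly determinate moment sequence.

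First I would check that $\mu$ has all moments finite, i.e.\ $\mu\in\cM_+(\R^d)$. The key elementary estimate is that for any multi-index $\alpha\in\N_0^d$ one has $|x^\alpha|\leq \|x\|^{|\alpha|}$, where $|\alpha|=\alpha_1+\dots+\alpha_d$, together with $\|x\|^{k}e^{-\varepsilon\|x\|}\leq \varepsilon^{-k}k!$ for all $x$ and $k\in\N_0$, which follows from $e^{\varepsilon\|x\|}\geq (\varepsilon\|x\|)^k/k!$ exactly as in the one-variable computation of Corollary \ref{evarepsilonx^2}. Multiplying and integrating against $\mu$ gives $\int_{\R^d}|x^\alpha|\,d\mu\leq \varepsilon^{-|\alpha|}|\alpha|!\int_{\R^d}e^{\varepsilon\|x\|}\,d\mu<\infty$, so every moment is finite and $\mu\in\cM_+(\R^d)$. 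In particular the moment sequence $s=(s_\alpha)_{\alpha\in\N_0^d}$ with $s_\alpha=\int_{\R^d} x^\alpha\,d\mu$ is well defined, and it is positive semidefinite because $L_s(p^2)=\int_{\R^d} p^2\,d\mu\geq 0$.

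Next I would estimate the marginal sequences $s^{[j]}$ of (\ref{det4}). Setting $M_0:=\int_{\R^d}e^{\varepsilon\|x\|}\,d\mu$ and applying the same bound with $\alpha=2n\,e_j$ yields
\[
s^{[j]}_{2n}=\int_{\R^d} x_j^{2n}\,d\mu\leq \int_{\R^d}\|x\|^{2n}\,d\mu\leq M_0\,\varepsilon^{-2n}(2n)!\,,\qquad j=1,\dots,d.
\]
Putting $M_1:=\max(M_0,1)\,\varepsilon^{-2}$, one checks that $M_0\,\varepsilon^{-2n}\leq \max(M_0,1)^{\,n}\varepsilon^{-2n}=M_1^{\,n}$ for every $n\geq 1$, so that $s^{[j]}_{2n}\leq M_1^{\,n}(2n)!$. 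Each marginal $s^{[j]}$ is the moment sequence of $\pi_j(\mu)$ and thus a Hamburger moment sequence satisfying hypothesis (\ref{analyticcase1}); hence by Corollary \ref{analyticcase} it obeys Carleman's condition (\ref{carlemand1}), and therefore $s$ satisfies the multivariate Carleman condition (\ref{multicarlemancon}).

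Finally, since $s$ is a positive semidefinite $d$-sequence satisfying the multivariate Carleman condition, Theorem \ref{carlemenmaindetex} shows that $s$, and hence $\mu$, is strongly determinate. I do not expect a serious obstacle: the argument is a clean reduction whose only delicate point is absorbing the constant $M_0$ into an $n$-th power to reach the exact form $M_1^{\,n}(2n)!$ required by Corollary \ref{analyticcase}. All the genuine analytic content is carried by Nussbaum's Theorem \ref{carlemenmaindetex}, which I am entitled to assume.
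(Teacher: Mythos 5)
Your proof is correct and follows essentially the same route as the paper's: both deduce $\mu\in\cM_+(\R^d)$ from the exponential integrability, bound the marginal even moments by a constant of the form $M^n(2n)!$ so that Corollary \ref{analyticcase} gives Carleman's condition for each marginal sequence, and then conclude strong determinacy from Nussbaum's Theorem \ref{carlemenmaindetex}. The only differences are cosmetic: you estimate monomials via $\|x\|^{|\alpha|}$ where the paper uses the coordinate bound $x_j^{2n}e^{-\varepsilon|x_j|}\leq\varepsilon^{-2n}(2n)!$, and you make explicit the absorption of the constant $M_0$ into the power $M_1^{\,n}$, which the paper leaves implicit.
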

\begin{proof}
The proof is similar to the proof of Corollary \ref{evarepsilonx^2} in Lecture 2. Let $j\in \{1,\dots,d\}$ and $n\in \N_0$. Then\, $x_j^{2n}e^{-\varepsilon |x_j|}\leq \varepsilon^{-2n}(2n)!$\, for $x_j\in \R$ and hence
\begin{align}\label{x2ne-normx}
\int_{\R^d} x_j^{2n}\,d\mu =\int_{\R^d} x_j^{2n}e^{-\varepsilon |x_j|}e^{\varepsilon |x_j|}\, d\mu \leq \varepsilon^{-2n}(2n)! \int_{\R^d} e^{\varepsilon \|x\|}~ d\mu<+\infty.
\end{align}
Let $p\in \R_d[\ux]$. Then $p(x)\leq c(1+x_1^{2n}+\dots+x_d^{2n})$ on $\R^d$ for some $c>0$ and $n\in \N$, so (\ref{x2ne-normx}) implies that $p$ is $\mu$-integrable. Thus $\mu\in \cM_+(\R^d).$ 

Let $s$ be the moment sequence of $\mu$. By  (\ref{x2ne-normx})  there is a constant $M>0$ such that 
\begin{align*} 
 s^{[j]}_{2n}
 =L_s(x_j^{2n})
 =\int_{\R^d} x_j^{2n}\, d\mu \leq M^{2n}(2n)! \quad \text{ for }~~~n\in \N_0.
\end{align*}
 By Corollary  \ref{analyticcase},  this inequality implies that the marginal sequence $s^{[j]}$ satisfies Carleman's condition  (\ref{carlemand1}). Therefore, $s$ and $\mu$ are strongly determinate by combining Theorems \ref{carlemenmaindetex} and \ref{mriesz}.
$\hfill\qed$ \end{proof}

 Carleman's condition can be  also used to localize the  support of representing measures, as the following result of Lasserre (2013) shows.
 
\begin{theorem}\label{carlemansupport}
 Let $s$ be a real $d$-sequence and  $\mathsf{f}=\{f_1,\dots,f_k\}$  a finite subset of $\R_d[\ux]$.
 Suppose the Riesz functional $L_s$ is $Q(\mathsf{f})$-positive (that is, $L_s(p^2)\geq 0$ and $L_s(f_jp^2) \geq 0$ for  $j=1,\dots,k$, $p\in \R_d[\ux]$) and satisfies the multivariate Carleman condition. Then the unique representing measure of the  determinate moment sequence $s$  is supported on the semi-algebraic set 
$\cK(\mathsf{f})$.
\end{theorem}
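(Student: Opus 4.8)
The plan is to combine Nussbaum's existence-and-uniqueness theorem with a density argument in a weighted $L^2$-space. First I would invoke Theorem \ref{carlemenmaindetex}: since $L_s$ is $Q(\mathsf{f})$-positive, it is in particular positive ($L_s(p^2)\ge 0$), so $s$ is positive semidefinite; together with the multivariate Carleman condition this makes $s$ a strongly determinate moment sequence with a unique representing measure $\mu\in\cM_+(\R^d)$. It then remains to show $\supp\mu\subseteq\cK(\mathsf{f})$, equivalently that $f_j\ge 0$ holds $\mu$-a.e.\ for each $j=1,\dots,k$.

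The heart of the argument is a weighted-Carleman lemma. Fix $j$ and choose $c>0$, $r\in\N$ with $|f_j(x)|\le g(x):=c(1+\|x\|^2)^r$ on $\R^d$. Then $g\,d\mu\in\cM_+(\R^d)$ has moment sequence $g(E)s$, and I claim it again satisfies the multivariate Carleman condition. To see this I would bound each marginal moment $\int x_i^{2n}g\,d\mu$ by expanding $g$ into monomials $x^{2\beta}$ with $|\beta|\le r$ and estimating every mixed moment $\int x_i^{2n}x^{2\beta}\,d\mu$ through a generalized Hölder (iterated Cauchy--Schwarz) inequality in terms of the marginal moments $s^{[l]}_{2m}=\int x_l^{2m}\,d\mu$; this yields a bound of the shape $\int x_i^{2n}g\,d\mu\le C\,(s^{[i]}_{2d(n+r)})^{1/d}$. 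Because the even marginal moments are log-convex, divergence of $\sum_n (s^{[i]}_{2n})^{-1/2n}$ survives this rescaling of the index, so $g\,d\mu$ indeed satisfies the multivariate Carleman condition. By Theorem \ref{carlemenmaindetex} the measure $g\,d\mu$ is strongly, hence strictly, determinate, so by Definition \ref{defvardetnotions} the polynomials $\C_d[\ux]$ are dense in $L^2(\R^d,g\,d\mu)$. Since $|f_j|\le g$, the identity embeds $L^2(\R^d,g\,d\mu)$ contractively into $L^2(\R^d,|f_j|\,d\mu)$, whence $\C_d[\ux]$ is dense in $L^2(\R^d,|f_j|\,d\mu)$ as well.

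With this density in hand the localization step is routine. I would argue by contradiction: if $A:=\{x:f_j(x)<0\}$ had $\mu(A)>0$, then $\int_A f_j\,d\mu<0$. Choosing polynomials $p_n\to\chi_A$ in $L^2(\R^d,|f_j|\,d\mu)$, a Cauchy--Schwarz estimate gives $\|p_n^2-\chi_A\|_{L^1(|f_j|\,d\mu)}\le\|p_n-\chi_A\|_{L^2(|f_j|\,d\mu)}\,\|p_n+\chi_A\|_{L^2(|f_j|\,d\mu)}\to 0$, so $\int f_j p_n^2\,d\mu\to\int_A f_j\,d\mu<0$. But $\int f_j p_n^2\,d\mu=L_s(f_j p_n^2)\ge 0$ by $Q(\mathsf{f})$-positivity, a contradiction. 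Hence $\mu(A)=0$, i.e.\ $f_j\ge 0$ $\mu$-a.e.; as this holds for every $j$, we get $\mu(\R^d\setminus\cK(\mathsf{f}))\le\sum_j\mu(\{f_j<0\})=0$, and since $\cK(\mathsf{f})$ is closed, $\supp\mu\subseteq\cK(\mathsf{f})$.

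The step I expect to be the main obstacle is the weighted-Carleman lemma: showing that the polynomial-weighted measure $g\,d\mu$ inherits the multivariate Carleman condition. This is where genuine estimation is required, namely controlling the mixed marginal moments via Hölder and, crucially, verifying that Carleman's divergence is stable under the doubling/rescaling of the summation index, which rests on the log-convexity of the even marginal moments. Everything else is comparatively soft: Nussbaum's theorem delivers existence, uniqueness, and the $L^2$-density of polynomials, and the final contradiction is a short approximation argument.
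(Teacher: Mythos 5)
Your proposal is correct in substance. Note first that the paper itself gives no argument for this theorem --- the ``proof'' is just the citation [MP, Theorem~14.25] --- so the comparison is with the argument given there (due to Lasserre). That proof shares your skeleton: Nussbaum's Theorem~\ref{carlemenmaindetex} yields existence and (strong) determinacy of $\mu$, and the crux is the same weighted-Carleman lemma, that the multivariate Carleman condition survives domination by a polynomial weight. But the localization step is done differently: in [MP] one stays at the level of moment sequences. The localized sequence $f_j(E)s$ is positive semidefinite by $Q(\mathsf{f})$-positivity and inherits the Carleman condition by the same domination estimate, hence has a \emph{positive} representing measure $\nu_j$; writing $f_j=f_j^+-f_j^-$, the two positive measures $f_j^+\,d\mu$ and $\nu_j+f_j^-\,d\mu$ have the same moment sequence, which again satisfies Carleman and so is determinate, whence $f_j\,d\mu=\nu_j\geq 0$, i.e.\ $f_j\geq 0$ $\mu$-a.e. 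Your route instead converts determinacy into $L^2$-density (strongly determinate $\Rightarrow$ strictly determinate, Theorem~\ref{op-det}) and approximates $\chi_{\{f_j<0\}}$ to contradict $L_s(f_jp^2)\geq 0$. Both are legitimate and rest on the same machinery; the measure-comparison route avoids any approximation argument, while yours displays directly how the positivity $L_s(f_jp^2)\geq0$ forces the sign condition. (One small point: contractivity of $L^2(g\,d\mu)\hookrightarrow L^2(|f_j|\,d\mu)$ alone does not give density in the whole target space; but both measures are finite and you only need to approximate the bounded function $\chi_A$, which does lie in $L^2(g\,d\mu)$, so your argument is fine as used.)

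The one step that needs repair is the estimate inside your weighted-Carleman lemma. The bound $\int x_i^{2n}g\,d\mu\le C\,(s^{[i]}_{2d(n+r)})^{1/d}$ with $C$ independent of $n$ is not what generalized H\"older produces: the factors coming from the other marginals have degrees growing with $n$, so they cannot be absorbed into a constant. A single Cauchy--Schwarz suffices and is cleaner: for $|\beta|\le r$,
\begin{align*}
\int x_i^{2n}x^{2\beta}\,d\mu \;\le\; \Big(\int x_i^{4n}\,d\mu\Big)^{1/2}\Big(\int x^{4\beta}\,d\mu\Big)^{1/2} \;=\; \big(s^{[i]}_{4n}\big)^{1/2}\,C_\beta,
\end{align*}
where $C_\beta$ is a genuine constant, and summing over $|\beta|\le r$ gives $t^{[i]}_{2n}:=\int x_i^{2n}g\,d\mu\le C\,(s^{[i]}_{4n})^{1/2}$. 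Hence $(t^{[i]}_{2n})^{-1/2n}\ge C^{-1/2n}\,(s^{[i]}_{4n})^{-1/4n}$, and it remains to check that $\sum_n (s^{[i]}_{4n})^{-1/4n}=+\infty$. This is exactly your log-convexity remark made precise: since $(s^{[i]}_{2m}/s^{[i]}_0)^{1/2m}$ is non-decreasing in $m$ (H\"older for the normalized marginal measure), the terms $(s^{[i]}_{2m})^{-1/2m}$ form, up to factors bounded between positive constants, a non-increasing sequence, and for a non-increasing nonnegative sequence the sum over even indices diverges together with the full sum. With this substitution the weighted-Carleman lemma, and hence your whole proof, is complete.
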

\begin{proof}
 {[MP, Theorem~14.25]}.
\qed \end{proof}

\section{Some operator-theoretic reformulations}
Apart from  real algebraic geometry,  
the operator theory on Hilbert space is another powerful tool for the study of the multidimensional moment problem. This connection  is not treated in these lectures. In this final section we briefly touch a few operator-theoretic points. First we recall some basic notions.

 Suppose $\cH$ is a complex Hilbert space with scalar product $\langle\cdot, \cdot\rangle$.
 A \emph{symmetric operator}\, on $\cH$ is a linear mapping $T$ of a linear subspace $\cD(T)$, called the domain of $T$, into $\cH$ such that
 $\langle T\varphi,\psi\rangle=\langle \varphi,T\psi\rangle$\, for\, $ \varphi,\psi\in \cD(T)$


 An operator $T$ is called \emph{closed} if for each sequence $(\varphi_n)$ from $ \cD(T)$ such that $\varphi_n\to \varphi$ and $T\varphi_n\to \psi$ in $\cH$ then $\varphi\in \cD(T), \psi=T\varphi$.
 If $T$ has a closed extension,  it has a smallest closed extension, called the \emph{closure} of $T$ and denoted by $\ov{T}$.

 Suppose that $\cD(T)$ is dense in $\cH$.
 Then the \emph{adjoint operator}\index{Operator! adjoint} $T^*$\index[sym]{TAstar@$\ov{T}, T^*$} of $T$ is defined:
 Its domain $\cD(T^*)$ is  the set of vectors $\psi\in \cH$ for which there exists an $\eta\in \cH$ such that $\langle T\varphi,\psi\rangle=\langle \varphi,\eta\rangle $ for all $\varphi\in \cD(T)$; in this case $T^*\psi=\eta$. 


 A densely defined operator $T$ is called \emph{self-adjoint}\index{Operator! self-adjoint} if $T=T^*$ and 
\emph{essentially self-adjoint}\index{Operator! essentially self-adjoint} if its closure $\ov{T}$ is self-adjoint, or equivalently, if $\ov{T}=T^*$.

We recall the following well-known fact from operator theory. 

\begin{lemma}\label{essent} A densely defined symmetric operator $T$   is essentially self-adjoint if and only if
the  subspaces $(T-{\ii})\cD(T)$ and $(T+{\ii})\cD(T)$ are dense in $\cH$.
\end{lemma}


Now we  reformulate Theorem \ref{mriesz}. Suppose $\mu\in \cM_+(\R)$. Let $X$ denote the multiplication operators by the variable $x$ on the Hilbert space $L^2(\R,\mu)$, that is, $(Xp)(x)=xp(x)$\, for\, $ p\in \cD(X):=\C[x]$. It is easily checked that $X$ is symmetric. 

For a function $f$ of $\R$ we set $g_\pm:=(x\pm {\ii})f$. Then, for $p\in \C[x]$,
\begin{align}\label{gpmf}
\int_\R |g_\pm(x)-(x\pm {\ii})p(x)|^2 d\mu(x)=\int_\R |f(x)-p(x)|^2 (1+x^2)d\mu(x)
\end{align}
 Since $g_\pm \in L^2(\R,\mu)$ if and only if $f\in L^2(\R,(1+x^2)d\mu)$, it follows from (\ref{gpmf}) that $(X\pm {\ii} )\cD(X)$ is dense in $L^2(\R,\mu)$ if and only if $\C[x]$ is dense in  $L^2(\R,(1+x^2)d\mu)$.
 Therefore, by Lemma \ref{essent}, the \emph{measure $\mu$ is determinate if and only if the multiplication operator $X$ on $L^2(\R,\mu)$ is essentially self-adjoint}.
 This is an interesting operator-theoretic characterization of determinacy.

Strong determinacy has also a nice operator-theoretic interpretation. For a measure $\mu\in \cM_+(\R^d)$, let $X_j$ denote the multiplication operator by the variable $x_j$ with domain $\cD(X_j)=\C_d[\ux]$ in the Hilbert space $L^2(\R^d,\mu)$.
\begin{theorem}\label{op-det}
Suppose $L$ is a moment functional on $\C_d[\ux]$ and  $\mu$ is a representing
measure for $L$. Then $\mu$ is strongly determinate if and only if the symmetric operators $X_1,\dots,X_d$
are essentially self-adjoint.
If this holds,  then $\mu$ is strictly determinate, that is, $\mu$ is determinate 
 and 
$\C_d[\ux]$ is dense in $L^2(\R^d,\mu)$.
\end{theorem}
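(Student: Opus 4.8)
The plan is to establish the stated equivalence first and then deduce strict determinacy from it; throughout I write $A_j:=\overline{X_j}$ for the closure of the symmetric multiplication operator $X_j$ on $L^2(\R^d,\mu)$. For the equivalence, fix $j\in\{1,\dots,d\}$ and imitate coordinatewise the one-dimensional computation displayed just before the theorem. For measurable $f$ on $\R^d$ set $g_\pm:=(x_j\pm\ii)f$; then $|g_\pm|^2=(1+x_j^2)|f|^2$, so $f\mapsto(x_j\pm\ii)f$ is a \emph{unitary} operator
\[
U_\pm\colon L^2(\R^d,(1+x_j^2)\,d\mu)\longrightarrow L^2(\R^d,\mu),
\]
which carries $\C_d[\ux]$ onto $(x_j\pm\ii)\,\C_d[\ux]=(X_j\pm\ii)\cD(X_j)$. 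As a unitary preserves density, $(X_j\pm\ii)\cD(X_j)$ is dense in $L^2(\R^d,\mu)$ if and only if $\C_d[\ux]$ is dense in $L^2(\R^d,(1+x_j^2)d\mu)$, and both signs yield the \emph{same} condition. By Lemma \ref{essent}, $X_j$ is essentially self-adjoint precisely when these two subspaces are dense, hence precisely when $\C_d[\ux]$ is dense in $L^2(\R^d,(1+x_j^2)d\mu)$. Taking the conjunction over $j=1,\dots,d$ and comparing with Definition \ref{defvardetnotions} shows that all $X_j$ are essentially self-adjoint if and only if $\mu$ is strongly determinate; this half is routine.

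Assume now the equivalent conditions hold. Since $1+x_j^2\ge 1$, the identity is a norm-nonincreasing inclusion of $L^2(\R^d,(1+x_j^2)d\mu)$ into $L^2(\R^d,\mu)$ with dense range (e.g.\ $C_c(\R^d)$ sits in both spaces and is dense in the latter). Continuity of this inclusion together with the density of $\C_d[\ux]$ in the stronger space then forces $\C_d[\ux]$ to be dense in $L^2(\R^d,\mu)$, which is one half of strict determinacy.

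The substantive part is determinacy. Because $X_j$ is multiplication by $x_j$, it is contained in the maximal multiplication operator $M_{x_j}$, which is self-adjoint; as a self-adjoint operator has no proper symmetric extension, $A_j=M_{x_j}$. The $A_j$ have commuting spectral projections (multiplications by indicator functions), hence a joint spectral measure $E$ on $\R^d$ with $E(M)$ equal to multiplication by $\chi_M$ and $\mu(M)=\langle E(M)1,1\rangle$. Now let $\nu\cong\mu$. Equal moments give $\int|p|^2\,d\mu=\int|p|^2\,d\nu$ for $p\in\C_d[\ux]$, so $Vp:=p$ is isometric from $\C_d[\ux]\subset L^2(\R^d,\mu)$ into $L^2(\R^d,\nu)$; by the density just proved it extends to an isometry $V$ intertwining $X_j$ with the multiplication operator $Y_j$ on $L^2(\R^d,\nu)$. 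Since $\C_d[\ux]$ is a core for $A_j$, a limiting argument upgrades this to $\overline{Y_j}\,V=VA_j$ on $\cD(A_j)$. I would then check that the range $R$ of $V$ reduces each maximal operator $M^\nu_{x_j}\supseteq\overline{Y_j}$: the intertwining shows $(M^\nu_{x_j}\mp\ii)^{-1}$ maps $R$ into $R$ (agreeing there with $V(A_j\mp\ii)^{-1}V^{*}$), and invariance under a bounded operator and its adjoint makes $R$ reducing, with $M^\nu_{x_j}|_R=VA_jV^{*}$. Hence the joint spectral measure $F$ of the $M^\nu_{x_j}$ restricts to $R$ as $VE(\cdot)V^{*}$, and since $1=V1\in R$ and $V^{*}1=1$,
\[
\nu(M)=\langle F(M)1,1\rangle=\langle E(M)1,1\rangle=\mu(M)
\]
for every Borel set $M$. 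Thus $\nu=\mu$, so $\mu$ is determinate, and together with the previous paragraph this is exactly strict determinacy.

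The main obstacle is the reducing-subspace argument in the last paragraph: transporting the self-adjointness of $A_j$ through the isometry $V$ to conclude that the a priori merely symmetric operators $\overline{Y_j}$ leave $R$ reducing, and that the associated joint spectral measures push forward correctly against the cyclic vector $1$. By contrast, the coordinatewise unitary computation and the norm comparison are elementary.
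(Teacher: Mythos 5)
The paper itself offers no proof of Theorem \ref{op-det}; it simply cites [MP, Theorem~4.2], so there is nothing line-by-line to compare against. Your argument is, as far as I can check, a correct and self-contained proof along the standard operator-theoretic route. The two genuinely important moves are both present and sound: (a) the coordinatewise unitary $f\mapsto(x_j\pm\ii)f$ of $L^2(\R^d,(1+x_j^2)d\mu)$ onto $L^2(\R^d,\mu)$, which via Lemma \ref{essent} turns essential self-adjointness of $X_j$ into exactly the density condition in Definition \ref{defvardetnotions}, generalizing the computation (\ref{gpmf}); and (b) the identification $\ov{X_j}=M_{x_j}$ with the maximal multiplication operator (a self-adjoint operator admits no proper symmetric extension), which is what buys you \emph{strong} commutativity of the closures and hence a joint spectral measure -- the step that abstract essential self-adjointness alone would not give (cf.\ Nelson-type phenomena), and precisely where the hypothesis enters. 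Your reducing-subspace argument for determinacy also checks out: $(M^\nu_{x_j}\mp\ii)^{-1}$ maps the range $R$ of the isometry $V$ into itself and agrees there with $V(A_j\mp\ii)^{-1}V^{*}$, invariance of $R$ under this operator and its adjoint forces the projection onto $R$ to commute with the resolvents and hence with the spectral measures, and uniqueness of the joint spectral measure of the restricted tuple gives $F(\cdot)\lceil R=VE(\cdot)V^{*}$, whence $\nu(M)=\langle F(M)1,1\rangle=\langle E(M)1,1\rangle=\mu(M)$. One presentational caveat you should repair: Lemma \ref{essent} applies only to \emph{densely defined} symmetric operators, and density of $\C_d[\ux]$ in $L^2(\R^d,\mu)$ is not automatic for $d\geq 2$ (the Berg--Thill phenomenon the paper mentions). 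In the direction "essentially self-adjoint $\Rightarrow$ strongly determinate" this density is part of the hypothesis, but in the converse direction you must first run your norm-comparison argument (your second paragraph) to deduce density of $\C_d[\ux]$ in $L^2(\R^d,\mu)$ from its density in $L^2(\R^d,(1+x_j^2)d\mu)$, and only then invoke Lemma \ref{essent}; as written, the lemma is applied before that density is available. This is an ordering fix, not a gap.
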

\begin{proof} 
 {[MP, Theorem~4.2]}.
\qed \end{proof}

 Finally, we relate   Carleman's condition  to the notion of a \emph{quasi-analytic vector}.


Let $T$ be a symmetric linear operator on a Hilbert  space $\cH$ and  $\varphi\in \cap_{n=1}^\infty \cD(T^n).$ Since the operator $T$ is symmetric, it is easily verified that the real sequence
\begin{align*}t=(t_n:=\langle T^n\varphi,\varphi\rangle)_{n\in \N_0}\end{align*} 
is  positive semidefinite and hence a moment sequence by Hamburger's theorem \ref{solutionhmp}.
 The vector $\varphi$ is called \emph{quasi-analytic}\index{Quasi-analytic vector} for $T$ if
\[
\sum_{n=1}^\infty \|T^n \varphi\|^{-1/n}=+\infty.
\]
Note that\, $t_{2n}=\langle T^{2n}\varphi,\varphi\rangle= \|T^n\varphi\|^2$\, for $n\in \N_0$. Hence 
 the vector $\varphi$ is  quasi-analytic  for $T$ if and only if the sequence $t$ satisfies Carleman's condition (\ref{carlemand1}). 

Now suppose  $\mu\in \cM_+(\R^d)$. Let  $s$ be the moment sequence of $\mu$ and  $L_s$  the corresponding Riesz functional on $\R_d[\ux]$. On the Hilbert space $L^2(\R^d,\mu)$ we have
\begin{align*}
s^{[j]}_{2n} =L_s(x_j^{2n})=\int x_j^{2n} d\mu(x)=  \langle X_j^{2n} 1,1 \rangle =\|X_j^n\, 1\|^2, \quad j=1,\dots,d,\, n\in \N_0.
\end{align*} 
 Therefore, from the preceding discussion it follows that the  sequence\,  $s$\, fulfills the \emph{multivariate Carleman condition} (according to Definition \ref{multicarle}) if and only if the  constant function $\varphi=1$ is a \emph{quasi-analytic vector} for the multiplication operators $X_1,\dots,X_d$ by the variable $x_1,\dots,x_d$, respectively.

\

\makeatletter
\renewcommand{\@chapapp}{Lecture}
\makeatother

\chapter{Polynomial optimization and semidefinite programming}

Abstract:\\
\textit{Semidefinite programming is introduced. Positivstellens\"atze and the moment problem provide methods for determining the minimum of a  polynomial over a semi-algebraic set via Lasserre relaxation.
From the Archimedean Positivstellensatz   convergence results are obtained.}

\bigskip

\section{Semidefinite programming}\label{semidefiniteprogr}

Semidefinite programming is a generalization of linear programming, where the linear constraints are replaced by linear \emph{matrix} constraints. 

 Let ${\Sym}_n$ denote the  real symmetric $n\times n$-matrices and $\langle A,B\rangle: ={\Tr} \, AB.$
 $A\succeq 0$ means that the matrix $A$ is positive semidefinite and $A \succ 0$  that $A$ is positive definite. 
 
 \smallskip
 
We define a semidefinite program and its dual program.
Suppose  $b\in \R^m$ and $m+1$ matrices $A_0,\dots,A_m\in {\Sym}_n$ are given.
\smallskip

 The primal \emph{semidefinite program} (SDP) is the following:
\begin{align}\label{defstandardsp}
 p_*=\inf_{y\in \R^m} \big\{ b^Ty:~ A(y):=A_0+y_1A_1+\dots+y_mA_m\succeq 0\}.
\end{align}
 That is, one minimizes the linear function\, $b^T y=\sum_{j=1}^m b_jy_j$\, in a vector variable\, $y=(y_1,\dots,y_m)^T\in \R^m$ subject to the \emph{linear matrix inequality}\index{Linear matrix inequality} (LMI) constraint 
\begin{align}\label{spectrahedron}
A(y):=A_0+y_1A_1+\dots+y_mA_m\succeq  0.
\end{align}
 The set of  points $y\in \R^m$ satisfying  (\ref{spectrahedron}) is called a \emph{spectrahedron}.
 Since $A(y)\succeq 0$  if and only if  its principal minors  are nonnegative and each such minor 
 is a polynomial,  a spectrahedron is a closed semi-algebraic set.
By (\ref{spectrahedron}), a spectrahedron is  also  convex. A  convex  closed semi-algebraic set is not necessarily a spectrahedron.

If all matrices $A_j$ are diagonal,  the constraint $A(y)\succeq 0$ consists  of linear inequalities, so  (\ref{defstandardsp}) is a linear  program. Conversely, each linear problem is a semidefinite program by writing the linear constraints as an LMI with a  diagonal matrix. 
\smallskip

 The \emph{dual program} associated with (\ref{defstandardsp}) is defined by
\begin{align}\label{dualsp}
 p^*=\sup_{Z\in {\Sym}_n} \{-\langle  A_0,Z\rangle: Z\succeq 0~~\text{ and }~~ \langle A_j,Z\rangle =b_j,~j=1,\dots,m\}.
\end{align}
Thus, one maximizes the linear function\, $-\langle A_0,Z\rangle=-{\Tr}\, A_0 Z$~ in a matrix variable $Z\in {\Sym}_n$ subject to the constraints\, $Z\succeq 0$\, and\, 
$\langle A_j,Z\rangle = {\Tr}\, A_jZ=b_j$.  It can be shown that the dual program (\ref{dualsp}) is also a semidefinite program.

 A vector $y\in \R^m$ resp. a matrix $Z\in {\Sym}_n$   is  called \emph{feasible}\index{Semidefinite program ! feasible vector}  for (\ref{defstandardsp}) resp. (\ref{dualsp}) if it satisfies the corresponding constraints.
 If there are no feasible points, we set $p_*=+\infty$ resp. $p^*=-\infty$.
 A program  is called \emph{feasible} if it has a feasible  point. 

\begin{proposition}\label{p_xp^*}
If $y$   is  feasible for (\ref{defstandardsp}) and $Z$   is feasible for (\ref{dualsp}), then
\begin{align}\label{dualitydiff}
b^Ty\geq p_*\geq p^*\geq-\langle A_0,Z\rangle.
\end{align}
\end{proposition}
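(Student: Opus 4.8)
The plan is to treat the three inequalities in the chain separately: the outer two are immediate from the definitions of infimum and supremum, and the middle one, weak duality, is the only substantive point.

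For the first inequality $b^Ty\geq p_*$, I would simply observe that $y$ is feasible for the primal program, so $b^Ty$ is one of the values over which the infimum in \eqref{defstandardsp} is taken; hence it dominates $p_*$. Symmetrically, $p^*\geq-\langle A_0,Z\rangle$ holds because $Z$ is feasible for the dual program, so $-\langle A_0,Z\rangle$ lies among the values whose supremum defines $p^*$ in \eqref{dualsp}.

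For the central inequality $p_*\geq p^*$, I would prove the pointwise bound $b^Ty\geq-\langle A_0,Z\rangle$ for \emph{every} feasible $y$ and \emph{every} feasible $Z$, and then pass to the infimum over feasible $y$ followed by the supremum over feasible $Z$. The bound itself comes from a single computation. Using the dual constraints $\langle A_j,Z\rangle=b_j$ and linearity of the trace,
\[
\langle A(y),Z\rangle=\langle A_0,Z\rangle+\sum_{j=1}^m y_j\,\langle A_j,Z\rangle=\langle A_0,Z\rangle+\sum_{j=1}^m b_j y_j=\langle A_0,Z\rangle+b^Ty.
\]
Since $A(y)\succeq 0$ by primal feasibility and $Z\succeq 0$ by dual feasibility, the left-hand side is nonnegative, so $b^Ty+\langle A_0,Z\rangle\geq 0$, which is exactly the desired bound.

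The one fact that needs justification, and the only real content of the argument, is that $\langle P,Q\rangle=\Tr(PQ)\geq 0$ whenever $P,Q\in\Sym_n$ are both positive semidefinite. I would obtain this by writing $P=P^{1/2}P^{1/2}$ with $P^{1/2}\succeq 0$ the symmetric square root and invoking the cyclicity of the trace,
\[
\Tr(PQ)=\Tr\!\left(P^{1/2}QP^{1/2}\right).
\]
The matrix $P^{1/2}QP^{1/2}$ is positive semidefinite, hence all its eigenvalues are nonnegative and its trace, being their sum, is nonnegative. This closes the gap; there is no serious obstacle here, as the statement is the standard weak-duality inequality for semidefinite programs.
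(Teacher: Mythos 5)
Your proof is correct and follows essentially the same route as the paper: feasibility gives $A(y)\succeq 0$ and $Z\succeq 0$, the dual constraints yield $b^Ty=\langle A(y),Z\rangle-\langle A_0,Z\rangle\geq-\langle A_0,Z\rangle$, and the chain \eqref{dualitydiff} follows by taking the infimum over $y$ and the supremum over $Z$. The only difference is that you also supply a proof of the fact $\TR(PQ)\geq 0$ for positive semidefinite $P,Q$ (via the square root and cyclicity of the trace), which the paper uses without comment.
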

\begin{proof}
Since $y$ is feasible for (\ref{defstandardsp}) and $Z$ is feasible for (\ref{dualsp}), $A(y)\succeq 0$ and $Z\succeq 0$. Therefore,  $\langle A(y),Z\rangle \geq 0$, so using  (\ref{dualsp}) we derive
\begin{align*}\label{anullx}
b^Ty =\sum_{j=1}^m y_j\langle A_j,Z\rangle=\langle A(y),Z\rangle -\langle A_0,Z\rangle\geq-\langle A_0,Z\rangle.
\end{align*}
Taking the infimum over $y$ and the supremum over $Z$ we obtain (\ref{dualitydiff}).
\qed \end{proof}

 In contrast to linear programming,   $p_*$ is not equal to $p^*$ in general.
 The number $p_*-p^*$ is called the \emph{duality gap}.
 A simple example of a semidefinite program with positive duality gap is the following:
Minimize $x_1$ subject to the constraint
 \begin{gather*}
\left(
\begin{array}{lll}
0 & x_1 & ~~0\\
x_1 & x_2 & ~~0\\
0 & 0 & x_1+1
\end{array}\right) \succeq 0.
\end{gather*}
In this case, we have $p_*=0$ and $p^*=-1$.

 The next  proposition shows that, under the stronger assumption of \emph{strict feasibility},  the duality gap is zero.
\begin{proposition}\label{stricltydeasible}
\begin{itemize}
\item[(i)]~If\, $p_*>-\infty$ and (\ref{defstandardsp}) is strictly feasible (that is, there is a $y\in \R^m$ such that $A(y)\succ 0$), then  $p_*=p^*$ and the supremum in (\ref{dualsp}) is a maximum.
\item[(ii)]~
If $p^*<+\infty$ and (\ref{dualsp}) is strictly feasible (that is, there is a  $Z\in {\Sym}_n$ such that $Z\succ0$ and $\langle A_j,Z\rangle =b_j, j=1,\dots,m$), then  $p_*=p^*$ and the infimum in (\ref{defstandardsp}) is a minimum.
\end{itemize}
\end{proposition}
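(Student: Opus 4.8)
The plan is to prove (i) by a convex-separation argument applied to the optimal-value (perturbation) function of the primal, and then to obtain (ii) by running the mirror-image argument on the dual. For (i), I would introduce the value function $v\colon \Sym_n \to \R\cup\{\pm\infty\}$,
\[
v(U):=\inf\{\, b^Ty : A(y)\succeq U\,\},
\]
so that $v(0)=p_*$. A direct check shows that $v$ is convex and nondecreasing for the positive semidefinite (L\"owner) order, since enlarging $U$ only shrinks the feasible set. This is exactly where strict feasibility enters: if $A(\bar y)\succeq \varepsilon I$ for some $\varepsilon>0$, then $\bar y$ is feasible for every $U$ with $\|U\|\le\varepsilon$, so $v$ is finite on a neighborhood of $0$; combined with the hypothesis $p_*>-\infty$, a short convex-combination argument rules out the value $-\infty$ near $0$, so $v$ is proper. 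Hence $0\in\operatorname{int}(\operatorname{dom}v)$ and $v$ admits a subgradient $G\in\Sym_n$ at $0$:
\[
v(U)\ge p_*+\langle G,U\rangle \qquad\text{for all } U\in\Sym_n .
\]

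The next step is to read off that $Z:=G$ is dual feasible and optimal. Testing the subgradient inequality on $U=-P$ with $P\succeq 0$ and using monotonicity gives $\langle G,P\rangle\ge 0$, hence $Z\succeq 0$. For arbitrary $y$ and any $P\succeq 0$, the point $y$ is feasible for the perturbation $U=A(y)-P$, so $v(A(y)-P)\le b^Ty$; substituting this into the subgradient inequality yields
\[
b^Ty\ \ge\ p_*+\langle G,A_0\rangle+\sum_j y_j\langle G,A_j\rangle-\langle G,P\rangle
\]
for all $y\in\R^m$ and $P\succeq 0$. Linearity in $y$ forces $\langle A_j,Z\rangle=b_j$ for each $j$, and then $P=0$ gives $-\langle A_0,Z\rangle\ge p_*$. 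Thus $Z$ is feasible for \eqref{dualsp} with $-\langle A_0,Z\rangle\ge p_*$, while weak duality (Proposition~\ref{p_xp^*}) gives $-\langle A_0,Z\rangle\le p^*\le p_*$. Therefore $p_*=p^*$ and the supremum in \eqref{dualsp} is attained at $Z$.

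For (ii) I would apply the same method to the dual, writing $-p^*=\inf\{\langle A_0,Z\rangle : Z\succeq 0,\ \langle A_j,Z\rangle=b_j\}$ and perturbing the right-hand sides through $w(u):=\inf\{\langle A_0,Z\rangle : Z\succeq 0,\ \langle A_j,Z\rangle=b_j+u_j\}$, so $w(0)=-p^*$. Strict dual feasibility (a $Z_0\succ0$ satisfying the equalities) places $0$ in the relative interior of $\operatorname{dom}w$, a subgradient $y$ exists, and identifying it with a primal point gives $A(y)\succeq 0$ and $b^Ty=p^*$, i.e.\ attainment of the infimum in \eqref{defstandardsp}.

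The step I expect to be the main obstacle is precisely the existence of the subgradient, that is, the nonemptiness of $\partial v(0)$: this is what the strict-feasibility hypothesis buys, and it is exactly what fails in its absence, as the example with positive duality gap preceding the proposition shows. A secondary technical point, specific to (ii), is that the perturbation $u$ ranges only over the range of the linear map $Z\mapsto(\langle A_j,Z\rangle)_j$; there $0$ lies only in the \emph{relative} interior of $\operatorname{dom}w$, so the subgradient argument must be carried out within that subspace rather than on all of $\R^m$.
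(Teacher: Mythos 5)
Your proof is correct, and it follows essentially the route the paper indicates: the paper defers its proof to [MP, Proposition~16.2] with the remark that it is ``based on separation of convex sets'', and the crux of your argument --- nonemptiness of the subdifferential of the value function $v$ (resp.\ $w$) at an interior (resp.\ relative-interior) point of its domain --- is exactly an application of the supporting-hyperplane/separation theorem, so the two proofs are the same argument in different packaging. The surrounding details (properness of $v$ via the convex-combination argument, reading off dual feasibility $\langle A_j,Z\rangle=b_j$, $Z\succeq 0$ and optimality $-\langle A_0,Z\rangle\geq p_*$ from the subgradient inequality, weak duality from Proposition \ref{p_xp^*}, and the relative-interior caveat in (ii)) all check out.
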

\begin{proof} 
 The proof is based on separation of convex sets, see [MP, Proposition~16.2].
$\qed$ \end{proof}

We give two examples where semidefinite programs appears.

\begin{example}(\textit{Largest eigenvalue of a symmetric matrix})\\
Let $\lambda_{\max}(B)$ denote the largest eigenvalue of $B\in {\Sym}_n$. Then
\begin{align*}
\lambda_{\max}(B)=\min_{y\in \R}~ \{ y: (yI-B)\succeq 0\}
\end{align*}
gives\, $\lambda_{\max}(B)$  by a semidefinite program. Since this program and its dual are strictly feasible (take $y\in \R$ such that $(yI-B)\succ 0$ and $Z=I$), Proposition \ref{stricltydeasible} yields 
\begin{align*}
\lambda_{\max}(B)=\max_{Z\in {\Sym}_n}~ \{\langle B,Z\rangle: Z\succeq 0, \langle I,Z\rangle =1\} .
\end{align*}
There is also a semidefinite program for the sum of the $j$ largest eigenvalues. $\hfill \circ$
\end{example}

\begin{example}\label{feasibilitysos} (\textit{Sos representation of a polynomial})\\
 A polynomial $f(x)=\sum_{\alpha} f_\alpha x^\alpha\in \R_d[\ux]_{2n}$ is in 
 $\sum \R_d[\ux]_n^2$ if and only if there exists a positive semidefinite matrix\,  $G$ such that
\begin{align*}
 f(x)
 =(\gx_n)^T G \gx_n,~~\text{ where }~~ \gx_n =(1,x_1,\dots,x_d,x_1^2,x_1x_2,\dots,x_d^2,x_1^3,\dots,x_1^n,\dots,x_d^n)^T.
\end{align*}
If we write  $\gx_n (\gx_n)^T=\sum_{\alpha} A_\alpha x^\alpha$ with $A_\alpha \in {\Sym}_{d(n)}$ and  compare coefficients in the equation $f(x)=(\gx_n)^T G \gx_n$, we obtain $\langle G,A_\alpha \rangle ={\Tr}\, GA_\alpha =f_\alpha$ for\,   $\alpha \in \N_0^d, |\alpha|\leq {2n}$. Therefore, $f\in \sum \R_d[\ux]_n^2$\, if and only if there exists  a matrix $G\in {\Sym}_{d(n)}$ such that
\begin{align}\label{sosfeas}
 G\succeq 0 \quad\text{ and }~~~ \langle Q,A_\alpha\rangle =f_\alpha \quad\text{ for }~~\alpha\in \N_0^d, |\alpha|\leq 2n.
\end{align}
 Hence $f$ is a sum of squares if and only if the \emph{feasibility condition} (\ref{sosfeas}) of the corresponding semidefinite program is satisfied.  $\hfill \circ$
 \end{example}
 
\section{Lasserre  relaxations of polynomial optimization with constraints}\label{tworelaxations}

 We fix  $\mathsf{f}=\{f_0,\dots,f_k\}$, where  $f_j\in \R_d[\ux]$, $f_j\neq 0, f_0=1$, and  $p\in \R_d[\ux], p\neq 0$. 

Our aim is to minimize the polynomial\, $p$\, over the semi-algebraic set $ \cK(\mathsf{f})$:
\begin{align}\label{pmin}
p^{\min}:=\inf \{p(x):~ x\in \cK(\mathsf{f})\}.
\end{align}
Clearly,
\begin{align}\label{strong1}
 p^{\min}&=\sup \big\{ \lambda \in \R: \, p-\lambda \geq 0~~\text{ on }~~ \cK(\mathsf{f})\big\}.\\
p^{\min}&=\inf \big\{ L(p): L\in \,\R_d[\ux]^*, L(1)=1, L(f)\geq 0 ~~\text{ if }~~ f \geq 0~~\text{ on }~~ \cK(\mathsf{f})\big\}.\label{strong2}
\end{align}
 The idea is to weaken the constraints in (\ref{strong1})  and (\ref{strong2}):   We  require in (\ref{strong1})  that $p-\lambda $ is a certain weighted sum of squares which is nonnegative on 
 $ \cK(\mathsf{f})$ and  consider in (\ref{strong2})    functionals $L$ which are nonnegative only on certain weighted  sums of squares.
\smallskip

For $n \in \N_0$ let  $  n_j$ be the largest integer such that $n_j\leq \frac{1}{2}(n-\deg(f_j))$. Set
\begin{align*}
Q(\mathsf{f})_n &:
=\bigg\{ \sum_{j=0}^k f_j\sigma_j:\, \sigma_j\in \sum \R_d[\ux]^2_{n_j}\bigg\},\\
Q(\mathsf{f})_n^*&:=\{ L\in\R_d[\ux]_n^*: L(1)=1 ~~\text{ and }~~ L(g)\geq 0,~   g\in  Q(\mathsf{f})_n\}.
\end{align*}

 Now we define two \textbf{Lasserre relaxations} by
\begin{align}
p^{\mom}_n &:=\inf \big\{ L(p): \, L \in Q(\mathsf{f})_n^*\big\},\label{defpmomn}\\
p^{\sos}_n&:=\sup \big\{ \lambda \in \R: \, p-\lambda \in Q(\mathsf{f})_n\big\},\label{defpsosn}
\end{align}
where we set $p^{\sos}_n=-\infty$ if there is no $ \lambda \in \R$ such that  $p-\lambda \in Q(\mathsf{f})_n$.

Both relaxations (\ref{defpmomn}) and (\ref{defpsosn}) can be reformulated in terms of a semidefinite program and its dual.
Let us begin with (\ref{defpmomn}).

A linear functional $L$ on $\R_d[\ux]_n$ is completely  described by the $\binom{d+n}{n}$ variables $y_\alpha:=L(x^\alpha)$,  $\alpha\in \N_0^d$, $|\alpha|\leq n$.
By definition, $L\in
Q(\mathsf{f})_n^*$ if and only if $L(1)=1$ and 
\begin{align}\label{lfjcondition}
L(f_jq^2)\geq 0~~~\text{ for }~~~q\in \R_d[\ux]_{n_j},~ j=0,\dots,k.
\end{align}
We reformulate these conditions in terms of the variables $y_\alpha$. Clearly, $L(1)=1$ means  $y_0=1$.
Let  $f_j=\sum_\alpha f_{j,\alpha}x^\alpha$. We denote by $H_{n_j}(f_jy)$   the $\binom{d+n_j}{n_j} \times\binom{d+n_j}{n_j}$-matrix  with entries 
\begin{align}\label{matentries}
H_{n_j}(f_jy)_{\alpha,\beta}:=\sum\nolimits_{\gamma} f_{j,\gamma}y_{\alpha+\beta+\gamma},\quad\text{ where }\quad |\alpha|\leq n_j, |\beta|\leq n_j. 
\end{align}
Thus $H_{n_j}(f_y)$ is a  localized  Hankel matrix. For $q=\sum_\alpha a_\alpha x^\alpha\in \R_d[\ux]_{n_j}$ we compute
\begin{align*}
L(f_jq^2)=\sum_{\alpha,\beta,\gamma} f_{j,\gamma} a_\alpha a_\beta L(x^{\alpha+\beta+\gamma})=\sum_{\alpha,\beta} H_{n_j}(f_jy)_{\alpha,\beta} a_\alpha a_\beta,
\end{align*}
so (\ref{lfjcondition}) holds if and only if the matrices $H_{n_j}(f_jy)$, $j=1,\dots,k$, are  positive semidefinite. 
Let $H(\mathsf{f})(y)$ denote the block diagonal  matrix with diagonal blocks $H_{n_0}(f_0y),H_{n_1}(f_1y),$ $\dots, H_{n_k}(f_ky)$. Then (\ref{lfjcondition}) is satisfied  if and only if  $H(\mathsf{f})(y)\succeq 0.$ 
The matrix $H(\mathsf{f})(y)$ has type  $N\times N$, where $N:=\sum_{j=0}^k\binom{d+n_j}{n_j}$.

 By (\ref{matentries}), all entries of $H(\mathsf{f})(y)$ are linear in the variables $y_\alpha$. Hence, inserting  $y_0=1$, there are (constant!) real symmetric $N\times N$-matrices $A_\alpha$ such that 
\begin{align}\label{defmsf}
 H(\mathsf{f})(y)= A_0+\sum_{\alpha\in \N_0^d,0<|\alpha|\leq n} y_\alpha A_\alpha.
\end{align}

Let
  $p(x)=\sum_\alpha p_\alpha x^\alpha$. Then $L(p)=p_0+\sum_{\alpha\neq 0}~ p_\alpha y_\alpha$.  By (\ref{defpmomn}), $p^{\mom}_n$ is the infimum of  
   $L(p)$ in the $M:=\binom{d+n}{n}- 1$ variables $y_\alpha$ subject to  $H(\mathsf{f})(y)\succeq 0$.

 Summarizing,  the relaxation  (\ref{defpmomn}) leads to the \textbf{primal semidefinite program}
\begin{align}\label{emprogmom}
p^{\mom}_n-p_0=\inf_{(y_\alpha)\in \R^M}\, \bigg\{~ \sum_{0<|\alpha|\leq n}~ p_\alpha y_\alpha:~  A_0+\sum_{0<|\alpha|\leq n} y_\alpha A_\alpha\succeq 0 \bigg\}.
\end{align}
\smallskip

Next we show that
 (\ref{defpsosn}) leads to the corresponding dual  program.
 Suppose  that $\lambda \in \R$ and $ p-\lambda \in Q(\mathsf{f})_n$, that is,
 \begin{align}\label{p-lambdaeq}
 p-\lambda =\sum_{j=0}^k f_j\sigma_j,\quad  \sigma_j\in \sum \R_d[\ux]^2_{n_j}.
 \end{align} 
Then   $\sigma_j\in \sum \R_d[\ux]_{n_j}^2$ if and only if there is a matrix $Z(j)\succeq 0$ such that
\begin{align*}
\sigma_j(x)=\sum_{|\alpha|,|\beta|\leq n_j} Z(j)_{\alpha,\beta} x^{\alpha+\beta}.
\end{align*}
Let $Z$ be the block diagonal $N\times N$-matrix with blocks $Z(0),\dots, , Z(k)$.
 Clearly, $Z\succeq 0$\, if and only if $Z(j)\succeq 0$\, for all $j$.
 
  Recall that   $f_j=\sum_\alpha f_{j,\alpha}x^\alpha.$  
  Equating coefficients in (\ref{p-lambdaeq}) yields
\begin{align*}
 p_0-\lambda &=\sum_{j=0}^k ~f_{j,0}Z(j)_{00} =\TR A_0Z =\langle A_0,Z\rangle,\\
 p_\alpha &=\sum_{j=0}^k~ \sum_{\beta+\gamma+\delta=\alpha}\ f_{j,\delta} Z(j)_{\beta,\gamma} =\TR A_\alpha Z =\langle A_\alpha,Z\rangle , ~~~\alpha\neq 0.
 \end{align*}
Clearly, taking the supremum 
of\, $\lambda$ in (\ref{defpsosn}) is equivalent to  taking the supremum
of\, $\lambda{-}p_0=-\langle A_0,Z\rangle$ subject to the conditions $p_\alpha=\langle A_\alpha,Z\rangle$,   $0<|\alpha|\leq n$.

 Thus, the second relaxation (\ref{defpsosn}) leads to the corresponding \textbf{dual  program} 
 \begin{align}\label{semidprsos}
 p^{\sos}_n{-} p_0= \sup_{Z\in {\Sym}_N} \big\{- \langle A_0,Z\rangle:~ Z\succeq 0 ,~~ p_\alpha  =\langle A_\alpha,Z\rangle ~~\text{ for }~~ 0<| \alpha|\leq n \big\}.
 \end{align}

\section{Polynomial optimization with constraints}\label{polyoptwithconstraint}

First we note that for $n\in \N$,
\begin{align}\label{psosinq1}
p^{\sos}_n\leq p^{\sos}_{n+1}\,~~~~ &\text{ and }~~~~\, p^{\mom}_n\leq p^{\mom}_{n+1}\\
p^{\sos}_n\leq & p^{\mom}_n\leq p^{\min}.\label{psosinq2}
\end{align}

We give the simple proofs of these inequalities.  Since $Q(\mathsf{f})_n$ is a subspace of  $Q(\mathsf{f})_{n+1}$,    $ p-\lambda \in Q(\mathsf{f})_n$ implies that $ p-\lambda \in Q(\mathsf{f})_{n+1}$, so  $p^{\sos}_n\leq p^{\sos}_{n+1}$. Since  restrictions of functionals from  $Q(\mathsf{f})_{n+1}^*$ belong to $Q(\mathsf{f})_n^*$, it follows that $p^{\mom}_n\leq p^{\mom}_{n+1}$.

Polynomials of $Q(\mathsf{f})$ are nonnegative on $ \cK(\mathsf{f})$. Hence each point evaluation at $x\in  \cK(\mathsf{f})$ is in $Q(\mathsf{f})_n^*$, so $p^{\mom}_n\leq p(x)$  for  $x\in  \cK(\mathsf{f})$. This implies $p^{\mom}_n\leq p^{\min}$. 

Let $L \in Q(\mathsf{f})_n^*$. If $p-\lambda \in Q(\mathsf{f})_n$, then $L(p-\lambda)=L(p)-\lambda \geq 0$,  that is, $\lambda \leq L(p).$ Taking the supremum over $\lambda$ and the infimum over $L$ we get $p^{\sos}_n\leq p^{\mom}_n$.
\smallskip

As an application of the Archimedean Positivstellensatz 
we show that for an Archimedean module $Q(\mathsf{f})$ both relaxations converge to the minimum of $p$.
\begin{theorem}\label{archconvergenthm}
Suppose  the quadratic module $Q(\mathsf{f})$ is Archimedean. Then the set $ \cK(\mathsf{f})$ is compact, so $p$ attains its minimum over  $ \cK(\mathsf{f})$, and we have
\begin{align}\label{limitpmonn}
\lim_{n\to \infty}~ p^{\sos}_n=\lim_{n\to \infty}~ p^{\mom}_n = p^{\min}.
\end{align}
\end{theorem}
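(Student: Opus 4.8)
The plan is to derive everything from the Archimedean Positivstellensatz (Theorem \ref{archmedps}(i)), which does all the real work. Compactness of $\cK(\mathsf{f})$ is immediate from Corollary \ref{archicompact}, and a continuous function attains its infimum on a nonempty compact set, so $p^{\min}$ is a genuine minimum (if $\cK(\mathsf{f})$ is empty the statement is vacuous). By the inequalities \eqref{psosinq1} and \eqref{psosinq2} already established, both sequences $(p^{\sos}_n)_n$ and $(p^{\mom}_n)_n$ are non-decreasing and bounded above by $p^{\min}$; hence both limits exist and satisfy $\lim_n p^{\sos}_n \leq \lim_n p^{\mom}_n \leq p^{\min}$. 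It therefore suffices to prove the single reverse inequality $\lim_n p^{\sos}_n \geq p^{\min}$, after which the squeeze forces all three quantities to coincide.

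For that reverse inequality I would fix $\varepsilon > 0$ and set $h := p - (p^{\min} - \varepsilon)$. On $\cK(\mathsf{f})$ we have $p \geq p^{\min}$, so $h \geq \varepsilon > 0$ at every point of $\cK(\mathsf{f})$. Since $Q(\mathsf{f})$ is Archimedean by hypothesis, Theorem \ref{archmedps}(i) applies and yields $h \in Q(\mathsf{f})$. The remaining task is to place $h$ at a finite relaxation level: by construction $Q(\mathsf{f}) = \bigcup_n Q(\mathsf{f})_n$, because any element is a finite sum $\sum_{j=0}^k f_j \sigma_j$ with each $\sigma_j$ a sum of squares of some fixed degree, and choosing $n$ large enough that the exponents $n_j$ exceed those degrees for every $j$ places the whole expression in $Q(\mathsf{f})_n$. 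Hence $h \in Q(\mathsf{f})_n$ for some $n$, which by definition \eqref{defpsosn} means exactly that $p - (p^{\min} - \varepsilon) \in Q(\mathsf{f})_n$, i.e. $p^{\sos}_n \geq p^{\min} - \varepsilon$. Letting $n \to \infty$ gives $\lim_n p^{\sos}_n \geq p^{\min} - \varepsilon$, and since $\varepsilon > 0$ was arbitrary we conclude $\lim_n p^{\sos}_n \geq p^{\min}$, completing the argument.

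The entire substance of the theorem is carried by the Archimedean Positivstellensatz, so I do not anticipate a genuine obstacle. The only point requiring mild care is the degree bookkeeping behind $Q(\mathsf{f}) = \bigcup_n Q(\mathsf{f})_n$, namely verifying that membership in the full quadratic module is realized at some finite truncation level; this is precisely how the relaxations $Q(\mathsf{f})_n$ were defined, since $n_j$ is the largest integer with $n_j \leq \tfrac{1}{2}(n - \deg f_j)$, and as $n \to \infty$ each $n_j \to \infty$. I would also remark that the proof shows slightly more than convergence of the optima: the perturbed polynomial $p - (p^{\min}-\varepsilon)$ admits an explicit weighted sum-of-squares certificate at a finite order $n$, which is the feature that makes the Lasserre hierarchy computationally meaningful via the semidefinite programs \eqref{emprogmom} and \eqref{semidprsos}.
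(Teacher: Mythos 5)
Your proof is correct and follows essentially the same route as the paper: invoke the Archimedean Positivstellensatz to certify $p-\lambda\in Q(\mathsf{f})$ for any $\lambda<p^{\min}$ (your $\lambda=p^{\min}-\varepsilon$), place the certificate in some $Q(\mathsf{f})_n$ by degree bookkeeping, and conclude via the monotonicity and sandwich inequalities \eqref{psosinq1}, \eqref{psosinq2}. The only cosmetic differences are the $\varepsilon$-parametrization and your explicit remarks on the empty-set case and on $Q(\mathsf{f})=\bigcup_n Q(\mathsf{f})_n$, which the paper handles implicitly.
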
 
\begin{proof}
We have noted  that $Q(\mathsf{f})$ is Archimedean  implies that $ \cK(\mathsf{f})$ is compact. 

Let $\lambda\in \R$ be such that $\lambda <p^{\min}$. Then $p(x)-\lambda >0$ on $\cK(\mathsf{f})$. Therefore,  $p-\lambda \in Q(\mathsf{f})$ by the Archimedean Positivstellensatz. This means that $p-\lambda =\sum_j f_j\sigma_j$ for some elements\, $\sigma_j\in \sum \R_d[\ux]^2$. We choose $n\in \N$ such that $n\geq \deg(f_j\sigma_j)$ for all $j$. Then   $p-\lambda=\sum_j f_j\sigma_j\in Q(\mathsf{f})_n$ and hence  $p^{\sos}_n \geq \lambda$ by the definition  of $p^{\sos}_n$.  Since $\lambda <p^{\min}$ was arbitrary and  $p^{\sos}_n\leq p^{\mom}_n\leq p^{\min}$ and $p^{\sos}_n\leq p^{\sos}_{n+1}$ by (\ref{psosinq1}) and (\ref{psosinq2}), this implies (\ref{limitpmonn}).
\qed \end{proof}

The following  propositions deal with two special  situations. The first shows that the supremum in (\ref{semidprsos}) is attained if $ \cK(\mathsf{f})$ has interior points. 
\begin{proposition}\label{psos=pmom}
Suppose that $ \cK(\mathsf{f})$ has a nonempty interior. Then $ p^{\sos}_n= p^{\mom}_n$ for   $n\in \N$. If\, $p^{\mom}_n>-\infty$, then the supremum in (\ref{semidprsos}) is a maximum.
\end{proposition}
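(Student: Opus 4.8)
The plan is to recognize the two relaxations as a primal--dual pair of semidefinite programs and to invoke Proposition \ref{stricltydeasible}(i). In the notation of Section \ref{semidefiniteprogr}, the moment relaxation \eqref{emprogmom} is exactly the primal program \eqref{defstandardsp} with cost vector $(p_\alpha)_{0<|\alpha|\le n}$ and constraint matrix $A(y)=H(\mathsf{f})(y)$, while the sos relaxation \eqref{semidprsos} is its dual \eqref{dualsp}; hence $p_*=p^{\mom}_n-p_0$ and $p^*=p^{\sos}_n-p_0$. Proposition \ref{stricltydeasible}(i) then delivers both $p^{\sos}_n=p^{\mom}_n$ and attainment of the supremum in \eqref{semidprsos}, provided that $p^{\mom}_n>-\infty$ and that the primal \eqref{emprogmom} is \emph{strictly feasible}. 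So the entire proof reduces to producing one strictly feasible point of \eqref{emprogmom}, and this is the single place where the hypothesis on the interior of $\cK(\mathsf{f})$ is used.

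First I would dispose of the degenerate case. If $p^{\mom}_n=-\infty$, then $p^{\sos}_n=-\infty$ as well by \eqref{psosinq2}, so $p^{\sos}_n=p^{\mom}_n$ holds trivially and there is nothing to attain. Thus I may assume $p^{\mom}_n>-\infty$. (It is also finite: since $\cK(\mathsf{f})$ has nonempty interior it is nonempty, and any point evaluation $\delta_x$ with $x\in\cK(\mathsf{f})$ lies in $Q(\mathsf{f})_n^*$, giving $p^{\mom}_n\le p^{\min}<+\infty$.)

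To exhibit a strictly feasible $y$, I would fix a closed ball $B$ contained in the interior of $\cK(\mathsf{f})$ and let $\mu$ be Lebesgue measure on $B$, normalized so that $\mu(B)=1$. Setting $L(g):=\int_B g\,d\mu$ and $y_\alpha:=L(x^\alpha)$ gives $L(1)=1$, i.e.\ $y_0=1$. For the associated block-diagonal matrix $H(\mathsf{f})(y)$ one has, for each $j$ and each $q\in\R_d[\ux]_{n_j}$,
\begin{align*}
\vec{q}^{\,T}\,H_{n_j}(f_jy)\,\vec{q}=L(f_jq^2)=\int_B f_j(x)\,q(x)^2\,d\mu(x).
\end{align*}
Since $B\subseteq\cK(\mathsf{f})$, each $f_j$ is nonnegative on $B$; being a nonzero polynomial, $f_j$ vanishes only on a set of Lebesgue measure zero, and likewise $q^2>0$ off a measure-zero set whenever $q\neq 0$. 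Hence $f_jq^2>0$ almost everywhere on $B$, so the integral is strictly positive for every $q\neq 0$. Consequently each block $H_{n_j}(f_jy)$ is positive definite, whence $H(\mathsf{f})(y)\succ 0$ and $y$ is strictly feasible for \eqref{emprogmom}.

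With strict feasibility in hand and $p_*=p^{\mom}_n-p_0>-\infty$, Proposition \ref{stricltydeasible}(i) applies and yields $p_*=p^*$, that is $p^{\sos}_n=p^{\mom}_n$, together with the assertion that the supremum in the dual program \eqref{semidprsos} is a maximum. The main (indeed the only nonroutine) obstacle is the positive-definiteness of the localizing matrices; the decisive point is that a nonzero polynomial which is nonnegative on a set with nonempty interior is in fact strictly positive almost everywhere there, so an interior point at which some $f_j$ happens to vanish cannot cause the quadratic form $q\mapsto L(f_jq^2)$ to degenerate.
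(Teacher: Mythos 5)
Your proof is correct and is essentially the argument behind the paper's citation [MP, Proposition~16.7]: recognize (\ref{emprogmom}) and (\ref{semidprsos}) as a primal--dual SDP pair, produce a strictly feasible primal point from the moments of normalized Lebesgue measure on a ball inside $\cK(\mathsf{f})$ (the localizing blocks $H_{n_j}(f_jy)$ are positive definite because $f_jq^2\geq 0$ on the ball and vanishes only on a Lebesgue-null set when $q\neq 0$), and apply Proposition \ref{stricltydeasible}(i). Your treatment of the degenerate case $p^{\mom}_n=-\infty$ via (\ref{psosinq2}) is also sound, so the proposal is complete and matches the intended proof.
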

\begin{proof} [MP, Proposition 16.7]
\qed \end{proof}

 The quadratic module $Q(\mathsf{f})$ is called \emph{stable} if  for  $n\in \N_0$ there  is a  $k(n)\in \N_0$ such that each $q\in  Q(\mathsf{f})_n$ can be written  as $q=\sum_j f_j\sigma_j$ with $\sigma_j\in \sum \R_d[\ux]^2$,  $\deg(f_j\sigma_j)\leq k(n)$.
 (This is a restriction on degree cancellations for elements of $Q(\mathsf{f})$.)

\begin{proposition}\label{stablefiniteconv}
Suppose that the quadratic module $Q(\mathsf{f})$ is stable. Then there exists an  $n_0\in \N_0$, depending only on $\deg(p)$, such that
$p^{\sos}_n=p^{\sos}_{n_0}$ for all $n\geq n_0$.
\end{proposition}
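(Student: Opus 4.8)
The plan is to exploit that the sequence $(p^{\sos}_n)_{n}$ is nondecreasing and to show, via stability, that its supremum is already attained at a level $n_0$ determined solely by $\deg p$. Write $m := \deg p$ and $p^{\sos} := \sup_{n} p^{\sos}_n$; by the monotonicity $p^{\sos}_n \le p^{\sos}_{n+1}$ from (\ref{psosinq1}) this supremum equals $\lim_{n\to\infty} p^{\sos}_n$, and since $\bigcup_n Q(\mathsf{f})_n = Q(\mathsf{f})$ we have $p^{\sos} = \sup\{\lambda \in \R : p - \lambda \in Q(\mathsf{f})\}$. It therefore suffices to produce a single $n_0 = n_0(m)$ with $p^{\sos}_{n_0} \ge p^{\sos}$, because then $p^{\sos}_{n_0} = p^{\sos}$ and monotonicity forces $p^{\sos}_n = p^{\sos}_{n_0}$ for all $n \ge n_0$. (The degenerate case $p^{\sos} = -\infty$ is trivial, and the argument below also covers $p^{\sos} = +\infty$.)

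The core step uses stability. Fix any $\lambda \in \R$ with $\lambda < p^{\sos}$, so that $p - \lambda \in Q(\mathsf{f})$ and $\deg(p-\lambda) \le m$. By stability, applied with the degree bound $m$, there is an integer $k(m) \in \N_0$ depending only on $m$ such that $p - \lambda$ admits a representation
\[
p - \lambda = \sum_{j=0}^k f_j\,\sigma_j, \qquad \sigma_j \in \sum \R_d[\ux]^2, \quad \deg(f_j\sigma_j) \le k(m).
\]
Setting $n_0 := k(m)$, I would then carry out the routine degree bookkeeping showing that this representation actually lies in the truncated module $Q(\mathsf{f})_{n_0}$: from $\deg f_j + \deg\sigma_j \le n_0$ and the fact that $\sigma_j$ is a sum of squares (so $\deg\sigma_j$ is even and each square summand has degree $\le \tfrac12\deg\sigma_j$) one obtains $\sigma_j \in \sum\R_d[\ux]^2_{(n_0)_j}$, where $(n_0)_j$ is the largest integer $\le \tfrac12(n_0 - \deg f_j)$. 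Hence $p - \lambda \in Q(\mathsf{f})_{n_0}$, and by the definition (\ref{defpsosn}) of $p^{\sos}_{n_0}$ this yields $p^{\sos}_{n_0} \ge \lambda$.

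Since $\lambda < p^{\sos}$ was arbitrary, letting $\lambda \uparrow p^{\sos}$ gives $p^{\sos}_{n_0} \ge p^{\sos}$; combined with the trivial bound $p^{\sos}_{n_0} \le p^{\sos}$ this is the desired equality, and monotonicity finishes the proof with $n_0 = k(\deg p)$ depending only on $\deg p$. The one delicate point — and the place where I expect to have to be careful — is the interface between the two notions of degree truncation: stability controls the \emph{representation degree} $\deg(f_j\sigma_j)$, whereas $Q(\mathsf{f})_n$ is defined through the per-generator square-length bound $\sigma_j \in \sum\R_d[\ux]^2_{n_j}$, and one must check that these match up so that the stabilization index is genuinely a function of $\deg p$ alone. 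This in turn hinges on reading the stability hypothesis as a bound on elements of $Q(\mathsf{f})$ of bounded \emph{degree} (rather than merely on those already presented with a bounded-degree representation), which is exactly what makes $k(m)$ available for $p-\lambda$ uniformly in $\lambda$.
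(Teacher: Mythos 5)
Your proof is correct and is essentially the standard argument: taking $n_0 = k(\deg p)$ from the stability function, pushing an arbitrary $\lambda < \sup_n p^{\sos}_n$ through a degree-bounded representation of $p-\lambda$, and doing the bookkeeping (evenness of $\deg\sigma_j$ and no leading-term cancellation in sums of squares) to land in $Q(\mathsf{f})_{n_0}$ is exactly how this proposition is proved; the paper itself only cites [MP, Proposition~16.8], whose proof runs along the same lines. Your closing caveat is also the right call: stability must be read as a bound for \emph{all} elements of $Q(\mathsf{f})$ of degree at most $n$, since the literal phrase ``each $q\in Q(\mathsf{f})_n$'' in the paper's definition would make the hypothesis vacuous (take $k(n)=n$) and the conclusion false, e.g.\ for Archimedean modules with $\dim \cK(\mathsf{f})\geq 2$, which the paper notes are never stable.
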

\begin{proof} 
 {[MP, Proposition 16.8]}.
\qed
\end{proof}

It can be shown that if\, $\dim \cK(\mathsf{f})\geq 2$, then the assumptions of  Theorem \ref{archconvergenthm} and Proposition \ref{stablefiniteconv} exclude each other. 

 It is natural to look for conditions which imply finite  convergence for the limit 
$\lim_{n\to \infty}~ p^{\mom}_n = p^{\min}.$ 
The flat extension 
theorem of  Lecture 5
gives such a result: 
\smallskip

\textit{Let $m:=\max \{1, \deg (f_j): j=0,\dots,k\}$ and $n>m$.
If the infimum in (\ref{defpmomn}) is attained at $L$ and\,  $\rank H_{n-m}(L)=\rank H_n(L) $, then\,  $ p^{\mom}_k = p^{\min}$\, for all\,  $ k\geq n$.}
\smallskip


\makeatletter
\renewcommand{\@chapapp}{Lecture}
\makeatother

\setcounter{tocdepth}{3}

\chapter{Truncated multidimensional moment problem: existence via positivity}

Abstract:\\
\textit{The theorem of Richter-Tchakaloff is proved. Existence criteria by positivity conditions are formulated and discussed.
Stochel's theorem is stated.}
\bigskip

The next three Lectures are concerned with the multidimensional truncated moment problem. We consider  the following general setup:
\smallskip

\noindent
$\bullet$~ $\cX$ is a \textbf{locally compact} Hausdorff space,\\
$\bullet$~ $E$ is a \textbf{finite-dimensional vector space of continuous real-valued functions} on $\cX$,\\
$\bullet$~ $K$ is a \textbf{closed} subset of $\cX$.

\smallskip
That $E$ has finite dimension is crucial! Further, we assume the following:
\begin{align}\label{g-condition}
\textbf{ There exists a function $e\in E$ such that $e(x)\geq 1$ for $x\in \cX$.} 
\end{align}

Let us recall the following fundamental notion.

\begin{dfn}
A linear  functional $L:E\to \R$   is called a \textbf{$K$-moment functional} if  there exists a   Radon measure $\mu$  on $\cX$ \textbf{supported on $K$} such that
\begin{align*}
L(f)=\int_\cX f(x) \, d\mu(x)\quad \text{\textit{for}}~~ f\in E?
\end{align*}
In the case $K=\cX$ we say that  $L$ is   a \textbf{moment functional}. 
\end{dfn}

Our guiding example and most important case is the following:\smallskip

 $\cX=\R^d$ and $E=\R_d[\ux]_m:=\{p\in \R_d[\ux]:\deg (p)\leq m\}$.
 \smallskip
 
\noindent 
 In this case we have the \emph{classical multi-dimensional truncated moment problem.} Assumption (\ref{g-condition})is satisfied with $e(x):=1$.
\smallskip

Further, $\delta_x$ denotes the Delta measure and $l_x$ is the point evaluation functional on $E$ at $x\in \cX$. For $k\in \N$,  a \emph{$k$-atomic measure} is a measure $\mu=\sum_{i=1}^k c_i\delta_{x_i}$, where all $c_i>0$ and the $x_i$ are pairwise different points of $\cX$. We  consider the zero measure as a $0$-atomic measure.

\smallskip
\section{The Richter-Tchakaloff Theorem}

The following theorem is the most important general result on truncated moment problems. It implies that  $K$-moment functionals on $E$ have always finitely atomic representing measures and are finite linear combinations of point evaluations. Thus, convex analysis comes up as a useful technical tool  to study  moment problems. 

The theorem was proved in full generality by Hans Richter (1957). It is often called  Tchakaloff theorem in the literature and \textbf{Richter--Tchakaloff theorem} in  [MP]. From its publication data and its generality it seems to be fully justified to call it \emph{Richter theorem}. We reproduce the proof given in [MP].

\begin{theorem}\label{richter}
Suppose   $(\cY,\mu)$ is a measurable space and $V$ is a finite-dimensional linear   space of $\mu$-integrable measurable real-valued functions on $(\cY,\mu)$. Let  $L^\mu$ denote the  functional on $V$ defined by $L^\mu(f)=\int f\, d\mu$, $f\in V$. Then there is a $k$-atomic measure $\nu=\sum_{j=1}^k m_j\delta_{x_j}$ on $\cY$,  $k\leq \dim V $, such that $L^\mu=L^\nu$, that is,\begin{align*} \dint f d\mu=\dint f d\nu\equiv \sum_{j=1}^k m_jf(x_j), \quad f\in V.\end{align*}
\end{theorem}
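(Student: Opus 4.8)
The plan is to translate the statement into convex geometry in $\R^N$ and then combine a separation argument with Carath\'{e}odory's theorem. Fix a basis $e_1,\dots,e_N$ of $V$, where $N=\dim V$, and define the evaluation map $\Phi:\cY\to\R^N$ by $\Phi(x)=(e_1(x),\dots,e_N(x))$. Setting $s:=\big(\int e_1\,d\mu,\dots,\int e_N\,d\mu\big)=\int_\cY\Phi\,d\mu\in\R^N$, the desired conclusion is precisely that $s$ can be written as $\sum_{j=1}^k m_j\Phi(x_j)$ with $m_j>0$, the $x_j\in\cY$ pairwise distinct, and $k\le N$; for then $\nu=\sum_j m_j\delta_{x_j}$ agrees with $L^\mu$ on each basis element and hence on all of $V$. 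Thus everything reduces to showing that $s$ lies in the convex cone $\cC:=\{\sum_j m_j\Phi(x_j): m_j\ge 0,\ x_j\in\cY\}$ generated by $\Phi(\cY)$, after which Carath\'{e}odory's theorem for cones yields a representation using at most $N$ generators (which we may take with linearly independent $\Phi(x_j)$, hence distinct $x_j$).

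First I would record a normalization: $\Phi(\cY)$ spans $\R^N$. Indeed, if $\Phi(\cY)$ lay in a hyperplane $\{v:\langle c,v\rangle=0\}$ with $c\ne 0$, then $g:=\sum_i c_i e_i$ would vanish identically on $\cY$, contradicting the linear independence of $e_1,\dots,e_N$ in the function space $V$. Consequently $\overline{\cC}$ is a full-dimensional cone. Next I would show $s\in\overline{\cC}$ by separation: if $s\notin\overline{\cC}$, a separating functional provides $c\in\R^N$ with $\langle c,\Phi(x)\rangle\ge 0$ for all $x\in\cY$ and $\langle c,s\rangle<0$; but then $g:=\sum_i c_i e_i\ge 0$ on $\cY$ forces $\langle c,s\rangle=\int_\cY g\,d\mu\ge 0$, a contradiction.

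The remaining and most delicate step is to pass from $s\in\overline{\cC}$ to genuine membership $s\in\cC$, since a finitely generated convex cone need not be closed. Here I would induct on $N$ via a dichotomy. If $s$ is an interior point of $\overline{\cC}$, then, as a full-dimensional convex set and its closure share the same interior, $s\in\mathrm{int}(\cC)\subseteq\cC$, and Carath\'{e}odory finishes the case. Otherwise $s$ lies on the boundary, so there is a supporting hyperplane: some $c\ne 0$ with $\langle c,v\rangle\ge 0$ for all $v\in\cC$ and $\langle c,s\rangle=0$. The associated $g=\sum_i c_i e_i$ is nonnegative on $\cY$ with $\int_\cY g\,d\mu=0$, whence $g=0$ $\mu$-almost everywhere, so $\mu$ is concentrated on the measurable set $\cY_0:=\{x\in\cY:g(x)=0\}$. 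Restricting to $\cY_0$, the space $V_0:=V\lceil\cY_0$ satisfies $\dim V_0<N$, because the restriction map $V\to V_0$ has in its kernel the nonzero element $g$; moreover $L^\mu$ factors through this restriction map since $\mu$ lives on $\cY_0$. Applying the induction hypothesis to $(\cY_0,\mu\lceil\cY_0)$ and $V_0$ yields a $k$-atomic measure with atoms in $\cY_0\subseteq\cY$ and $k\le\dim V_0<N$ that represents $L^\mu$ on all of $V$.

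The main obstacle is exactly this non-closedness gap bridged in the last step: separation only places $s$ in $\overline{\cC}$, and the substantive content is the boundary reduction, which strictly lowers $\dim V$ and drives the induction. The base case $N=0$ (equivalently, any stage at which $V\lceil\cY_0$ becomes trivial) is immediate, represented by the zero, $0$-atomic measure; since the dimension drops at every boundary step, the induction terminates.
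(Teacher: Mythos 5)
Your proposal is correct and follows essentially the same route as the paper's proof: induction on $\dim V$, the interior/boundary dichotomy for the closed cone generated by the point evaluations (using that a convex set and its closure have the same interior), a supporting hyperplane in the boundary case producing a nonnegative $g\in V$ with $\int g\, d\mu=0$, so that $\mu$ concentrates on the zero set of $g$ and the dimension strictly drops, followed by Carath\'eodory's theorem to bound the number of atoms by $\dim V$. The only minor variations are cosmetic: you work with moment vectors in $\R^N$ rather than functionals in $V^*$, you establish $s\in\ov{\cC}$ by a separation argument where the paper invokes approximation by simple functions, and you pass to the restricted space $V\lceil \cY_0$ instead of choosing a complement $V_0$ with $V=V_0\oplus\R\cdot f_0$.
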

\begin{proof}
Let $C$ be the convex cone in the dual space $V^*$ of all nonnegative linear combinations of point evaluations $l_x$, where $ x\in\cY,$ and let $\ov{C}$ be the closure of $C$ in  $V^*$. We prove by induction on $m:=\dim \,V$\, that\, $L^\mu \in C.$

 First let $m=1$ and $V=\R{\cdot}f$. Set $c:=\int f d\mu$. If $c=0$, then $\int (\lambda f) d\mu =0\cdot l_{x_1}(\lambda f),$  $\lambda \in \R$, for any $x_1\in \cY$. Suppose now that $c>0$. Then $f(x_1)>0$ for some $x_1\in \cY$. Hence $m_1:=cf(x_1)^{-1}>0$ and   $\int( \lambda f) d\mu =m_1 l_{x_1}(\lambda f)$
  for $\lambda \in \R$. The case $c<0$ is treated similarly.

Assume  that the assertion holds for vector spaces of dimension $m{-}1.$ Let $V$ be a vector space of dimension $m$. By standard approximation of  $\int f d\mu$ by integrals of simple functions it follows that $L^\mu \in \ov{C}$. We now distinguish between two cases.
\smallskip

Case 1: $L^\mu$ is an interior point of $\ov{C}$.\\
By a basic result of convex analysis, the  convex set  $C$ and its closure $\ov{C}$ have the same  interior points. Hence we have $L^\mu\in C$ in Case 1.

Case 2: $L^\mu$ is a boundary point of $\ov{C}$.\\ Then there exists a supporting hyperplane $F_0$ for the cone\, $\ov{C}$ at $L^\mu$
, that is, $F_0$ is a linear functional on $V^*$ such that $F_0\neq 0$, $F_0(L^\mu)=0$ and $F_0(L)\geq 0$ for all $L\in \ov{C}$. Because $V$ is  finite-dimensional,  there is a function $f_0\in V$ such that $F_0(L)=L(f_0),$   $L\in V^*$. For $x\in \cY$, we have $l_x\in C$ and hence  $F_0(l_x)=l_x(f_0)=f_0(x)\geq 0.$ Clearly, $F_0\neq 0$ implies that $f_0\neq 0$.
We  choose an $(m{-}1)$-dimensional linear subspace $V_0$ of $V$ such that $V=V_0\oplus \R{\cdot} f_0$. Let us set $\cZ:=\{x\in \cY:f_0(x)=0\}$. Since\, $0=F_0(L^\mu)=L^\mu(f_0)=\int f_0 d\mu$ and $f_0(x)\geq 0$ on $\cY$, it follows that $f_0(x)=0$ $\mu$-a.e. on $\cY$, that is, $\mu(\cY\backslash \cZ)=0$. Now we define a measure  $\tilde{\mu}$  on $\cZ$  by $\tilde{\mu}(M)=\mu(M\cap \cZ)$. Then 
\begin{align*}
 L^\mu(g)
 = \dint_{\cY} g\, d\mu
 =\dint_{\cZ} g\, d\mu
 =\dint_\cZ g\, d\tilde{\mu}
 =L^{\tilde{\mu}}(g)\quad\text{ for }~~ g\in V_0.
\end{align*} 
We apply the induction hypothesis to the functional $L^{\tilde{\mu}}$ on  $V_0\subseteq \cL^1(\cZ,\tilde{\mu})$. Since $ L^\mu= L^{\tilde{\mu}}$ on $V_0$, there exist  $\lambda_j\geq 0$ and $x_j\in \cZ, j=1,\dots,n$, such that for $f\in V_0$,
\begin{equation}\label{reprLaslincombin0}
L^\mu(f)=\sum_{j=1}^{n} \lambda_j l_{x_j}(f).
\end{equation}
Since $f_0=0$ on $\cZ$, hence $f_0(x_j)=0$, and $L^\mu(f_0)=0$, (\ref{reprLaslincombin0}) holds for $f=f_0$ as well and so for all $f\in V$. Thus, $L^\mu\in C$. This completes the induction proof.

The set $C$ is a cone in the $m$-dimensional real vector space $V^*$. Since $L^\mu\in C$,  it follows from
Carath\'eodory's theorem 
that there is a representation (\ref{reprLaslincombin0})  with $n\leq m $. This means that $L^\mu$ is the integral of the measure $\nu=\sum_{j=1}^n \lambda_j \delta_{x_j}$. Clearly, $\nu$ is $k$-atomic, where $k\leq n\leq m$. (We only have  $k\leq n$, since some numbers $\lambda_j$ in (\ref{reprLaslincombin0}) could be  zero and the points $x_j$ are not  necessarily different.)
$\hfill \qed$ \end{proof}
 
 Recall that Carath\'eodory's theorem (see, e.g., [MP, Proposition~A.35]) says the following:
 If $X$ is a subset of a $d$-dimensional real vector space $V$, then  each element of the cone generated by $X$ is a nonnegative  combination 
of $d$ points of $X$.
\smallskip

 For the truncated moment problem this theorem has the following corollary.
 \begin{corollary}\label{carrichter} Each moment functional $L$ on  $E$  has a $k$-atomic representing measure $\nu$, where $k\leq \dim\, E$. If\, $\mu$ is a representing measure of $L$ and $\cY$ is a Borel subset of $\cX$ such that $\mu(\cX\backslash \cY)=0$, then all atoms of $\nu$ can be chosen from $\cY$.
\end{corollary}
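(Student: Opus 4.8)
The plan is to derive both assertions directly from the Richter--Tchakaloff Theorem \ref{richter}; the only real work is to choose the correct measurable space and finite-dimensional function space in each case, since the heavy lifting (the induction and the appeal to Carath\'eodory's theorem) is already contained in that theorem.

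First I would dispose of the first assertion. Since $L$ is a moment functional on $E$, there is a Radon measure $\mu$ on $\cX$ with $L(f)=\int_\cX f\,d\mu$ for all $f\in E$, and in particular every $f\in E$ is $\mu$-integrable. Thus $E$ is a finite-dimensional space of $\mu$-integrable functions on the measurable space $(\cX,\mu)$ and $L$ coincides with the functional $L^\mu$ of Theorem \ref{richter}. Applying that theorem with $\cY=\cX$ and $V=E$ produces a $k$-atomic measure $\nu=\sum_{j=1}^k m_j\delta_{x_j}$ with $k\leq\dim E$ and $L^\mu=L^\nu$, that is, $L(f)=\sum_{j=1}^k m_j f(x_j)$ for all $f\in E$. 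Hence $\nu$ is a $k$-atomic representing measure for $L$ with $k\leq\dim E$.

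For the second assertion I would run the same argument over $\cY$ in place of $\cX$. Let $\mu$ be a representing measure of $L$ and let $\cY$ be a Borel subset of $\cX$ with $\mu(\cX\backslash\cY)=0$. Because the complement is $\mu$-null, $\int_\cX f\,d\mu=\int_\cY f\,d\mu$ for every $f\in E$, so $L(f)=\int_\cY (f\lceil\cY)\,d\mu$. I would therefore take $V:=E\lceil\cY=\{f\lceil\cY:f\in E\}$, a finite-dimensional space of $\mu$-integrable functions on $(\cY,\mu)$; since the restriction map $E\to V$ is linear and onto, $\dim V\leq\dim E$. The functional $f\lceil\cY\mapsto\int_\cY(f\lceil\cY)\,d\mu$ on $V$ is precisely $L^\mu$, and it agrees with $L$ through the restriction map. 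Theorem \ref{richter} then yields a $k$-atomic measure $\nu=\sum_{j=1}^k m_j\delta_{x_j}$ on $\cY$ with $k\leq\dim V\leq\dim E$ and $\int_\cY(f\lceil\cY)\,d\mu=\sum_{j=1}^k m_j f(x_j)$ for all $f\in E$. Combining this with the previous identity gives $L(f)=\sum_{j=1}^k m_j f(x_j)$, so $\nu$, regarded as a measure on $\cX$ supported in $\cY$, is a representing measure for $L$ whose atoms $x_j$ all lie in $\cY$.

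The argument is mostly bookkeeping once Theorem \ref{richter} is available; the one point deserving care is the passage to restrictions. The hard part will be to verify that replacing $E$ by $V=E\lceil\cY$ does not corrupt the representation: one must use the $\mu$-null complement to replace $\int_\cX$ by $\int_\cY$ for each $f\in E$, observe that the measure $\nu$ furnished by Theorem \ref{richter} is constructed on $\cY$ so that its atoms automatically satisfy $x_j\in\cY$, and confirm that the resulting point evaluations reproduce $L$ on all of $E$ rather than merely on the restricted functions. Beyond this I anticipate no genuine difficulty.
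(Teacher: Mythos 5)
Your proof is correct and is essentially the paper's own argument: the paper likewise proves both assertions by applying the Richter--Tchakaloff Theorem \ref{richter} to the measure space $(\cY,\mu\lceil\cY)$ with the restricted function space, which automatically places all atoms in $\cY$. Your additional care with the restriction map $E\to E\lceil\cY$ (noting $\dim E\lceil\cY\leq\dim E$ and that the atomic measure reproduces $L$ on all of $E$) just makes explicit what the paper's one-line proof leaves implicit.
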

\begin{proof}
Apply Theorem \ref{richter} to the measure space $(\cY,\mu \lceil \cY)$ and $V=E$.
$\hfill\qed$\end{proof}

\section{Positive semidefinite $2n$-sequences}

If the space   $\cX$ is \emph{compact},  each $E_+$-positive linear functional on $E$ is a moment functional by Proposition 1.7 of Lecture 1. 
If $\cX$ is  not compact, this is no longer true. In this section, we  discuss this  for the truncated Hamburger moment problem of $E=\R[x]_{2n}$, $\cX=\R$.

\begin{theorem}\label{hmppositivesemi}
For a real sequence $s=(s_j)_{j=0}^{2n}$ the following  are equivalent:
\begin{itemize}
\item[\em (i)]~  $s$ is positive semidefinite, that is,
\begin{align*}
\sum_{k,l=0}^n \, s_{k+l}c_kc_l \geq 0\quad\text{for~ all}~~  (c_0,\dots,c_n)^T\in \R^{n+1}, n\in \N.
\end{align*}
\item[\em (ii)]~  The Hankel matrix $H_n(s)=(s_{i+j})_{i,j=1}^n$ is positive semidefinite.
\item[\em (iii)]~  $L_s(p^2)\geq 0$ for all $p\in \R[x]_n$.
\item[\em (iv)]~ $L_s(q)\geq 0$ for all $q\in \R[x]_{2n}$ such that $q(x)\geq 0$ on $\R$.
\item[\em (v)]~  There exists a   Radon measure $\mu$ on $\R$ and a  number\, $a \geq 0$ such that  
\begin{align}\label{mpcondtion}
s_j=\int_\R x^j\, d\mu(x) ~~ \text{for}~~ j=0,\dots,2n-1,~~\text{and}~~
s_{2n} =a+\int_\R x^{2n}\, d\mu(x).
\end{align}
\end{itemize}
\end{theorem}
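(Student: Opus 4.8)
The plan is to dispatch the easy equivalences first and isolate the single hard implication. The identity $L_s(p^2)=\vec{p}^{\,T}H_n(s)\,\vec{p}$ for $p=\sum_{k=0}^n a_kx^k$ (with $\vec{p}=(a_0,\dots,a_n)^T$) shows at once that (i), (ii) and (iii) each say precisely that the quadratic form with matrix $H_n(s)$ is nonnegative, so these three are literally the same statement. For (iv)$\Rightarrow$(iii) I note that $p^2\in\R[x]_{2n}$ is nonnegative on $\R$ whenever $p\in\R[x]_n$, so (iv) applied to $q=p^2$ gives (iii). For the converse (iii)$\Rightarrow$(iv): if $q\in\R[x]_{2n}$ and $q\ge 0$ on $\R$, then $\deg q$ is even, say $2m\le 2n$, and by Proposition \ref{positivepolyr}(i) $q$ is a sum of squares; comparing degrees forces each square to be that of a polynomial of degree $\le m\le n$, hence in $\R[x]_n$, so $L_s(q)\ge 0$ follows from (iii). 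Finally (v)$\Rightarrow$(i): for $p=\sum_{k=0}^n a_kx^k$ write $p^2=\sum_{j=0}^{2n}c_jx^j$ with $c_{2n}=a_n^2$, and substituting (\ref{mpcondtion}) yields
\[
L_s(p^2)=\int_\R p(x)^2\,d\mu(x)+a\,a_n^2\ge 0,
\]
since $a\ge 0$. This closes the ring except for the existence statement (iii)$\Rightarrow$(v).

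For (iii)$\Rightarrow$(v) I would use the theory of Gaussian quadrature attached to the positive semidefinite Hankel form, splitting according to the rank of $H_n(s)$. If $H_n(s)$ is positive definite, then $\langle f,g\rangle:=L_s(fg)$ is an inner product on $\R[x]_n$, and Gram--Schmidt applied to $1,x,\dots,x^n$ produces an orthogonal polynomial $P_n$ of degree $n$. The key classical fact is that $P_n$ has $n$ distinct real roots $x_1<\dots<x_n$: if $P_n$ changed sign at only $r<n$ points $\xi_1,\dots,\xi_r$, then with $Q=\prod_{i=1}^r(x-\xi_i)\in\R[x]_{n-1}$ the product $P_nQ$ would be of constant sign, of degree $\le 2n-1$, yet $L_s(P_nQ)=0$ because $Q$ is orthogonal to $P_n$, contradicting the positivity $L_s(g)>0$ for nonzero $g\ge 0$ of degree $\le 2n$. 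Solving the resulting nonsingular system then yields weights $\lambda_i>0$ with $\sum_i\lambda_i x_i^{\,j}=s_j$ for $j=0,\dots,2n-1$, while the quadrature remainder gives $\sum_i\lambda_i x_i^{\,2n}\le s_{2n}$; setting $\mu=\sum_i\lambda_i\delta_{x_i}$ and $a:=s_{2n}-\sum_i\lambda_i x_i^{\,2n}\ge 0$ is exactly (\ref{mpcondtion}).

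If $H_n(s)$ is only positive semidefinite, singularity supplies a nonzero $p\in\R[x]_n$ with $L_s(p^2)=0$; the Cauchy--Schwarz inequality for the form $L_s$ then forces $L_s(pq)=0$ for all $q\in\R[x]_n$, so $L_s$ effectively lives on a space of strictly smaller dimension. I would either induct on $n$ using this degeneration, or equivalently reduce the top moment: let $a_0\ge 0$ be the largest number with $H_n(s)-a_0\,e_ne_n^{T}\succeq 0$ (it exists, since $a=0$ is admissible and positivity fails for large $a$), replace $s_{2n}$ by $s_{2n}-a_0$, and observe that the resulting Hankel matrix is positive semidefinite and singular. Its kernel polynomial—again with only real simple roots by the same sign-change argument—produces a finitely atomic measure matching all moments $0,\dots,2n$ of the reduced sequence, which together with the defect $a_0$ is (\ref{mpcondtion}). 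By the Richter--Tchakaloff Theorem \ref{richter} the representing measure can in any case be taken finitely atomic with at most $n+1$ atoms.

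The main obstacle is precisely the existence implication (iii)$\Rightarrow$(v), and within it the fact that the orthogonal (kernel) polynomial has only real, simple roots: this is where positivity of the functional is genuinely used, and it is what allows the atoms of $\mu$ to be placed on the real line rather than in $\C$. The conceptual point that makes the statement clean is the bookkeeping of the top moment, namely that exactly a one-parameter nonnegative defect $a$ in $s_{2n}$ is needed because the truncated Hamburger moment cone fails to be closed in the $e_{2n}$ direction; carefully handling the degenerate (singular Hankel) case is the remaining technical effort.
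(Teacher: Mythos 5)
Your handling of the easy implications is correct; in particular, the observation that a nonnegative $q\in\R[x]_{2n}$ is a sum of squares of polynomials of degree at most $n$ (no cancellation can occur among the leading coefficients) is exactly what (iii)$\Rightarrow$(iv) needs. Your route to the hard implication (iii)$\Rightarrow$(v) is also genuinely different from the paper's: the paper makes no case distinction on the rank of $H_n(s)$, but instead transports $L_s$ to the span of the functions $u_j(x)=x^j(1+x^{2n})^{-1}$, which extend continuously to the one-point compactification $\R\cup\{\infty\}$, verifies positivity there (again via sums of squares), applies Proposition \ref{integralrepcompactcase}, and reads off $a$ as the mass $\tilde\mu(\{\infty\})$ of the representing measure at infinity. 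Your Gaussian-quadrature argument in the positive definite case is a correct classical alternative (modulo the standard identity $\lambda_i=L_s(\ell_i^2)>0$ for the weights), and the bookkeeping $a=L_s(W^2)\ge 0$ with $W=\prod_i(x-x_i)$ is right.

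The singular case, however, has a genuine gap. The claim that the kernel polynomial of the maximally reduced matrix has ``only real simple roots by the same sign-change argument'' is unjustified: the sign-change argument needs $L_s(g)>0$ for every nonzero $g\ge 0$ with $\deg g\le 2n-2$, which is precisely positive definiteness of $H_{n-1}(s)$, and reducing the top moment does nothing to secure this, since the degeneracy may sit in low degrees. Concretely, for $n=2$ and $s=(1,1,1,1,1)$ (the moment sequence of $\delta_1$, already maximally reduced because the test vector $(0,1,-1)$ gives $s_2-2s_3+(s_4-a)=-a<0$ for any $a>0$), the kernel of $H_2(s)$ is two-dimensional and contains $(1-x)^2$, a degree-$n$ kernel polynomial with a double root; nothing in your argument selects a kernel element with $n$ distinct real roots, and proving that such an element always exists is essentially equivalent to the theorem itself, so it cannot be waved through. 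The fallback ``induct on $n$'' is likewise undeveloped and fails as stated: applying the inductive hypothesis to $(s_0,\dots,s_{2n-2})$ produces a defect at position $2n-2$, where (v) demands exactness. Your scheme is sound exactly when $H_{n-1}(s)\succ 0$: then the kernel of the reduced matrix is one-dimensional, spanned by a degree-$n$ polynomial whose roots are real and simple, because a nonnegative polynomial of degree $\le 2n-2$ is a sum of squares from $\R[x]_{n-1}$ and $L_s$ is strictly positive on those. A clean repair is therefore to prove only that dichotomy and to handle singular $H_{n-1}(s)$ by a limiting argument: set $s^\varepsilon:=s+\varepsilon t$ with $t$ the (positive definite) Gaussian moment sequence, apply your positive definite case to get $\mu_\varepsilon=\sum_i\lambda_i^\varepsilon\delta_{x_i^\varepsilon}$ and $a_\varepsilon\ge 0$, and let $\varepsilon\to 0$: since $\sum_i\lambda_i^\varepsilon$ and $\sum_i\lambda_i^\varepsilon(x_i^\varepsilon)^{2n}$ stay bounded, along a subsequence each atom either converges, contributing to $\mu$, or escapes to $\pm\infty$, in which case $\lambda_i^\varepsilon|x_i^\varepsilon|^j\to 0$ for $j<2n$ while its $2n$-th moment survives and is absorbed into $a$. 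This limiting mechanism is, in effect, the compactification that the paper's proof builds in from the start.
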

\begin{proof} 
(iii)$\to$(v): The proof is  based on Proposition 1.7 from Lecture 1.
Let $\cX:=\R\cup \{\infty\}$ denote the one point compactification of $\R$. The functions 
\begin{align*}u_j(x):=
x^j(1+x^{2n})^{-1},\quad  j=0,\dots,2n,\end{align*}
are continuous functions on $\cX$ by  $u_j(\infty):=0,  j=0,\dots,2n-1$, and $u_{2n}(\infty):=1$. Now we define a linear functional $\tilde{L}$ on 
\begin{align*}
 F:=\Lin\{u_j:j=0,\dots,2n\}\quad\text{ by }~~~ \tilde{L}(u_j)=L_s(x^j), j=0,\dots,2n.
\end{align*}
Using that positive polynomials of $\R[x]$ are sums of squares one easily verifies that $\tilde{L}$ is  $F_+$-positive.  Hence, by Proposition 1.7, 
$\tilde{L}$  has a representing Radon measure $\tilde{\mu}$ on $\cX$.

 Set $a=\tilde{\mu}(\{\infty\})$ and define   measures $\hat{\mu}$ and $\mu$ on $\R$ by $\hat{\mu}(M):=\tilde{\mu}(M)$ for $M\subseteq \R$ and $d\mu:=(1+x^{2n})^{-1}d\hat{\mu}$. Then, for $j=0,\dots,2n$, 
 \begin{align*}s_j=L_s(x^j)=\tilde{L}(u_j)=\int_\cX u_j(x)\, d\tilde{\mu}=a u_j(\infty)+\int_\R \frac{x^j}{1+x^{2n}}\, d\hat{\mu}= a\delta_{j,2n}+ \int_\R x^j\, d\mu, \end{align*}

(v)$\to$(ii): Let $(c_0,\dots,c_n)^T\in \R^{n+1}$. Using (\ref{mpcondtion}) we derive
\begin{align}\label{iiiimpliesi}
\sum_{k,l=0}^n s_{k+l} c_kc_l =ac_n^2+ \sum_{k,l=0}^n \int_\R c_kc_lx^{k+l} d\mu =ac_n^2+\int_\R \bigg(\sum_{k=0}^n c_kx^k\bigg)^2 d\mu\geq 0,
\end{align}
since $a\geq 0$. This proves (ii).

The other implications are trivial or easily checked.
$\hfill\qed$ \end{proof}   

\begin{example}\label{semidefnotmp}
$s=(0,0,1)$ is positive semidefinite. Since $s_0=0$, $s$ cannot be given by a positive measure on $\R$, but $s$ has a representation (\ref{mpcondtion}) with $\mu=0$,  $a=1$. $\hfill \circ$
\end{example}

This simple example shows that the $E_+$-positivity of  a linear functional or the positive  semidefiniteness of a sequence $s$ are 
 \emph{not sufficient} for representing it by a positive measure. Roughly speaking, "there may be atoms at infinity". 
 
 But if there is a polynomial $p\in \R[x]$ of degree $n$ such that $L_s(p^2)=0$,  it follows from (\ref{iiiimpliesi}) that $a=0$, hence $L_s$ is indeed a moment functional. The notion of the \emph{Hankel rank} can be used  to characterize truncated Hamburger moment functionals, see [MP, Section~9.5].

\section{The truncated moment problem on   projective space}\label{tmpprojectivespace}

We  study  truncated moment problems on the $d$-dimensional real projective space   $\dP^d(\R)$  and apply this to the truncated $K$-moment problem   
for \emph{closed} sets  in $\R^d$.

The points of  $\dP^d(\R)$ are equivalence classes of $(d+1)$-tuples $(t_0,\dots,t_d)\neq 0$ of real numbers under the equivalence relation 
\begin{align}\label{equivalencerel}
(t_0,\dots,t_d)\sim(t^\prime_0,\dots,t^\prime_d)\quad\text{ if }\quad  (t_0,\dots,t_d)=\lambda(t^\prime_0,\dots,^\prime_d)\quad\text{ for }~~\lambda\neq 0.
\end{align} 
 The equivalence class is  $[t_0:\dots:t_d]$.
Thus, $\dP^d(\R)=(\R^{d+1}\backslash \{0\})/ \sim.$  The map 
\[\varphi:\R^d\ni(t_1,\dots,t_d)\mapsto [1:t_1:\dots: t_d]\in\dP^d(\R)\]
 is  injective. 
 We  identify  $t\in\R^d$  with  $\varphi(t)\in\dP^d(\R)$. Then  $\R^d\subseteq\dP^d(\R)$. The complement of $\R^d$ in $\dP^d(\R)$ is the hyperplane $\dH_\infty^d =\{[0:t_1:\dots: t_d]\in \dP^d(\R)\}$.

We denote by  $\cH_{d+1,2n}$  the  \emph{homogeneous}  polynomials of $\R[x_0,x_1,\dots,x_d]$\, of degree $2n$. 
The map
 \begin{align*}
\phi:p(x_0,\dots,x_d)\mapsto \hat{p}(x_1,\dots,x_d):=p(1,x_1,\dots,x_d)
\end{align*}
is a  bijection of the vector spaces\, $\cH_{d+1,2n}$\, and\, $\R_{d}[\ux]_{2n}$.

The projective space $\dP^d(\R)$ is a compact space and  
 each $q\in \cH_{d+1,2n}$ can be  considered as a continuous function $\tilde{q}$ on this space by
\begin{align}\label{actionprojective}
\widetilde{q}(t):=\frac{q(t_0,\dots,t_d)}{(t_0^{2}+\dots+t_d^{2})^n}, ~~t=[t_0:\dots:t_d]\in \dP^d(\R).
\end{align}
We set $E:=\{ \widetilde{q}:q\in \cH_{d+1,2n}\}$. For $e(x):=(x_0^{2}+\dots+x_d^{2})^n$,\, we have $\tilde{e}\in E$ and   $\tilde{e}(t)=1$ for  $t\in \dP^d(\R)$, so assumption (\ref{g-condition}) is fulfilled. Since  $\dP^d(\R)$ is compact,  each $E_+$positive linear functional on $E$ is a moment functional. Then the representing measure is a Radon measure on $\dP^d(\R)$. 
\smallskip

Let us return to the  $K$-moment problem for  $\R_d[\ux]_{2n}$ for a \emph{closed} set $K\subseteq\R^d$.

The closure  $K^-$  of $K$ in $\dP^d(\R)$ is the disjoint union of $K$ and   $K_\infty:=K^- \cap\dH_\infty^d$. We want to characterize linear functionals on $\R_d[\ux]_{2n}$ which are nonnegative on
\begin{align}\label{PoscK}
 {\Pos}(K)_{2n}
 :=\{p \in \R_d[\ux]_{2n}: p(x)\geq 0\quad\text{ for }\quad x\in K\}.
\end{align}

For $p\in \R_d[\ux]_{2n}$ let $p_{2n}$ denote its homogeneous part of degree $2n$ and put
\begin{align*}
 \widetilde{p_{2n}}(t)
 := \frac{p_{2n}(t_1,\cdots,t_d)}{(t_1^{2}+\dots+t_d^{2})^n}\quad\text{ for }\quad t=(0:t_1:\dots:t_d)\in \dH_\infty^d\cong \dP^{d-1}(\R).
\end{align*}

The  new ingredient is that measures  on  $K_\infty$ give ${\Pos}(K)_{2n}$-positive  functionals.
\begin{lemma}\label{linftypos}
If $\mu_\infty$ is a  Radon measure on $\dH_\infty^d$ supported on $K_\infty$, then 
\begin{align}\label{Linfty}
L_\infty(p):=\int_{K_\infty} \widetilde{p_{2n}}(t)\, d\mu_\infty(t), \quad p\in \R_d[\ux]_{2n},
\end{align}
defines a linear functional on $\R_d[\ux]_{2n}$ such that $L_\infty(p)\geq 0$ for $p\in {\Pos}(K)_{2n}.$
\end{lemma}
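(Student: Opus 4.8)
The plan is to dispose of well-definedness and linearity first, which are routine, and then to reduce the positivity assertion to the single pointwise claim that $\widetilde{p_{2n}}(t)\ge 0$ for every $t\in K_\infty$ whenever $p\in\Pos(K)_{2n}$. That pointwise claim is the only substantial point.

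First I would verify that \eqref{Linfty} defines a linear functional. The function $\widetilde{p_{2n}}$ is well-defined on $\dH_\infty^d$ because $p_{2n}$ is homogeneous of degree $2n$: replacing $(t_1,\dots,t_d)$ by $(\lambda t_1,\dots,\lambda t_d)$ scales numerator and denominator in \eqref{actionprojective} by the same factor $\lambda^{2n}$, so the value is unchanged on the equivalence class. Since $t_1^2+\dots+t_d^2>0$ throughout $\dH_\infty^d$, the function $\widetilde{p_{2n}}$ is continuous there; as $\dH_\infty^d\cong\dP^{d-1}(\R)$ is compact and $K_\infty$ is closed in it, $K_\infty$ is compact, and the Radon measure $\mu_\infty$ is finite on it, so the integral converges. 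Linearity is immediate because $p\mapsto p_{2n}$ is linear (it extracts the top-degree coefficients), $p_{2n}\mapsto\widetilde{p_{2n}}$ is linear, and integration is linear.

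For positivity, fix $p\in\Pos(K)_{2n}$ and $t=[0:t_1:\dots:t_d]\in K_\infty$. Since $K_\infty\subseteq K^-$, there is a sequence $x^{(k)}\in K$ with $\varphi(x^{(k)})\to t$ in $\dP^d(\R)$. Writing $\rho_k:=\|(1,x^{(k)})\|=\sqrt{1+\|x^{(k)}\|^2}$ and $u^{(k)}:=(1,x^{(k)})/\rho_k$, the convergence of $\varphi(x^{(k)})=[1:x^{(k)}]$ to $t$ forces $\rho_k\to\infty$ (the $0$-th coordinate of the limiting direction vanishes) and $u^{(k)}\to u:=\pm(0,t_1,\dots,t_d)/\|t\|$, where $\|t\|:=\sqrt{t_1^2+\dots+t_d^2}$. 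Splitting $p=p_{2n}+q$ with $\deg q\le 2n-1$ and dividing by $\rho_k^{2n}$, I would use homogeneity to write $p_{2n}(x^{(k)})/\rho_k^{2n}=p_{2n}(u^{(k)}_1,\dots,u^{(k)}_d)$, which converges by continuity to $p_{2n}(u_1,\dots,u_d)=\widetilde{p_{2n}}(t)$; here the evenness of $2n$ is essential, since $(\pm 1/\|t\|)^{2n}=\|t\|^{-2n}$ cancels the sign ambiguity in $u$ and reproduces exactly the scaling defining $\widetilde{p_{2n}}$.

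The subleading term $q(x^{(k)})/\rho_k^{2n}$ tends to $0$, because every monomial of degree $\le 2n-1$ is bounded by $\|x^{(k)}\|^{2n-1}\le\rho_k^{2n-1}$, leaving a factor $\rho_k^{-1}\to 0$ after division by $\rho_k^{2n}$. Hence $p(x^{(k)})/\rho_k^{2n}\to\widetilde{p_{2n}}(t)$, and since $x^{(k)}\in K$ gives $p(x^{(k)})\ge 0$ while $\rho_k^{2n}>0$, the limit satisfies $\widetilde{p_{2n}}(t)\ge 0$. As $t\in K_\infty$ was arbitrary and $\mu_\infty\ge 0$, integrating the nonnegative continuous function $\widetilde{p_{2n}}$ over $K_\infty$ yields $L_\infty(p)\ge 0$. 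The main obstacle is precisely this passage to the limit at infinity: one must pick the right normalization $\rho_k$, confirm that the lower-order part washes out, and exploit that $2n$ is even so that the sign of the limiting direction does not affect $\widetilde{p_{2n}}(t)$.
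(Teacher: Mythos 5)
Your proof is correct. The paper itself states Lemma \ref{linftypos} without any proof (the surrounding results of this section are delegated to [MP]), so there is no argument of the paper to compare against; your homogenization-and-limit argument --- normalizing by $\rho_k=\sqrt{1+\|x^{(k)}\|^2}$, checking that the part of degree $\le 2n-1$ is $O(\rho_k^{-1})$ after rescaling, and using the evenness of $2n$ to neutralize the antipodal sign ambiguity in the limiting direction --- is the natural route, and every step checks out. One cosmetic point: the unit vectors $u^{(k)}$ need not converge in $\R^{d+1}$ (they may oscillate between neighborhoods of $u$ and $-u$), so either pass to a subsequence or observe, as your evenness remark effectively does, that $p_{2n}\bigl(u^{(k)}_1,\dots,u^{(k)}_d\bigr)$ converges to $\widetilde{p_{2n}}(t)$ regardless, since $p_{2n}(-v)=p_{2n}(v)$.
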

\smallskip

 The next theorem characterizes   ${\Pos}(K)_{2n}$-positive linear functionals on $\R_d[\ux]_{2n}$.
\begin{theorem}\label{posKchar}
Suppose $K\subseteq \R^d$ is  closed  and  $L$ is a linear functional on $\R_d[\ux]_{2n}$. Then  we have
\begin{align}\label{kpositivetiy}
 L(p)\geq 0 \quad\text{ for }~~~~ p\in {\Pos}(K)_{2n}
\end{align}
 if and only if there are  Radon measures $\mu$ on $\R^d$  supported on $K$ and $\mu_\infty$ on $\dH_\infty^d$  supported on $K_\infty$ such that
\begin{align}\label{L+Linfty}
L(p)=\int_K p(t)\, d\mu(t) +\int_{K_\infty} \widetilde{p_{2n}}(t)\, d\mu_\infty(t) \quad \text{for}\quad p\in \R_d[\ux]_{2n}.
\end{align}
\end{theorem}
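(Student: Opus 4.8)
The plan is to transfer the entire problem to the compact projective space $\dP^d(\R)$, apply the compact-case representation result (Proposition~\ref{integralrepcompactcase}) there, and then pull the resulting measure back down, splitting it into a part carried by $K\subseteq\R^d$ and a part carried by the hyperplane at infinity $K_\infty$. The \emph{if} direction is immediate: if $L$ is given by (\ref{L+Linfty}) and $p\in{\Pos}(K)_{2n}$, then $\int_K p\,d\mu\geq 0$ because $p\geq 0$ on $\supp\mu\subseteq K$ and $\mu\geq 0$, while $\int_{K_\infty}\widetilde{p_{2n}}\,d\mu_\infty\geq 0$ by Lemma~\ref{linftypos}; hence $L(p)\geq 0$.

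For the \emph{only if} direction, the central object is the homogenization $q\in\cH_{d+1,2n}$ of $p\in\R_d[\ux]_{2n}$ (i.e.\ $\phi(q)=p$), whose associated continuous function $\widetilde q$ on the compact space $\dP^d(\R)$ satisfies
\[
\widetilde q([1{:}t_1{:}\dots{:}t_d])=\frac{p(t)}{(1+\|t\|^2)^n}\ \ (t\in\R^d),\qquad
\widetilde q([0{:}t_1{:}\dots{:}t_d])=\widetilde{p_{2n}}(t)\ \ (t\in\dH_\infty^d).
\]
First I would record the positivity dictionary this yields. Since $(1+\|t\|^2)^n>0$, one has $p\geq 0$ on $K$ if and only if $\widetilde q\geq 0$ on $K$; and because $\widetilde q$ is continuous and $K^-=\overline{K}$, this is in turn equivalent to $\widetilde q\geq 0$ on all of $K^-$. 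Thus ${\Pos}(K)_{2n}$ corresponds exactly to the cone of functions of $E$ that are nonnegative on $K^-$, and in particular the mass of $L$ ``at infinity'' is encoded by the values of $\widetilde q=\widetilde{p_{2n}}$ on $K_\infty$.

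Next I would define a functional $\widetilde L$ on the restricted space $E|_{K^-}\subseteq C(K^-;\R)$ by $\widetilde L(\widetilde q|_{K^-}):=L(p)$. The one delicate point is well-definedness, since restriction to $K^-$ need not be injective on $E$: if $\widetilde q_1|_{K^-}=\widetilde q_2|_{K^-}$ then $p_1=p_2$ on $K$, so both $\pm(p_1-p_2)$ lie in ${\Pos}(K)_{2n}$, and the hypothesis that $L$ is ${\Pos}(K)_{2n}$-positive forces $L(p_1)=L(p_2)$; the same hypothesis shows $\widetilde L$ is $E_+$-positive on $K^-$. Since $K^-$ is compact and $E$ contains the function $\widetilde e\equiv 1>0$, Proposition~\ref{integralrepcompactcase} provides a Radon measure $\widetilde\mu$ on $K^-$ with $\widetilde L(g)=\int_{K^-} g\,d\widetilde\mu$. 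Finally I would split $\widetilde\mu$ along the Borel decomposition $K^-=K\sqcup K_\infty$ (here $K=K^-\cap\R^d$ is open in $K^-$ and $K_\infty=K^-\cap\dH_\infty^d$ is closed), setting $\mu_\infty:=\widetilde\mu|_{K_\infty}$ and $d\mu:=(1+\|t\|^2)^{-n}\,d(\widetilde\mu|_{K})$; the two displayed formulas for $\widetilde q$ then give $L(p)=\int_{K^-}\widetilde q\,d\widetilde\mu=\int_K p\,d\mu+\int_{K_\infty}\widetilde{p_{2n}}\,d\mu_\infty$, which is (\ref{L+Linfty}). Integrability of $p$ against $\mu$ is automatic, since $\widetilde q$ is bounded on the compact set $K^-$ and $\widetilde\mu$ is finite there.

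The main obstacle is conceptual rather than computational: it is the correct setup of the compactification so that the behaviour of $L$ ``at infinity'' is faithfully captured by a genuine measure on $K_\infty$ through $\widetilde{p_{2n}}$. Concretely, the steps requiring care are verifying the positivity dictionary (especially the passage from nonnegativity on $K$ to nonnegativity on its projective closure $K^-$ via continuity) and the well-definedness of $\widetilde L$ on $E|_{K^-}$, which is exactly where the positivity of $L$ is used to annihilate the kernel of the restriction map. Once these are in place, the measure decomposition and the change of variables $d\mu=(1+\|t\|^2)^{-n}\,d\widetilde\mu$ are routine.
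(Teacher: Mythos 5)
Your proposal is correct and is essentially the argument the paper intends: the paper itself defers the proof to [MP, Theorem~17.3], but the machinery it assembles in Section~\ref{tmpprojectivespace} --- the compact space $\dP^d(\R)$, the space $E$ of functions $\widetilde{q}$, Lemma~\ref{linftypos} for the ``if'' direction, and the compact-case representation result (Proposition~\ref{integralrepcompactcase}) --- is exactly what you use. Your treatment of the two delicate points is sound: the positivity dictionary (nonnegativity of $p$ on $K$ passes to nonnegativity of $\widetilde{q}$ on the projective closure $K^-$ by continuity and density) and the well-definedness of $\widetilde{L}$ on the restriction $E\lceil K^-$, where the ${\Pos}(K)_{2n}$-positivity of $L$ is precisely what annihilates the kernel of the restriction map, after which the splitting of $\widetilde{\mu}$ along $K^-=K\sqcup K_\infty$ with density $(1+\|t\|^2)^{-n}$ on the affine part is routine.
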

\begin{proof}
 {[MP, Theorem~17.3]}.
 $\hfill \qed$
\end{proof}

 If the set $K$ in Theorem \ref{posKchar} is  compact, then $K_\infty$ is empty, so  the second summand in (\ref{L+Linfty}) does not occur and  we recover the known result that for compact sets $K$ the positivity  condition  characterizes $K$-moment functionals.

 The   moment problem on  closed subsets $K$ of $\dP^d(\R)$ has several advantages.
 First, $K$ is compact, so  $K$-moment functionals are characterized by  positivity conditions.
 Secondly, homogeneous polynomials are more convenient to deal with and additional technical tools such as the apolar scalar product (see [MP, Section~9.1]) are available.

\smallskip

As an easy application of Theorem \ref{posKchar} we show that $K$-moment functionals for \emph{closed} subsets $K$ of $\R^d$  can be characterized be the following extension property.
\begin{theorem}\label{extrsnion2n}
Let $K$ be a closed subset of $\R^d$ and  $L_0$ a linear functional on $\R_d[\ux]_{2n-2}$. Then $L_0$ is a  $K$-moment functional on $\R_d[\ux]_{2n-2}$ if and only if $L_0$ admits an extension  to a ${\Pos}(K)_{2n}$-positive linear functional $L$ on $\R_d[\ux]_{2n}$:
\begin{align}\label{extendedposit}
L(p)\geq 0\quad \text{for}\quad p\in {\Pos}(K)_{2n}.
\end{align}
\end{theorem}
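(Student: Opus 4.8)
The plan is to treat the two implications separately: the forward implication rests on the Richter--Tchakaloff theorem, while the converse is the advertised application of Theorem \ref{posKchar}.

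Assume first that $L_0$ is a $K$-moment functional on $\R_d[\ux]_{2n-2}$, with a representing Radon measure $\mu_0$ supported on $K$. The naive idea---to extend $L_0$ to $\R_d[\ux]_{2n}$ by integrating against $\mu_0$---fails, since $\mu_0$ is only guaranteed to have finite moments up to order $2n-2$, so polynomials of degree $2n$ need not be $\mu_0$-integrable. I would circumvent this by replacing $\mu_0$ with a finitely atomic measure. By Corollary \ref{carrichter} applied to $L_0$ on $E=\R_d[\ux]_{2n-2}$, taking $\cY=K$ (so that $\mu_0(\R^d\setminus K)=0$), there is a $k$-atomic representing measure $\nu=\sum_{j=1}^k m_j\delta_{x_j}$ with $m_j>0$ and all atoms $x_j\in K$. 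Since $\nu$ is finitely atomic, every polynomial is $\nu$-integrable, so $L(p):=\sum_{j=1}^k m_j\,p(x_j)$ is a well-defined linear functional on all of $\R_d[\ux]_{2n}$. It extends $L_0$, because $\nu$ represents $L_0$ on $\R_d[\ux]_{2n-2}$; and if $p\in{\Pos}(K)_{2n}$ then $p(x_j)\ge 0$ for each $x_j\in K$, whence $L(p)\ge 0$. Thus $L$ is the required ${\Pos}(K)_{2n}$-positive extension.

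Conversely, suppose $L_0$ extends to a ${\Pos}(K)_{2n}$-positive functional $L$ on $\R_d[\ux]_{2n}$. Here I would invoke Theorem \ref{posKchar}: there exist Radon measures $\mu$ on $\R^d$ supported on $K$ and $\mu_\infty$ on $\dH_\infty^d$ supported on $K_\infty$ such that $L(p)=\int_K p\,d\mu+\int_{K_\infty}\widetilde{p_{2n}}\,d\mu_\infty$ for all $p\in\R_d[\ux]_{2n}$. The point is to restrict this to $p\in\R_d[\ux]_{2n-2}$: such $p$ has vanishing homogeneous part of degree $2n$, i.e.\ $p_{2n}=0$, so $\widetilde{p_{2n}}\equiv 0$ and the boundary term drops out. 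Hence $L_0(p)=L(p)=\int_K p\,d\mu$ for all $p\in\R_d[\ux]_{2n-2}$, and since $\mu$ is supported on $K$ and has finite moments at least up to order $2n-2$, $L_0$ is a $K$-moment functional on $\R_d[\ux]_{2n-2}$, as claimed.

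The only genuine obstacle is the moment-order mismatch in the forward direction: a representing measure of $L_0$ on $\R_d[\ux]_{2n-2}$ may lack a finite moment of order $2n$, so a direct integration argument is unavailable. The Richter--Tchakaloff theorem dissolves this difficulty, since a finitely atomic measure automatically has all moments finite and its atoms can be placed inside $K$. Once this is in hand, both the positivity and the extension property are immediate, which is why the converse---a clean specialization of Theorem \ref{posKchar} to degrees $\le 2n-2$---is the only place where the projective characterization is actually needed.
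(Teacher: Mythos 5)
Your proof is correct and follows essentially the same route as the paper: the forward direction uses the Richter--Tchakaloff theorem to replace the representing measure by a finitely atomic one (your appeal to Corollary \ref{carrichter} to place the atoms in $K$ just makes explicit a detail the paper leaves implicit), and the converse applies Theorem \ref{posKchar} and observes that the term at infinity vanishes since $p_{2n}=0$ for $p\in\R_d[\ux]_{2n-2}$.
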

\begin{proof}
Let $L_0$ be
 a  $K$-moment functional on $\R_d[\ux]_{2n-2}$. By Richter's theorem,  $L_0$ has a finitely atomic representing measure $\mu$. Then it suffices to define $L$ by $L(p)=\int f\, d\mu$, $f\in\R_d[\ux]_{2n}$. 

Conversely, assume  (\ref{extendedposit}) holds. By Theorem \ref{posKchar}, $L$ is of the form  (\ref{L+Linfty}). If $p\in\R_d[\ux]_{2n-2}$, then  $p_{2n}=0$. Hence the second summand in (\ref{L+Linfty}) vanishes and we obtain\, $L_0(p)=\int_K \, p(t)\, d\mu(t)$\, for $p\in\R_d[\ux]_{2n-2}$.
$\hfill\qed$ \end{proof}
\section{Stochel's theorem}
The following interesting result is \textbf{Stochel's theorem} (2001). It says that 
if for a functional $L$ on $\R_d[\ux]$ the truncated $K$-moment problem  on $\R_d[\ux]_{2n}$ is solvable for all $n\in \N$, then $L$ is a $K$-moment functional on $\R_d[\ux]$. 
\begin{theorem}\label{stochel}
Let $K$ be a closed subset of $\R^d$ and $L$  a linear functional on $\R_d[\ux]$. Suppose  for each $n\in \N$  the restriction $L_n:=L\lceil\R_d[\ux]_{2n}$ is a  $K$-moment functional on $\R_d[\ux]_{2n}$, that is, there exists a Radon measure $\mu_n$ on $\R^d$ supported on $K$ such that
\[
L_n(p)\equiv L(p)=\int p(x)\, d\mu_n(x) \quad \text{for}~~ p \in\R_d[\ux]_{2n}.
\]
Then $L$ is a $K$-moment functional on $\R_d[\ux]$. 
\end{theorem}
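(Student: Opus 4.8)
The plan is to avoid constructing the measure $\mu$ directly from the $\mu_n$ and instead to reduce the whole assertion to a single positivity condition via Haviland's Theorem \ref{haviland}. That theorem says that $L$ is a $K$-moment functional as soon as $L(p)\geq 0$ for every $p\in{\Pos}(K)$. This is exactly the right reduction, because a direct approach — extracting a weak-$*$ (vague) limit of the sequence $(\mu_n)$, all of which have the same total mass $\mu_n(\R^d)=L(1)$ — would force one to control the possible escape of mass to infinity on the noncompact set $K$, which is precisely the delicate point one wants to sidestep. The key observation is that the truncated solvability hypotheses already deliver the positivity input Haviland needs, with no limiting argument at all.

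I would verify the positivity condition as follows. Fix $p\in{\Pos}(K)$, so $p(x)\geq 0$ for all $x\in K$, and put $m:=\deg p$. Choosing $n\in\N$ with $2n\geq m$ gives $p\in\R_d[\ux]_{2n}$, and by hypothesis there is a Radon measure $\mu_n$ supported on $K$ with
\[
L(p)=\int_{\R^d} p(x)\, d\mu_n(x)=\int_K p(x)\, d\mu_n(x),
\]
where the second equality uses $\supp\mu_n\subseteq K$ (so $\mu_n(\R^d\setminus K)=0$). Since $\mu_n$ is a nonnegative measure and $p\geq 0$ on $K$, this integral is nonnegative, hence $L(p)\geq 0$. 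As $p\in{\Pos}(K)$ was arbitrary, $L$ satisfies condition (i) of Theorem \ref{haviland}, and therefore $L$ is a $K$-moment functional on $\R_d[\ux]$, as claimed.

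In truth there is no substantial obstacle here; the entire content lies in recognizing the reduction. What makes the argument work is that the measures $\mu_n$ need not be compatible with one another in any way: each individual $\mu_n$ is used only to certify the single inequality $L(p)\geq 0$ for one polynomial $p$ of degree at most $2n$, and it is Haviland's Theorem that afterwards assembles these separate pointwise positivity facts into the existence of one global representing measure supported on $K$. The only thing to be careful about is the bookkeeping of degrees — ensuring $2n\geq\deg p$ so that the assumed representation applies to $p$ — which is immediate.
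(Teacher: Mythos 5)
Your proof is correct and is essentially the argument the paper relies on (the paper itself defers to [MP], where the proof is exactly this reduction): every $p\in{\Pos}(K)$ lies in some $\R_d[\ux]_{2n}$, the hypothesis then gives $L(p)=\int_K p\,d\mu_n\geq 0$ since $\mu_n$ is nonnegative and supported on $K$, and Haviland's Theorem \ref{haviland} converts this positivity into a representing measure supported on $K$. Nothing is missing; the degree bookkeeping and the role of the (mutually unrelated) measures $\mu_n$ as mere positivity certificates are exactly as in the paper's source.
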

\begin{proof}  
 {[MP, Theorem~17.3]}.
$\hfill\qed$
\end{proof}

In terms of sequences  Theorem \ref{stochel} can be rephrased as follows: 
 Let $s=(s_\alpha)_{\alpha\in \N_0^d}$ be a real multisequence. If for each\, $n\in \N$ the truncation\,  $s^{(n)}:=(s_\alpha)_{\alpha\in \N_0^d, |\alpha|\leq 2n}$\, has a representing measure supported on $K$, then the  sequence $s$ does as well.


\makeatletter
\renewcommand{\@chapapp}{Lecture}
\makeatother

\setcounter{tocdepth}{3}

\chapter{Truncated multidimensional moment problem: existence via flat extensions}

Abstract:\\
\textit{Hankel matrices are introduced and their basic properties are investigated. 
The flat extension theorem of Curto and Fialkow is  treated.}
\bigskip

Hankel matrices appeared already in Lectures 1 and 3. They are also an important technical tool for the study of multidimensional  truncated moment problems. The flat extension developed in Section \ref{flatexnetiontheorem} is essentially based on rank conditions of Hankel matrices.

\section{Hankel matrices}\label{hankelmatrices}

 Throughout this Lecture, we suppose that $\sN$ be a nonempty finite subset of $\N_0^d$ and ${\cA}=\Lin\{ x^\alpha:\alpha \in {\sN}\}$.
 We shall  consider the moment problem on the (finite-dimensional) linear subspace $\cA^2$ (not of $\cA$!) of the polynomials $\R[x_1,\dots,x_d]$:
\begin{align*}
{\cA}^2:={\Lin}\, \{ p\, q:\, p,q\in {\cA}\}={\Lin}\, \{x^\beta: \beta\in {\sN}+{\sN}\}.
\end{align*} 
Suppose $L$ is a linear functional on the real vector space $\cA^2$.


\begin{definition}\label{hankeldef} $p\, q\in \R_d[\ux]_{n-1}$
The \emph{Hankel matrix} of  $L$ is the symmetric $|\sN|\times|\sN|$-matrix\, 
\begin{align*}
 H(L)
 =(h_{\alpha,\beta})_{\alpha,\beta\in {\sN}} ,\quad\text{ where }~~ h_{\alpha,\beta}:=L(x^{\alpha+\beta}), ~~ \alpha,\beta\in {\sN}.
\end{align*}
 The  number\, $\rank L:=\rank H(L)$\, is called the \emph{rank} of  $L$.
\end{definition} 

We fix an ordering of the index set $\sN$.
For $f=\sum_{\alpha\in {\sN}} f_\alpha x^\alpha\in {\cA}$ we denote by $\vec{f}=(f_\alpha)^T\in \R^{|{\sN}|}$  the coefficient vector of $f$ according  to the ordering of  $\sN$.
 Define 
\begin{align}\label{defkernle}
 \cN_L&= \{f\in {\cA}: L(fg)=0~~\text{ for all }~~g\in {\cA}\},\\
 \cV_L&=\{t\in \R^d: f(t)=0~~\text{ for all }~ f\in \cN_L\}.\label{v(L)variety}
\end{align}
Note that $\cV_L$ is a real algebraic subset of $\R^d$.

 The next proposition shows that properties of the functional $L$ can be  nicely translated into those for the matrix $H(L)$.

\begin{proposition}\label{propHankelL}\begin{itemize}
\item[\em (i)] For $f\in \cA$ and $g\in \cA$ we  have
\begin{align}\label{lhankelfg}
L(fg)=\vec{f}^{\, T} H(L) \vec{g}.
\end{align}
\item[\em (ii)]~\,$L(f^2)\geq 0$\,  for  $f\in\cA$\,   if and only if the matrix $H(L)$ is positive semi-definite.
\item[\em (iii)]~ ${\rank} \, L=\dim\, ({\cA} /  \cN_L)$.
\item[\em (iv)]~ $f\in \cN_L$ if and only if  $\vec{f}\in\ker H(L).$ 
 \item[\em (v)]~ If $L$ is a moment functional on $\cA^2$, then\, ${\supp}\, \mu \subseteq \cV_L$  for each representing measure $\mu$. 
\end{itemize}
\end{proposition}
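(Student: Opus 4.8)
The plan is to let everything flow from the bilinear identity in (i) and then to treat (v) by a separate positivity argument. Throughout I would use that the coefficient map $f\mapsto\vec{f}$ is a \emph{linear bijection} of $\cA$ onto $\R^{|\sN|}$, so that quantifiers ``for all $f\in\cA$'' translate verbatim into ``for all vectors in $\R^{|\sN|}$''. First I would prove (i) by direct expansion: writing $f=\sum_{\alpha\in\sN}f_\alpha x^\alpha$ and $g=\sum_{\beta\in\sN}g_\beta x^\beta$, the product $fg=\sum_{\alpha,\beta}f_\alpha g_\beta\,x^{\alpha+\beta}$ lies in $\cA^2$ because $\alpha+\beta\in\sN+\sN$, so $L(fg)$ is defined, and linearity gives $L(fg)=\sum_{\alpha,\beta}f_\alpha g_\beta\,L(x^{\alpha+\beta})=\sum_{\alpha,\beta}f_\alpha g_\beta\,h_{\alpha,\beta}=\vec{f}^{\,T}H(L)\vec{g}$.

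With (i) in hand, (ii) is immediate: taking $g=f$ yields $L(f^2)=\vec{f}^{\,T}H(L)\vec{f}$, and since $f\mapsto\vec{f}$ ranges over all of $\R^{|\sN|}$, the condition $L(f^2)\geq 0$ for all $f\in\cA$ is exactly nonnegativity of the quadratic form of the symmetric matrix $H(L)$, i.e.\ $H(L)\succeq 0$. Part (iv) is the same computation read as a kernel statement: $f\in\cN_L$ means $\vec{f}^{\,T}H(L)\vec{g}=0$ for every $\vec{g}\in\R^{|\sN|}$, which is equivalent to $\vec{f}^{\,T}H(L)=0$, hence (by symmetry of $H(L)$) to $H(L)\vec{f}=0$, that is $\vec{f}\in\ker H(L)$. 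Then (iii) follows from (iv) by rank--nullity: the coefficient map carries $\cN_L$ isomorphically onto $\ker H(L)$, so $\dim(\cA/\cN_L)=|\sN|-\dim\ker H(L)=\rank H(L)=\rank L$.

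For (v), let $\mu$ be any representing measure for $L$, so $L(h)=\int h\,d\mu$ for $h\in\cA^2$. For $f\in\cN_L$ I would put $g=f$ in the definition of $\cN_L$ to get $0=L(f^2)=\int f^2\,d\mu$. Since $f^2\geq 0$ and $\mu$ is a \emph{positive} measure, this forces $f=0$ $\mu$-a.e. Because $f$ is a polynomial, hence continuous, the set $\{t:f(t)\neq 0\}$ is open and of $\mu$-measure zero, so it is disjoint from $\supp\mu$; therefore $\supp\mu\subseteq\{t:f(t)=0\}$. Intersecting this inclusion over all $f\in\cN_L$ gives $\supp\mu\subseteq\cV_L$, which is the claim.

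I expect (i)--(iv) to be routine linear-algebra bookkeeping, the only care being to keep the bijection $f\mapsto\vec{f}$ explicit so the ``for all $f$'' clauses become matrix identities. The genuine, though still short, obstacle is the measure-theoretic step in (v): passing from ``$f=0$ $\mu$-a.e.'' to the topological statement $\supp\mu\subseteq\{f=0\}$. This is where positivity of $\mu$ (to conclude $f^2=0$ a.e.\ from its vanishing integral) and continuity of the polynomial $f$ (to upgrade an a.e.\ statement to one about the support, viewed as the complement of the largest open $\mu$-null set) are both essential; everything else is algebra.
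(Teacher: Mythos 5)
Your proof is correct and follows essentially the same route as the paper: (i) by direct expansion, (ii) and (iv) read off from the bilinear identity, (iii) by rank--nullity via the correspondence $\cN_L\leftrightarrow\ker H(L)$, and (v) by the positivity/continuity argument showing $\supp\mu$ avoids the open set where a polynomial in $\cN_L$ is nonzero. The only difference is that you spell out the steps the paper compresses into ``(ii) and (iv) follow at once from (i).''
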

\begin{proof}
(i): Let $f=\sum_{\alpha\in {\sN}} f_\alpha x^\alpha\in {\cA}$ and $g=\sum_{\alpha\in {\sN}} g_\alpha x^\alpha\in {\cA}$. Then
\begin{align*}
L(fg)=\sum_{\alpha,\beta\in {\sN}} f_\alpha g_\beta L(x^{\alpha+\beta})=\sum_{\alpha,\beta\in {\sN}} h_{\alpha,\beta} f_\alpha g_\beta= 
(f_\alpha)^{\, T} H(L) (g_\beta)=\vec{f}^{\, T} H(L) \vec{g}.
\end{align*}

(ii) and (iv) follow at once from (i).
\begin{align*}
~~~~\text{(iii):} ~~\rank H(L)
 = \dim \R^{|{\sN}|} - \dim\, \ker H(L)
 =\dim {\cA}-\dim  \cN_L
 =\dim ({\cA}/\cN_L).
\end{align*}

(v): If $p\in \cN_L$, then $L(p^2)=\int p(t)^2\, d\mu =0$. Since $p(x)$ is continuous on $\R^d$, this implies  $\supp\, \mu$ is contained in the zero set of $p$. Thus, $\supp \,\mu \subseteq \cV_L$.
$\hfill \qed$ \end{proof}

 Now  suppose that the functional $L$ is \emph{positive}, that is, $L(f^2)\geq 0$ for  $f\in {\cA}$.
 Then the symmetric bilinear form $\langle \, \cdot,\cdot\rangle$ on the quotient  space $\cD_L:={\cA}/\cN_L$ defined by
\begin{align}\label{defscalar}
\langle f+\cN_L,g+\cN_L \rangle:=L(fg),\quad f,g\in {\cA}.
\end{align}
 is nondegenerate and positive definite, so it is  an inner product.
 By Proposition \ref{propHankelL}(iii),  $\dim\, \cD_L =\rank\, L$.
 Thus, $(\cD_L,\langle \cdot, \cdot\rangle)$ is a \emph{real finite-dimensional Hilbert space}.
 This Hilbert space is another useful tool in treating the moment problem. 
Since  element of $ {\cA}^2$ are sums of products $fg$  with 
$f,g\in{\cA}$, the  functional $L$ on $\cA^2$ can be  recovered from the   space $(\cD_L,\langle \cdot, \cdot\rangle)$ by using equation
 (\ref{defscalar}).

The next proposition shows that $\cN_L$ obeys some ideal-like properties.
\begin{proposition}\label{idealLN}
Let $p\in \cN_L$ and $q\in {\cA}$.
\begin{itemize}
\item[\em (i)]\, If $L$ is a positive functional and $pq^2\in {\cA}$, then $pq\in \cN_L$.
\item[\em (ii)]\, If $L$ is a  $K$-moment functional and $pq\in \cA$, then $pq\in \cN_L$.
\end{itemize}
\end{proposition}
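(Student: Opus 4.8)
The plan is to prove both parts by showing directly that $L(pq\cdot g)=0$ for every $g\in\cA$, which by the definition (\ref{defkernle}) of $\cN_L$ is exactly the statement $pq\in\cN_L$. Before doing so, the first thing I would pin down is that $pq\in\cA$, so that the claimed membership is even meaningful. In the principal case $\cA=\R_d[\ux]_n$ this is automatic: in part (i) one has $\deg(pq)\le\deg(pq^2)\le n$ from $pq^2\in\cA$, while in part (ii) $pq\in\cA$ is hypothesized outright. With $pq\in\cA$ in hand, all products $pq\cdot g$ with $g\in\cA$ lie in $\cA^2$, so $L(pq\,g)$ is defined.

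For part (i), the plan is to first reduce everything to the single scalar identity $L((pq)^2)=0$ and then exploit positivity. The key algebraic observation is that $(pq)^2=p\cdot(pq^2)$. Since $pq^2\in\cA$ by hypothesis and $p\in\cN_L$, the defining annihilation property of $\cN_L$ applied to $g=pq^2$ gives $L\big(p\cdot(pq^2)\big)=0$, that is, $L((pq)^2)=0$. Now positivity of $L$ means, by Proposition \ref{propHankelL}(ii), that the symmetric bilinear form $B(f,g):=L(fg)$ is positive semidefinite on $\cA$, so the Cauchy--Schwarz inequality for positive semidefinite forms applies. For each $g\in\cA$ it yields
\[
|L(pq\,g)|^2=|B(pq,g)|^2\le B(pq,pq)\,B(g,g)=L((pq)^2)\,L(g^2)=0 .
\]
Hence $L(pq\,g)=0$ for all $g\in\cA$, which is precisely $pq\in\cN_L$.

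For part (ii), the plan is to pass to a representing measure. By assumption $L(f)=\int f\,d\mu$ for $f\in\cA^2$, with $\mu$ supported on $K$. Since $p\in\cN_L$ we have in particular $L(p^2)=\int p^2\,d\mu=0$; because $p^2$ is continuous and nonnegative on $\R^d$, this forces $p=0$ $\mu$-almost everywhere, so $\supp\mu\subseteq\{t:p(t)=0\}$ (this is exactly the reasoning used for Proposition \ref{propHankelL}(v)). Consequently, for every $g\in\cA$ the integrand $pqg$ vanishes $\mu$-a.e., and since $pqg\in\cA^2$ is $\mu$-integrable we get $L(pq\,g)=\int pqg\,d\mu=0$. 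Therefore $pq\in\cN_L$.

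The genuinely delicate point in both parts — and the step I would treat with the most care — is the bookkeeping of membership: the Cauchy--Schwarz step in (i) needs $pq\in\cA$ for $B(pq,\cdot)$ to be a form on $\cA$, and the integral manipulation in (ii) needs $pqg\in\cA^2$ so that it is covered by the representing measure. Once these membership facts are justified (immediately from degree bounds in the graded situation), the remaining arguments are the one-line identity $(pq)^2=p\cdot(pq^2)$ together with Cauchy--Schwarz in (i), and the almost-everywhere vanishing of $p$ on $\supp\mu$ in (ii).
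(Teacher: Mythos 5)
Your proof is correct and follows essentially the same route as the paper: part (i) rests on the identity $(pq)^2=p\cdot(pq^2)$ together with the Cauchy--Schwarz inequality for the positive functional, and part (ii) on the fact that any representing measure of $L$ is supported in the zero set of $p$. The only cosmetic difference is that you verify $L(pq\,g)=0$ for all $g\in\cA$ directly from the definition of $\cN_L$ (carefully noting the needed membership $pq\in\cA$), whereas the paper concludes from $L((pq)^2)=0$ via the kernel characterization of Proposition \ref{propHankelL}(iv); these are the same argument in different packaging.
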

\begin{proof}
(i): Since the functional $L$ is positive, the Cauchy--Schwarz inequality  holds. Using that $pq^2\in {\cA}$ and $L(p^2)=0$ we obtain
\begin{align*}
L((pq)^2)^2=L(p\, pq^2)^2\leq L(p^2)L((pq^2)^2)=0,
\end{align*}
so that $pq\in \cN_L$ by Proposition \ref{propHankelL}(iv).

(ii) Since $L$ is a $K$-moment functional, it has a representing measure $\mu$, which  is supported on $K$. 
 Then\, $\supp\mu \subseteq \cV_L$ by Proposition \ref{propHankelL}(v). Therefore, since $p\in \cN_L$ and hence $p(x)=0$ on $\cV_L$, we get
\begin{align*}
L((pq)^2)=\int_K (pq)^2(x)\, d\mu(x) =\int_{\cV_L\cap K} p(x)^2q(x)^2\, d\mu(x)= 0.
\end{align*} 
Thus,   $p\, q\in \cN_L$  again by Proposition \ref{propHankelL}(iv).
$\hfill\qed$ \end{proof}

We restate the preceding results in the case of our standard example.
\begin{corollary}\label{pqidealpro}
Let $L$ be a linear functional on ${\cA}=\R_{d}[\ux]_{2n}$. Suppose that $p\in \cN_L$ and $q\in \R_d[\ux]_n$.
\begin{itemize}
\item[\em (i)]\,  If $L$ is a positive functional and  $p\, q\in \R_d[\ux]_{n-1}$, then $p\, q\in \cN_L$.
\item[\em (ii)]~ If $L$ is a $K$-moment functional and  $p\, q\in \R_d[\ux]_{n}$, then $p\, q\in \cN_L$.
\end{itemize}
\end{corollary}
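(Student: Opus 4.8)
The plan is to read the corollary as the specialization of Propositions \ref{idealLN} and \ref{propHankelL} to the standard example, where the working ``half-space'' is $\cA=\R_d[\ux]_n$, so that $\cA^2=\R_d[\ux]_{2n}$ is the space on which $L$ is defined and $\cN_L=\{f\in\R_d[\ux]_n: L(fg)=0 \text{ for all } g\in\R_d[\ux]_n\}$. With this identification part (ii) is immediate: we are given $p\in\cN_L$, $q\in\R_d[\ux]_n=\cA$ and $pq\in\R_d[\ux]_n=\cA$, which are exactly the hypotheses of Proposition \ref{idealLN}(ii), so $pq\in\cN_L$.

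Part (i) is where the real work lies. One would like to quote Proposition \ref{idealLN}(i) directly, but its Cauchy--Schwarz proof needs $pq^2\in\cA$, i.e. $\deg(pq^2)\le n$, whereas the hypothesis $pq\in\R_d[\ux]_{n-1}$ only gives $\deg(pq^2)=\deg(pq)+\deg q\le (n-1)+\deg q$, which can be as large as $2n-2$. The point is that multiplying $p$ by all of $q$ at once overshoots the degree budget of $\cA$, so the remedy is to multiply by one variable at a time. First I would record the single-variable step: if $L$ is positive, $p'\in\cN_L$ and $\deg p'\le n-2$, then $x_ip'\in\cN_L$. This is exactly Proposition \ref{idealLN}(i) applied with $q=x_i$ (its hypothesis $p'x_i^2\in\cA$ reads $\deg p'+2\le n$), or seen directly: $\deg(x_ip')\le n-1$ gives $x_ip'\in\cA$, and $L((x_ip')^2)=L(p'\cdot x_i^2p')$ with $x_i^2p'\in\cA$, whence Cauchy--Schwarz together with $L(p'^2)=0$ forces $L((x_ip')^2)=0$; positive semidefiniteness of $H(L)$ and Proposition \ref{propHankelL}(iv) then yield $x_ip'\in\cN_L$.

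From the single-variable step I would induct on $|\gamma|$ to prove: if $p\in\cN_L$ and $\deg(x^\gamma p)\le n-1$, then $x^\gamma p\in\cN_L$. Writing $\gamma=\gamma'+e_i$, the inductive hypothesis (applicable since $\deg(x^{\gamma'}p)=\deg(x^\gamma p)-1\le n-2$) gives $x^{\gamma'}p\in\cN_L$, and then the single-variable step, whose constraint $\deg(x^{\gamma'}p)\le n-2$ is met, produces $x^\gamma p=x_i(x^{\gamma'}p)\in\cN_L$; the binding degree constraint at every stage is precisely $\deg(x^\gamma p)\le n-1$. Finally, expanding $q=\sum_\gamma q_\gamma x^\gamma$ and noting that each monomial of $q$ satisfies $\deg(x^\gamma p)=|\gamma|+\deg p\le\deg q+\deg p=\deg(pq)\le n-1$, the claim applies to every term, and since $\cN_L$ is a linear subspace we conclude $pq=\sum_\gamma q_\gamma\,x^\gamma p\in\cN_L$. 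The main obstacle is thus entirely the degree bookkeeping: one must verify that the $\R_d[\ux]_{n-1}$ hypothesis leaves exactly enough slack that each one-variable multiplication keeps the relevant squared polynomial inside $\cA^2=\R_d[\ux]_{2n}$, while the analytic content is just the single Cauchy--Schwarz inequality reused at each step.
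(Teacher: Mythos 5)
Your proof is correct and follows essentially the same route as the paper: the paper likewise proves part (i) by applying Proposition \ref{idealLN}(i) with $q=x_j$ and then notes that ``repeated applications'' together with the linearity of $\cN_L$ give the general case, while part (ii) is quoted directly from Proposition \ref{idealLN}(ii). Your contribution is simply to make the paper's ``repeated applications'' explicit via the induction on $|\gamma|$ and the accompanying degree bookkeeping, which is exactly the intended argument.
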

\begin{proof} 
(i): Proposition \ref{idealLN}(i) yields the assertion  in the  case $q=x_j,$  $j=1,\dots,d$. Since $\cN_L$ is a vector space,  repeated applications give the general case.

(ii) follows  from Proposition \ref{idealLN}(ii).
$\hfill\qed$ \end{proof}

Corollary \ref{pqidealpro} show an important difference between positive functionals and moment functionals: The assertion $pq\in \cN_L$ holds for a moment functional if $p\, q\in \R_d[\ux]_n$, while for a positive functional it is assumed that   $p\, q\in \R_d[\ux]_{n-1}$.

\section{The full moment problem with finite rank Hankel matrix}\label{fullmpfinitehankel}

 Definition \ref{hankeldef} of the Hankel matrix $H(L)$ extends verbatim to linear functionals $L$ on the polynomial algebra $\R_d[\ux]$.
 In this case we have the \emph{full} moment problem and in the.
 If this Hankel matrix $H(L)$ has finite rank, there is the following result.

\begin{theorem}\label{fullmpfiniterank}
Suppose that $L$ is a positive linear functional on $\R_d[\ux]$ such that\, ${\rank}\, H(L)=r$, where $r\in \N$.  Then $L$ is a moment functional with unique representing measure $\mu$. This measure $\mu$ has $r$ atoms and\,  
${\supp}\, \mu=\cV_L$.
\end{theorem}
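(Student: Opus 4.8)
The plan is to realize $L$ through a family of commuting self-adjoint multiplication operators on the finite-dimensional Hilbert space $\cD_L=\R_d[\ux]/\cN_L$ and then to read off the representing measure from their joint spectral decomposition. First I would record that, since here $\cA=\R_d[\ux]$ and hence $pq^2\in\cA$ for all $p,q$, Proposition \ref{idealLN}(i) shows that $\cN_L$ is an \emph{ideal} of $\R_d[\ux]$. Consequently the maps $X_j(f+\cN_L):=x_jf+\cN_L$, $j=1,\dots,d$, are well defined on $\cD_L$; the identity $L((x_jf)g)=L(f(x_jg))$ shows each $X_j$ is self-adjoint for the inner product (\ref{defscalar}), and they commute because multiplication by coordinates does. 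By Proposition \ref{propHankelL}(iii), $\dim\cD_L=\rank L=r$.

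Next I would invoke the finite-dimensional spectral theorem for the commuting self-adjoint family $X_1,\dots,X_d$: there is an orthogonal decomposition $\cD_L=\bigoplus_i W_i$ into joint eigenspaces, with $X_j$ acting on $W_i$ as the scalar $t^{(i)}_j$, where $t^{(i)}=(t^{(i)}_1,\dots,t^{(i)}_d)\in\R^d$ are the distinct joint eigenvalues and $P_i$ the corresponding orthogonal projections. Writing $\mathbf{1}:=1+\cN_L$ and noting $x^\alpha+\cN_L=X^\alpha\mathbf{1}$, the vectors $\{X^\alpha\mathbf{1}\}$ span $\cD_L$, so $\mathbf{1}$ is \emph{cyclic}. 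Since $P_i$ commutes with each $X_j$, the projection of $\cD_L=\Lin\{p(X)\mathbf{1}:p\in\R_d[\ux]\}$ onto $W_i$ lies in $\R\cdot P_i\mathbf{1}$; this forces $\dim W_i=1$ and $P_i\mathbf{1}\neq0$ for every $i$. Hence there are exactly $r$ joint eigenvalues and the weights $m_i:=\|P_i\mathbf{1}\|^2$ are strictly positive.

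With this in hand the representing measure drops out: for $p\in\R_d[\ux]$ one has $p(X)\mathbf{1}=p+\cN_L$ and the functional calculus gives $p(X)=\sum_i p(t^{(i)})P_i$, so
\begin{align*}
L(p)=L(p\cdot 1)=\langle p(X)\mathbf{1},\mathbf{1}\rangle=\sum_{i=1}^r m_i\,p(t^{(i)}).
\end{align*}
Thus $\mu:=\sum_{i=1}^r m_i\delta_{t^{(i)}}$ is a representing measure with exactly $r$ atoms, so $L$ is a moment functional. To pin down the support I would show $\cN_L=\{f:f(t^{(i)})=0,\ i=1,\dots,r\}$: indeed $f\in\cN_L\iff f(X)\mathbf{1}=\sum_i f(t^{(i)})P_i\mathbf{1}=0$, which by orthogonality and $P_i\mathbf{1}\neq0$ is equivalent to $f(t^{(i)})=0$ for all $i$. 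Hence $\cN_L$ is the vanishing ideal of the finite set $\{t^{(1)},\dots,t^{(r)}\}$, whose real zero set is the set itself, giving $\cV_L=\{t^{(1)},\dots,t^{(r)}\}=\supp\mu$.

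Finally, for uniqueness let $\nu$ be any representing measure of $L$. By Proposition \ref{propHankelL}(v) one has $\supp\nu\subseteq\cV_L=\{t^{(1)},\dots,t^{(r)}\}$, so $\nu=\sum_i m_i'\delta_{t^{(i)}}$ with $m_i'\geq0$; choosing interpolating polynomials separating the atoms and matching against $L$ forces $m_i'=m_i$, whence $\nu=\mu$. The main obstacle is the cyclic-vector step of the second paragraph: establishing \emph{simultaneously} that each joint eigenspace is one-dimensional and that $\mathbf{1}$ meets every eigenspace is exactly what upgrades the a priori bound ``$\leq r$ atoms'' to the sharp count $r$ and yields positive weights, and it is where the positivity of $L$ (through the Hilbert space structure making the $X_j$ self-adjoint with real spectrum) is genuinely used.
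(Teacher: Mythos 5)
Your proof is correct and follows exactly the route the paper indicates for this theorem: the GNS construction on the finite-dimensional Hilbert space $\cD_L=\R_d[\ux]/\cN_L$ (using Proposition \ref{idealLN}(i) to make $\cN_L$ an ideal), simultaneous diagonalization of the commuting self-adjoint multiplication operators $X_1,\dots,X_d$, and the cyclicity of $\mathbf{1}$ to obtain the $r$-atomic spectral measure, with $\supp\mu=\cV_L$ and uniqueness read off from Proposition \ref{propHankelL}(v). The cyclic-vector argument pinning down one-dimensional joint eigenspaces with $P_i\mathbf{1}\neq 0$ is precisely the step that turns the spectral decomposition into the sharp atom count, and you have carried it out correctly.
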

\begin{proof} 
 {[MP, Theorem~17.29]}.
$\hfill \qed$
\end{proof}

 Idea of proof: The GNS representation $\pi_L$ acts on a finite-dimensional Hilbert space. Then $\pi_L(x_1),\dots,\pi_L(x_d)$ are commuting self-adjoint operators and their spectral measure leads to the measure $\mu$.

\section{Flat extensions and the flat extension theorem}\label{flatexnetiontheorem}

Recall that $\sN$ is a finite subset of $\N_0^d$ and  ${\cA}=\Lin \{x^\alpha: \alpha \in {\sN}\}$. Let $\sN_0$ be a proper subset of $\sN$ and $\cB=\Lin \{x^\alpha: \alpha \in {\sN}_0\}$. 

\begin{definition}\label{defflat}
A   linear functional $L$ on ${\cA}^2$ is called  \emph{flat} with respect to ${\cB}^2$  if
\begin{align*}
{\rank}\, H(L)={\rank}\, H(L_0),
\end{align*} where
$L_0$ denotes the restriction to ${\cB}^2$ of $L$.
\end{definition}
To motivate this definition  we  write the Hankel matrix $H(L)$ as a block matrix
\[ H(L) = \left( \begin{array}{ll}
     H(L_0)& ~X_{12}\\
     ~ X_{21} & ~X_{22}
   \end{array} \right).  \]
Then $H(L)$ is  a flat extension  of the matrix $H(L_0)$ 
if\, ${\rank}\, H(L) ={\rank}\, H(L_0)$. This is the useful notion of flatness for block matrices.

The next proposition shows that for flat functionals the positivity on the smaller space $\cB^2$ implies the positivity on the larger space $\cA^2$.

\begin{proposition}\label{flatpositive}
Let $L$ be a linear functional on ${\cA}^2$ which is flat with respect to ${\cB}^2$. If $L(p^2)\geq 0$ for all $p\in {\cB}$, then $L(q^2)\geq 0$ for all $q\in {\cA}$.
\end{proposition}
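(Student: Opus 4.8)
The plan is to reduce the assertion entirely to a linear-algebra fact about flat extensions of positive semidefinite matrices, using Proposition \ref{propHankelL} to pass between functionals and their Hankel matrices. By Proposition \ref{propHankelL}(ii) applied to $\cB$, the hypothesis $L(p^2)\geq 0$ for all $p\in\cB$ is equivalent to $H(L_0)\succeq 0$, and the desired conclusion $L(q^2)\geq 0$ for all $q\in\cA$ is equivalent to $H(L)\succeq 0$. Writing $H(L)$ in the block form displayed above, and setting $A:=H(L_0)$, $B:=X_{12}$, $X_{21}=B^T$ (by symmetry of $H(L)$) and $C:=X_{22}$, the task becomes purely matricial: if the symmetric matrix $\left(\begin{smallmatrix} A & B\\ B^T & C\end{smallmatrix}\right)$ satisfies $A\succeq 0$ and $\rank H(L)=\rank A$, then $H(L)\succeq 0$.

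The key step I would carry out first is to exploit the flatness (rank) condition in order to express the off-diagonal and lower-right blocks through $A$. The first $|\sN_0|$ columns of $H(L)$ form the block column $\left(\begin{smallmatrix} A\\ B^T\end{smallmatrix}\right)$, whose rank satisfies $\rank A\le \rank\left(\begin{smallmatrix} A\\ B^T\end{smallmatrix}\right)\le \rank H(L)=\rank A$, the first inequality because $A$ is obtained by deleting rows and the second because this is a column submatrix of $H(L)$. Hence all three ranks coincide, so the column space of this leading block column already equals the full column space of $H(L)$; in particular every column of the second block $\left(\begin{smallmatrix} B\\ C\end{smallmatrix}\right)$ lies in it. This produces a matrix $W$ with $\left(\begin{smallmatrix} B\\ C\end{smallmatrix}\right)=\left(\begin{smallmatrix} A\\ B^T\end{smallmatrix}\right)W$, that is, $B=AW$ and $C=B^TW=W^TAW$, the last equality using the symmetry of $A$.

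With this in hand the conclusion is immediate: putting $P:=\left(\begin{smallmatrix} I\\ W^T\end{smallmatrix}\right)$ one checks directly that $H(L)=PAP^{T}$, so for every $\vec{q}\in\R^{|\sN|}$ Proposition \ref{propHankelL}(i) yields $L(q^2)=\vec{q}^{\,T}H(L)\vec{q}=(P^{T}\vec{q})^{T}A\,(P^{T}\vec{q})\geq 0$, since $A=H(L_0)\succeq 0$. I expect the rank-chasing of the middle paragraph---specifically the verification that the column space of the leading block column exhausts that of $H(L)$, which is exactly the point where flatness enters---to be the only nonroutine ingredient; the block factorization $H(L)=PAP^{T}$ and the final positivity check are then purely formal once $W$ has been constructed.
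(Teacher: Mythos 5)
Your proof is correct, and every step checks out: the rank chase showing that the column space of the block column $\bigl(\begin{smallmatrix} A\\ B^T\end{smallmatrix}\bigr)$ exhausts that of $H(L)$, the construction of $W$ with $B=AW$ and $C=W^{T}AW$, and the factorization $H(L)=PH(L_0)P^{T}$ giving positive semidefiniteness. The paper itself does not reproduce an argument for this proposition (it only cites [MP, Proposition~17.34]), but your reduction via Proposition \ref{propHankelL} to the purely matricial fact that a flat symmetric extension of a positive semidefinite block is again positive semidefinite is exactly the standard proof of that result.
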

\begin{proof}
 {[MP, Proposition~17.34]}.
$\hfill \qed$
\end{proof}

 Though there are  general versions of the flat extension theorem (see \cite{mp}),  we   restrict ourselves to the  important case where $\cB=\R_d[\ux]_{n-1}$ and $ {\cA}=\R_d[\ux]_{n}$.

 Let $L$ be a linear functional  on $\R_d[\ux]_{2n}$.
 The Hankel matrix $H_n(L)$  has the entries
\[
 h_{\alpha,\beta}
 :=L(x^{\alpha+\beta}),\quad\text{ where }~~ \alpha,\beta\in \N_0^d, |\alpha|, |\beta|\leq n.
\]
The Hankel matrix $H_{n-1}(L)$ of its restriction to $\R_d[\ux]_{2n-2}$ has the entries $h_{\alpha,\beta}$ with $ |\alpha|,|\beta|\leq n-1.$ 

 The main result of this Lecture is  the following \textbf{flat extension theorem} of R. Curto and L. Fialkow (1996).
 It says that if a functional on  $\R_d[\ux]_{2n} $ is flat with respect  to $\R_d[\ux]_{2n-2}$ and positive on 
$\R_d[\ux]_{2n-2}$, then it is a moment functional.
\begin{theorem}\label{flatmpr2nd}
Suppose  $L$ is a linear functional on $\R_d[\ux]_{2n}, n\in \N$, such that 
\[
 L(p^2)\geq 0~~\text{ for }~~p\in \R_d[\ux]_{n-1} ~~~\text{ and }~~~r:=\rank H_n(L)=\rank H_{n-1}(L).
\]
 Then $L$ is a moment functional with a unique representing measure. This measure is $r$-atomic and we have $r\leq\binom{n-1+d}{d}=\dim \R_d[\ux]_{n-1}$.
\end{theorem}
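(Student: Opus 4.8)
The plan is to reduce the truncated problem to the \emph{full} moment problem with a finite-rank Hankel matrix, so that Theorem~\ref{fullmpfiniterank} applies. Concretely, I would construct a positive linear functional $\tilde L$ on the whole algebra $\R_d[\ux]$ which extends $L$ and satisfies $\rank H(\tilde L)=r$; Theorem~\ref{fullmpfiniterank} then produces a unique $r$-atomic representing measure $\mu$ for $\tilde L$, and since $\tilde L$ agrees with $L$ on $\R_d[\ux]_{2n}$, this $\mu$ represents $L$. First I would record that $L$ is positive on the larger space: by Proposition~\ref{flatpositive} (with $\cB=\R_d[\ux]_{n-1}$ and $\cA=\R_d[\ux]_n$) the flatness assumption together with $L(p^2)\ge 0$ on $\R_d[\ux]_{n-1}$ gives $L(q^2)\ge 0$ for all $q\in\R_d[\ux]_n$, i.e.\ $H_n(L)\succeq 0$. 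This lets me form the finite-dimensional Hilbert space $\cD_L=\R_d[\ux]_n/\cN_L$ with inner product $\langle [f],[g]\rangle=L(fg)$, of dimension $r$ by Proposition~\ref{propHankelL}(iii).

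The decisive consequence of flatness is that every class in $\cD_L$ has a representative of degree $\le n-1$. I would prove this by noting that for a positive semidefinite form the radical coincides with the null vectors, so for $f\in\R_d[\ux]_{n-1}$ the conditions $f\in\cN_L$ and $f\in\cN_{L_0}$ both reduce to $L(f^2)=0$; hence $\cN_L\cap\R_d[\ux]_{n-1}=\cN_{L_0}$, and the inclusion $\R_d[\ux]_{n-1}\hookrightarrow\R_d[\ux]_n$ induces an injection $\cD_{L_0}\hookrightarrow\cD_L$, which is an isomorphism because both spaces have dimension $r$. With degree $\le n-1$ representatives available, I would define candidate multiplication operators $X_1,\dots,X_d$ on $\cD_L$ by $X_j[f]:=[x_j f]$ for $f\in\R_d[\ux]_{n-1}$, noting that $x_j f\in\R_d[\ux]_n$ so that $[x_j f]\in\cD_L$. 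Symmetry $\langle X_j[f],[g]\rangle=L(x_j fg)=\langle [f],X_j[g]\rangle$ is then immediate.

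The hard part will be the well-definedness of the $X_j$, i.e.\ showing $g\in\cN_{L_0}\Rightarrow x_j g\in\cN_L$, together with their commutativity. This is exactly the point where a positive functional is weaker than a moment functional: Corollary~\ref{pqidealpro}(i) only yields $x_j g\in\cN_L$ when $x_j g\in\R_d[\ux]_{n-1}$, whereas here $x_j g$ may have full degree $n$, and the naive Cauchy--Schwarz estimate for $L((x_j g)^2)=L(g\cdot x_j^2 g)$ fails because $x_j^2 g$ has degree $n+1$, outside the range where positivity is available. Overcoming this degree barrier is the whole content of the flat extension theorem; I would resolve it through the block-matrix (Smul'jan) description of flat positive semidefinite extensions, writing
\[
H_n(L)=\begin{pmatrix} H_{n-1}(L) & B\\ B^{T} & C\end{pmatrix},
\]
using $\rank H_n(L)=\rank H_{n-1}(L)$ to produce $W$ with $B=H_{n-1}(L)\,W$ and $C=W^{T}H_{n-1}(L)\,W$, and reading off from these identities the relations $x^\gamma\equiv p_\gamma\pmod{\cN_L}$ with $p_\gamma\in\R_d[\ux]_{n-1}$ for every monomial of degree $n$. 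The same relations, propagated one degree at a time, yield the unique rank-preserving extension of $H_n(L)$ to $H_{n+1}(L)$; checking that this extension is again a symmetric, Hankel-structured moment matrix is equivalent to the commutativity of the induced $X_j$, and this consistency is precisely what flatness forces. Iterating gives a positive $\tilde L$ on all of $\R_d[\ux]$ with $\rank H(\tilde L)=r$.

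Finally I would invoke Theorem~\ref{fullmpfiniterank} for $\tilde L$ to obtain a unique representing measure $\mu$ with exactly $r$ atoms and $\supp\mu=\cV_{\tilde L}$; restricting to degrees $\le 2n$ shows that $\mu$ represents $L$. The rank bound is immediate, since $r=\rank H_{n-1}(L)$ cannot exceed the size $\binom{n-1+d}{d}=\dim\R_d[\ux]_{n-1}$ of $H_{n-1}(L)$. For uniqueness of the representing measure of $L$ itself, I would use Proposition~\ref{propHankelL}(v): any representing measure $\nu$ of $L$ satisfies $\supp\nu\subseteq\cV_L$, a finite set of at most $r$ points, and matching the rank-$r$ moment data then determines the weights uniquely (an invertible Vandermonde-type system on the distinct atoms), so $\nu=\mu$.
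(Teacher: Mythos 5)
Your proposal is correct and takes essentially the same route as the paper's proof (the Curto--Fialkow argument of [MP, Theorem~17.37], which the paper only sketches): use flatness to extend $L$ degree by degree to a positive functional $\tilde L$ on all of $\R_d[\ux]$ with $\rank H(\tilde L)=r$, invoking Proposition~\ref{flatpositive} for positivity along the way, and then apply Theorem~\ref{fullmpfiniterank} to obtain the unique $r$-atomic representing measure. The one caveat is that the decisive consistency step --- that the Smul'jan-type rank-preserving extension stays Hankel, equivalently that the induced multiplication operators commute, which you assert ``flatness forces'' --- is exactly the technical heart that the paper itself defers to [MP], so your outline matches the paper's proof in both structure and level of detail.
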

\begin{proof} 
 {[MP, Theorem~17.37]}.
$\hfill \qed$
\end{proof}

 Idea of proof: The flatness  is used to extend $L$ to a  functional $\tilde{L}$ on $\R_d[\ux]$ which is flat with respect to $\R_d[\ux]_{n-1}$. This is the crucial part of the proof. By Proposition \ref{flatpositive}, $\tilde{L}$ is positive. Since $\rank\, \tilde{L}$ is finite, $\tilde{L}$ is a moment functional by Theorem \ref{fullmpfiniterank}. 

\smallskip

 The next  theorem  deals with the \emph{$\cK$-moment problem}, where 
\begin{align}\label{K_f}
 K
 :=K(\mathsf{f})=\{x\in \R^d: f_1(x)\geq 0,\dots,f_k(x)\geq 0\}.
\end{align}

\begin{theorem}\label{flatsemialgebracset}
Let $L$ be linear functional on $\R_d[\ux]_{2n}$ and $r:={\rank}\, H_n(L)$. Then the following are equivalent:
\begin{itemize}
\item[\em (i)]~
 $L$ is a   $\cK$-moment functional which has an $r$-atomic representing measure  with all atoms in $\cK$.
\item[\em (ii)]~  $L$ extends to a linear functional $\tilde{L}$ on $\R_d[\ux]_{2(n+m)}$ such that 
\[
 \rank H_{n+m}(\tilde{L})
 =\rank H_n(L) ~~~\text{ and }~~~\tilde{L}(f_jp^2)\geq 0
\]
 for  $p\in \R_d[\ux]_n$ and $j=0,\dots,k, f_0=1$,  $m:=\max \{1, \deg (f_j): j=1,\dots,k\}.$
 \end{itemize}
 \end{theorem}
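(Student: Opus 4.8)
The plan is to prove the two implications separately, using the flat extension theorem (Theorem \ref{flatmpr2nd}) as the engine for the hard direction. For the easy direction (i)$\Rightarrow$(ii): given an $r$-atomic representing measure $\mu=\sum_{i=1}^r c_i\delta_{x_i}$ with all $x_i\in\cK$, $c_i>0$, and $r=\rank H_n(L)$, I would simply define $\tilde L(p):=\int p\,d\mu$ on $\R_d[\ux]_{2(n+m)}$. This is legitimate since a finitely atomic measure has all moments, and it extends $L$ because $\mu$ represents $L$ on $\R_d[\ux]_{2n}$. The positivity condition is immediate: for $p\in\R_d[\ux]_n$ and $0\le j\le k$ one has $\deg(f_jp^2)\le m+2n\le 2(n+m)$, so $\tilde L(f_jp^2)=\sum_i c_i f_j(x_i)p(x_i)^2\ge 0$ because $f_j(x_i)\ge 0$ on $\cK$ (and $f_0=1$). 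For the rank condition, write $H_{n+m}(\tilde L)=\sum_i c_i\,v_i v_i^{T}=VDV^{T}$, where $v_i$ is the vector of monomials $x^\alpha$, $|\alpha|\le n+m$, evaluated at $x_i$ and $D=\mathrm{diag}(c_i)\succ 0$; its rank equals $\dim\Lin\{v_i\}$. Since $\rank H_n(L)=r$ forces the degree-$\le n$ truncations of the $v_i$ to be linearly independent, the full vectors $v_i$ are too, giving $\rank H_{n+m}(\tilde L)=r=\rank H_n(L)$.

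For (ii)$\Rightarrow$(i), the first step is to observe that $H_n(L)=H_n(\tilde L)$ is a principal submatrix of each $H_{n+i}(\tilde L)$, so the ranks form a nondecreasing chain pinched between $r$ and $r$; hence $\rank H_{n+i}(\tilde L)=r$ for all $0\le i\le m$. The second step is to upgrade positivity. We are given $\tilde L(p^2)\ge 0$ only for $p\in\R_d[\ux]_n$ (take $j=0$), but the constant ranks make $\tilde L$ flat with respect to each lower degree. Applying Proposition \ref{flatpositive} with $\cB=\R_d[\ux]_{n+i}$ and $\cA=\R_d[\ux]_{n+i+1}$ successively for $i=0,1,\dots,m-1$ propagates positivity upward, yielding $\tilde L(p^2)\ge 0$ for all $p\in\R_d[\ux]_{n+m-1}$. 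Now $\tilde L$ on $\R_d[\ux]_{2(n+m)}$ satisfies exactly the hypotheses of the flat extension theorem at level $N=n+m$: positivity on $\R_d[\ux]_{N-1}$ and $\rank H_N(\tilde L)=\rank H_{N-1}(\tilde L)=r$. It therefore produces a unique $r$-atomic measure $\mu=\sum_{i=1}^r c_i\delta_{x_i}$ representing $\tilde L$, hence representing $L$.

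It remains to place the atoms in $\cK$, which is where the $f_j$-positivity is finally used. Since $\rank H_n(L)=r$ equals the number of atoms, the monomial vectors $(x^\alpha)_{|\alpha|\le n}$ at the points $x_1,\dots,x_r$ are linearly independent, so the evaluation functionals $p\mapsto p(x_i)$ on $\R_d[\ux]_n$ are linearly independent; by duality one can interpolate, choosing for each $i_0$ a polynomial $p\in\R_d[\ux]_n$ with $p(x_{i_0})=1$ and $p(x_i)=0$ for $i\ne i_0$. Then for each $j\in\{1,\dots,k\}$, $0\le \tilde L(f_jp^2)=\sum_i c_i f_j(x_i)p(x_i)^2=c_{i_0}f_j(x_{i_0})$, and $c_{i_0}>0$ gives $f_j(x_{i_0})\ge 0$. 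Hence every atom lies in $\cK=K(\mathsf{f})$, establishing (i).

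I expect the main obstacle to be the positivity-propagation step, which must be executed carefully across the $m$-step gap rather than in a single flat extension; once positivity on $\R_d[\ux]_{n+m-1}$ is secured, the flat extension theorem does the heavy lifting and the interpolation argument is elementary. A secondary point requiring care is the degree bookkeeping, so that $f_jp^2$ always lies in the domain $\R_d[\ux]_{2(n+m)}$ of $\tilde L$.
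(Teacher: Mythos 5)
Your proof is correct. The paper itself offers no proof of this theorem (it only cites [MP, Theorem~17.38]), and your argument — (i)$\Rightarrow$(ii) by integrating against the atomic measure and noting that linear independence of the degree-$\le n$ monomial vectors at the atoms lifts to the degree-$\le n+m$ vectors; (ii)$\Rightarrow$(i) by pinching the ranks along the chain of principal submatrices, propagating positivity step by step via Proposition~\ref{flatpositive}, applying the flat extension Theorem~\ref{flatmpr2nd} at level $n+m$, and finally forcing the atoms into $\cK$ by Lagrange interpolation with polynomials from $\R_d[\ux]_n$ against the localized positivity $\tilde{L}(f_jp^2)\geq 0$ — is exactly the standard route of the cited source, assembled correctly from the results stated in this lecture (Theorem~\ref{flatmpr2nd}, Propositions~\ref{flatpositive} and~\ref{fiitehankelatomic}).
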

\begin{proof}
 {[MP, Theorem~17.38]}.
$\hfill \qed$
\end{proof}

 This result says that \emph{all} $\cK$-moment functionals on $\R_d[\ux]_{2n}$  can be obtained by flat extensions to some appropriate larger space $\R_d[\ux]_{2(n+m)}$.

\section{Hankel matrices of functionals with finitely atomic measures}

By the Richter-Tchakaloff theorem \ref{richter}, each moment functional on $\cA^2$ has a finitely atomic representing measure. Therefore, 
 the corresponding Hankel matrices are of particular interest.

In this Section, we suppose $\mu$ is a finitely atomic measure 
\begin{align}\label{defimusignes}
\mu=\sum_{j=1}^k c_j\delta_{x_j},\quad\text{ where }~~~ c_j>0,~ x_j\in \R^d~~~\text{ for }~~j=1,\dots,k,
\end{align}
and $L$  is the corresponding moment functional on ${\cA}^2$:
\begin{align}\label{defLsigenesmeas}
L(f)=\int f(x)\, d\mu= \sum_{j=1}^k c_j f(x_j), ~~f\in {\cA}^2.
\end{align}
Clearly, $\mu$ is $k$-atomic if and only if the points $x_j$ are pairwise distinct. 

For $x\in \R^d$, let $\gs_{{\sN}}(x)=\gs(x)$ denote the column vector $(x^{\alpha})_{\alpha\in {\sN}}$. Note that $\gs_{\sN}(x)$  is the moment vector of the delta measure $\delta_x$ for $\cA$, not for ${\cA}^2$!

\begin{proposition}\label{fiitehankelatomic} 
For the Hankel matrix $H(L)$ we have
\begin{align}\label{Hankelkatomicrep}
&H(L)=\sum_{j=1}^k \,c_j \gs_{\sN}(x_j)\, \gs_{\sN}(x_j)^T,\\ &{\rank}\, H(L)\leq k=|{\supp}\, \mu|\leq |\cV_L|,
\label{rankleqk}
\end{align}
${\rank}\, H(L)=k$ if and only if  the vectors\, $\gs_{\sN}({x_1}),\dots, \gs_{\sN}({x_k})$\,  are linearly independent.
\end{proposition}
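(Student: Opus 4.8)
The plan is to extract all three assertions from the explicit rank-one decomposition (\ref{Hankelkatomicrep}), so the first task is to establish that identity by comparing entries. For $\alpha,\beta\in\sN$, the definition of $H(L)$ together with (\ref{defLsigenesmeas}) gives
\[
[H(L)]_{\alpha,\beta}=L(x^{\alpha+\beta})=\sum_{j=1}^k c_j\, x_j^{\alpha+\beta}=\sum_{j=1}^k c_j\, x_j^{\alpha}x_j^{\beta},
\]
and since $x_j^{\alpha}x_j^{\beta}$ is exactly the $(\alpha,\beta)$-entry of the outer product $\gs_\sN(x_j)\,\gs_\sN(x_j)^T$, summing over $j$ yields (\ref{Hankelkatomicrep}). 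It is convenient to record this in factored form: writing $V$ for the $|\sN|\times k$ matrix whose $j$-th column is $\gs_\sN(x_j)$ and $D$ for the diagonal matrix with diagonal entries $c_1,\dots,c_k$, the decomposition reads $H(L)=V D V^T$.

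For the chain of inequalities (\ref{rankleqk}), this factorization immediately gives $\rank H(L)\le\rank V\le k$, since $V$ has only $k$ columns. The middle equality $k=|\supp\mu|$ holds because $\mu$ is $k$-atomic, so its support is the set of the $k$ pairwise distinct points $x_1,\dots,x_k$. For the final inequality I would invoke Proposition \ref{propHankelL}(v): as $L$ is a moment functional with representing measure $\mu$, we have $\supp\mu\subseteq\cV_L$, and therefore $k=|\supp\mu|\le|\cV_L|$.

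The decisive step is the equivalence characterizing equality $\rank H(L)=k$. Here I would exploit that $D$ is positive definite (all $c_j>0$), so $D=D^{1/2}D^{1/2}$ with $D^{1/2}$ invertible, whence $H(L)=\bigl(V D^{1/2}\bigr)\bigl(V D^{1/2}\bigr)^T$. Using the elementary identity $\rank(M M^T)=\rank M$ for real matrices, applied to $M=V D^{1/2}$, together with the invertibility of $D^{1/2}$, I obtain $\rank H(L)=\rank\bigl(V D^{1/2}\bigr)=\rank V$. Consequently $\rank H(L)=k$ exactly when $V$ has full column rank, i.e.\ when the vectors $\gs_\sN(x_1),\dots,\gs_\sN(x_k)$ are linearly independent.

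No step is genuinely difficult; the only point needing a moment's care is the identity $\rank(M M^T)=\rank M$, which I would justify through $\ker(M M^T)=\ker M$ (if $M M^T v=0$ then $\|M^T v\|^2=v^T M M^T v=0$, forcing $M^T v=0$). Everything then reduces to linear algebra over $\R$, with the moment-problem content entering only once, through Proposition \ref{propHankelL}(v) in the last inequality of (\ref{rankleqk}).
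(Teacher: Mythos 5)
Your proof is correct. The decomposition (\ref{Hankelkatomicrep}) and the chain (\ref{rankleqk}) are handled just as in the paper: entrywise comparison of $L(x^{\alpha+\beta})$ with the outer products, counting the $k$ rank-one summands, and Proposition \ref{propHankelL}(v) for $|{\supp}\,\mu|\leq|\cV_L|$. Where you genuinely differ is the equality criterion. The paper computes the action of $H(L)$ on coefficient vectors, $H(L)\vec{f}=\sum_{j} c_j\, l_{x_j}(f)\,\gs_{\sN}(x_j)$, observes that $\im H(L)$ lies in the span of the $\gs_{\sN}(x_j)$, and converts ${\rank}\, H(L)=k$ into linear independence of the point evaluations $l_{x_1},\dots,l_{x_k}$ on $\cA$, which is then identified with linear independence of the vectors $\gs_{\sN}(x_j)$. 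You instead factor $H(L)=\bigl(VD^{1/2}\bigr)\bigl(VD^{1/2}\bigr)^T$ and invoke $\rank(MM^T)=\rank M$. Your route is more modular: positivity of the $c_j$ enters only through invertibility of $D^{1/2}$, and you get the unconditional, slightly stronger statement $\rank H(L)=\rank V=\dim \Lin\{\gs_{\sN}(x_1),\dots,\gs_{\sN}(x_k)\}$, from which the equivalence is immediate. The paper's route buys an explicit description of the image of $H(L)$ and the dictionary between the rank and independence of the functionals $l_{x_j}$ on $\cA$, which the surrounding text exploits (see the example with $l_{-1},l_0,l_1$ right after the proposition). One small slip to fix: your kernel argument establishes $\ker(MM^T)=\ker(M^T)$, not $\ker(MM^T)=\ker M$ (these sit in different spaces when $|\sN|\neq k$); the conclusion $\rank(MM^T)=\rank(M^T)=\rank M$ is of course unaffected.
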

\begin{proof} For  $x\in \R^d$ the $(\alpha,\beta)$-entry of 
$\gs(x) \gs(x)^T$ is\, $x^{\alpha+\beta}.$ Thus, $\gs(x) \gs(x)^T$  is  the Hankel matrix of the point evaluation $l_x$ on $\cA^2$. Since  $L=\sum_j c_j l_{x_j}$, this gives (\ref{Hankelkatomicrep}).

By (\ref{Hankelkatomicrep}),  $H(L)$ is a sum of  $k$  rank one matrices\, $c_j \gs(x_j)\gs(x_j)^T$, so ${\rank}\, H(L)\leq k$. Since ${\supp}\, \mu \subseteq \cV_L$, it is obvious that $|{\supp}\, \mu|\leq |\cV_L|.$
 
Now we prove the last assertion.

If $f=\sum_{\alpha \in {\sN}} f_\alpha x^\alpha \in {\cA}$ and    $e_\alpha=(\delta_{\alpha,\beta})_{\beta \in {\sN}}$ is the $\alpha$-th basis vector, then  
\begin{align}\label{rangeH(L)}
H(L)\vec{f}=\sum_{\alpha\in {\sN}} f_\alpha H(L)e_\alpha=
\sum_{\alpha\in {\sN}} f_\alpha \sum_{j=1}^k c_j x_j^\alpha \gs(x_j)=\sum_{j=1}^k c_jl_{x_j}(f) \gs(x_j).
\end{align}
 Since $\im H(L)$ is contained in the span of $\gs(x_1),\dots,\gs(x_k)$, it follows  from (\ref{rangeH(L)}) that  $\rank H(L)=\dim\im H(L)$ is\, $k$\, if and only if the point evaluations $l_{x_1},\dots,l_{x_k}$ on ${\cA}$ are linearly independent.
 Clearly,   $\sum_j a_j\gs(x_j)=0$ if and only if $\sum_j a_j l_{x_j}=0$ on ${\cA}$. Hence ${\rank}\, H(L)=k$ if and only if\,  $\gs({x_1}),\dots, \gs({x_k})$\,  are linearly independent. 
$\hfill \qed$ \end{proof}

 \begin{corollary}
For each moment functional $L$ on ${\cA}^2$ we have
\begin{align}\label{rankleqvlcard}
{\rank}\, H(L)\leq |\cV_L|.
\end{align}
 \end{corollary}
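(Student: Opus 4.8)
The plan is to reduce the corollary directly to the preceding Proposition~\ref{fiitehankelatomic} by first producing a \emph{finitely atomic} representing measure. Since $\sN$ is finite, both $\cA$ and $\cA^2=\Lin\{x^\beta:\beta\in\sN+\sN\}$ are finite-dimensional. Thus I would apply the Richter--Tchakaloff Theorem~\ref{richter} to the space $V=\cA^2$: as $L$ is by hypothesis a moment functional on $\cA^2$, it admits a representing measure, and Theorem~\ref{richter} upgrades this to a $k$-atomic representing measure $\mu=\sum_{j=1}^k c_j\delta_{x_j}$ with $c_j>0$ and pairwise distinct points $x_j\in\R^d$, where $k\leq\dim\cA^2$.

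With such a finitely atomic $\mu$ in hand, the chain of inequalities~(\ref{rankleqk}) of Proposition~\ref{fiitehankelatomic} applies verbatim and gives
\[
\rank H(L)\leq k=|\supp\mu|\leq|\cV_L|,
\]
the last step using the inclusion $\supp\mu\subseteq\cV_L$ furnished by Proposition~\ref{propHankelL}(v). Reading off the outer terms yields the asserted bound~(\ref{rankleqvlcard}).

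There is essentially no obstacle beyond the two results already in place; the only point requiring care is that one must pass to a finitely atomic measure \emph{before} invoking~(\ref{rankleqk}), since Proposition~\ref{fiitehankelatomic} is formulated only for finitely supported measures. An arbitrary representing measure of $L$ need not have finite support, and it is precisely the Richter--Tchakaloff theorem that guarantees a finitely atomic one always exists. When $\cV_L$ is infinite the bound is of course vacuous, but the argument is indifferent to this.
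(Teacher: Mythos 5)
Your proof is correct and follows essentially the same route as the paper: invoke the Richter--Tchakaloff theorem to obtain a finitely atomic representing measure, then conclude via the chain \eqref{rankleqk} of Proposition \ref{fiitehankelatomic}, whose last inequality rests on $\supp\mu\subseteq\cV_L$ from Proposition \ref{propHankelL}(v). The remark that passing to a finitely atomic measure must precede the use of \eqref{rankleqk} is exactly the point the paper's proof also relies on.
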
 
 \begin{proof}
By Theorem  \ref{richter} and  Proposition \ref{propHankelL}(v),\, $L$ has   a finitely atomic representing measure $\mu$ such that ${\supp}\, \mu\subseteq \cV_L.$ Then ${\rank}\, H(L)\leq |\supp \, \mu|$
by (\ref{rankleqk}), which yields  (\ref{rankleqvlcard}). 
$\hfill \qed$ \end{proof}
\begin{example}
Let $d=1, {\sN}=\{0,1\}$, so that ${\cA}=\{a+bx:a,b\in \R\}$. The functionals $l_{-1},l_0,l_1$ are linearly independent on ${\cA}^2$, but they are linearly dependent on $\cA$, since $2l_0=l_{-1}+l_1$ on $\cA$. For $L=l_{-1}+l_0+l_1$ we have ${\rank}\, H(L)=2$.
$\hfill \circ$ 
\end{example} 
 
 The last theorem in this lecture is only a slight variation of Theorem \ref{flatsemialgebracset} in the case $\cK=\R^d$. It says that \emph{all}  moment functionals on $\R_d[\ux]_{2n}$ have extensions to  some 
$\R_d[\ux]_{2n+2k}$ which are flat with respect to $\R_d[\ux]_{2n+2k-2}$.
\begin{theorem} A linear functional $L$ on $\R_d[\ux]_{2n}$ is a  moment functional if and only if  there exist  a number $k\in \N$ and an extension of $L$ to a positive linear functional  $\tilde{L}$ on $\R_d[\ux]_{2n+2k}$ such that 
$ {\rank}~ H_{n+k}(\tilde{L})={\rank}~ H_{n+k-1}(\tilde{L}).$
\end{theorem}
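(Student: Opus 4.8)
The plan is to prove the two implications separately, using the flat extension theorem (Theorem \ref{flatmpr2nd}) for the backward direction and the Richter--Tchakaloff theorem (Theorem \ref{richter}) together with the Hankel-matrix factorization of Proposition \ref{fiitehankelatomic} for the forward direction.

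For the ``if'' part, suppose $\tilde L$ is a positive extension of $L$ to $\R_d[\ux]_{2(n+k)}$ with $\rank H_{n+k}(\tilde L)=\rank H_{n+k-1}(\tilde L)$. I would apply Theorem \ref{flatmpr2nd} with $n$ replaced by $N:=n+k$: positivity of $\tilde L$ yields $\tilde L(p^2)\ge 0$ for all $p\in\R_d[\ux]_{N-1}$, and the assumed equality of ranks is exactly the flatness hypothesis, so $\tilde L$ is a moment functional with some representing measure $\mu$. Restricting the identity $\tilde L(f)=\int f\,d\mu$ to $f\in\R_d[\ux]_{2n}\subseteq\R_d[\ux]_{2N}$, on which $\tilde L$ agrees with $L$, shows that $L$ is itself a moment functional.

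For the ``only if'' part, suppose $L$ is a moment functional. By Theorem \ref{richter} (see also Corollary \ref{carrichter}) it has a finitely atomic representing measure $\mu=\sum_{j=1}^m c_j\delta_{x_j}$ with $c_j>0$ and pairwise distinct atoms $x_j$. I would set $\tilde L(f):=\int f\,d\mu=\sum_{j=1}^m c_j f(x_j)$ on all of $\R_d[\ux]$ and restrict it to $\R_d[\ux]_{2(n+k)}$ for a $k$ fixed below. This $\tilde L$ extends $L$ and is positive, since $\tilde L(p^2)=\sum_j c_j p(x_j)^2\ge0$. To secure the rank equality, observe that by Proposition \ref{fiitehankelatomic}, with $N:=n+k$ and $\gs_N(x)$ the vector of monomials of degree $\le N$, the Hankel matrix factors as $H_N(\tilde L)=\sum_{j=1}^m c_j\,\gs_N(x_j)\gs_N(x_j)^T$; hence $\rank H_N(\tilde L)\le m$, with equality precisely when $\gs_N(x_1),\dots,\gs_N(x_m)$ are linearly independent.

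The decisive step is therefore to guarantee this linear independence already at degree $N-1$. Since the $x_j$ are distinct, standard Lagrange-type interpolation produces, for each $i$, a polynomial $p_i=\prod_{j\ne i}\ell_{ij}$ of degree $m-1$, where each $\ell_{ij}$ is an affine linear form with $\ell_{ij}(x_i)=1$ and $\ell_{ij}(x_j)=0$ (take a coordinate separating $x_i$ and $x_j$ and normalize); then $p_i(x_i)=1$ and $p_i(x_\ell)=0$ for $\ell\ne i$. These $p_i$ are dual to the evaluations $l_{x_1},\dots,l_{x_m}$, so the latter are linearly independent on $\R_d[\ux]_{m-1}$, hence on every $\R_d[\ux]_{L}$ with $L\ge m-1$. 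Choosing $k:=\max\{1,\,m-n\}$ makes $N-1=n+k-1\ge m-1$, so $\rank H_{N-1}(\tilde L)=m$; since $H_{N-1}(\tilde L)$ is a principal submatrix of $H_N(\tilde L)$, monotonicity of rank gives $m=\rank H_{N-1}(\tilde L)\le\rank H_N(\tilde L)\le m$, forcing $\rank H_{N}(\tilde L)=\rank H_{N-1}(\tilde L)$. I expect the interpolation/linear-independence step to be the only genuine content; everything else is bookkeeping with the already-established Hankel-matrix machinery and the flat extension theorem.
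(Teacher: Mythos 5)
Your proof is correct, and its skeleton coincides with the paper's: the ``if'' direction is exactly the paper's appeal to the flat extension theorem (Theorem \ref{flatmpr2nd}), and the ``only if'' direction starts the same way, taking a finitely atomic representing measure $\mu$ via the Richter--Tchakaloff theorem, extending $L$ by integration against $\mu$, and invoking the bound $\rank H_N(\tilde L)\le|\supp\mu|$ coming from the factorization in Proposition \ref{fiitehankelatomic}. Where you genuinely diverge is the final step. The paper finishes softly: since the ranks $\rank H_m(\tilde L)$ form a nondecreasing integer sequence (each $H_{m-1}(\tilde L)$ is a principal submatrix of $H_m(\tilde L)$) bounded above by $|\supp\mu|$, they must coincide at two consecutive indices beyond $n$ --- a pigeonhole argument that produces \emph{some} $k$ but gives no control over it. You instead show that the rank actually attains the value $m=|\supp\mu|$ as soon as the degree reaches $m-1$, by building Lagrange-type dual polynomials as products of affine forms separating the distinct atoms; combined with the upper bound $\rank H_{n+k}(\tilde L)\le m$ and rank monotonicity, this forces the required equality and pins down an explicit admissible $k=\max\{1,m-n\}$ (and, via Richter's bound $m\le\dim\R_d[\ux]_{2n}$, an a priori bound on $k$ depending only on $n$ and $d$). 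Both arguments are complete; yours buys an effective, quantitative version of the statement at the cost of the extra interpolation construction, while the paper's is shorter but non-constructive.
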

\begin{proof} The  if part follows from the flat extension Theorem \ref{flatmpr2nd}. Conversely, suppose $L$ is moment functional. By Theorem \ref{richter}, is has a finitely atomic representing measure $\mu$. Define $\tilde{L}(f)=\int f d\mu$ on $\R_d[\ux]$. Then ${\rank}\, H_m(\tilde{L})\leq |{\supp}~\mu|$  by (\ref{rankleqk}). Hence   ${\rank}\, H_m(\tilde{L})= {\rank}\,H_{m-1}(\tilde{L})$ for some $m>n$.
\end{proof}

 We summarize some of the preceding results.
 The following  are \emph{necessary} conditions for  a linear functional $L$ on ${\cA}^2$ to be a moment functional:

\noindent  $\bullet$~ the \emph{positivity condition:}
\begin{align}\label{poscondi}
L(f^2)\geq 0\quad\text{ for }~~~ f\in {\cA},
\end{align}
$\bullet$~ the \emph{rank-variety condition:} 
\begin{align}\label{rankconKR^d}
{\rank}\, H(L)\leq |\cV_L|,
\end{align}
$\bullet$~ the \emph{consistency condition}:
\begin{align}\label{consistency1} 
p\in \cN_L ,q\in {\cA}~~\text{ and }~~ pq\in {\cA}\quad\text{ imply }~~~  pq\in \cN_L.
\end{align}
 Conditions (\ref{rankconKR^d}) and (\ref{consistency1}) follow from  (\ref{rankleqvlcard}) and Proposition \ref{idealLN}. 
\smallskip

Problem:
\textit{When are these (and may be, additional) conditions  sufficient for being a moment functional?}
\smallskip

 One result in this direction is the following: Curto and Fialkow (2005) have shown for the polynomials $\R_2[\ux]_{2n} $ in 2 variables that condition (\ref{poscondi}) and (\ref{consistency1}) are sufficient if there is a polynomial  of degree at most $2$ in the kernel $\cN_L$. 

\begin{definition} A moment functional on $\cA^2$ is called \emph{minimal} if ${\rank}\, H( L)=|\cV_L|$.
\end{definition}
 
It is not difficult to show that each minimal moment functional has a unique representing measure and this measure is ${\rank}\, H( L)$-atomic.

\begin{example}\label{vlhllage}  (\textit{An example for which~ $|\cV_L|>\rank H(L)$}\,)\\
 Suppose that $d=2,n\geq 3,$ and ${\cA}=\R[x_1,x_2]_{n}.$ Set
\begin{align*}
p(x_1,x_2)=(x_1-\alpha_1)\cdots(x_1-\alpha_n), ~~~  q(x_1,x_2)=(x_2-\beta_1)\cdots(x_2-\beta_n),
\end{align*}
where  $\alpha_1<\dots<\alpha_n$,\,   $\beta_1<\dots< \beta_n.$
 Then\, $\cZ(p)\cap \cZ(q)=\{ (\alpha_i,\beta_j): i,j=1,\dots,n\}$.

Define an $n^2$-atomic measure $\mu$ such that the atoms are the points of  $\cZ(p)\cap \cZ(q)$   and a  moment functional on ${\cA}^2$\, by $L(f)=\int f\, d\mu$.
Since $L(p^2)=L(q^2)=0$, we have $p, q\in \cN_L$, so that ${\rank}\, H(L)={\dim}\, ({\cA}/\cN_L)\leq\binom{n+2}{2}- 2 $. Then~ $|\cV_L|=n^2$, so
\begin{align}\label{dffrVLHL}
 |\cV_L|- {\rank}\, H(L) \geq n^2-\binom{n+2}{2}+2
 =\binom{n-1}{2}~.
\end{align}
This  shows that the difference\,  $|\cV_L|- {\rank}\, H(L)$\, can be arbitrarily large. $\hfill \circ$
\end{example}


\makeatletter
\renewcommand{\@chapapp}{Lecture}
\makeatother

\setcounter{tocdepth}{3}

\chapter{Truncated multidimensional moment problem: core variety and moment cone}

Abstract:\\
\noindent
\textit{The  core variety is defined and basic results about the core variety are obtained.
Results on the structure of  the moment cone are discussed.}
\bigskip

 In this Lecture, we retain the setup stated at the beginning of Lecture 8.
 Recall that $E$ is a \emph{finite-dimensional} vector space of real-valued continuous functions on a locally compact space $\cX$.

\section{Strictly positive linear functionals}
 As noted in  Lecture 8 (see Example \ref{semidefnotmp} therein), $E_+$-positive  functionals are not necessarily moment functionals.
 But \emph{strictly} positive functionals are, as Proposition \ref{existencemfstrict} below shows.

\begin{definition}\label{strictposfunc}
A linear functional $L$ on $E$ is called {\em strictly $E_+$-positive}\index{Functional! strictly $E_+$-positive} if
\begin{align}\label{strictposL}
 L(f)>0\quad\text{ for all }~~~ f\in E_+,~ f\neq 0.
\end{align}
\end{definition}

It is not difficult to verify that $L$ is strictly positive if and only if $L$ is an inner point of the dual wedge $(E_+)^{\smallwedge}$ in $E^*$.

\begin{proposition}\label{existencemfstrict}
Suppose $L$ a strictly $E_+$-positive linear functional on $E$.
Then $L$ is a moment functional. Further,  for each $x\in \cX$, there is a finitely atomic representing measure $\nu$ of $L$ such that\, $\nu(\{x\})>0.$  
\end{proposition}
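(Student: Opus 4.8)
The plan is to combine convex duality with the Richter--Tchakaloff theorem. Let $\cC\subseteq E^*$ denote the set of all moment functionals on $E$. By Corollary~\ref{carrichter} every moment functional is a finite nonnegative combination of point evaluations $l_x$; conversely each such combination $\sum_j c_j l_{x_j}$ with $c_j>0$ is represented by the atomic measure $\sum_j c_j\delta_{x_j}$. Hence $\cC$ is exactly the convex cone generated by $S:=\{l_x:x\in\cX\}$. Assumption~\eqref{g-condition} guarantees that every representing measure is finite, since $\mu(\cX)\leq\int e\,d\mu=L(e)<\infty$, so these functionals really are moment functionals in the sense of the definition.

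First I would identify the closure of $\cC$ by bipolar duality. Under the natural pairing of $E$ and $E^*$ one has $S^{\smallwedge}=\{f\in E: l_x(f)=f(x)\geq 0\text{ for all }x\in\cX\}=E_+$, and therefore the bipolar theorem in the finite-dimensional space $E^*$ gives $(E_+)^{\smallwedge}=(S^{\smallwedge})^{\smallwedge}=\overline{\cC}$, the closed conic hull of $S$. By the remark preceding the proposition, $L$ is strictly $E_+$-positive precisely when $L$ is an interior point of $(E_+)^{\smallwedge}=\overline{\cC}$. I would then invoke the same basic fact from convex analysis already used in the proof of Theorem~\ref{richter}, namely that a convex set and its closure have the same interior points. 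Applying this to the convex cone $\cC$, whose closure has nonempty interior since it contains $L$, I conclude $L\in\Int(\cC)\subseteq\cC$. Thus $L$ is a moment functional, which settles the first assertion.

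For the second assertion I would fix $x\in\cX$. Since $l_x\in\cC$ and $L\in\Int(\cC)$, there is an $\varepsilon>0$ small enough that $L-\varepsilon\,l_x$ still lies in $\cC$, that is, $L-\varepsilon\,l_x$ is again a moment functional. By Corollary~\ref{carrichter} it admits a finitely atomic representing measure $\nu'$. Then $\nu:=\nu'+\varepsilon\,\delta_x$ is a finitely atomic measure representing $L$, because $L(f)=(L-\varepsilon\,l_x)(f)+\varepsilon\,l_x(f)=\int f\,d\nu'+\varepsilon f(x)=\int f\,d\nu$ for all $f\in E$, and clearly $\nu(\{x\})\geq\varepsilon>0$.

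The only genuinely non-routine point is the duality step establishing $(E_+)^{\smallwedge}=\overline{\cC}$ together with the passage from $\Int(\overline{\cC})$ to $\Int(\cC)$; once the moment cone is identified with the conic hull of point evaluations and its closure with the dual wedge, everything else reduces to the standard convex-analytic fact that a convex set and its closure share the same interior, and to a direct application of Richter--Tchakaloff. I expect the main care to be needed in confirming that the interior is nonempty (which is automatic here, as the hypothesis places $L$ in it) and in the bookkeeping of the pairing between $E$ and $E^*$.
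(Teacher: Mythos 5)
Your proof is correct and follows essentially the same route as the proof the paper defers to ([MP, Theorem~1.30]): identify the closure of the cone of moment functionals with the dual wedge $(E_+)^{\smallwedge}$ via bipolar duality, combine the remark that strict $E_+$-positivity means $L$ is an interior point of this wedge with the convex-analysis fact (already used in the proof of Theorem~\ref{richter}) that a convex set and its closure have the same interior, and then produce the atom at $x$ by splitting off $\varepsilon\, l_x$ and applying Corollary~\ref{carrichter} to $L-\varepsilon\, l_x$. I see no gaps.
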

\begin{proof}
 {[MP, Theorem~1.30]}.
$\hfill \qed$
\end{proof}

\section{The core variety}

Suppose $L$ is a moment functional on $E$. A natural question is to describe the set of points of $\cX$ which are atoms of some representing measure of $L$.
The idea to tackle this problem is the following simple fact.
\begin{lemma}\label{pospol}
 Let  $\mu$ be a representing measure for a moment functional $L$. If $f\in E$ satisfies  $f(x)\geq 0$ on $\supp\, \mu$ and  $L(f)=0$, then $\supp\, \mu$ is contained in the zero set $\cZ(f):=\{x\in \cX:f(x)=0\}$ of $f$.
\end{lemma}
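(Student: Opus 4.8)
The plan is to read off the integral representation and then invoke the elementary measure-theoretic principle that a nonnegative continuous function integrating to zero must vanish on the support of the measure. Since $\mu$ is a representing measure for $L$, we have $L(f)=\int_\cX f\,d\mu$, and because $f\in E\subseteq C(\cX;\R)$ the function $f$ is continuous. The hypotheses then give $\int_\cX f\,d\mu = L(f)=0$ together with $f\geq 0$ on $\supp\mu$.

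First I would restrict attention to $\supp\mu$: since $\mu$ assigns zero mass to $\cX\setminus\supp\mu$, the integral equals $\int_{\supp\mu} f\,d\mu$, and on this set $f\geq 0$ by hypothesis. Then I would argue by contradiction. Suppose there were a point $x_0\in\supp\mu$ with $f(x_0)>0$. By continuity of $f$ there is an open neighborhood $U$ of $x_0$ on which $f(x)>\tfrac12 f(x_0)>0$. By the defining property of the support, every open neighborhood of a support point has positive measure, so $\mu(U)>0$.

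Combining these, I would estimate
\begin{align*}
0=L(f)=\int_{\supp\mu} f\,d\mu \;\geq\; \int_{U\cap\,\supp\mu} f\,d\mu \;\geq\; \tfrac12\, f(x_0)\,\mu(U)\;>\;0,
\end{align*}
where the first inequality uses $f\geq 0$ on $\supp\mu$ to discard the remainder of the domain and the second uses the lower bound for $f$ on $U$ together with $\mu(U\cap\supp\mu)=\mu(U)$. This is the desired contradiction, so no such $x_0$ exists; that is, $f(x)=0$ for every $x\in\supp\mu$, which is exactly $\supp\mu\subseteq\cZ(f)$.

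There is no genuine obstacle here: the only points requiring a moment's care are the passage from $L(f)$ to the integral over $\supp\mu$ (using that the complement of the support is $\mu$-null) and the invocation of the characterizing property of support points (positive measure on neighborhoods), both of which are standard for Radon measures. Continuity of $f$, guaranteed because $E$ consists of continuous functions, is precisely what makes the neighborhood argument work.
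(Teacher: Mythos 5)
Your proof is correct and follows essentially the same argument as the paper: continuity of $f$ yields a neighborhood of the hypothetical point on which $f$ is bounded below by a positive constant, and integrating there contradicts $L(f)=0$. The only cosmetic difference is that you argue by contradiction using the fact that neighborhoods of support points have positive measure, whereas the paper argues contrapositively, deducing $\mu(U)=0$ and hence $x_0\notin\supp\mu$; the substance is identical.
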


\begin{proof} Suppose $x_0\in \cX$ and   $x_0\notin\cZ(f)$. Then $f(x_0)>0$. Since $f$ is continuous, there are an open neighborhood $U$ of $x_0$ and an $\varepsilon >0$ such that $f(x)\geq \varepsilon $ on $U$. Then 
\begin{align*} 0=\int_\cX f(x)\, d\mu\geq \int_{U}\, f(x) \,d\mu \geq \varepsilon \mu(U)\geq 0,\end{align*}
so that  $\mu(U)=0$. Therefore,  $x_0\notin \supp\, \mu$.
\qed \end{proof} 
 This core variety is defined by a repeated application of this 
idea. It was invented by L. Fialkow (2015).

Suppose $L$ is an arbitrary linear functional   on $E$  such that $L\neq 0$. We define inductively cones $\cN_k(L)$, $k\in \N,$ of $E$ and  subsets $\cV_j(L)$, $j\in \N_0,$ of $\cX$ by $\cV_0(L)=\cX$, 
\begin{align}\label{N_KLE}
 \cN_k(L)&:=\{ p\in E: L(p)=0,~~ p(x)\geq 0 ~~\text{ for }~~x\in \cV_{k-1}(L)\, \},\\
\cV_j(L)&:=\{ x\in \cX: p(x)=0~~\text{ for }~p\in \cN_j(L)\}.\label{V_JL}
\end{align}

\begin{definition}\label{corevariety} 
The {\em core variety}  $\cV(L)$ of the  functional $L$, $L\neq 0$, on $E$ is 
\begin{align}\label{corevarVLE}
\cV(L):=\bigcap_{j=0}^\infty \cV_j(L).
\end{align}
\end{definition}

 If $\R=\cX$ and $E$ is a subset $\R_d[\ux]$, then $\cV(L)$ is the zero set of real polynomials, that is, $\cV(L)$ is a \emph{real algebraic set}.

For instance, if $L$ is strictly positive, then $\cN_1(L)=\{0\}$ and therefore $\cV(L)=\cX$.

Some basic facts on these sets are collected in the next proposition. 
\begin{proposition}\label{propcorevariety}
\begin{itemize}
\item[\em (i)]\ $\cN_{j-1}(L)\subseteq \cN_{j} (L)$ and $\cV_j(L)\subseteq \cV_{j-1} (L)$ for $j\in \N.$ 
\item[\em (ii)]~ If $\mu$ is a representing measure of $L$, then\,  ${\supp}\, \mu \subseteq \cV (L).$
\item[\em (iii)]~ There exists a $k\in \N_0$ such that 
\begin{align}\label{termiantes}
\cX =\cV_0(L) \supsetneqq \cV_1(L)\supsetneqq ... \supsetneqq \cV_k(L)= \cV_{k+j}(L)=\cV(L),\quad j\in \N.
\end{align}
\end{itemize}
\end{proposition}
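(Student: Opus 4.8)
The plan is to handle the three parts in the order stated: parts (i) and (ii) by straightforward inductions that exploit how the families $\cN_j(L)$ and the sets $\cV_j(L)$ feed into one another, and part (iii)---the real content---by a finite-dimensionality argument. Throughout I would use the elementary fact that for a cone of functions the common zero set is unchanged by passing to its linear span, so that each $\cV_j(L)$ is the zero set of a linear subspace of $E$.

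For (i) I would prove the two inclusions simultaneously by induction on $j$. The base case $\cV_1(L)\subseteq\cV_0(L)=\cX$ is immediate. For the step, assuming $\cV_{j-1}(L)\subseteq\cV_{j-2}(L)$, I first note that a polynomial that is nonnegative on the larger set $\cV_{j-2}(L)$ is a fortiori nonnegative on the smaller set $\cV_{j-1}(L)$; since the constraint $L(p)=0$ is the same in both definitions, this yields $\cN_{j-1}(L)\subseteq\cN_j(L)$ (for $j\geq 2$, as $\cN_0$ is not defined). Passing to common zero sets reverses the inclusion: a point annihilating every element of the larger cone $\cN_j(L)\supseteq\cN_{j-1}(L)$ also annihilates every element of $\cN_{j-1}(L)$, so $\cV_j(L)\subseteq\cV_{j-1}(L)$, closing the induction. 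For (ii) I would show by induction on $j$ that $\supp\,\mu\subseteq\cV_j(L)$ for every $j\in\N_0$, so that intersecting over $j$ gives $\supp\,\mu\subseteq\cV(L)$. The case $j=0$ is trivial; for the step, assume $\supp\,\mu\subseteq\cV_{j-1}(L)$ and take $p\in\cN_j(L)$. Then $p\geq 0$ on $\cV_{j-1}(L)\supseteq\supp\,\mu$ and $L(p)=\int p\,d\mu=0$, so Lemma~\ref{pospol} forces $\supp\,\mu\subseteq\cZ(p)$; letting $p$ range over $\cN_j(L)$ says exactly that $\supp\,\mu\subseteq\cV_j(L)$.

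The substance of the proposition, and the step I expect to be the main obstacle, is the termination claim (iii): that the decreasing chain $\cV_0(L)\supseteq\cV_1(L)\supseteq\cdots$ becomes constant after finitely many strict drops. My plan is to linearize. Each $\cN_j(L)$ is a convex cone, and the common zero set of a cone equals that of its linear span $W_j:=\Lin\,\cN_j(L)$, a subspace of the finite-dimensional space $E$; thus $\cV_j(L)=\cZ(W_j)$ for $j\geq 1$. By part (i) the subspaces $W_j$ form an increasing chain, so $\dim W_j$ is nondecreasing and bounded by $\dim E$. The key point is that a strict drop $\cV_{j+1}(L)\subsetneq\cV_j(L)$ with $j\geq 1$ forces $\dim W_{j+1}>\dim W_j$, since $W_{j+1}=W_j$ would give $\cZ(W_{j+1})=\cZ(W_j)$, that is $\cV_{j+1}(L)=\cV_j(L)$. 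As the dimension can increase only finitely often, and the passage $\cV_0\to\cV_1$ can account for at most one additional drop, there is a smallest $k\in\N_0$ with $\cV_k(L)=\cV_{k+1}(L)$; minimality of $k$ together with part (i) makes the inclusions $\cV_0(L)\supsetneqq\cdots\supsetneqq\cV_k(L)$ strict.

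Finally I would verify that equality propagates, so that stabilization is genuine. If $\cV_k(L)=\cV_{k+1}(L)$, then $\cN_{k+1}(L)$ and $\cN_{k+2}(L)$ are cut out by nonnegativity on the \emph{same} set and the same condition $L(\cdot)=0$, hence coincide, giving $\cV_{k+2}(L)=\cZ(\cN_{k+2}(L))=\cZ(\cN_{k+1}(L))=\cV_{k+1}(L)$; inductively $\cV_{k+j}(L)=\cV_k(L)$ for all $j\in\N$. Since the chain is decreasing and eventually constant, $\cV(L)=\bigcap_{j}\cV_j(L)=\cV_k(L)$, which is exactly the asserted chain~(\ref{termiantes}).
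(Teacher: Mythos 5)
The paper states Proposition \ref{propcorevariety} without any proof (it is one of the few results in these notes given neither an argument nor an [MP] citation), so there is nothing to compare against; judged on its own, your proof is correct and complete. Parts (i) and (ii) are the expected simultaneous/straightforward inductions, with Lemma \ref{pospol} carrying the inductive step in (ii), and your linearization in (iii) --- replacing each cone $\cN_j(L)$ by its span $W_j$, using $\cZ(W_j)=\cV_j(L)$, and observing that by (i) the $W_j$ increase and that a strict drop $\cV_{j+1}(L)\subsetneqq\cV_j(L)$ for $j\geq 1$ forces $\dim W_{j+1}>\dim W_j\leq \dim E$ --- is precisely the finite-dimensionality argument this statement requires; note that a Noetherian-type descending chain argument would not be available here, since $E$ is an arbitrary finite-dimensional space of continuous functions rather than polynomials, so your route is the right one. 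The only blemish is inherited from the proposition itself: $\cN_0(L)$ is undefined, so the first inclusion in (i) must be read for $j\geq 2$, which you correctly flag.
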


For a moment functional $L$ let $k(L)$ denote the number $k$ in equation (\ref{termiantes}), that is, $k(L)$ is  the smallest $k$ such that $\cV_k(L)=\cV(L)$. It is an interesting problem to characterize the set of moment sequences $L$ for which $k(L)$ is a fixed number $k$. Further, it is likely to expect that for each number $n\in \N$ there exists a moment functional $L$ such that $n=k(L)$.  

The importance of the core variety  stems from the following three theorems. The first theorem is due to Blekherman and Fialkow, the two other are from my recent joint paper with Ph. di Dio.

\begin{theorem}\label{coreexistence}
A linear functional\, $L\neq 0$ on $E$ is a  moment functional if and only if $L(e)\geq 0$ and 
the core variety $\cV(L)$ is not empty. 
\end{theorem}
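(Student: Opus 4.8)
The plan is to treat the two directions separately, the forward one being routine. First I would prove the ``only if'' part: if $L\neq 0$ is a moment functional with representing measure $\mu$, then $L(e)=\int e\,d\mu\geq 0$ since $e\geq 1>0$, and Proposition~\ref{propcorevariety}(ii) gives $\supp\mu\subseteq\cV(L)$; as $L\neq 0$ forces $\mu\neq 0$, its support is nonempty, so $\cV(L)\neq\emptyset$.

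For the converse, assume $L(e)\geq 0$ and $W:=\cV(L)\neq\emptyset$. The set $W$ is an intersection of zero sets of continuous functions, hence closed in $\cX$ and therefore locally compact, and $e\lceil W\geq 1$, so the standing assumption \eqref{g-condition} holds for the pair $(W,E\lceil W)$, where $E\lceil W:=\{f\lceil W:f\in E\}$. I would reduce the whole statement to the single positivity claim: \emph{if $f\in E$ satisfies $f(x)\geq 0$ for all $x\in W$, then $L(f)\geq 0$.} Granting this, applying it to $f$ and to $-f$ shows that $f\lceil W=0$ forces $L(f)=0$, so $\tilde L(f\lceil W):=L(f)$ is a well-defined functional on $E\lceil W$, nonnegative on $(E\lceil W)_+$. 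Strictness then comes for free from the stabilization of Fialkow's construction: since $W=\cV_{k+1}(L)=\{x\in\cX:p(x)=0\ \text{for all}\ p\in\cN_{k+1}(L)\}$ with $\cN_{k+1}(L)=\{p\in E:L(p)=0,\ p\geq 0\ \text{on}\ W\}$, any $f$ with $f\geq 0$ on $W$ and $L(f)=0$ lies in $\cN_{k+1}(L)$ and hence vanishes on $W$; thus $\tilde L(g)>0$ for every nonzero $g\in(E\lceil W)_+$. Proposition~\ref{existencemfstrict} then applies to $\tilde L$ on $E\lceil W$ and produces a finitely atomic Radon measure $\nu$ on $W$ representing $\tilde L$; regarding $\nu$ as a measure on $\cX$ supported on $W$ gives $L(f)=\tilde L(f\lceil W)=\int_W f\,d\nu=\int_\cX f\,d\nu$, so $L$ is a moment functional.

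It remains to prove the positivity claim, which I would establish by induction on the length $k=k(L)$ of the terminating chain in \eqref{termiantes}. In the base case $k=0$ we have $W=\cX$, so $\cV_1(L)=\cX$ and hence $\cN_1(L)=\{0\}$; here the claim reads ``$f\in E_+\Rightarrow L(f)\geq 0$.'' Suppose instead $f\in E_+$ with $L(f)<0$, and set $g_t:=(1-t)e+tf\in E_+$ for $t\in[0,1]$. Then $L(g_0)=L(e)\geq 0>L(f)=L(g_1)$, so by continuity there is $t^\ast\in(0,1)$ with $L(g_{t^\ast})=0$; moreover $g_{t^\ast}\geq(1-t^\ast)e\geq(1-t^\ast)>0$ on $\cX$, so $g_{t^\ast}\neq 0$. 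Thus $g_{t^\ast}$ is a nonzero element of $\cN_1(L)$, contradicting $\cN_1(L)=\{0\}$. This settles the base case, and it is precisely here that the hypothesis $L(e)\geq 0$ is used.

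For the inductive step I would pass to $\cX_1:=\cV_1(L)$, $E_1:=E\lceil\cX_1$, and the functional $L_1$ on $E_1$ induced by $L$; one checks that the core-variety chain of $L_1$ computed inside $\cX_1$ is the tail $\cV_1(L)\supseteq\cV_2(L)\supseteq\cdots$ of the chain of $L$, so that $\cV(L_1)=W$ and $k(L_1)=k-1$, whence the induction hypothesis yields $L(f)=L_1(f\lceil\cX_1)\geq 0$ for every $f\geq 0$ on $W$. The point that makes this step legitimate --- and the main obstacle of the whole argument --- is the well-definedness and one-step positivity of $L_1$: one must show that $f\lceil\cV_1(L)=0$ forces $L(f)=0$, equivalently that $L$ is nonnegative on every $f\in E$ with $f\geq 0$ on $\cV_1(L)$. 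This cannot be reduced to mere $E_+$-positivity by adding elements of $\cN_1(L)$, since such functions vanish on $\cV_1(L)$ and are too small near it to dominate a function that is only nonnegative \emph{on} $\cV_1(L)$; it requires a genuine one-step argument using $\cV_1(L)=\{x:p(x)=0\ \text{for all}\ p\in\cN_1(L)\}$ together with a convexity/limiting argument in the spirit of the base case. Establishing this single-step positivity transfer is the crux; once it is in hand, the induction together with the reductions above closes the proof.
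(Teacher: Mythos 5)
Your forward direction, the reduction of the converse to the positivity claim ($f\geq 0$ on $W:=\cV(L)$ implies $L(f)\geq 0$), the passage to the induced functional $\tilde L$ on $E\lceil W$ with the strictness argument via $\cN_{k+1}(L)$, and the appeal to Proposition~\ref{existencemfstrict} are all sound; so is your base-case segment argument (modulo the harmless edge case $L(e)=0$, where $t^\ast=0$ rather than $t^\ast\in(0,1)$). The genuine gap is the step you flag yourself: the inductive step requires the ``single-step positivity transfer'' ($f\geq 0$ on $\cV_1(L)$ implies $L(f)\geq 0$, which is also what makes $L_1$ well defined), and you never prove it --- you only assert that some convexity argument in the spirit of the base case should yield it. A proof that ends with ``once this is in hand, the proof closes'' has not established the theorem; as written, the converse direction is open.

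What you missed is that no induction is needed: the stabilization of the chain lets you run your own base-case argument once, at the level where it bites. By Proposition~\ref{propcorevariety}(iii) there is a $k$ with $\cV_k(L)=\cV_{k+1}(L)=\cV(L)=W\neq\emptyset$. Suppose $f\in E$, $f\geq 0$ on $W$, and $L(f)<0$. Put $g_t:=(1-t)e+tf$; then $g_t\geq 1-t$ on $W$ for $t\in[0,1]$, and $L(g_0)=L(e)\geq 0>L(g_1)$, so $L(g_{t^\ast})=0$ for some $t^\ast\in[0,1)$. Now $g_{t^\ast}\geq 0$ on $\cV_k(L)=W$ together with $L(g_{t^\ast})=0$ says precisely that $g_{t^\ast}\in\cN_{k+1}(L)$, hence $g_{t^\ast}$ vanishes on $\cV_{k+1}(L)=W$; but $g_{t^\ast}\geq 1-t^\ast>0$ on $W$ and $W\neq\emptyset$, a contradiction. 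This proves the positivity claim in one stroke, and the rest of your argument then goes through verbatim. (Incidentally, the same trick would also repair your induction: to get the one-step transfer at level $1$, the segment argument produces $g_{t^\ast}\in\cN_2(L)$, which must vanish on $\cV_2(L)\supseteq\cV(L)\neq\emptyset$, again a contradiction; the fact you were missing is that the hypothesis $\cV(L)\neq\emptyset$ guarantees nonemptiness of every set down the chain. But the direct argument at the stable level makes the whole induction superfluous.)
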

\begin{proof}
 {[MP, Theorem~18.22]}.
$\hfill \qed$
\end{proof}

For a moment functional $L$ on $E$  we define the set of possible atoms:
\begin{align}\label{definitionW(L)}
\cW(L) :=\{ x\in \cX:\text{$\mu(\{x\})>0$ for some representing measure $\mu$ of $L$}\}.
\end{align}

The second theorem  says that the core variety is just  
the set of possible atoms. In particular, it implies that $\cW(L)$ is a \emph{closed} subset  of $\cX$.

\begin{theorem}\label{corevarsuppmu}
Let $L$ be a truncated moment functional on $E$. Then \begin{align}\label{wequalv}
\cW(L)=\cV(L).
\end{align}
Each representing measure $\mu$ of $L$ is supported on $\cV(L)$. For each point $x\in \cV(L)$ there is a finitely atomic representing measure $\mu$ of $L$ which has $x$ as an atom. 
 \end{theorem}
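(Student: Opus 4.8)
The plan is to prove Theorem \ref{corevarsuppmu} by combining the already-established Theorem \ref{coreexistence} (existence via nonempty core variety) with an iterated ``restriction'' argument that produces, for any prescribed point of $\cV(L)$, a representing measure having that point as an atom. First I would establish the inclusion $\cW(L)\subseteq\cV(L)$, which is essentially immediate: if $x$ is an atom of some representing measure $\mu$ of $L$, then $\mu(\{x\})>0$, so $x\in\supp\mu$, and by Proposition \ref{propcorevariety}(ii) we have $\supp\mu\subseteq\cV(L)$; hence $x\in\cV(L)$. This also yields at once that every representing measure is supported on $\cV(L)$.

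The substantive direction is $\cV(L)\subseteq\cW(L)$, which is exactly the assertion that each $x\in\cV(L)$ is an atom of some finitely atomic representing measure. Fix $x_0\in\cV(L)$. The key idea is to split off the contribution of the point $x_0$: consider the point evaluation $l_{x_0}$ on $E$ and, for a suitable scalar $c>0$, form the functional $L':=L-c\,l_{x_0}$. I would choose $c$ small enough that $L'$ remains a moment functional, and then apply the existence Theorem \ref{coreexistence} to $L'$; any representing measure $\mu'$ of $L'$ yields $\mu:=\mu'+c\,\delta_{x_0}$ as a representing measure of $L$ with $x_0$ as an atom, and by Richter's Theorem \ref{richter} (Corollary \ref{carrichter}) we may take $\mu$ finitely atomic. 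Thus the crux is to show that $L'$ is a moment functional, which by Theorem \ref{coreexistence} reduces to showing $L'(e)\ge 0$ and $\cV(L')\neq\emptyset$ for an appropriate choice of $c$.

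The proof that such a $c>0$ exists is where the real work lies, and this is the step I expect to be the main obstacle. The natural strategy is to use the characterization of $\cV(L)$ as the set of possible atoms in a self-referential way, or more concretely to argue as follows. Since $x_0\in\cV(L)=\cV_k(L)$ for the terminating index $k$ of Proposition \ref{propcorevariety}(iii), the point $x_0$ lies in every $\cV_j(L)$; in particular, for every $p\in E$ with $p\ge 0$ on $\cV_{k-1}(L)$ and $L(p)=0$ one has $p(x_0)=0$, so that subtracting a small multiple of $l_{x_0}$ does not destroy positivity against these ``active'' polynomials. I would make this precise by showing that the core variety of $L'$ still contains $x_0$ (or at least is nonempty), using that the cones $\cN_j(L')$ built from $L'$ are controlled by those of $L$ together with the vanishing of every relevant $p$ at $x_0$. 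The delicate point is that subtracting $c\,l_{x_0}$ changes which polynomials are annihilated, so one must verify that for small $c$ no new annihilating polynomial appears that could shrink the core variety to the empty set; a compactness or continuity argument on the finitely many ``stages'' $\cV_0(L),\dots,\cV_k(L)$, which stabilizes after $k$ steps by Proposition \ref{propcorevariety}(iii), should close this gap.

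Finally, I would assemble the pieces: the two inclusions give $\cW(L)=\cV(L)$, the first inclusion gives $\supp\mu\subseteq\cV(L)$ for every representing measure $\mu$, and the construction of $\mu=\mu'+c\,\delta_{x_0}$ (made finitely atomic via Corollary \ref{carrichter}) furnishes, for each $x\in\cV(L)$, a finitely atomic representing measure having $x$ as an atom. The only genuinely technical ingredient is the existence of the admissible scalar $c>0$, and I would isolate that as a separate lemma so that the main argument reads cleanly as an application of Theorems \ref{richter} and \ref{coreexistence}.
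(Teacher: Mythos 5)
Your first half is fine: $\cW(L)\subseteq\cV(L)$ and the support statement follow from Proposition \ref{propcorevariety}(ii) exactly as you say. The gap is in the converse inclusion, and it is a genuine one, not a deferred technicality. The ``key lemma'' you isolate --- for $x_0\in\cV(L)$ there is a $c>0$ such that $L':=L-c\,l_{x_0}$ is again a moment functional --- is not a stepping stone towards $x_0\in\cW(L)$; it is \emph{equivalent} to it: if $\mu'$ represents $L'$, then $\mu'+c\,\delta_{x_0}$ represents $L$ with an atom at $x_0$, and conversely, if $\mu$ represents $L$ with $\mu(\{x_0\})=c_0>0$, then $\mu-c_0\delta_{x_0}$ represents $L-c_0 l_{x_0}$. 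So the reduction relocates the entire difficulty into the lemma, and the argument you sketch for the lemma does not close it. You correctly observe that the \emph{old} annihilators cause no trouble (every $p\in\cN_j(L)$ vanishes at $x_0$ since $x_0\in\cV(L)\subseteq\cV_j(L)$), but the obstruction is the \emph{new} ones: functions $p$ nonnegative on the relevant stage with $p(x_0)>0$ and $L(p)=c\,p(x_0)$. Ruling these out for one uniform $c>0$ requires, already at the first stage, $\inf\{L(p): p\in E_+,\ p(x_0)=1\}>0$, and similarly at the later stages. These infima run over closed convex sets that are in general \emph{unbounded} (sections of positivity cones in $E$), so no compactness or continuity argument over the finitely many stabilized stages produces a positive lower bound; and the only visible source of such a bound is a representing measure of $L$ with an atom at $x_0$ --- precisely what you are trying to construct. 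As it stands, the proposal is circular at its core step.

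The missing idea --- the one behind [MP, Theorem 18.21], whose main tool is stated in this very lecture --- is to restrict $L$ to the core variety and use \emph{strict} positivity there. Since $L$ is a moment functional, every representing measure $\mu$ satisfies $\supp\mu\subseteq\cV(L)$, so $L$ descends to a well-defined functional $\hat L$ on $\hat E:=E\lceil\cV(L)$ (if $f=g$ on $\cV(L)$, then $L(f-g)=\int (f-g)\,d\mu=0$). By the stabilization $\cV_{k+1}(L)=\cV_k(L)=\cV(L)$ of Proposition \ref{propcorevariety}(iii), $\hat L$ is strictly $\hat E_+$-positive: if $f\geq 0$ on $\cV(L)$ and $L(f)=0$, then $f\in\cN_{k+1}(L)$, hence $f$ vanishes on $\cV_{k+1}(L)=\cV(L)$, i.e.\ $f\lceil\cV(L)=0$. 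Now apply Proposition \ref{existencemfstrict} to the locally compact space $\cV(L)$ (closed in $\cX$) and the finite-dimensional space $\hat E$ (note $e\lceil\cV(L)\geq 1$): for every $x\in\cV(L)$ there is a finitely atomic measure $\nu$ on $\cV(L)$ representing $\hat L$, hence representing $L$, with $\nu(\{x\})>0$. This yields $\cV(L)\subseteq\cW(L)$ and the last assertion simultaneously. (Equivalently: strict positivity of $\hat L$ makes it an interior point of the dual wedge of $\hat E_+$, and only then does your perturbation $L-c\,l_{x_0}$ remain positive, with nonempty core variety, for small $c$; this is the uniform bound $L(p)\geq c\,p(x_0)$ for all $p\in E$ nonnegative on $\cV(L)$ that your sketch lacks.) One further slip: applying Corollary \ref{carrichter} to $\mu=\mu'+c\,\delta_{x_0}$ may destroy the atom at $x_0$; you must make $\mu'$ finitely atomic first and then add $c\,\delta_{x_0}$.
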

\begin{proof}
 {[MP, Theorem~18.21]}. 
$\hfill\qed$
\end{proof}

A moment functional is  called \emph{determinate} if it has a unique representing measure.

\begin{theorem}\label{detminacysizeofW_+} For any  
  moment functional $L$ on $E$ the 
  following are equivalent:
\begin{itemize}
\item[\em (i)]~   $L$ is  determinate.
\item[\em (ii)]~\,  $|\cV(L)|\leq \dim (E\lceil \cV(L)).$
\end{itemize}
\end{theorem}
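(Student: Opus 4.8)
The plan is to characterize determinacy through the finitely atomic structure of representing measures, which the preceding theory makes available. By Theorem~\ref{corevarsuppmu}, every representing measure of $L$ is supported on $\cV(L)$, and by the Richter--Tchakaloff Theorem~\ref{richter} every representing measure can be replaced by a finitely atomic one with at most $\dim E$ atoms, all lying in $\cV(L)$. The key linear-algebraic object is the evaluation map: restricting functions of $E$ to the set $\cV(L)$ gives the space $E\lceil\cV(L)$, and a finitely atomic measure $\mu=\sum_{j=1}^k c_j\delta_{x_j}$ with atoms $x_j\in\cV(L)$ represents $L$ precisely when its weights solve the linear system $\sum_j c_j\,(f(x_j))=L(f)$ for all $f\in E$, with the additional sign constraints $c_j>0$.

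First I would prove the direction (ii)$\to$(i). Assume $|\cV(L)|\leq \dim(E\lceil\cV(L))$. If $\cV(L)$ is finite, say $\cV(L)=\{x_1,\dots,x_N\}$ with $N=|\cV(L)|$, then the point evaluations $l_{x_1},\dots,l_{x_N}$ restricted to $E$ span a space of dimension equal to $\dim(E\lceil\cV(L))$, which by hypothesis is at least $N$; hence these $N$ functionals are linearly independent on $E$. Any representing measure is supported on $\cV(L)$ by Theorem~\ref{corevarsuppmu}, so it is of the form $\sum_j c_j\delta_{x_j}$ and is determined by the vector $(c_1,\dots,c_N)$. Linear independence of the $l_{x_j}$ forces the weights to be the unique solution of the linear moment equations, so $\mu$ is unique. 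If $\cV(L)$ is infinite, I would argue that the hypothesis $|\cV(L)|\le\dim(E\lceil\cV(L))$ together with finite-dimensionality of $E$ still yields uniqueness: the representing measures all have finite support inside $\cV(L)$, and one shows the support must coincide for any two of them by exploiting that the evaluation functionals at any finite atom set are constrained by the dimension bound.

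For the direction (i)$\to$(ii), I would argue by contraposition: suppose $|\cV(L)|>\dim(E\lceil\cV(L))$, and produce two distinct representing measures. The strict inequality means one can choose more points of $\cV(L)$ than $\dim(E\lceil\cV(L))$, so the corresponding point evaluations are linearly dependent on $E$. This linear dependence provides a nonzero signed combination $\sum_j a_j\,l_{x_j}=0$ on $E$ with $\sum_j a_j\ne 0$ absorbed, i.e.\ a nontrivial ``null measure'' with zero $E$-moments. By Theorem~\ref{corevarsuppmu}, for each atom there is a finitely atomic representing measure having that atom with positive weight; averaging or perturbing such a measure by a small multiple of the signed null combination keeps positivity of the weights (for a small enough perturbation parameter) while preserving all moments, yielding a second, distinct representing measure. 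Hence $L$ is not determinate.

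The main obstacle I anticipate is the infinite $\cV(L)$ case and the precise handling of the perturbation in the converse. When $\cV(L)$ is infinite the quantity $|\cV(L)|$ is an infinite cardinal, so condition (ii) can only hold vacuously or must be interpreted so that (ii) forces $\cV(L)$ to be finite; I would first establish that a determinate $L$ must have finite core variety (otherwise one can pick $\dim(E\lceil\cV(L))+1$ distinct points, get a dependence, and perturb to non-uniqueness as above), thereby reducing everything to the finite case where the counting argument is clean. The delicate point in the perturbation step is guaranteeing that the perturbed weights remain strictly positive; I would control this by choosing a base representing measure whose weights are bounded below and taking the perturbation parameter small relative to that bound, so that positivity and hence the measure property are preserved.
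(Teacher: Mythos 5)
Your proposal is correct, and since the paper itself gives no argument for this theorem (it only cites [MP, Theorem~18.23]), there is no in-paper proof to compare against; your route is the natural one built on Theorem~\ref{richter} and Theorem~\ref{corevarsuppmu}. The two pillars are sound: the span of the restricted evaluations $\{l_x\lceil E : x\in \cV(L)\}$ in $E^*$ has dimension exactly $\dim(E\lceil \cV(L))$, so under (ii) the evaluations at the points of $\cV(L)$ are linearly independent and the weights of any representing measure (all supported on $\cV(L)$ by Theorem~\ref{corevarsuppmu}) are uniquely determined; conversely, if $|\cV(L)|>\dim(E\lceil \cV(L))$, evaluations at $\dim(E\lceil\cV(L))+1$ distinct points of $\cV(L)$ are dependent, and averaging the finitely atomic representing measures that Theorem~\ref{corevarsuppmu} provides (one having each chosen point as an atom) yields a representing measure with strictly positive mass at every one of these points, which a sufficiently small multiple of the signed null combination perturbs into a second, distinct representing measure. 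Two tidying remarks: the infinite-$\cV(L)$ subcase of (ii)$\to$(i) that you hedge on is in fact vacuous, since $\dim(E\lceil \cV(L))\le \dim E<\infty$ forces $\cV(L)$ to be finite whenever (ii) holds (you note this later yourself, so the hedge should simply be deleted); and in the perturbation step positivity only needs to be protected at atoms $x_j$ with negative coefficient $a_j$, which your averaging construction already guarantees.
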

\begin{proof}
 {[MP, Theorem~18.23]}.
 $\hfill\qed$
\end{proof}

 Thus, in particular,\, $L$ is not determinate if\,  $|\cV(L)|>\dim E.$ 
\smallskip

 We close this section with three  illustrating  examples.
 \begin{example} (\textit{$\cV(L)\neq \emptyset$\, and $L$ is not a truncated moment functional})\\
Let $d=1$, ${\sN}=\{0,2\}$. Then $E=\{a+bx^2: a,b\in \R\}$.  Define a linear functional $L$ on $E$ by\, $L(a+bx^2)=-a$. Clearly, $L$ is not a  moment functional, because $L(1)=-1$. 

 Then $E_+=\{a+bx^2: a\geq 0,b\geq 0 \}$, so  $\cN_1(L)= \R_+ \cdot x^2$ and $\cV_1(L)=\{ 0\}$. Hence 
 \[\{f\in E: f(x)\geq 0~~\text{ for }~~ x\in \cV_1(L)\}=\{a+bx^2: a\geq 0,b\in \R \}.\] Therefore, $\cN_2(L)=\R\cdot x^2$ and $\cV_2(L)=\{0\}$, so that $\cV(L)=\{0\}\neq \emptyset.$ 
$\hfill \circ$ \end{example}
\begin{example}\label{wlneqvl} (\textit{A truncated moment functional with $\cV(L)=\cV_2(L)\neq \cV_1(L)$} ) \\
Let $d=1$ and ${\sN}=\{0,2,4,5,6,7,8\}$. Then $E={\Lin} \, \{ 1,x^2,x^4,x^5,x^6,x^7,x^8\}$. We fix a real number $\alpha >1$ and define $\mu=\delta_{-1}+\delta_1+\delta_\alpha.$ For the corresponding moment functional\, $L\equiv L^\mu=l_{-1}+l_1+l_\alpha$\,  on $E$ we have
\begin{align}\label{counterexamplexv+l}
\cW(L)=\cV(L)=\cV_2(L)=\{1, -1,\alpha\} \subset \{1,-1,\alpha,-\alpha\}=\cV_1(L).
\end{align}

Let us prove (\ref{counterexamplexv+l}).
Put $p(x):=(x^2-1)^2(x^2-\alpha^2)^2$. Clearly, $p\in E$, $L(p)=0$ and $p\in {\Pos}(\R)$, so  $p\in \cN_1(L)$. Conversely, let $f\in \cN_1(L)$. Then $L(f)=0$ implies that $f(\pm 1)=f(\alpha)=0$. Since $f\in {\Pos}(\R)$, the zeros\, $ 1, -1, \alpha$ have even multiplicities. Hence  $f(x)=(x-1)^2(x+1)^2(x-\alpha)^2(ax^2+bx+c)$ with $a,b,c\in \R$.  Since $x$ and $x^3$ are not in ${\sA}$, the  coefficients of $x, x^3$ vanish. This yields\,  $ax^2+bx+c=a(x+\alpha)^2$ with  $a\geq 0$. Thus $\cN_1(L)= \R_+ {\cdot} p$. Hence\, $\cV_1(L)=\cZ(p)=\{1,-1,\alpha,-\alpha\}.$

Now we set $q(x)=x^4(x^2-1)(\alpha-x)$. Then $q\in E$. Since $q(\pm 1)=q(\alpha)=0$ and $q(-\alpha)=\alpha^4(\alpha^2-1)2\alpha>0,$ we have $q(x)\geq 0$ on $\cV_1(L)$ and $L(q)=0$. Thus,  $q\in \cN_2(L)$ and hence\, $\cV_2(L)\subseteq \cV_1(L)\cap \cZ(q)=\{ 1,-1,\alpha\}.$

Since $1,-1,\alpha$ are atoms of $\mu$,   $\{1,-1, \alpha\}\subseteq \cW(L)$. Now (\ref{counterexamplexv+l}) follows.  The moment functional $L$   is determinate. $\hfill \circ$
\end{example}

\begin{example} (\textit{An example based on the Robinson polynomial})

Let $\cH_{3,6}$ denote the homogeneous polynomials in $3$ variables of degree $6$ and  $\dP^2(\R)$ the $2$-dimensional projective space. 
The   
polynomial $R\in \cH_{3,6}$, defined by  
\begin{align*}
R(x,y,z):=&x^6+y^6+z^6+3x^2y^2z^2 -x^4y^2 -x^4z^2 - x^2y^4- y^4z^2-x^2z^4-y^2z^4,
\end{align*}
is called the \emph{Robinson polynomial}.
It has a number of interesting properties:
\begin{proposition}\label{robins}
\begin{itemize}
\item[\em (i)]
$R(x,y,z)\geq 0$ for  $(x,y,z)\in\R^3$.
\item[\em (ii)]~ $R$   is not a sum of squares in $\R[x,y,z]$. 
\item[\em (iii)]~ $R$ has exactly $10$ zeros $t_1,\dots,t_{10}$, given by (\ref{robzero1})--(\ref{robzero2}), in  $\dP^2(\R)$. 
\item[\em (iv)]~ If $p\in \cH_{3,6}$ vanishes on $t_1,\dots,t_{10}$, then $p=\lambda R$ for $\lambda\in \R$.
\end{itemize}
\end{proposition}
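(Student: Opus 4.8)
The plan is to handle the four assertions in the order (i), (iii), (ii), (iv), because nonnegativity and the exact location of the zeros both feed into the sum-of-squares obstruction and the uniqueness statement. For (i) I would substitute $a=x^2$, $b=y^2$, $c=z^2$, under which $R$ becomes the symmetric form
\[
S(a,b,c)=a(a-b)(a-c)+b(b-a)(b-c)+c(c-a)(c-b),
\]
that is, the $t=1$ instance of Schur's inequality, evaluated at $a,b,c\geq 0$. The key is the polynomial identity $S=(a-b)^2(a+b-c)+c(a-c)(b-c)$; after a WLOG ordering $a\geq b\geq c\geq 0$ both summands are $\geq 0$, so $S\geq 0$ on the positive octant. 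Since every real $(x,y,z)$ gives $a,b,c\geq 0$, this yields $R\geq 0$ on $\R^3$.

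For (iii) I would read the zeros off the same identity. If all coordinates are nonzero then $a,b,c>0$, and $S=0$ forces both summands to vanish, hence $a=b=c$, i.e. $x^2=y^2=z^2$, giving the four points $[1{:}\pm1{:}\pm1]$. If a coordinate vanishes, say $z=0$, then $R(x,y,0)=(x^2-y^2)^2(x^2+y^2)$, which vanishes in $\dP^2(\R)$ exactly at $[1{:}1{:}0]$ and $[1{:}{-}1{:}0]$; the three coordinate cases give six further points, while points with two vanishing coordinates are not zeros (e.g. $R(1,0,0)=1$). This produces exactly the ten listed points.

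The proof of (ii) rests on one linear-algebra lemma: no nonzero cubic form vanishes at all ten zeros, equivalently the evaluation map $\cH_{3,3}\to\R^{10}$ (both of dimension $10$) is an isomorphism. I would verify this by writing a general cubic in its ten monomials and imposing vanishing at the explicit points: the six points with a vanishing coordinate pair up the mixed coefficients, and the four sign points then force the three leading coefficients to vanish, so the only such cubic is $0$. Granting the lemma, suppose $R=\sum_k h_k^2$; one may take the $h_k\in\cH_{3,3}$ homogeneous of degree $3$. Each zero $t_i$ gives $0=R(t_i)=\sum_k h_k(t_i)^2$, so every $h_k$ vanishes at all ten points, whence $h_k=0$ and $R=0$, a contradiction. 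Thus $R\notin\sum\R[x,y,z]^2$.

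The main obstacle is (iv). As stated for plain vanishing, the ten simple conditions cut the $28$-dimensional space $\cH_{3,6}$ down to dimension at least $18$, so the operative hypothesis must be (and in the intended application, where $p\in\cN_L$ is nonnegative, automatically is) that each $t_i$ is a \emph{double} zero, i.e. $p(t_i)=0$ and $\nabla p(t_i)=0$. The plan is then to show that the space $W$ of sextic forms singular at all ten points equals $\R\cdot R$. I would exploit the symmetry group $G$ of order $24$ generated by the coordinate permutations and the sign changes, which fixes both the configuration and $R$; since $W$ is $G$-invariant, I would decompose $\cH_{3,6}$ into $G$-isotypic components and check that the singularity conditions annihilate every nontrivial component while leaving a one-dimensional invariant subspace spanned by $R$. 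The hard part is controlling the rank of the thirty gradient conditions: by Euler's relation they are dependent, and one must show the rank is exactly $27$. Symmetry reduces this to a few small blocks, but that verification is the genuine computational core; alternatively one may invoke the known fact that $R$ spans an extreme ray of the cone of nonnegative sextics whose real vanishing locus is precisely this configuration.
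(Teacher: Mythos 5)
Your treatment of (i)--(iii) is correct and essentially self-contained, and for (i) it takes a genuinely different route from the paper's. The paper deduces nonnegativity from the identity $(x^2+y^2)R= x^2z^2(x^2-z^2)^2+y^2z^2(y^2-z^2)^2+(x^2-y^2)^2(x^2+y^2-z^2)^2$, whereas you substitute $a=x^2$, $b=y^2$, $c=z^2$ and recognize $R$ as the Schur form; your identity $S=(a-b)^2(a+b-c)+c(a-c)(b-c)$ is exact, and since $S$ is symmetric the WLOG ordering is legitimate. This route has the advantage that the equality analysis immediately delivers (iii), including the boundary cases $R(x,y,0)=(x^2-y^2)^2(x^2+y^2)$ and $R(1,0,0)=1$. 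Your argument for (ii) is the standard one and is sound: in a sum-of-squares decomposition of a sextic form the summands must be cubic forms, and the linear-algebra lemma (the six coordinate-zero points force $a_{120}=-a_{300}$, $a_{210}=-a_{030}$, etc., after which the four sign points force everything to zero) does check out, since $\dim\cH_{3,3}=10$. Your observation about (iv) is also a genuine catch: as literally stated it is false, since vanishing at ten points cuts the $28$-dimensional space $\cH_{3,6}$ to dimension at least $18$; concretely, $(x^2-y^2)(x^2-z^2)(y^2-z^2)$ vanishes at all ten $t_i$ and is not a multiple of $R$. The hypothesis actually used in the Lecture's application (strict positivity of $L^\mu$, and $\cV(L^\nu)=\{t_1,\dots,t_{10}\}$) is that $p$ be nonnegative, which, as you say, forces $\nabla p(t_i)=0$ and reduces (iv) to the claim that sextic forms singular at the ten points form the line $\R\cdot R$.

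The gap is that this last claim is never proved. You reduce it to showing that the thirty gradient conditions on $\cH_{3,6}$ have rank exactly $27$, call that ``the genuine computational core,'' and stop; the symmetry discussion organizes that computation but does not replace it. (Also, the rank bound $\leq 27$ comes from the fact that $R$ itself is a nonzero solution, not from Euler's relation, which only makes the ten value conditions redundant given the gradient conditions.) Your proposed fallback -- invoking the known extremality of $R$ in the cone of nonnegative sextics -- runs the implication backwards: extremality says that any decomposition $R=p_1+p_2$ with $p_i\geq 0$ has $p_i=\lambda_i R$, and the standard proof of that fact goes \emph{through} the statement you need (each $p_i$ is nonnegative and vanishes on $\cZ(R)$, hence is a multiple of $R$); extremality alone does not imply that an arbitrary nonnegative sextic vanishing at the ten points is proportional to $R$. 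Note too that the cheap shortcut of restricting to lines is unavailable here: no line contains four of the ten points (the maximum is three), so one cannot force $p$ to vanish on a line by counting eight zeros with multiplicity against degree six. Until the rank computation (or a citation of the actual result, as the paper does via [MP, Proposition~19.19]) is supplied, assertion (iv) -- the one the Lecture actually uses -- remains unproven.
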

\begin{proof}
 {[MP, Proposition~19.19]}.
 $\hfill\qed$
\end{proof}

Assertion (i) follows from the identity
\begin{align*}
(x^2+y^2)R= x^2z^2(x^2-z^2)^2+y^2z^2(y^2-z^2)^2+ (x^2-y^2)^2(x^2+y^2-z^2)^2.
\end{align*}
The zeros of $R$ in the projective space $\dP^2(\R)$ are:
\begin{align}\label{robzero1}
&t_1=(1,1,1), t_2=(1,1,-1), t_3=(1,-1,1), t_4=(1,-1,-1), t_5=(1,0,1),\\& t_6=(1,0,-1), t_7=(1,1,0), t_8=(1,-1,0), t_9=(0,1,1), t_{10}=(0,1,-1).\label{robzero2}
\end{align}
Fix $t_{0}\in\dP^2(\R)$ such that $t_{0}\neq  t_j$, $j=1,\dots,10$. Then $R(t_{0})\neq 0$. Put 
\begin{align*} 
 \nu=\sum\nolimits_{j=1}^{10} m_j\delta_{t_j},\quad \mu=\nu+m_0\delta_{t_0},\quad\text{ where }~~ ~m_j> 0
\end{align*}
 Let $L^\nu$ and $L^\mu$ be the  moment functionals given by the atomic measures $\mu$ and $\nu$, respectively, on the vector space $\cH_{3,6}$ of continuous functions   on the compact topological space $ \dP^2(\R)$. 
From Proposition \ref{robins}(iv) we obtain:

$\bullet$~ $L^\mu$ is strictly positive.
Hence $\cV(L^\mu)=  \dP^2(\R)$ and each  $x\in\dP^2(\R)$ is atom of a representing measure.

$\bullet$~ For $L^\nu$  only the points $t_1,\dots,t_{10}$ are atoms of some representing measure. 
It can be shown that $\nu$ is determinate and $\cV(L)=\{t_1,\dots,t_{10}\}$.

The measure $\mu$ differs from $\nu$ by a single atom, but the corresponding core varieties are opposite extreme cases, one is the whole space and the other is discrete.$\hfill \circ$

\end{example}

\section{The moment cone}\label{momentconedef}
From now on, ${\sA} := \{a_1,\dots,a_m\}$ denotes a fixed  basis of the  vector space $E$.

There is a one-to-one correspondence between linear functionals $L$ on $E$ and vectors $s=(s_1,\dots,a_m)\in \R^m$ given by $L(a_j)=s_j, j=1,\dots,m$.
For $s\in \R^m$  the corresponding functional is the Riesz functional $L_s$ of $s$.

Let $\cM_+(E)$ denote the set of Radon  measures on $\cX$ for which all functions of $E$ are $\mu$-integrable. For $\mu\in \cM_+(E)$,
\begin{align}\label{defmomentf}
L_s^\mu (f)=\int f(x) d\mu(x),\quad f\in E, 
\end{align}
is equivalent to
\begin{align}\label{moments}
s_j(\mu)=\int a_j(x) d\mu(x),\quad j=1,\dots,m.
\end{align}
Recall that the functional $L_s$ in (\ref{defmomentf}) is called the \emph{moment functional} of $\mu$ and the vector $s(\mu)=(s_1(\mu=,\dots,s_m(\mu))$ given by (\ref{moments}) is  the \emph{moment vector} of $\mu$. Thus, $s\in \R^m$ is  moment vector if and only if $L_s$ is a moment functional.

\begin{definition}
 The \textbf{moment cone} $\cS$ is the cone of all \emph{moment sequences}, that is,
\[
 \cS
 := \{ (s_1(\mu),\dots,s_m(\mu)):~ \mu\in \cM(E)~ \},
\]
and $\cL$ denotes the cone of all \emph{moment functionals} in $E^*$, that is,
\[
\cL
:=\{ L^\mu\in E^*: L^\mu=\int f d\mu,f\in E, ~~\textit{where}~~ \mu\in \cM_+(E)\}.
\]
\end{definition}

For $x\in \cX$, the moment vector of the delta measure $\delta_x$ is
\[
 s_{\sA}(x)
 :=(a_1(x),\dots,a_m(x)).
\]
Since each moment functional has a finitely atomic representing measure by Theorem \ref{richter}, each vector of $\cS$ is a nonnegative linear combination of vectors $s_{\sA}(x)$ for $x\in \cX$, that is, $\cS$ is the convex cone in $\R^m$ generated by the vectors $s_{\sA}(x), x\in \cX$.

The map $s\mapsto L_s$ is a bijection of $\cS$ and $\cL$. Further, we have
\begin{align*}
\R^m=\cS-\cS~~~~\text{ and }~~~E^*=\cL-\cL.
\end{align*}

In general, both cones $\cS$ and $\cL$ are not closed, as the following example shows.

\begin{example}
Let $\sA=\{1,x,x^2\}$ on $\cX=\R$. For $\mu_n=n^{-2}\delta_n$ we have 
$s(\mu_n)=(n^{-2}, n^{-1},1)$. But  $s:=\lim_n s(\mu_n)=(0,0,1)$ is not in $\cS$.
$\hfill \circ$
\end{example}

For a cone $C$ in a real vector space $V$, its \emph{dual cone} is defined by
\[
C^\wedge=\{ \varphi\in V^*: \varphi(c)\geq 0~~~\text{ for }~~c\in C\}.
\]
Let ${\ov{\cL}}$\, denote the closure of the cone ${\cL}$ in  $E^*$.

\begin{proposition}\label{cLcPwedge} ~
${\cL}\subseteq (E_+)^{\wedge} ={\ov{\cL}}$~  and~ ${\cL}^{\wedge}=E_+=(E_+)^{{\smallwedge}{\smallwedge}}.$\\
If the space $\cX$ is compact, then the cone ${\cL}$  is closed in the norm topology of the dual space $E^*$ and we have ${\cL}=(E_+)^{\smallwedge}$.
\end{proposition}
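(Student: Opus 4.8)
The statement to prove is Proposition~\ref{cLcPwedge}, which asserts several relations among the moment-functional cone $\cL$, the positive functionals $(E_+)^\wedge$, and the positivity wedge $E_+$. I will organize the proof around the three claims: the chain $\cL\subseteq (E_+)^\wedge=\ov{\cL}$, the duality $\cL^\wedge=E_+=(E_+)^{\smallwedge\smallwedge}$, and the compact case where $\cL$ is closed and equals $(E_+)^\smallwedge$.

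\textbf{The inclusion and closure relations.} The plan is to first observe that $\cL\subseteq(E_+)^\wedge$ is immediate: if $L=L^\mu$ is a moment functional and $f\in E_+$, then $L(f)=\int f\,d\mu\geq 0$ since $\mu$ is a positive measure and $f\geq 0$ on $\cX$. Because $(E_+)^\wedge$ is defined by the nonstrict inequalities $\varphi(f)\geq 0$ for $f\in E_+$, it is closed in $E^*$ (it is an intersection of closed half-spaces), so taking closures gives $\ov{\cL}\subseteq(E_+)^\wedge$. The harder direction is $(E_+)^\wedge\subseteq\ov{\cL}$. Here I would argue by contradiction using a separation theorem: if some $L_0\in(E_+)^\wedge$ were not in the closed convex cone $\ov{\cL}$, then since $E^*$ is finite-dimensional (as $E$ is finite-dimensional) there is a separating functional, i.e. an element $f\in E=(E^*)^*$ and a constant such that $L(f)\geq 0$ for all $L\in\ov{\cL}$ while $L_0(f)<0$. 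Testing against the point evaluations $L=l_x$ (which lie in $\cL$, being represented by $\delta_x$) forces $f(x)\geq 0$ for all $x\in\cX$, so $f\in E_+$; but then $L_0(f)\geq 0$ since $L_0\in(E_+)^\wedge$, contradicting $L_0(f)<0$. This yields $(E_+)^\wedge\subseteq\ov{\cL}$ and closes the chain of equalities.

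\textbf{The duality relations.} For $\cL^\wedge=E_+$, I note that by the bipolar theorem in finite dimensions $\cL^\wedge=\ov{\cL}^\wedge=\bigl((E_+)^\wedge\bigr)^\wedge=(E_+)^{\smallwedge\smallwedge}$, using the already-established equality $\ov{\cL}=(E_+)^\wedge$. So the two asserted identities $\cL^\wedge=E_+$ and $E_+=(E_+)^{\smallwedge\smallwedge}$ reduce to the single statement that $E_+$ equals its own double wedge. One inclusion $E_+\subseteq(E_+)^{\smallwedge\smallwedge}$ is formal. For the reverse, I would again separate: if $f_0\in(E_+)^{\smallwedge\smallwedge}$ but $f_0\notin E_+$, then $f_0(x_0)<0$ for some $x_0\in\cX$; the evaluation $l_{x_0}$ belongs to $(E_+)^\wedge$ (it is nonnegative on $E_+$), yet $l_{x_0}(f_0)=f_0(x_0)<0$ contradicts $f_0\in(E_+)^{\smallwedge\smallwedge}$. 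This uses only that point evaluations are positive functionals, together with continuity of $f_0$.

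\textbf{The compact case.} When $\cX$ is compact, the task is to upgrade $\ov{\cL}=(E_+)^\wedge$ to $\cL=(E_+)^\wedge$, i.e. to show $\cL$ is already closed. The natural route is to invoke the assumption~\eqref{g-condition} that there exists $e\in E$ with $e(x)\geq 1$ on $\cX$, together with Proposition~\ref{integralrepcompactcase}: on a compact space every $E_+$-positive linear functional on a subspace containing a strictly positive function is a moment functional. Thus every element of $(E_+)^\wedge$ is actually in $\cL$, giving $(E_+)^\wedge\subseteq\cL$ and hence $\cL=(E_+)^\wedge$, which is closed. The main obstacle I anticipate is the separation argument in the first part: one must be careful that the finite-dimensionality of $E^*$ justifies the Hahn--Banach/separation step and that the separating element can be realized as a genuine function $f\in E$ evaluated via point masses $l_x$, so that the conclusion $f\in E_+$ is legitimate. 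The compact-case closedness is comparatively easy once Proposition~\ref{integralrepcompactcase} is available, since it converts abstract positivity directly into representability.
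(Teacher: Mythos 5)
Your proof is correct, and it follows essentially the same route as the paper's source: the paper itself defers the proof to [MP, Propositions 1.26, 1.27], where the argument is exactly this combination of (a) separation of a point from the closed cone $\ov{\cL}$ in the finite-dimensional space $E^*$, tested against point evaluations $l_x\in\cL$ to produce an element of $E_+$, and (b) Proposition \ref{integralrepcompactcase} together with assumption (\ref{g-condition}) to convert $E_+$-positivity into representability when $\cX$ is compact. Your observation that both identities $\cL^\wedge=E_+$ and $E_+=(E_+)^{\smallwedge\smallwedge}$ reduce, via $\ov{\cL}=(E_+)^{\smallwedge}$ and the bipolar identity $\cL^\wedge=\ov{\cL}^\wedge$, to the single double-wedge statement handled by point evaluations is a clean and legitimate organization of that argument.
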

\begin{proof}
 {[MP, Propositions~1.26,~1.27]}.
 $\hfill\qed$
\end{proof}

Finally, we turn to the supporting hyperplanes  of the cone $\cL$ of moment functionals.
Suppose  $L\in \cL$. Recall from (\ref{N_KLE}) that   $\cN_1(L)=\{ f\in E_+: L(f)=0~\}.$

The next proposition shows that the nonzero elements of $\cN_1(L)$  correspond to the supporting hyperplanes of the cone $\cL$.

\begin{proposition}\label{bundardayinnermc} 
\begin{itemize}
\item[\em (i)]\ Let  $p\in \cN_1(L), p\neq 0$. Then\, $\varphi_p(L')=L'(p), L'\in E^*$,\, defines a supporting functional\, $\varphi_p$\, of the cone ${\cL}$ at $L$. Each supporting functional of $\cL$ at $L$ is of this form.
\item[\em (ii)]~ $L$ is a boundary point of the cone\, $\cL$\, if and only if\, $\cN_1(L)\neq\{0\}.$ 
\item[\em (iii)]~ $L$ is an inner point of the cone\, $\cL$\, if and only if\,   $\cN_1(L)=\{0\}.$
\end{itemize}
\end{proposition}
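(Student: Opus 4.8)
```latex
The plan is to prove Proposition~\ref{bundardayinnermc} by identifying the supporting functionals of the cone $\cL$ via the duality results already established in Proposition~\ref{cLcPwedge}. Recall that $E^* = \cL - \cL$ and that $\cL$ generates $E^*$, so $\cL$ has nonempty interior in $E^*$ and the notions of boundary and inner point are the usual ones for a full-dimensional convex cone. The key identity I would exploit is $\cL^{\smallwedge} = E_+$ from Proposition~\ref{cLcPwedge}: a functional $\varphi \in E^{**} \cong E$ is nonnegative on all of $\cL$ precisely when, viewed as an element $p \in E$, it lies in $E_+$. This is exactly what turns abstract supporting functionals of $\cL$ into concrete nonnegative functions $p \in E_+$.

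First I would prove (i). A supporting functional of $\cL$ at $L$ is, by definition, a nonzero $\varphi \in E^{**}$ with $\varphi \geq 0$ on $\cL$ and $\varphi(L) = 0$. Under the canonical identification $E^{**} \cong E$ (valid since $E$ is finite-dimensional), such $\varphi$ corresponds to some $p \in E$ with $\varphi = \varphi_p$, where $\varphi_p(L') = L'(p)$ for $L' \in E^*$. The condition $\varphi_p \geq 0$ on $\cL$ says $L'(p) \geq 0$ for every moment functional $L'$, i.e.\ $p \in \cL^{\smallwedge} = E_+$ by Proposition~\ref{cLcPwedge}. The condition $\varphi_p(L) = 0$ says $L(p) = 0$. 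Together these give exactly $p \in \cN_1(L)$, and $\varphi_p \neq 0$ forces $p \neq 0$. Conversely, any $p \in \cN_1(L)$ with $p \neq 0$ yields $\varphi_p \geq 0$ on $\cL$ (since $p \in E_+$) with $\varphi_p(L) = 0$, hence a supporting functional. This establishes the bijective correspondence claimed in (i).

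Parts (ii) and (iii) then follow from the general fact that a point $L$ of a full-dimensional closed (or solid) convex cone $\cL$ is a boundary point if and only if there exists a nonzero supporting functional at $L$, and an inner point if and only if no such functional exists. By part (i), a nonzero supporting functional at $L$ exists precisely when $\cN_1(L) \neq \{0\}$; this gives (ii), and (iii) is its negation since every $L \in \cL$ is either a boundary or an inner point. To apply this cleanly I would note that $\cL$ has nonempty interior (from $E^* = \cL - \cL$) so the supporting hyperplane theorem applies to boundary points, and that any inner point cannot be separated from $\cL$ by a hyperplane, ruling out a nonzero supporting functional.

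The main obstacle I anticipate is justifying the passage from ``supporting functional of the cone $\cL$'' to ``supporting functional of the \emph{closure} $\ov{\cL}$,'' since $\cL$ need not be closed in general (as the example $\sA = \{1,x,x^2\}$ shows). A boundary point of $\cL$ in the sense of the ambient space $E^*$ is a boundary point of $\ov{\cL}$, and supporting hyperplanes are naturally supporting hyperplanes of $\ov{\cL}$; here $\ov{\cL} = (E_+)^{\smallwedge}$ by Proposition~\ref{cLcPwedge}, whose dual cone is again $E_+$. I would therefore phrase the separation argument for $\ov{\cL}$ and then transfer it back, checking that $\varphi_p(L) = 0$ is unaffected since $L \in \cL \subseteq \ov{\cL}$ and that membership $p \in \cN_1(L)$ only references $E_+$ and the value $L(p)$, both insensitive to the distinction between $\cL$ and its closure.
```
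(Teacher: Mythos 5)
Your proof is correct. There is nothing in the paper to compare it against in detail: the paper's ``proof'' of Proposition~\ref{bundardayinnermc} is only the citation [MP, Proposition~1.42], so your argument stands as a self-contained replacement, and it follows the natural route that the cited source takes as well — identify $E^{**}\cong E$, use $\cL^{\smallwedge}=E_+$ from Proposition~\ref{cLcPwedge} to show that nonzero supporting functionals at $L$ correspond exactly to nonzero elements of $\cN_1(L)=\{f\in E_+: L(f)=0\}$, and then get (ii) and (iii) from the supporting-hyperplane theorem applied to the cone $\cL$, which is full-dimensional because $E^*=\cL-\cL$. Two minor remarks. First, your concern about passing between $\cL$ and $\ov{\cL}$ is unnecessary: the supporting-hyperplane theorem needs only a convex set with nonempty interior, not a closed one, and the positivity $\varphi_p\geq 0$ on $\cL$ already gives $p\in\cL^{\smallwedge}=E_+$ directly, with no reference to the closure. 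Second, you silently use the standard cone reduction — that a separating hyperplane at a boundary point $L$ of a cone can be normalized so that $\varphi\geq 0$ on $\cL$ and $\varphi(L)=0$ (the affine level must be $0$ since $tL\in\cL$ for all $t>0$); this is routine but is the one step you should spell out if you write the argument up in full.
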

\begin{proof}
 {[MP,Proposition~1.42]}.
 $\hfill\qed$
\end{proof}

An \emph{exposed face} of a cone $C$ in a finite-dimensional real vector space is a subcone of the form $F=\{f\in C:\varphi(f)=0\}$ for
some functional $\varphi\in C^{\smallwedge}$. 

Since  $\cL^{\smallwedge}=E_+$,  each  $\varphi\in \cL^{\smallwedge}$ is of the form 
$\varphi_p(L')=L'(p), L'\in E^*,$ for some $p\in E_+$. Hence the exposed faces of the cone $\cL$ in $E^*$ are precisely the sets 
the sets
\begin{align}\label{f-pdefi}
 F_p
 :=\{L'\in \cL: \varphi_p(L')\equiv L'(p) =0\},\quad\text{ where }~~ p\in E_+.
\end{align}

Let $L\in\cL$. Since $\cL\subseteq (E_+)^{\smallwedge}$,  $N_1(L)$ is an  exposed face of the cone $E_+$. 
If $\cX$ is compact, then $(E_+)^{\smallwedge}={\cL}$ by  Proposition \ref{cLcPwedge}, so  each exposed face of $E_+$
is of this form. 
Thus,  in this case the subcones $\cN_1(L)$ are precisely the  exposed faces of $E_+$.

Recall that for inner points of $\cL$ we have $\cW(L)=\cX$ (by Proposition \ref{existencemfstrict}) and hence $\cW(L)=\cV_1(L)$. In general, $\cW(L)\neq \cV_1(L)$, as shown by  Example \ref{wlneqvl}.

The next result characterizes those boundary points for which $\cW(L)=\cV_1(L).$

\begin{proposition}\label{thm:WV+cases}
Let $L$ be a boundary point of $ {\cL}$. Then  $\cW(L)=\cV_(L) $ if and only if  $L$ lies in the relative interior of an exposed face of the cone $\cL$.
\end{proposition}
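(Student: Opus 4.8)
The plan is to reduce the assertion first to a one-step stabilization property of the core variety, and then to an application of Proposition~\ref{bundardayinnermc}(iii) on a restricted space. Throughout write $\cN_1(L)=\{p\in E_+:L(p)=0\}$, and recall from Theorem~\ref{corevarsuppmu} that $\cW(L)=\cV(L)$ and from Proposition~\ref{propcorevariety} that $\cV(L)\subseteq\cV_2(L)\subseteq\cV_1(L)$. First I would record the purely combinatorial fact that $\cW(L)=\cV_1(L)$ is equivalent to one-step stabilization: since $\cV(L)=\cW(L)$ and $\cV_{k+1}$ depends only on $\cV_k$, the chain $\cV_1\supseteq\cV_2\supseteq\cdots$ is already stationary as soon as $\cV_2(L)=\cV_1(L)$; hence $\cW(L)=\cV_1(L)$ iff $\cV_2(L)=\cV_1(L)$. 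Unwinding definition~\eqref{V_JL}, $\cV_2(L)=\cV_1(L)$ holds iff every $p\in\cN_2(L)$ vanishes on $\cV_1(L)$, where $\cN_2(L)=\{p\in E:L(p)=0,\ p\ge0\text{ on }\cV_1(L)\}$.

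Next I would pass to restricted data. Because $L$ is a boundary point, $\cN_1(L)\neq\{0\}$ by Proposition~\ref{bundardayinnermc}(ii); choose $p^*$ in the relative interior of the convex cone $\cN_1(L)$. Set $K:=\cV_1(L)=\cZ(p^*)$ (the equality $\cZ(p^*)=\cV_1(L)$ follows from $p^*\ge0$ and the relative-interior choice), a closed, hence locally compact, subset of $\cX$, and let $E_K:=E\lceil K$ with restriction map $\rho:E\to E_K$. The key structural observation is the identification $F_{p^*}=\{L'\in\cL:L'(p^*)=0\}=\{L'\in\cL:L'(p)=0\ \forall p\in\cN_1(L)\}$ and, since a moment functional annihilating all such $p$ must be carried by $K$, that the pullback $\rho^*$ maps the moment cone $\cL_K$ of $E_K$ over $K$ linearly and bijectively onto $F_{p^*}$; moreover $F_{p^*}$ is the smallest exposed face of $\cL$ containing $L$. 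As $L$ is represented by measures supported on $\cV(L)\subseteq K$, it factors as $L=\ov L\circ\rho$ with $\ov L\in\cL_K$, and $\rho^*$ carries $\mathrm{relint}\,\cL_K$ onto $\mathrm{relint}\,F_{p^*}$.

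Now I would apply Proposition~\ref{bundardayinnermc}(iii) to the pair $(K,E_K)$ (assumption~\eqref{g-condition} survives restriction via $e\lceil K$): $\ov L$ is an inner point of $\cL_K$ iff $\cN_1^{K}(\ov L)=\{f\in(E_K)_+:\ov L(f)=0\}=\{0\}$. A short lift-and-push computation gives $\cN_1^{K}(\ov L)=\rho(\cN_2(L))$, so $\cN_1^{K}(\ov L)=\{0\}$ exactly when $\cN_2(L)\subseteq\ker\rho$, i.e.\ when every element of $\cN_2(L)$ vanishes on $K=\cV_1(L)$ — which by the first paragraph is $\cV_2(L)=\cV_1(L)$, i.e.\ $\cW(L)=\cV_1(L)$. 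Chaining the equivalences yields $\cW(L)=\cV_1(L)\iff L\in\mathrm{relint}\,F_{p^*}$.

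Finally I would upgrade "$L\in\mathrm{relint}\,F_{p^*}$" to "$L$ lies in the relative interior of some exposed face of $\cL$." One direction is immediate. For the other, if $L\in\mathrm{relint}\,F$ for an exposed face $F=F_q$ with $q\in\cN_1(L)$, then $F_{p^*}\subseteq F_q$ is a face of $F$ containing the relative-interior point $L$, so by the standard convex-analysis fact that a face meeting the relative interior is the whole set, $F_{p^*}=F$ and $L\in\mathrm{relint}\,F_{p^*}$. The main obstacle, and the step I would write most carefully, is the identification $\rho^*(\cL_K)=F_{p^*}$ together with its preservation of relative interiors: this rests on verifying that a moment functional annihilating all of $\cN_1(L)$ is supported on $K$ (so that $\rho^*$ is onto $F_{p^*}$) and on matching the ambient spans $\mathrm{span}\,F_{p^*}=\rho^*((E_K)^*)$, after which Proposition~\ref{bundardayinnermc} supplies the rest.
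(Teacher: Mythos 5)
Your argument is correct. There is nothing in the paper itself to compare it against: the paper disposes of this proposition by citing [MP, Theorem~1.45], so what you have written is a genuine proof assembled entirely from results the paper does state (Proposition~\ref{bundardayinnermc}, Theorem~\ref{corevarsuppmu}, Lemma~\ref{pospol}, the description of exposed faces via $\cL^{\smallwedge}=E_+$) plus standard finite-dimensional convex analysis, and every load-bearing step checks out. The stabilization claim is right, since $\cN_{k+1}(L)$ depends only on $\cV_k(L)$, so $\cV_2(L)=\cV_1(L)$ forces $\cV_k(L)=\cV_1(L)$ for all $k$ and hence $\cW(L)=\cV(L)=\cV_1(L)$, while the converse follows from $\cV(L)\subseteq\cV_2(L)\subseteq\cV_1(L)$. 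Your $p^*\in\mathrm{relint}\,\cN_1(L)$ does satisfy $\cZ(p^*)=\cV_1(L)$: for each $p\in\cN_1(L)$ there is $\varepsilon>0$ with $p^*-\varepsilon p\in\cN_1(L)\subseteq E_+$, so $p^*(x)=0$ forces $p(x)=0$. The identification $\rho^*(\cL_K)=F_{p^*}$ holds because Lemma~\ref{pospol} places every representing measure of every $L'\in F_{p^*}$ inside $\cZ(p^*)=K$; since $\rho^*$ is injective and linear maps commute with relative interiors ($T(\mathrm{relint}\,C)=\mathrm{relint}\,T(C)$), you get $\rho^*(\mathrm{relint}\,\cL_K)=\mathrm{relint}\,F_{p^*}$ directly, so the span-matching you flagged as the delicate point is not actually needed. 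The computation $\cN_1^K(\ov{L})=\rho(\cN_2(L))$ and the closing face argument (a face of $F_q$ meeting $\mathrm{relint}\,F_q$ equals $F_q$) are both correct. Two things to make explicit in a final write-up: the statement's ``$\cV_(L)$'' is a typo for $\cV_1(L)$, as the surrounding text confirms and as you assumed; and when you apply Proposition~\ref{bundardayinnermc}(iii) to $(K,E_K)$ you should note that point evaluations at points of $K$ already span $(E_K)^*$, so $\cL_K$ is full-dimensional and ``inner point'' there coincides with ``relative interior point'', which is the form in which you use it.
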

\begin{proof}
 {[MP, Theorem~1.45]}.
 $\hfill\qed $
\end{proof}

\secret{

\section{Carath\'eodory  numbers}
By Richter's theorem, each  sequence $s\in \cS$ has a $k$-atomic representing measure, with $k\leq \dim\, E$. 
It is natural to ask for smallest possible number $k$.

\begin{definition}\label{defcaratheorynumnber} 
Let  $s\in \cS$. The  \emph{Carath\'eodory number}\,  ${\sC}(s)={\sC}(L_s)$ is the smallest number $k\in \N_0$ such that $s$, equivalently $L_s$, has a $k$-atomic representing measure.

The \emph{Carath\'eodory number}\, ${\sC}(\cS)$ is the largest\, ${\sC}(s)$ for $s\in \cS$.
\end{definition}

Thus, $\sC(\cS)$  the smallest  number\, $n\in \N$ such that each  sequence  $s\in \cS$ has a $k$-atomic representing measure, where $k\leq n$. We set ${\sC}(s)=0$  for $s=0$.
Then
\begin{align*}
  {\sC}(\cS) ={\sup} \, \{ {\sC}(s):\, s \in {\cS}_{\sA}\, \}  \leq m= {\dim}\, E.
\end{align*}

\begin{example} ({\it Carath\'eodory number  in dimension one on $\R[x]_m$})\\
Let $d=1, m\in \N,$  ${\sA}=\{1,x,\dots,x^m\}$, and  $\cX:=[a,b],$ where
 $a,b\in \R, a<b.$ Then $E={\Lin} \, {\sA}=\R[x]_m$.  For the Carath\'eodory number we have
 \begin{align*}
 {\sC}(\cS)=n+1\quad {\rm if}~~ m=2n~~{\rm or}~~  m=2n+1,~~n\in \N.
 \end{align*} That is, if  $\lfloor \frac{m}{2}\rfloor$ denotes the largest integer $k$ satisfying $k\leq \frac{m}{2}$, we can write
\begin{align}\label{caronecase}
{\sC}(\cS)=\left\lfloor \frac{m}{2}\right\rfloor +1.
\end{align}

\end{example}
}

%




\end{document}